\documentclass[10pt]{amsart}
\usepackage[english]{babel}
\usepackage{amssymb}
\usepackage{enumitem}
\usepackage{comment}
\usepackage{color}
\usepackage{hyperref}
\usepackage[initials]{amsrefs}
\usepackage[all]{xy}
\usepackage{tikz}
\usepackage{tikz-cd}
\usetikzlibrary{decorations.pathmorphing}

%\SelectTips{cm}{}
%\usepackage{mathpazo}
\usepackage{color}

% Symbols:

\newcommand{\bbN}{{\mathbb N}}
\newcommand{\bbQ}{{\mathbb Q}}
\newcommand{\bbR}{{\mathbb R}}
\newcommand{\bbZ}{{\mathbb Z}}
\newcommand{\bbC}{{\mathbb C}}
\newcommand{\bbH}{\mathbb{H}}
\newcommand{\bbO}{\mathbb{O}}

\newcommand{\HYP}{\mathcal{H}}

\newcommand{\calC}{\mathcal{C}}

\newcommand{\calV}{\mathcal{V}}

\newcommand{\calN}{\mathcal{N}}
\newcommand{\calZ}{\mathcal{Z}}
\newcommand{\calO}{\mathcal{O}}

% General math commands:

\newcommand{\bs}{\backslash}
\newcommand{\betti}{\beta^{(2)}}

\newcommand{\id}{\operatorname{id}}
\newcommand{\im}{\operatorname{im}}

\newcommand{\isom}{\operatorname{Isom}}

\newcommand{\pr}{\operatorname{pr}}

\newcommand{\vol}{\operatorname{vol}}

\newcommand{\Isom}{\operatorname{Isom}}

\newcommand{\Ker}{\operatorname{Ker}}

\newcommand{\Homeo}{\operatorname{Homeo}}
\newcommand{\Stab}{\operatorname{Stab}}
\newcommand{\SL}{\operatorname{SL}}

\newcommand{\Aut}{\operatorname{Aut}}
\newcommand{\Auttop}{\operatorname{Aut}^{c}}

\newcommand{\Inn}{\operatorname{Inn}}
\newcommand{\Out}{\operatorname{Out}}
\newcommand{\Outtop}{\operatorname{Out}^c}

\newcommand{\Radam}{\operatorname{Rad}_{\mathrm{am}}}

\newcommand{\PSL}{\operatorname{PSL}}
\newcommand{\PGL}{\operatorname{PGL}}
\newcommand{\GL}{\operatorname{GL}}

\newcommand{\QI}{\operatorname{QI}}

\newcommand{\fin}{{\mathrm{fin}}}

\newcommand{\acts}{\curvearrowright}

\newcommand{\CAF}{\ensuremath{\mathrm{(CAF)}}}
\newcommand{\NBC}{\ensuremath{\mathrm{(NbC)}}}
\newcommand{\wNBC}{\ensuremath{\mathrm{(wNbC)}}}

\newcommand{\IRR}{\ensuremath{\mathrm{(Irr)}}}
\newcommand{\BT}{\ensuremath{\mathrm{(BT)}}}
\newcommand{\NBCone}{\ensuremath{\mathrm{(wNbC)_1}}}
\newcommand{\Dreg}{\mathcal{D}_{\mathrm{reg}}}

% Norms:

\newcommand{\normal}{\triangleleft}

\newcommand{\overto}[1]{\,{\buildrel{#1}\over\longrightarrow}\,}

\newcommand{\ssf}[1]{{#1}^{\mathrm{ss}}}
\newcommand{\tdf}[1]{{#1}^{\mathrm{td}}}

% Theorem styles and numbering:
\newtheorem{mthm}{Theorem}

\newtheorem{theorem}{Theorem}[section]
\newtheorem{lemma}[theorem]{Lemma}

\newtheorem{corollary}[theorem]{Corollary}
\newtheorem{cor}[theorem]{Corollary}
\newtheorem{proposition}[theorem]{Proposition}
\newtheorem{prop}[theorem]{Proposition}

\theoremstyle{definition}
\newtheorem{definition}[theorem]{Definition}
\newtheorem{defn}[theorem]{Definition}

\newtheorem{example}[theorem]{Example}
\newtheorem{prob}[theorem]{Problem}

\newtheorem{remark}[theorem]{Remark}

\newtheorem{setup}[theorem]{Setup}

\numberwithin{equation}{section}

\begin{document}

\title{Lattice envelopes}

\author{Uri Bader}
\address{Weizmann Institute, Rehovot}
\email{uri.bader@gmail.com}

\author{Alex Furman}
\address{University of Illinois at Chicago, Chicago}
\email{furman@uic.edu}

\author{Roman Sauer}
\address{Karlsruhe Institute of Technology}
\email{roman.sauer@kit.edu}

\thanks{In part, U.B. and A.F. were supported by BSF grant 2008267 and A.F. by NSF grant DMS 1611765. 
All three authors thank MSRI for its support during the \emph{Geometric Group Theory} Program.}

\subjclass[2000]{Primary 20565; Secondary 22D05}
\keywords{Locally compact groups, lattices, geometric group theory}

\begin{abstract}
We introduce a class of countable groups by some abstract group-theoretic conditions. It includes 
linear groups with finite amenable radical and finitely generated 
residually finite groups with some non-vanishing $\ell^2$-Betti numbers that are not virtually a product of two infinite groups. Further, 
it includes acylindrically hyperbolic groups. 
For any group $\Gamma$ in this class we determine the general structure of the possible lattice embeddings of $\Gamma$, i.e.~of all compactly generated, locally compact groups that contain $\Gamma$ as a lattice. 
This leads to a precise description of possible non-uniform lattice embeddings of groups in this class. 
Further applications include the determination of possible lattice embeddings of fundamental groups of closed manifolds with pinched negative curvature. 
\end{abstract}

\maketitle

%\tableofcontents

\section{Introduction} % (fold)
\label{sec:introduction_and_statement_of_the_main_result}

\subsection{Motivation and background}
Let $G$ be a locally compact second countable group, hereafter denoted \textbf{lcsc}\footnote{If $G$ is, in addition, totally disconnected we call $G$ a \textbf{tdlc} group.}. 
Such a group carries a left-invariant Radon measure unique up to scalar multiple, 
known as the Haar measure. 
A subgroup $\Gamma<G$ is a \textbf{lattice} if it is discrete, and $G/\Gamma$ carries
a finite $G$-invariant measure; equivalently, if the $\Gamma$-action on $G$ admits
a Borel fundamental domain of finite Haar measure.
If $G/\Gamma$ is compact, one says that $\Gamma$ is a \textbf{uniform lattice}, 
otherwise $\Gamma$ is a \textbf{non-uniform} lattice. The inclusion $\Gamma\hookrightarrow G$ is called 
a \textbf{lattice embedding}. 
We shall also say that $G$ is a \textbf{lattice envelope} of $\Gamma$.
\medskip

The classical examples of lattices come from geometry and arithmetic.
Starting from a locally symmetric manifold~$M$ of finite volume, 
we obtain a lattice embedding $\pi_1(M)\hookrightarrow \Isom(\tilde{M})$ of its fundamental group 
into the isometry group of its universal covering via the action by deck transformations. 
The real Lie group $\isom(\tilde M)$ is semi-simple if $\tilde M$ has no Euclidean direct 
factors, and the lattice $\Gamma<\isom(\tilde M)$ is uniform if and only if $M$ is compact. 

Real semi-simple Lie groups viewed as algebraic groups admit \textbf{arithmetic lattices} such as 
$\SL_d(\bbZ)<\SL_d(\bbR)$~\cite{Margulis:book}*{Theorem 3.2.4~on p.63}. 
Analogous constructions exist for products of real and $p$-adic algebraic semi-simple 
groups, such as 
\[
	\SL_d(\bbZ[\frac{1}{p_1},\dots,\frac{1}{p_k}])<
	\SL_d(\bbR)\times \SL_d(\bbQ_{p_1})\times\cdots\times\SL_d(\bbQ_{p_k}).
\]
Our notations and conventions for arithmetic lattices are defined in \S\ref{sub:arithmetic} below.

\medskip

A central theme in the study of lattices are the connections between the lattices
and the ambient group.
Mostow's Strong Rigidity (\cite{Mostow:1973book} for the uniform case, \cites{Prasad-Mostow, Margulis-Mostow} for the non-uniform case) 
asserts that an irreducible lattice in a semi-simple real Lie group $G$ that is not locally isomorphic to $\SL_2(\bbR)$
determines the ambient Lie group among all Lie groups and
determines its embedding in the ambient Lie group uniquely up to an automorphism of~$G$.

It is natural to ask to what extent lattices determine their lattice envelopes among all lcsc groups, 
and which countable groups have only trivial lattice envelopes. 
To make the question precise we introduce the notion of \textbf{virtual isomorphism} between 
lattice  embeddings in Definition~\ref{def: virtual isomorphism}. 
A lattice embedding is called \textbf{trivial} if it is virtually isomorphic to the identity homomorphism of a countable discrete group, 
which can be regarded as a lattice embedding. Virtual isomorphism is an equivalence relation. We refer to \S\ref{sub: virtual iso} for more information. 

\begin{prob}\label{pro:envelopes} Given a countable group $\Gamma$, describe all its possible lattice envelopes up to virtual isomorphism. 
\end{prob}

In the study of lattices, the non-uniform lattices are often harder to work with when compared to uniform ones.
For example, the condition of \textbf{integrability} of lattices 
(e.g.~required for different purposes in 
\citelist{\cite{Margulis-T}\cite{Shalom}\cite{Monod}\cite{BFGM}\cite{GKM}}) 
is automatically satisfied by uniform lattices and can be proven for some examples of 
non-uniform lattices, often using elaborate arguments.
This motivates the following.
\begin{prob}\label{pro:non-uni}%\hfill{}\\
	How prevalent are non-uniform lattice embeddings? 
	What groups admit both non-trivial uniform and non-uniform embeddings?
\end{prob}

\subsection{Main result}
To state our main result, Theorem~\ref{T:main}, towards Problem~\ref{pro:envelopes} we need to put some conditions on the group~$\Gamma$. 

\begin{definition}\label{D:properties}
	We say that a countable group $\Gamma$ has property
	\begin{itemize}
		\item \textbf{(BT)} 
		if there is an upper bound on the order of finite subgroups of $\Gamma$.
		\item \textbf{(Irr)}   
		if $\Gamma$ is not virtually isomorphic to a product
		of two infinite groups.
		\item \textbf{(CAF)} if every amenable commensurated subgroup of $\Gamma$ is finite.
		\item \textbf{(wNbC)} if for every normal subgroup $N\normal \Gamma$ and every 
		commensurated subgroup $M<\Gamma$ such that $N\cap M=\{1\}$ there is a finite index subgroup of $M$ that commutes with $N$.
		\item \textbf{(NbC)} if every quotient of $\Gamma$ by a finite normal subgroup has (wNbC). 
	\end{itemize}
\end{definition}
The abbreviations stand for \textbf{B}ound on \textbf{T}orsion, \textbf{Irr}educiblity,
\textbf{C}ommensurated \textbf{A}menable is \textbf{F}inite, and (\textbf{w}eak) \textbf{N}ormal \textbf{b}y \textbf{C}ommensurated.
For the definition of commensurated subgroup see \S\ref{sub: commensurated and commensurable group}. 

In \S\ref{sec:properties_bt_irr_caf_and_nbc} we prove the following result that shows that large classes of groups satisfy the above conditions, and we also describe these classes in more detail. 
\begin{theorem}\label{T:good-groups}
    The following countable groups have property $\CAF$: 
	\begin{enumerate}
		\item Groups with a non-vanishing $\ell^2$-Betti number in some degree. 
		\item Linear groups with finite amenable radical. 
		\item Groups in the class $\Dreg$ introduced in~\cite{thom}. 
		This class contains all acylindrically hyperbolic groups and thus non-elementary convergence groups. 
	\end{enumerate}	
    Further, 
	the following countable groups have property $\NBC$: 
	\begin{enumerate}
		\item[(1')] Groups with non-vanishing first $\ell^2$-Betti number.
		\item[(2')] Linear groups. 
		\item[(3')] Finitely generated residually finite groups with finite amenable radical. 
		\item[(4')] Groups in the class $\Dreg$. 
	\end{enumerate}
	 The groups in~(1') and (4') also have the property~$\IRR$. 
\end{theorem}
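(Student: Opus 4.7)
The statement packages several independent results about three different types of groups, so the plan is to argue class by class. Before splitting, I would record a pointwise version of \wNBC{} that the setup already forces for free: if $N\normal\Gamma$ and $M<\Gamma$ is commensurated with $N\cap M=\{1\}$, then for each fixed $n\in N$ the finite-index subgroup $M_n\defq M\cap nMn^{-1}$ satisfies $[M_n,n]\subseteq M\cap N=\{1\}$. Thus the real content of \wNBC{} is to replace this $n$-dependent finite-index subgroup of $M$ by one uniform in $n$. For \CAF{}, I would likewise assume an infinite amenable commensurated subgroup $A<\Gamma$ and seek a contradiction with the specific structural hypothesis on $\Gamma$.

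For the $\ell^2$-Betti number entries (1) and (1'), I would build on the established vanishing principle that the presence of an infinite amenable commensurated subgroup in a countable group kills every $\ell^2$-Betti number; this contradicts the hypothesis of (1) and so gives \CAF{}. The property \IRR{} for (1') is immediate from the Cheeger--Gromov product formula, since $\betti_\ast(\Gamma_1\times\Gamma_2)=0$ whenever both infinite factors are present. For \NBC{} under non-vanishing first $\ell^2$-Betti number, I would combine the pointwise commutator identity above with a quantitative vanishing result for $\betti_1$ of certain extensions to upgrade pointwise centralisation to uniform centralisation of $N$ by a finite-index subgroup of $M$.

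For the linear group entries (2), (2'), (3'), I would pass to Zariski closures in the ambient algebraic group. An amenable commensurated $A<\Gamma$ is virtually solvable by the Tits alternative; the identity component of its Zariski closure is normalised by the Zariski closure of $\Gamma$, hence lies in the latter's amenable radical. Combined with the standing hypothesis on the amenable radical of $\Gamma$ (finite outright in~(2), or forced to be finite using residual finiteness and finite generation in~(3')) this forces $A$ to be finite, proving \CAF{}. For \wNBC{}, the conjugation action of $M$ on $N$ factors through $\rho\colon M\to\Aut(N)$ which, by the pointwise identity, is finite on every cyclic subgroup of $N$; in the linear or finitely generated residually finite setting one can upgrade this to finiteness of $\rho$ on a uniform finite-index subgroup of $M$ by controlling the algebraic or profinite completion of $\Gamma$. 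For the class $\Dreg$ of~\cite{thom} in (3) and (4'), all three properties hold essentially by design of that class, and in the acylindrically hyperbolic subcase both \CAF{} and \IRR{} follow directly from the virtual cyclicity of centralisers of loxodromic elements.

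The step I expect to be hardest is the uniform upgrade in \wNBC{} under hypotheses (2') and (3'): the commutator calculation gives a finite-index centraliser of each single $n\in N$, but extracting one global finite-index subgroup of $M$ contained in $\Ker\rho$ requires genuine control of the image of $\rho$, and this is where linearity or residual finiteness has to do nontrivial work to match the commensuration property of $M$ with the normal structure of $N$. The analogous step for (1') calls for a corresponding strengthening of the $\ell^2$-vanishing machinery from pointwise to uniform centralisation, which I anticipate to be the other delicate point of the proof.
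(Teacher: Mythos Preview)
Your scaffolding and your pointwise commutator identity $[M\cap nMn^{-1},n]\subset M\cap N=\{1\}$ match the paper's starting point. The key divergence is in how \NBC\ is actually established for (1'), (2'), and (4'), and here the paper takes a route that bypasses precisely the step you flag as hardest.

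The paper first packages your identity into a lemma: for $\Lambda=NM$ with $N\lhd\Lambda$, $M$ commensurated, $N\cap M=\{1\}$, one assigns to each finite $F\subset N$ a finite-index normal $M_F\lhd M$ and an $M$-invariant $N_F<N$ containing $F$ with $[M_F,N_F]=1$, so that $\Lambda$ is an ascending union of subgroups each virtually isomorphic to a product $N_F\times M_F$. For (1') and (4') the paper then does \emph{not} attempt to upgrade to uniform centralisation; it proves the stronger property \NBCone\ (one of $N,M$ must be finite). If both are infinite, then $\Lambda=NM$ is commensurated in $\Gamma$ and is an ascending union of groups with vanishing $\betti_1$ (K\"unneth on each $N_F\times M_F$, then a direct-limit argument), which forces $\betti_1(\Gamma)=0$ via the vanishing result of~\cite{BFS} for commensurated subgroups; the $\Dreg$ case is parallel, using Thom's injectivity of restriction on $QH^1(-,\ell^2\Gamma)$ to commensurated subgroups. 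For (2') the mechanism is Noetherian descent rather than control of $\rho\colon M\to\Aut(N)$: the Zariski closures $\overline{M_F}$ in $\GL_n(k)$ stabilise at some finite $F_1$, and then for every $n\in N$ the subgroup $M_{F_1\cup\{n\}}$ is Zariski-dense in $\overline{M_{F_1}}$ and centralises $n$ by your pointwise identity, so $M_{F_1}$ centralises all of $N$. For (3') the paper simply invokes Caprace--Kropholler--Reid--Wesolek; your profinite-completion idea is in the right spirit but is not what is done there. Your \CAF\ sketch for (2) via $(\overline{A})^0$ is correct once you justify that commensurate subgroups have Zariski closures with the same identity component; the paper cites~\cite{breuillardetal} for this case.
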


The statement about $\NBC$~for the groups of class (3') is due to 
Caprace--Kropholler--Reid--Wesolek~\cite{capraceetal-closure}*{Corollary 18} and the fact 
that the class (3') is closed under passing to quotients by finite normal subgroups. 

The next theorem was announced in~\cite{BFS-CRAS} with a slightly stronger formulation of the \NBC~and \CAF~condition. It gives a partial answer to Problem~\ref{pro:envelopes} that concerns 
possible lattice envelopes of a given group $\Gamma$.  
We impose no restriction on the lattice envelope, except for it being compactly generated. 
This condition can also be removed if $\Gamma$ is assumed to be finitely generated (see Lemma~\ref{L:fg-cg}).

\begin{mthm}[Structure of possible lattice envelopes]\label{T:main}\hfill{}\\
	Let $\Gamma$ be a countable group with properties 
	$\IRR$, $\CAF$, and $\NBC$, and  
	let $\Gamma< G$ be a non-trivial lattice embedding into a compactly generated 
	lcsc group $G$.  
	
    Then the lattice embedding $\Gamma\hookrightarrow G$ is virtually isomorphic to one of the following lattice embeddings: 
	\begin{enumerate}
		\item
		\label{case:real-lat}
		an irreducible lattice in a connected, center-free,
		semi-simple real Lie group without compact factors; 
		\item
		\label{case: arith-lat}
		an $S$-arithmetic lattice or an $S$-arithmetic lattice up to tree extension as in 
		Definition~\ref{def: definition of S-arithmetic lattice up to tree extension};
		\item 
		\label{case: tdcc-lat}
		a lattice in a non-discrete totally disconnected locally compact group with
		trivial amenable radical.
	\end{enumerate}
    If in addition $\Gamma$ has \BT, then the lattice in (3) is uniform. 
	% 
	% Moreover, with the exception of virtually free groups,
	% a group $\Gamma$ with the above properties admits a lattice embedding
	% of at most one of the types (\ref{case:real-lat}), (\ref{case: arith-lat}), or (\ref{case: tdcc-lat}).
\end{mthm}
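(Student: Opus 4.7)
The plan is to exploit the canonical normal filtration of a general lcsc group---the connected component $G^0$ and the amenable radical $R = \Radam(G)$---and let the three hypotheses on $\Gamma$ dictate into which of the three listed forms the envelope $G$ must fall.

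First I would dispose of the amenable radical. The intersection $\Gamma \cap R$ is normal in $\Gamma$, in particular commensurated, and amenable, so $\CAF$ forces it to be finite. A standard argument, combining the compact generation of $G$ with the finiteness of $\Gamma \cap R$, then promotes this to compactness of $R$. Passing to $G/R$ produces a lattice embedding virtually isomorphic to the original: the kernel $\Gamma\cap R$ is finite and normal in $\Gamma$, and $\NBC$ is designed precisely so that the resulting quotient of $\Gamma$ still enjoys $\wNBC$, while $\CAF$ and $\IRR$ are preserved. One may therefore assume throughout that $\Radam(G)=\{1\}$; in particular $G^0$ is semisimple, center-free, and without compact factors.

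Next I would analyze the pair $(S,G)$ with $S:=G^0$. By Auslander-type arguments adapted to the lcsc setting, $\Gamma\cap S$ is a lattice in $S$, and it is visibly normal in $\Gamma$. The quotient $G/S$ is tdlc and $\Gamma$ has dense image in it. Now a trichotomy. If $G=S$, the envelope is a connected center-free semisimple real Lie group, and $\IRR$ forbids reducibility, yielding case~\eqref{case:real-lat}. If $S=\{1\}$, then $G$ is tdlc with trivial amenable radical, giving case~\eqref{case: tdcc-lat}; the $\BT$ addendum follows because a non-uniform lattice in such a group contains arbitrarily large finite subgroups, obtained as stabilizers in the action of $\Gamma$ on the coset space of compact-open subgroups exiting the thick part of a fundamental domain, contradicting the torsion bound. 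The substantive case is $\{1\}\neq S\neq G$: the conjugation action of $G/S$ on $S$ places a suitable quotient of $G/S$ inside $\Commen_S(\Gamma\cap S)$, and commensurator superrigidity in the Margulis--Bader--Furman spirit identifies $\Gamma\cap S$ as $S$-arithmetic, placing $\Gamma<G$ in case~\eqref{case: arith-lat}. The tree-extension refinement accounts for the possibility that $G/S$ carries an extra tdlc factor not accounted for by the $S$-arithmetic congruence structure, acting instead on a Bruhat--Tits tree attached to the arithmetic group.

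The main obstacle I anticipate is the mixed case. Two subtasks must be solved. The first is the identification of the abstract commensurating action of $G/S$ on $S$ with a concrete $S$-arithmetic realization; this invokes deep rigidity input. The second is ruling out spurious splittings: $\wNBC$, applied with $N=\Gamma\cap S$ and $M$ the preimage in $\Gamma$ of a suitable commensurated subgroup of $G/S$ intersecting $N$ trivially, dictates the interaction between the connected and tdlc pieces of $G$ and forces either a genuine near-product structure---excluded by $\IRR$---or precisely the tree-extended $S$-arithmetic picture predicted in case~\eqref{case: arith-lat}.
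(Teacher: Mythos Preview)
Your plan has the right overall architecture---kill the amenable radical, then trichotomize on the connected component---but the trichotomy itself rests on a false claim.

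You assert that ``Auslander-type arguments'' make $\Gamma\cap S$ a lattice in $S=G^0$. This fails precisely in case~(\ref{case: arith-lat}) of the theorem. Take $\Gamma=\SL_2(\bbZ[1/p])$ diagonally embedded in $G=\SL_2(\bbR)\times\SL_2(\bbQ_p)$: here $G^0=\SL_2(\bbR)\times\{1\}$ and $\Gamma\cap G^0=\{1\}$, so there is no lattice in $S$ to feed into commensurator superrigidity, and your ``substantive case'' cannot start. Auslander-type results concern discreteness of the projection modulo an \emph{amenable} normal subgroup; $G^0$ here is semisimple, and the projection $\Gamma\to G/G^0$ is dense, not discrete. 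The paper produces the needed lattice by a different intersection: after arranging $G=\ssf{G}\times\tdf{G}$ it sets $M_1=\Gamma\cap(\ssf{G}\times U)$ for a compact open $U<\tdf{G}$; this is a lattice in the \emph{open} subgroup $\ssf{G}\times U$, and its projection $\ssf{M}_1<\ssf{G}$ is the commensurated lattice to which arithmeticity is eventually applied. Correspondingly, the \wNBC\ application has the roles reversed from yours: one takes $N=\Gamma\cap(\{1\}\times\tdf{G})$ (intersection with the totally disconnected factor, which is genuinely normal) and $M=M_1$; this is the pairing for which $N\cap M$ can be arranged to be trivial.

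A second, smaller gap: the compactness of $R=\Radam(G)$ is not as automatic as you suggest. Applying \CAF\ to the normal amenable subgroup $\Gamma\cap R$ gives only its finiteness; to conclude $R$ compact one would need to know that $\Gamma\cap R$ is a \emph{lattice} in $R$, i.e.\ that $\Gamma$ projects discretely to $G/R$. No ``standard argument'' gives this, since $G/R$ is semisimple-by-tdlc rather than a Lie group. The paper instead restricts to the open subgroup $G'_U$ (preimage of $\ssf{G}\times U$), whose quotient by its amenable kernel $K$ is the Lie group $\ssf{G}$; there the Breuillard--Gelander topological Tits alternative does apply, and \CAF\ is invoked on the resulting commensurated (not normal) amenable subgroup $\Gamma\cap K$ to force it finite, whence $K$ and $R$ are compact.
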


For the notion of tree extension see Definition~\ref{defn: generalized S-arithmetic lattice embedding}. 
The three cases are distinct (see Remark~\ref{rem: meaning of S-arithmetic}). 
We show in Example~\ref{exa: exotic lattice} the necessity of the condition $\NBC$ for the theorem above. 
 
The classical case of lattices in semi-simple real Lie groups includes both arithmetic and 
non-arithmetic examples (non-arithmetic lattices 
can appear only in $\operatorname{SO}(n,1)$ and $\operatorname{SU}(n,1)$).
The case of $S$-arithmetic lattices in the above statement refers to irreducible
lattices in a product of finitely many real semi-simple 
Lie groups and $p$-adic ones, where factors of both types are present.
Case (\ref{case: tdcc-lat}) contains a large class of examples that includes lattices in $p$-adic groups,
and fundamental groups $\Gamma$ of some finite cube complexes whose universal cover
has a non-discrete group of automorphisms. 
This last case remains quite mysterious although a structure theory of non-discrete, 
simple, totally disconnected groups emerged in the last decade (in this regard, Caprace's survey~\cite{caprace-ecm} is recommended). 

However assuming property $\BT$ the last case does not
allow non-uniform lattices. 
Hence we obtain the following partial solution to Problem~\ref{pro:non-uni}. 

% A lattice embedding of a group without property \NBC, which is 
% of a different type than ones described in Theorem~\ref{thm:main classification}, is 
% discussed in Example~\ref{exa: exotic lattice}. 

\begin{corollary}[Classification of non-uniform lattices]\hfill{}\\
	Let $\Gamma$ be a countable group with properties $\BT$, $\IRR$, $\CAF$, and $\NBC$. 	
	Then every non-uniform lattice embedding $\Gamma\hookrightarrow G$ of $\Gamma$ 
	is virtually isomorphic to either 
	\begin{enumerate}
		\item
		an irreducible lattice in a connected, center free,
		semi-simple real Lie group without compact factors, or 
		\item
		an $S$-arithmetic lattice, possibly up to tree extension, 
		with both real and non-Archimedean factors present. 
	\end{enumerate}
	In particular, every lattice embedding of such $\Gamma$ is square integrable, 
	with the exception of lattice embeddings into Lie groups that are locally isomorphic to $\SL_2(\bbR)$ or $\SL_2(\bbC)$. 
\end{corollary}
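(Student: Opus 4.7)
The strategy is to deduce the corollary directly from Theorem~\ref{T:main}, using two additional classical inputs: that every lattice in a product of semisimple algebraic groups over non-Archimedean local fields is uniform, and the standard integrability estimates for non-uniform arithmetic lattices via Siegel-set reduction.

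First I would apply Theorem~\ref{T:main} to a non-trivial, non-uniform lattice embedding $\Gamma \hookrightarrow G$ into a compactly generated lcsc group~$G$. Since $\Gamma$ satisfies $\IRR$, $\CAF$, and $\NBC$, the embedding is virtually isomorphic to one of the three cases there. The closing sentence of Theorem~\ref{T:main} asserts that $\BT$ forces the lattice in case~(\ref{case: tdcc-lat}) to be uniform, so non-uniformity excludes this case. What remains are cases~(\ref{case:real-lat}) and~(\ref{case: arith-lat}), corresponding to the two listed alternatives.

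Second, I would refine case~(\ref{case: arith-lat}) by showing that both Archimedean and non-Archimedean factors must appear. A purely Archimedean $S$-arithmetic embedding is, by definition, a lattice in a semisimple real Lie group and therefore already falls under case~(\ref{case:real-lat}). Conversely, if all factors were non-Archimedean, the ambient group would be a product of semisimple algebraic groups over non-Archimedean local fields, where every lattice is uniform, contradicting the non-uniformity hypothesis. The tree-extension decoration from Definition~\ref{def: definition of S-arithmetic lattice up to tree extension} enlarges the ambient group only by a compact-by-discrete factor and preserves (non-)cocompactness, so this dichotomy is unaffected.

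For the square-integrability statement, uniform embeddings are trivially square-integrable, so by the preceding classification only the non-uniform cases require attention. Square integrability of a non-uniform lattice in a semisimple real Lie group with no factor locally isomorphic to $\SL_2(\bbR)$ or $\SL_2(\bbC)$ is classical, and follows from Siegel-set reduction combined with the decay of matrix coefficients; the two excluded groups correspond precisely to the slow-decay cusps of $\bbH^2$ and $\bbH^3$. In case~(\ref{case: arith-lat}) the same techniques apply at each Archimedean and non-Archimedean place, with the tree-extension component contributing only a uniformly bounded projection, so the exception list is unchanged. The main obstacle will be verifying this last point: Siegel-set reduction is classical only in the pure $S$-arithmetic setting and needs to be transferred carefully to the generalized construction of Definition~\ref{def: definition of S-arithmetic lattice up to tree extension}; by contrast, the case analysis rests squarely on Theorem~\ref{T:main} and is otherwise short.
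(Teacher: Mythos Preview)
Your proposal is correct and matches the paper's approach, which simply says the corollary is direct from Theorem~\ref{T:main} and cites Shalom's work for square integrability. Two minor remarks: your second paragraph is superfluous, since by Setup~\ref{setup:arith}(3) the paper's definition of $S$-arithmetic lattice already requires at least one finite and one infinite place (cf.\ Remark~\ref{rem: meaning of S-arithmetic}), so ``both real and non-Archimedean factors present'' is part of the definition rather than something to be argued; and for the integrability statement the paper simply invokes \cite{Shalom}*{\S2} and \cite{shalom-square}*{Theorem~3.7} rather than redoing Siegel-set estimates. Your concern about tree extensions is easily dispatched: since $\mathbf{H}_{K,S}^+<H^*$ is cocompact, a fundamental domain in $H^*$ differs from one in $\mathbf{H}_{K,S}$ by a compact factor, so square integrability transfers.
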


This is a direct corollary except for the last statement which is a consequence of results about 
square integrability of classical lattices from Shalom's work (see~\cite{Shalom}*{\S2} 
for the S-arithmetic and higher rank case and~\cite{shalom-square}*{Theorem~3.7} for the rank~$1$ case.

\subsection{Applications}
As applications of our main result, we now present more precise classification results of lattice envelopes for specific groups. 
Their proofs can be found in~\S\ref{sec: applications}. 

\begin{mthm}[Mostow rigidity with locally compact targets]\label{T:Mostow}\hfill{}\\
	Let $\Gamma<H$ be an irreducible lattice embedding into a center-free, real semi-simple Lie group~$H$ 
	without compact factors that is not locally isomorphic to $\SL_2(\bbR)$.
	Then every non-trivial lattice embedding of $\Gamma$ into an lcsc group~$G$ is virtually isomorphic to $\Gamma<H$. 
	
	If $\Gamma<H$ is an $S$-arithmetic lattice embedding as in Definition~\ref{def: definition of S-arithmetic lattice up to tree extension}.  
	Then every non-trivial lattice embedding of $\Gamma$ into an lcsc group~$G$ is virtually isomorphic to $\Gamma<H$ or
	to a tree extension $\Gamma<H^*$ if $H$ includes rank~$1$ non-archimedean factors. 
\end{mthm}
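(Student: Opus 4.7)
The plan is to invoke the main structural Theorem~\ref{T:main} and match its three possible outputs against the specific embedding $\Gamma<H$, using classical strong rigidity and arithmeticity as the sorting tools. First I would verify the hypotheses of Theorem~\ref{T:main}: since $\Gamma$ is a lattice in a semisimple real Lie or $S$-arithmetic group, $\Gamma$ is linear with finite amenable radical (trivial in the first part, by the center-free assumption on $H$), so Theorem~\ref{T:good-groups}(2) and~(2') deliver \CAF~and~\NBC. Irreducibility \IRR~is part of the hypothesis, and \BT~follows from Selberg's lemma, which provides a torsion-free finite-index subgroup and thereby bounds the orders of finite subgroups. Theorem~\ref{T:main} then supplies a virtual isomorphism of $\Gamma\hookrightarrow G$ to one of the three model types: (a) an irreducible lattice in a center-free real semisimple Lie group without compact factors; (b) an $S$-arithmetic lattice, possibly up to tree extension; or (c) a lattice in a non-discrete tdlc group with trivial amenable radical.

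Next I would eliminate or identify each case. In case~(a) the target real semisimple Lie group contains $\Gamma$ as an irreducible lattice, and Mostow--Prasad--Margulis strong rigidity --- where the assumption that $H$ is not locally $\SL_2(\bbR)$ is crucial --- identifies this Lie group with $H$; this yields the conclusion of the first part of the theorem. In case~(b), Margulis arithmeticity pins down the arithmetic structure of $\Gamma$ uniquely: in part~1 (pure real $H$) this excludes a mixed $S$-arithmetic envelope, while in part~2 it recovers $\Gamma<H$ up to tree extension on any rank-$1$ non-archimedean factors (whose Bruhat--Tits trees admit larger automorphism groups, accounting precisely for the tree-extension clause). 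Case~(c) must be excluded in both parts because a purely tdlc envelope is incompatible with the archimedean Lie factors present in $H$: the connected component of the identity is, up to compact corrections, an invariant of the virtual-isomorphism class of a lattice embedding, so the semisimple Lie factors of $H$ must persist in any envelope of $\Gamma$.

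The main obstacle is the clean exclusion of case~(c), which requires showing that any lcsc envelope of $\Gamma$ must carry the archimedean Lie factors of $H$. This can be handled either by invoking commensurator superrigidity to construct a copy of the relevant semisimple Lie factor inside $G$ from the commensurator of $\Gamma$, or more directly by tracking connected components of envelopes through virtual isomorphism. A secondary point of care is showing that in part~2 the tree extensions produced by Theorem~\ref{T:main} genuinely correspond to substituting rank-$1$ non-archimedean factors of $H$ by larger tree-automorphism groups acting cocompactly on the same Bruhat--Tits tree. Once these points are established, the remaining identifications --- verifying that the model embedding produced by Theorem~\ref{T:main} coincides with $\Gamma<H$ (or its tree extension) under virtual isomorphism --- are routine.
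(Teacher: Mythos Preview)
Your overall architecture matches the paper's: verify the hypotheses of Theorem~\ref{T:main}, apply it, and sort the three outputs against $\Gamma<H$ via strong rigidity and arithmeticity. Cases~(a) and~(b) are handled correctly in outline. The genuine gap is your exclusion of case~(c). The argument that ``the connected component of the identity is, up to compact corrections, an invariant of the virtual-isomorphism class of a lattice embedding'' is circular: it presupposes that $\Gamma<G$ and $\Gamma<H$ are virtually isomorphic, which is the conclusion. What is actually needed is a statement about $\Gamma$ as an \emph{abstract} group, namely that it admits no non-trivial lattice embedding into a tdlc group with trivial amenable radical. This is far from automatic---free groups embed as lattices both in $\PSL_2(\bbR)$ and in tree automorphism groups---so the presence of archimedean factors in one envelope does not by itself obstruct a tdlc envelope. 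Your alternative, ``construct a copy of the relevant semisimple Lie factor inside $G$'' via commensurator superrigidity, fails directly since a tdlc group has no nontrivial connected subgroups.

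The paper's exclusion of case~(c) (Theorem~\ref{thm: discreteness in case 3}) is the substantive part of the proof and runs through quasi-isometric rigidity. The mechanism is Lemma~\ref{L:NSS}: a continuous homomorphism from a tdlc $G$ with $\Radam(G)=\{1\}$ into a Polish group with no small subgroups, injective on $\Gamma$, forces $G$ to be discrete. One then manufactures such a target case by case. For non-uniform $\Gamma<H$, $\QI(\Gamma)$ is the countable commensurator (Schwartz, Farb--Schwartz, Eskin, Wortman), and the map $G\to\QI(\Gamma)$ does the job. For uniform $\Gamma<H$ one passes through $\QI(X)$, splits via Kleiner--Leeb, and then on a single archimedean factor produces a continuous map into a Lie group or $\Homeo(S^1)$: Tukia for $\HYP^m$ with $m\ge 3$, Chow for $\HYP^m_{\bbC}$, Pansu for $\HYP^m_{\bbH}$ and $\HYP^2_{\bbO}$, Kleiner--Leeb for higher rank, and the boundary circle for $\HYP^2$. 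None of this is visible in your sketch, and without it case~(c) remains open.
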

This theorem may be viewed as a generalization of Mostow's Strong Rigidity,
corresponding to the special case of Theorem~\ref{T:Mostow}, 
where $H$ is a real semi-simple Lie group and $G$ is also known to be a semi-simple real Lie group
(in most accounts one even assumes $G=H$ and the focus is on aligning two given lattice embeddings of $\Gamma$ in $G$ by
an automorphism of $G$).
In fact, Mostow considered the case of uniform lattice embeddings \cite{Mostow:1973book};
the non-uniform were later obtained by Prasad \cite{Prasad-Mostow} and Margulis \cite{Margulis-Mostow}. 
In \cite{Furman-MM} the second author proved Theorem~\ref{T:Mostow}
for the case of a simple real Lie group $H$ with ${\rm rk}_\bbR(H)\ge 2$ and a general lattice envelope $\Gamma<G$, 
and for the case of rank-one real Lie group $H$ and a uniform lattice envelope $\Gamma<G$.

The case of $H=\PSL_2(\bbR)$ is excluded from Theorem~\ref{T:Mostow} 
for two reasons. First, strong rigidity does not hold in this setting -- the moduli space
of embeddings of a surface group in the same group $G=H=\PSL_2(\bbR)$ is multidimensional.
Secondly, non-uniform lattices in $\PSL_2(\bbR)$, that are virtually free groups of finite 
rank $2\le n<\infty$, 
can be embedded as a lattice in completely different lcsc groups, such as $\PGL_2(\bbQ_p)$, 
or the automorphism group $\Aut(T)$ of a locally finite tree $T$. 
Our next results show that all lattice envelopes of free groups are related to the ones mentioned above. 

\begin{mthm}[Lattice embeddings of free groups]\label{T:free-groups} \hfill{}\\
	Let $\Gamma$ be a finite extension of a finitely generated non-abelian 
	free group $F_n$ and let $\Gamma<G$ be a non-trivial lattice embedding. 
	\begin{enumerate}
	\item
		If $\Gamma<G$ is non-uniform, then it is virtually isomorphic to a non-uniform lattice embedding into $\PSL_2(\bbR)$. 
	\item
		If $\Gamma<G$ is uniform, then it is virtually isomorphic to a lattice embedding 
		into a closed cocompact subgroup of the automorphism group of a tree. 
	\end{enumerate}
\end{mthm}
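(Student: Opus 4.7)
The plan is to apply Theorem~\ref{T:main} to $\Gamma$ and then to trim the resulting cases using the fact that $\Gamma$ has virtual cohomological dimension one.

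First I would verify the hypotheses of Theorem~\ref{T:main}. Since $F_n$ is torsion-free, every finite subgroup of $\Gamma$ embeds into $\Gamma/F_n$, giving \BT. The first $\ell^2$-Betti number satisfies $\beta_1^{(2)}(\Gamma)=(n-1)/[\Gamma:F_n]>0$, so Theorem~\ref{T:good-groups} parts~(1) and~(1') yield \CAF, \NBC, and \IRR; alternatively $\Gamma$ is a finitely generated, residually finite group with finite amenable radical, placing it in class~(3'). Theorem~\ref{T:main} then asserts that $\Gamma<G$ is virtually isomorphic to one of its three listed types.

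Next I would trim these cases using $\operatorname{vcd}(\Gamma)=1$. In case~(\ref{case: arith-lat}), an $S$-arithmetic lattice with both real and non-Archimedean factors acts properly on a product of a symmetric space and a non-trivial Bruhat--Tits building, and the Borel--Serre formula forces $\operatorname{vcd}(\Gamma)\ge 2$; a tree extension at the level of the ambient group leaves the lattice unchanged up to finite index. In case~(\ref{case:real-lat}), the classical computation of the cohomological dimension of lattices in semi-simple real Lie groups shows that virtual freeness of $\Gamma$ forces $H$ to be locally isomorphic to $\PSL_2(\bbR)$ and $\Gamma$ to be non-uniform; indeed, uniform lattices in $\PSL_2(\bbR)$ are surface groups, and any lattice in a different semi-simple real $H$ without compact factors has $\operatorname{vcd}\ge 2$. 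Consequently, if $\Gamma<G$ is non-uniform then property \BT~also excludes case~(\ref{case: tdcc-lat}) (whose lattices would be uniform), leaving only case~(\ref{case:real-lat}) and proving~(1).

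For part~(2), case~(\ref{case:real-lat}) is excluded since its virtually free instances are non-uniform, so $\Gamma<G$ falls into case~(\ref{case: tdcc-lat}): a compactly generated non-discrete tdlc group $G$ with trivial amenable radical containing $\Gamma$ as a uniform lattice. The hardest step is to realize $G$ as a closed cocompact subgroup of $\Aut(T)$ for a locally finite tree $T$. Since $\Gamma<G$ is uniform, $G$ is quasi-isometric, via its Cayley--Abels graph, to a tree, and in particular has infinitely many ends. A tdlc version of Stallings' theorem on ends together with the accessibility of compactly generated tdlc groups (as developed by Kr\"on--M\"oller and Dunwoody) then provides a continuous, proper, cocompact action $G\acts T$ on a locally finite tree. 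The kernel of $G\to\Aut(T)$ is a compact normal, hence amenable, subgroup of $G$, and therefore trivial by the trivial amenable radical hypothesis. A proper cocompact action on a locally finite tree realizes $G$ as a closed cocompact subgroup of $\Aut(T)$, in which $\Gamma$ is a uniform lattice, as required.
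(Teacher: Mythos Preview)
Your proof follows the same overall strategy as the paper's: verify the hypotheses of Theorem~\ref{T:main} via Theorem~\ref{T:good-groups}, then eliminate the three cases. The verification of \BT, \CAF, \NBC, \IRR\ is essentially identical.

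For trimming cases~(\ref{case:real-lat}) and~(\ref{case: arith-lat}) you argue with $\operatorname{vcd}(\Gamma)=1$ and Borel--Serre, whereas the paper stays inside its $\ell^2$-framework: Olbrich's computation of $\ell^2$-invariants of locally symmetric spaces shows that among connected center-free semi-simple real Lie groups only lattices in $\PSL_2(\bbR)$ have $\beta_1^{(2)}>0$, and case~(\ref{case: arith-lat}) is excluded either because $\beta_1^{(2)}$ vanishes for lattices in products of two non-compact groups or because $S$-arithmetic lattices in the sense of Setup~\ref{setup:arith} are never Gromov hyperbolic. Both routes are valid; yours is more classical, theirs reuses machinery already in place.

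The substantive difference is in case~(\ref{case: tdcc-lat}). The paper invokes Mosher--Sageev--Whyte \cite{MSW-annals}*{Theorem~9} directly: since the tdlc envelope is quasi-isometric to its virtually free uniform lattice, their quasi-action theorem yields a continuous homomorphism to $\Isom(T)$ for a locally finite tree $T$, with cocompact image and compact kernel; triviality of the amenable radical makes the kernel trivial. Your proposed route via a tdlc Stallings theorem together with accessibility is in the same circle of ideas, but the precise package you need---that a compactly generated tdlc group quasi-isometric to a bushy tree admits a continuous, proper, cocompact action on a locally finite tree---is not something one can simply read off from Kr\"on--M\"oller and Dunwoody (Dunwoody's accessibility is for discrete finitely presented groups, and the tdlc analogues require additional work). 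Citing \cite{MSW-annals} directly, as the paper does, closes the argument in one step and avoids having to assemble a tdlc accessibility statement from scattered sources.
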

Note that the second possibility in the Theorem includes such examples as $\PGL_2(\bbQ_p)$
or other rank-one non-Archimedean groups. 

For uniform lattices in $\PSL_2(\bbR)$, such as surface groups, 
the possibilities for lattice embeddings are even more restricted.

\begin{mthm}[Lattice embeddings of surface groups]\label{T:surface-groups}\hfill{}\\
	Let $\Gamma$ be a uniform lattice in $\PSL_2(\bbR)$. Any other non-trivial lattice embedding 
	of $\Gamma$ is virtually isomorphic to a uniform lattice embedding into $\PSL_2(\bbR)$. 
\end{mthm}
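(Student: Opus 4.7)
The strategy is to apply Theorem~\ref{T:main} and eliminate every possibility except uniform lattice embeddings into $\PSL_2(\bbR)$. First I would verify the hypotheses. A uniform lattice $\Gamma<\PSL_2(\bbR)$ is virtually a closed surface group of genus $\geq 2$; in particular $\Gamma$ is finitely generated, linear, residually finite, Gromov hyperbolic (so in the class $\Dreg$), and satisfies $\beta_1^{(2)}(\Gamma)>0$ by $\ell^2$-proportionality for cocompact lattices. Theorem~\ref{T:good-groups} then supplies \BT, \IRR, \CAF, and \NBC, while finite generation lets us drop the compact-generation hypothesis on $G$ via Lemma~\ref{L:fg-cg}. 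So every non-trivial lattice embedding $\Gamma<G$ falls into one of the three cases of Theorem~\ref{T:main}, which I treat in turn.

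In case~(\ref{case:real-lat}), $\Gamma$ is an irreducible lattice in a connected, center-free, semi-simple real Lie group $H$ without compact factors. Gromov hyperbolicity of $\Gamma$ forbids $\bbZ^2$-subgroups, ruling out real rank $\geq 2$ for $H$: irreducible lattices in higher-rank semi-simple groups contain free abelian subgroups of rank at least two, coming either from stabilizers of $2$-flats in the symmetric space (uniform case) or from unipotent subgroups (non-uniform case). Hence $H$ is simple of rank one, and a cohomological dimension count then forces $H\simeq \PSL_2(\bbR)$: in the uniform regime $\operatorname{cd}(\Gamma)=2=\dim(H/K)$, and among rank-one possibilities only $\PSL_2(\bbR)$ realises this; the non-uniform regime could only yield $H\simeq \PSL_2(\bbC)$, excluded because its non-uniform lattices have $\bbZ^2$ cusp subgroups absent from surface groups. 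Thus case~(\ref{case:real-lat}) produces the desired uniform embedding into $\PSL_2(\bbR)$.

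In case~(\ref{case: arith-lat}), $\Gamma$ would be an $S$-arithmetic lattice (possibly up to tree extension) with both archimedean and non-archimedean factors, so it would act properly on a product $X_\infty\times X_{\mathrm{fin}}$ of a non-trivial symmetric space and a non-trivial Bruhat--Tits building or tree. A cocompact action produces $\bbZ^2$-flats, and the non-uniform setting produces non-cyclic abelian unipotent subgroups such as $\bbZ[1/p]$; either outcome contradicts the fact that every abelian subgroup of a surface group is cyclic.

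Case~(\ref{case: tdcc-lat}) is the main obstacle. By \BT\ the embedding is uniform into a non-discrete tdlc group $G$ with trivial amenable radical. My plan is to leverage the boundary action: for a compact open subgroup $K<G$, the quotient $G/K$ carries a locally finite graph structure (induced by a compact symmetric generating set of $G$) on which both $G$ and $\Gamma$ act geometrically, whence $G/K$ is quasi-isometric to $\bbH^2$ via $\Gamma$. The induced continuous $G$-action on the boundary $\partial(G/K)\simeq S^1$ is a convergence action; by the convergence group theorem of Tukia, Gabai and Casson--Jungreis, extended to lcsc convergence actions on $S^1$, this $G$-action is topologically conjugate to that of a subgroup of $\PSL_2(\bbR)\subset \Homeo^+(S^1)$, with compact kernel $N\triangleleft G$. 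Since $N$ is compact and normal it is amenable and hence trivial by assumption, so $G$ embeds continuously as a totally disconnected closed subgroup of the connected Lie group $\PSL_2(\bbR)$, forcing $G$ to be discrete and contradicting non-discreteness. This closes off case~(\ref{case: tdcc-lat}) and completes the proof.
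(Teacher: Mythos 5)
Your overall architecture is exactly the paper's: check the hypotheses of Theorem~\ref{T:main} via Theorem~\ref{T:good-groups} and Lemma~\ref{L:fg-cg}, then eliminate the three cases. Cases (1) and (2) go through essentially as you describe, though the paper argues via $\ell^2$-Betti numbers (Olbrich's computations \cite{olbrich} reduce case (1) to $\PSL_2(\bbR)$, and positivity of $\beta^{(2)}_1$ excludes lattices in products of two non-compact groups by \cite{kyed+petersen+vaes} and \cite{petersen}), whereas you use hyperbolicity and cohomological dimension; the paper itself records the hyperbolicity alternative for the $S$-arithmetic case, so this is a legitimate variant. One small slip: Theorem~\ref{T:good-groups} does not provide \BT; you get it from virtual torsion-freeness of $\Gamma$, and it is only needed to force uniformity in case (3).

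The genuine gap is in case (3). First, continuity of the $G$-action on $\partial(G/K)\cong S^1$ is not automatic: it is the content of Lemma~\ref{lem: QI of lcsc group} combined with the Morse lemma, i.e.\ of \cite{Furman-MM}*{Theorem~3.5}, and your sketch treats it as given. Second, the compactness of the kernel $N$ and the closedness of the image of $G/N$ in $\PSL_2(\bbR)$ are asserted, not proved; a continuous injective homomorphism of a tdlc group into a Lie group need not have closed image, so the step from ``totally disconnected subgroup of $\PSL_2(\bbR)$'' to ``discrete'' does not come for free. Third, and most seriously, the Tukia--Gabai--Casson--Jungreis theorem concerns discrete convergence groups on $S^1$; for non-discrete $G$ the convergence property must be phrased in terms of sequences leaving compact sets, and the extension to lcsc groups that you invoke is precisely the kind of statement requiring a proof or a precise reference (the paper's appeal to \cite{CD} concerns Cantor sets and trees, not the circle). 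The paper disposes of case (3) simply by quoting \cite{Furman-MM}*{Theorem~C}. If you want a self-contained argument in the spirit of this paper, the efficient patch is the one used for the $X_1=\HYP^2$ case inside the proof of Theorem~\ref{thm: discreteness in case 3}: once continuity of $\rho\colon G\to \Homeo(S^1)$ is in place, use that $\Homeo(S^1)$ has no small subgroups and that $\rho$ is injective on $\Gamma$ (a cocompact lattice in $\PSL_2(\bbR)$ has no non-trivial finite normal subgroup, so it acts faithfully on its boundary); Lemma~\ref{L:NSS} then shows directly that the lattice embedding is trivial, with no convergence-group theorem, no conjugation into $\PSL_2(\bbR)$, and no closed-image issues.
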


Let $M$ be a manifold that admits a Riemannian metric with strictly negative sectional curvature, but is not homeomorphic to a locally symmetric one.
We conjecture that fundamental groups of such manifolds $\Gamma=\pi_1(M)$ have no non-trivial lattice envelopes. Under an additional pinching assumption, we are able to prove this. 

\begin{mthm}[Lack of lattice embeddings in pinched negative curvature]\label{T:GromovThurston}\hfill{}\\
	The fundamental group of a closed Riemannian manifold~$M$ of dimension $n\ge 5$ whose sectional curvatures range in 
	\[ \bigl[ -(1+\frac{1}{n-1})^2, -1\bigr]\]
	does not admit a non-trivial lattice embedding unless $M$ is homeomorphic to a closed hyperbolic manifold. 
\end{mthm}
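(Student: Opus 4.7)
The plan is to verify the hypotheses of Theorem~\ref{T:main} for $\Gamma=\pi_1(M)$ and then eliminate each of the three resulting cases, showing that only the real hyperbolic subcase of case~(\ref{case:real-lat}) can occur. Since $M$ is closed and strictly negatively curved, the universal cover $\tilde M$ is a Hadamard manifold on which $\Gamma$ acts freely, properly, and cocompactly. Hence $\Gamma$ is torsion-free, non-elementary, and Gromov hyperbolic, therefore acylindrically hyperbolic and in $\Dreg$. Theorem~\ref{T:good-groups} then supplies $\IRR$, $\CAF$, and $\NBC$, while $\BT$ is trivial. I would then apply Theorem~\ref{T:main} to any non-trivial lattice envelope $\Gamma<G$.

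First I would rule out case~(\ref{case: arith-lat}): an $S$-arithmetic envelope (possibly up to tree extension) requires a non-Archimedean factor, whose unipotent root subgroup injects into $\Gamma$ a copy of $\bbZ[\tfrac{1}{p}]$. This is a torsion-free abelian group of $\bbZ$-rank one that is not finitely generated, hence not virtually cyclic, contradicting the fact that abelian subgroups of a hyperbolic group are virtually cyclic.

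For case~(\ref{case:real-lat}), $\Gamma$ is an irreducible lattice in a connected, center-free, real semi-simple Lie group $H$ without compact factors. The absence of $\bbZ^2$ in $\Gamma$ forces $H$ to have real rank one, so the associated symmetric space $X$ is one of the four negatively curved rank-one symmetric spaces. The dimensions agree, and since $n\ge 5$ the topological rigidity theorem of Farrell--Jones identifies $M$ topologically with the locally symmetric quotient $X/\Gamma$. The pinching bound is chosen precisely so that Bishop's comparison gives volume entropy $h(g)\le(n-1)(1+\tfrac{1}{n-1})=n$ for $(M,g)$, whereas the locally symmetric metric on $X/\Gamma$ has entropy $n$ in the complex, $n+2$ in the quaternionic, and $22$ in the Cayley case. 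The minimal-entropy theorem of Besson--Courtois--Gallot therefore rules out the quaternionic and Cayley cases directly, and in the complex case forces equality; the BCG rigidity statement then produces a Riemannian isometry of $(M,g)$ with the locally symmetric model, whose pinching equals $1/4$, forcing $(1+\tfrac{1}{n-1})^2=4$ and hence $n=2$, contradicting $n\ge 5$. Thus $X$ is real hyperbolic and $M$ is homeomorphic to a closed hyperbolic manifold.

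The hard part is case~(\ref{case: tdcc-lat}), where $\BT$ makes $\Gamma$ a \emph{uniform} lattice in a non-discrete tdlc group $G$ with trivial amenable radical. By Milnor--\v{S}varc applied to the cocompact action of $G$ on its Cayley--Abels graph, $G$ is quasi-isometric to $\Gamma$, hence itself word hyperbolic, and the $G$-action extends continuously to a convergence group action on $\partial\Gamma\cong S^{n-1}$. Since $n-1\ge 4$, the convergence group theorem of Tukia (together with its extensions beyond the discrete case) topologically conjugates this action into the M\"obius group of $S^{n-1}$, namely the connected real Lie group $\operatorname{PO}(n,1)$. Lie groups have no small subgroups, so every closed totally disconnected subgroup is discrete, contradicting the non-discreteness of $G$. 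Excluding this last case completes the proof.
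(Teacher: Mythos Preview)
Your handling of case~(\ref{case: tdcc-lat}) has a genuine gap. There is no ``convergence group theorem of Tukia'' that conjugates an arbitrary convergence group acting on $S^{n-1}$ into the M\"obius group when $n-1\ge 2$. Tukia's theorem of this type requires the group to act by \emph{uniformly quasiconformal} homeomorphisms with respect to the standard conformal structure on the round sphere; mere convergence-group dynamics is not enough. Indeed, the very Gromov--Thurston manifolds under discussion furnish discrete convergence groups on $S^{n-1}$ that are \emph{not} conjugate into $\operatorname{PO}(n,1)$, so the statement you invoke is false already for discrete groups. Even if you tried to upgrade to a quasiconformal statement, the visual metric on $\partial\tilde M$ is only H\"older-equivalent, not quasiconformally equivalent, to the round metric, so Tukia's hypothesis is not available. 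The paper instead uses Mj's version of the Hilbert--Smith conjecture for boundary actions (\cite{Mj}*{Corollary~2.2.5}), whose hypothesis is that the Hausdorff dimension $h$ of $\partial\Gamma$ with respect to the visual metric satisfies $h<t+2$, where $t=n-1$ is the topological dimension. Since $h$ equals the volume entropy of $\tilde M$, the Bishop--G\"unther comparison and the pinching give $h\le (n-1)\bigl(1+\tfrac{1}{n-1}\bigr)=n<n+1$. This is precisely where the specific pinching constant enters the proof; your argument never uses it in case~(\ref{case: tdcc-lat}), which is a warning sign.

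Your case~(\ref{case:real-lat}) argument via Besson--Courtois--Gallot is also incomplete. BCG gives $h(g)^n\vol(M,g)\ge h(g_0)^n\vol(\Gamma\backslash X,g_0)$, so the bound $h(g)\le n$ together with $h(g_0)=n+2$ (quaternionic) or $h(g_0)=22$ (Cayley) yields only a lower bound on $\vol(M,g)$, not a contradiction; and in the complex case $h(g)\le n=h(g_0)$ does not force equality in the BCG inequality without volume control. The paper uses the geometric superrigidity of Mok--Siu--Yeung instead: any negatively curved closed manifold homotopy equivalent to a complex, quaternionic, or Cayley hyperbolic manifold is isometric to it after rescaling, hence has pinching exactly $1/4$, contradicting $\bigl(1+\tfrac{1}{n-1}\bigr)^2<4$ for $n\ge 2$. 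Note that for this case only the weak $1/4$-pinching is needed; the sharp constant is dictated entirely by case~(\ref{case: tdcc-lat}).
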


Some remarks about this result are in order. 
Gromov--Thurston~\cite{Gromov+Thurston} construct infinitely many examples of negatively curved manifolds with the above pinching in 
dimensions $n\ge 4$ that are not homeomorphic to hyperbolic manifolds. By taking 
connected sums of a hyperbolic manifold with an exotic sphere Farrell--Jones construct closed 
smooth manifolds that are homeomorphic to a hyperbolic manifold but whose smooth structure does not support a hyperbolic Riemannian metric~\cite{farrell+jones-exotic}. We recommend the survey~\cite{farrell+jones+ontaneda} by Farrell--Jones--Ontaneda on these issues.  
 
Finally, we obtain the following surprising characterization of free groups. 

\begin{mthm}[Non-uniform lattice embeddings of groups with $\beta^{(2)}_1>0$]\label{T: characterisation of groups with positive first L2 Betti} \hfill{}\\
	Let $\Gamma$ be a group with non-vanishing first $\ell^2$-Betti number that has 
	an upper bound on the order of its finite subgroups.
	% for the order of its torsion elements. 
	If $\Gamma$ possesses a non-uniform 
	compactly generated lattice envelope, then $\Gamma$ has a non-abelian free subgroup 
	of finite index. 
\end{mthm}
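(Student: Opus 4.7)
The plan is to combine the structural Corollary on non-uniform lattice embeddings with proportionality/vanishing properties of $\ell^2$-Betti numbers for lattices in semi-simple real and $S$-arithmetic groups. By Theorem~\ref{T:good-groups}, the hypothesis $\beta^{(2)}_1(\Gamma)>0$ implies that $\Gamma$ has the properties $\CAF$, $\NBC$, and $\IRR$; combined with the assumed $\BT$, the classification corollary on non-uniform lattices applies. Hence the given non-uniform lattice embedding $\Gamma<G$ is virtually isomorphic to a lattice $\Lambda$ of one of two types: either (i) an irreducible lattice in a connected, center-free, semi-simple real Lie group $H$ without compact factors, or (ii) an $S$-arithmetic lattice, possibly up to tree extension, in a product of semi-simple real and non-archimedean factors with both types present.

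The main step is to rule out case (ii) and to pin down $H$ in case (i) via the $\ell^2$-Betti number computation. In case (ii), $\Lambda$ is an irreducible lattice in a product $G_1\times\cdots\times G_k$ of $k\ge 2$ non-compact simple real or $p$-adic factors; the tree extension does not alter $\Lambda$ itself. The proportionality and K\"unneth formulas for $\ell^2$-Betti numbers of lcsc groups (as developed by Petersen and Sauer--Thom) give
\[
	\beta^{(2)}_1(\Lambda) \;=\; \operatorname{covol}(\Lambda)\cdot\sum_{j=1}^{k}\beta^{(2)}_1(G_j)\prod_{i\ne j}\beta^{(2)}_0(G_i).
\]
Each non-compact lcsc group $G_i$ has $\beta^{(2)}_0(G_i)=0$, so every summand contains a zero factor and $\beta^{(2)}_1(\Lambda)=0$. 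Since virtual isomorphism preserves vanishing of $\beta^{(2)}_1$ (it scales by index on finite-index subgroups and is unchanged by quotients by finite normal subgroups), this contradicts $\beta^{(2)}_1(\Gamma)>0$. In case (i), proportionality for lattices in a simple real Lie group forces $\beta^{(2)}_1(\Lambda)$ to vanish unless $\dim(H/K)=2$, i.e.\ unless $H$ is locally isomorphic to $\PSL_2(\bbR)$.

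A non-uniform lattice in $\PSL_2(\bbR)$ is a Fuchsian group of finite covolume with cusps, hence virtually free of finite rank; the rank is at least~$2$ since $\beta^{(2)}_1(F_n)=n-1$ and this must be positive. Transferring back through virtual isomorphism yields a finite index subgroup $\Gamma_0<\Gamma$ and a finite normal subgroup $N\normal \Gamma_0$ such that $\Gamma_0/N$ is a non-abelian free group of finite rank. The Bass--Serre tree of $\Gamma_0/N$ then gives an action of $\Gamma_0$ on a tree with vertex stabilizers all equal to the finite group $N$, so $\Gamma_0$ is virtually free (Karrass--Pietrowski--Solitar/Stallings--Dunwoody), producing the desired non-abelian free subgroup of finite index in $\Gamma$. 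The main technical hurdle I foresee lies in the careful invocation of the proportionality and K\"unneth formulas across mixed real and non-archimedean factors and for possibly non-uniform lattices, together with the bookkeeping needed to absorb the tree extension in case (ii).
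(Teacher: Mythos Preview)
Your proof is correct and follows essentially the same route as the paper: apply the classification of non-uniform lattice embeddings (via Theorem~\ref{T:main} with \BT), rule out the $S$-arithmetic case using vanishing of $\beta^{(2)}_1$ for lattices in products of non-compact factors (the paper cites \cite{kyed+petersen+vaes} and \cite{petersen} where you invoke proportionality plus K\"unneth), and reduce case~(i) to $\PSL_2(\bbR)$ via the Olbrich computation. One small imprecision: in case~(i) you write ``simple'' where you should write ``semi-simple''---the relevant fact (that $\beta^{(2)}_1(H)=0$ unless $\dim(H/K)=2$) holds for the full semi-simple group and is exactly Olbrich's result; your final paragraph spelling out why finite-by-free implies virtually free is more explicit than the paper, which leaves this step to the reader.
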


\subsection{Structure of the paper}

We devote most of~\S\ref{sec:properties_bt_irr_caf_and_nbc} to the proof of Theorem~\ref{T:good-groups}. 
We discuss an example by Burger--Mozes in~\S\ref{sub: examples_of_groups_that_are_not_in_bbc_} that 
shows the necessity of the condition~\NBC; if one drops \NBC~while still keeping \CAF~then there are 
exotic lattice embeddings that are not covered by the three cases of Theorem~\ref{T:main}. 
\smallskip

The definition of virtually isomorphism of lattice embeddings and the tools of the trade for the 
proof of Theorem~\ref{T:main} are provided in~\S\ref{sec:preliminaries}. 
The most difficult proof in~\S\ref{sec:preliminaries} is the result about outer automorphisms of S-arithmetic lattices (Theorem~\ref{thm: outer automorphism groups are finite}). 
While this result is known or expected to hold by the experts there was no proof so far in the required generality. 
We hope its proof provides a useful reference. 
\smallskip

The bulk of the paper is devoted to the proof of Theorem~\ref{T:main} in~\S\ref{sec:proof of main result}. 
The paper~\cite{BFS-CRAS} in which we announced the results of this paper and provided proofs in special cases 
might be helpful for the reader. 
In the first step of the proof we rely on property \CAF~and the positive solution of Hilbert's 5th problem to show that the given lattice embedding $\Gamma<G$ is virtually isomorphic to one into a product of a semi-simple Lie group~$L$ 
and a totally disconnected group~$D$. 
So we may assume that $G=L\times D$. Let $U<D$ be a compact-open subgroup. In the second step of the proof 
we split $L$ further as $L=L_0\times L_1$ in such a way that the projection of $\Gamma$ to $L_1$ is dense. 
Depending on the finiteness of $\Gamma\cap (L\times U)$ and $\Gamma\cap (\{1\}\times L_1\times U)$, 
we distinguish three cases in the third step which correspond to the cases (1), (2) or (3) of the statement of Theorem~\ref{T:main}. 
The second case is the most sophisticated. 
Here we have to 
identify the lattice $\Gamma\cap(\{1\}\times L_1\times D')<L_1\times D'$, where $D'<D$ is a certain closed cocompact subgroup, 
as an S-arithmetic lattice, and $D'$ with the corresponding non-Archimedean factor. This identification is an arithmeticity theorem which is proved in our companion paper~\cite{BFS-adelic} and heavily relies on Margulis' commensurator rigidity. We actually prove in that paper a result of greater generality which does not assume compact generation of the ambient group. The version needed here is only slightly more general than the arithmeticity theorem by Caprace--Monod in~\cite{Caprace+Monod:II}*{Theorem~5.20}. 
Our basic setup for $S$-arithmetic groups and a detailed explanation of the arithmeticity theorem is provided in~\S\ref{sec:arithmetic_case}. Further, we use Margulis' normal subgroup theorem and the aforementioned result on outer automorphism groups to conclude that $L=L_1$ and $L_0=\{1\}$ in the second case. 
The condition on \NBC~appears just once in the whole proof, namely in the third step of the proof, in Lemma~\ref{lem: normalizing}.  

The final step of the proof identifies the difference between $L\times D'$ and $L\times D$ as a tree extension in the sense of Setup~\ref{setup:arith}. 
\smallskip

In the proof of the applications, that is 
Theorems~\ref{T:Mostow}--\ref{T: characterisation of groups with positive first L2 Betti}, 
a major step is always to identify the most mysterious case, namely (3), in Theorem~\ref{T:main}. 
To this end, we appeal to ideas of geometric group theory and quasi-isometric rigidity. 
This is based on the fact that a lattice with property \BT~is quasi-isometric to any of its totally disconnected lattice envelopes.

\section{The properties  $\BT$, $\IRR$, $\CAF$, and $\NBC$} % (fold)
\label{sec:properties_bt_irr_caf_and_nbc}

We describe the classes in Theorem~\ref{T:good-groups} in more detail. 
In~\S\ref{sub: proof of properties of groups} we prove Theorem~\ref{T:good-groups}. 
In~\S\ref{sub: examples_of_groups_that_are_not_in_bbc_} we describe an exotic lattice embedding 
of a group that does not have property \NBC. 

\subsection{Commensurated and commensurable groups} % (fold)
\label{sub: commensurated and commensurable group}

\begin{definition}\label{def: commensurated}
	A subgroup $A$ of a group $G$ is \emph{commensurated} by a subset $S\subset G$ 
if $s^{-1}As\cap A$ has finite index in $s^{-1}As$ and $A$ for every $s\in S$. 
If $A<G$ is commensurated by all of $G$ we just say that $A<G$ is \emph{commensurated}.
\end{definition}

\begin{definition}\label{def: commensurable groups}
Two lcsc groups $G$ and $G'$ are \emph{commensurable} if there are open subgroups $H<G$ and 
$H'<G'$ of finite index such that $H\cong H'$ as topological groups. 

Further, $G$ and $G'$ are \emph{weakly commensurable} or \emph{virtually isomorphic} if there 
are open subgroups $H<G$ and $H'<G'$ of finite index and compact normal subgroups 
$K\normal H$ and $K'\normal H'$ such that $H/K\cong H'/K'$ as topological groups. 

A topological isomorphism $f\colon H/K\to H'/K'$ is called a \emph{virtual isomorphism} from $G$ to  $G'$. We say it \emph{restricts} to subgroups $A<G$ and $A'<G'$ if $f$ restricts to an isomorphism 
$(A\cap H)/(A\cap K)\to (A'\cap H')/(A'\cap K')$, which is then a virtual isomorphism between $A$ and $A'$.  
\end{definition}

If in the above definition the groups $G$ and $G'$ are countable discrete then a finite index subgroup of $G$ or $G'$  is open and a compact normal subgroup of $G$ or $G'$ is finite. 

The proof of the following easy lemma is left to the reader. 

\begin{lemma}\label{lem: permanence commensurated}Being commensurated is preserved in the following situations: 
\begin{enumerate}
\item Preimages of commensurated subgroups under homomorphisms are commensurated. 
\item Intersections of finitely many commensurated subgroups are commensurated.
\item A finite index subgroup of a commensurated subgroup is itself commensurated. 
\end{enumerate}
\end{lemma}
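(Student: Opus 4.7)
The plan is to verify each of the three permanence properties directly from Definition~\ref{def: commensurated}, relying only on the standard facts that preimages under a homomorphism and intersections of finitely many subgroups both preserve the property of being finite index.

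For (1), I would take a homomorphism $\phi\colon G\to H$ with $A<H$ commensurated and, for $g\in G$, rewrite
\[
	g^{-1}\phi^{-1}(A)g\cap \phi^{-1}(A) \;=\; \phi^{-1}\bigl(\phi(g)^{-1}A\phi(g)\cap A\bigr).
\]
The commensuration of $A$ supplies finite index of the inner intersection inside both $\phi(g)^{-1}A\phi(g)$ and $A$, and taking $\phi$-preimages preserves these indices, yielding the desired bound in both $g^{-1}\phi^{-1}(A)g$ and $\phi^{-1}(A)$.

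For (2), fix commensurated subgroups $A_1,\dots,A_n<G$ and set $B\defq\bigcap_i A_i$. The identity
\[
	B\cap g^{-1}Bg \;=\; \bigcap_{i=1}^{n}\bigl(A_i\cap g^{-1}A_i g\bigr)
\]
lets me analyse one factor at a time. Each $A_i\cap g^{-1}A_i g$ has finite index in $A_i$ by assumption, so $B\cap g^{-1}A_i g$ has finite index in $B$ (take coset representatives of $A_i\cap g^{-1}A_i g$ in $A_i$ and intersect with $B$). Taking the intersection over the finitely many $i$ preserves finite index, and the symmetric argument handles the index inside $g^{-1}Bg$.

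For (3), suppose $B<A<G$ with $A$ commensurated and $[A:B]<\infty$. The auxiliary subgroup $A'\defq A\cap g^{-1}Ag$ is finite index in both $A$ and $g^{-1}Ag$, while $B\cap A'$ and $g^{-1}Bg\cap A'$ are finite index in $A'$ because $B$ and its conjugate are finite index in $A$ and $g^{-1}Ag$ respectively. Hence $B\cap g^{-1}Bg = (B\cap A')\cap (g^{-1}Bg\cap A')$ is finite index in $A'$, and therefore in both $B$ and $g^{-1}Bg$. There is no real obstacle here; the only point requiring care is the direction of the index bookkeeping in (2) and (3) — one must track finite index inside $A_i$ (respectively inside $A$) rather than prematurely inside $B$ — which is why I factor each argument through an auxiliary finite-index subgroup ($A_i\cap g^{-1}A_i g$ in (2), and $A'$ in (3)).
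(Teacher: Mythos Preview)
Your proof is correct and is precisely the kind of direct verification the authors had in mind: the paper itself leaves this lemma to the reader, so there is no alternative argument to compare against. Your bookkeeping in (2) and (3) via the auxiliary finite-index subgroups is exactly the right way to avoid the pitfall you flagged.
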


Both \CAF~and \NBC~are not necessarily preserved if one passes to a weakly commensurable group. They are, however, preserved by passing to quotients by finite kernels. 

\begin{lemma}\label{lem: CAF and amenable kernel}
	Let $K\normal \Gamma$ be a finite normal subgroup. If $\Gamma$ has property 
	\CAF, then $\Gamma/K$ has \CAF. 
\end{lemma}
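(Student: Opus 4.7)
The plan is to use the pullback of a hypothetical problematic subgroup of $\Gamma/K$ and derive a contradiction with property \CAF~of $\Gamma$. Let $\pi\colon\Gamma\to\Gamma/K$ denote the quotient homomorphism. Suppose $\bar A<\Gamma/K$ is an amenable commensurated subgroup; the goal is to show that $\bar A$ is finite. Set $A\defq\pi^{-1}(\bar A)$, which sits in a short exact sequence
\[
 1\longrightarrow K\longrightarrow A\longrightarrow \bar A\longrightarrow 1.
\]

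First I would verify that $A$ is commensurated in $\Gamma$. This is immediate from Lemma~\ref{lem: permanence commensurated}(1), since $A$ is the preimage of the commensurated subgroup $\bar A$ under the homomorphism $\pi$.

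Next I would verify that $A$ is amenable. Since $K$ is finite it is amenable, $\bar A$ is amenable by assumption, and amenability is closed under extensions of groups; hence $A$ is amenable.

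Finally, property~\CAF~for $\Gamma$ forces $A$ to be finite, and therefore $\bar A = A/K$ is finite as well. This completes the verification that every amenable commensurated subgroup of $\Gamma/K$ is finite, i.e.\ that $\Gamma/K$ has \CAF. I do not anticipate any real obstacle: the argument is entirely formal once the permanence of commensuration under preimages and the closure of amenability under finite extensions are invoked.
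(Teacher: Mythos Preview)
Your proof is correct and follows essentially the same approach as the paper: pull back the amenable commensurated subgroup along the quotient map, use Lemma~\ref{lem: permanence commensurated} to see the preimage is commensurated, observe it is amenable as a finite extension of an amenable group, and apply \CAF~for $\Gamma$.
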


\begin{proof}
	Let $p\colon \Gamma\to\Gamma/K$ be the projection. Let $A<\Gamma/K$ be an amenable, commensurated subgroup.  
	Then $p^{-1}(A)<\Gamma$  
	is commensurated by Lemma~\ref{lem: permanence commensurated}. 
	Since $p^{-1}(A)$ is a finite extension of $A$, it is amenable too. 
	By the property $\CAF$ the group $p^{-1}(A)$ is finite, thus $A$ is finite. 
\end{proof}

\begin{lemma}\label{lem: NBC and finite kernel}
	Let $K\normal\Gamma$ be a finite normal subgroup. If $\Gamma$ has property \NBC, 
	then $\Gamma/K$ has \NBC. 
\end{lemma}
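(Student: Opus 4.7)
The plan is to unwind the definition of $\NBC$ and exploit the standard third-isomorphism-theorem manipulation: a quotient of a quotient by a finite normal subgroup is itself a quotient by a finite normal subgroup. Thus the $\NBC$ property, which is formulated as a $\wNBC$-requirement on every such quotient, should automatically descend along $\Gamma \twoheadrightarrow \Gamma/K$.

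More precisely, let $p \colon \Gamma \to \Gamma/K$ denote the projection and let $\bar{F} \normal \Gamma/K$ be an arbitrary finite normal subgroup. Set $F \defq p^{-1}(\bar{F})$. Since $\bar{F}$ is normal in $\Gamma/K$, its preimage $F$ is normal in $\Gamma$; and since $K$ and $\bar{F}$ are both finite, $F$ is finite as well (indeed $|F| = |K|\cdot|\bar{F}|$). The third isomorphism theorem then yields a canonical isomorphism
\[
(\Gamma/K)/\bar{F} \;\cong\; \Gamma/F.
\]

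Now apply the hypothesis that $\Gamma$ has $\NBC$ to the finite normal subgroup $F \normal \Gamma$: this gives $\wNBC$ for $\Gamma/F$, and hence for $(\Gamma/K)/\bar{F}$. As $\bar{F}\normal\Gamma/K$ was arbitrary among finite normal subgroups, $\Gamma/K$ satisfies $\NBC$ by definition.

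There is essentially no obstacle here; the only thing worth verifying carefully is that the definition of $\NBC$ already quantifies over \emph{all} finite normal subgroups (including the trivial one, which recovers $\wNBC$ for the group itself), so that the reduction $(\Gamma/K)/\bar{F} \cong \Gamma/F$ lands precisely in the scope of the assumption on $\Gamma$. No appeal to Lemma~\ref{lem: permanence commensurated} or to properties of commensurated subgroups is needed at this stage, in contrast to the proof of Lemma~\ref{lem: CAF and amenable kernel}, because the full force of $\NBC$ is built to be stable under finite quotients by design.
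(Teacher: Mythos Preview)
Your proof is correct and is precisely the argument the paper has in mind: the paper's own proof simply states that the property is ``obvious and built into the definition of \NBC,'' and your third-isomorphism-theorem unwinding is exactly what makes that obviousness explicit.
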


\begin{proof}
The stated property is obvious and built into the definition of \NBC. 
\end{proof}

\subsection{Groups in the class $\Dreg$ and acylindrically hyperbolic groups}

The class $\Dreg$ was introduced by Thom~\cite{thom}. It is closely related to the class $\calC_{\mathrm{reg}}$ of Monod--Shalom. Both $\Dreg$ and $\calC_{\mathrm{reg}}$ are an attempt to define negative curvature for groups in a cohomological way. The definition of $\Dreg$ is analytical. 

Let $\pi\colon\Gamma\to U(\mathcal{H})$ be a unitary representation. 
A map $c\colon \Gamma\to \mathcal{H}$ is a \emph{quasi-1-cocycle} for $\pi$ if 
\[ 
	\Gamma\times\Gamma\ni (g,h)\mapsto \pi(g)c(h)-c(gh)+c(g)\in \mathcal{H}
\]
is uniformly bounded on $\Gamma\times\Gamma$. 
The vector space of quasi-1-cocycles modulo the bounded 
ones forms a group $QH^1(\Gamma, \mathcal{H})$. 
The class $\Dreg$ is the class of groups for which $QH^1(\Gamma, \ell^2(\Gamma))$ has non-zero $L(\Gamma)$-dimension in the 
sense of L\"uck~\cite{lueck-book}*{Section~6.1}. More concrete is the following description: 

\begin{proposition}[\cite{thom}*{Corollary~2.5 and Lemma~2.8}]\label{prop: nonamenable groups in Dreg}
	A group~$\Gamma$ is in $\Dreg$ if and only if there is an unbounded 
quasi-1-cocycle $\Gamma\to\ell^2(\Gamma)$. 
\end{proposition}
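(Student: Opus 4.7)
The proposition is an equivalence; the forward direction is essentially a definitional unwinding that I would dispose of first. If $\dim_{L(\Gamma)} QH^1(\Gamma,\ell^2(\Gamma))>0$ then in particular $QH^1(\Gamma,\ell^2(\Gamma))\neq 0$, so there is a quasi-$1$-cocycle $c\colon\Gamma\to\ell^2(\Gamma)$ representing a non-zero class, meaning $c$ differs from every bounded cocycle by an unbounded map; specializing to the zero bounded cocycle, $c$ itself is unbounded as a map $\Gamma\to\ell^2(\Gamma)$.

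For the converse, my plan is to exploit the right $L(\Gamma)$-module structure on $QH^1(\Gamma,\ell^2(\Gamma))$ coming from the fact that the right regular representation of $\Gamma$ commutes with the left one. Right convolution by $T\in L(\Gamma)$ acts on $\ell^2(\Gamma)$ commuting with the left-regular representation, so the pointwise operation $(c\cdot T)(g)\defq c(g)T$ preserves both the space of quasi-$1$-cocycles and the subspace of bounded cocycles, and therefore descends to a right $L(\Gamma)$-action on $QH^1(\Gamma,\ell^2(\Gamma))$. Given an unbounded quasi-$1$-cocycle $c$, this yields an $L(\Gamma)$-linear map
\[
\phi\colon L(\Gamma)\longrightarrow QH^1(\Gamma,\ell^2(\Gamma)),\qquad T\longmapsto [c\cdot T],
\]
with $\phi(1)=[c]\neq 0$; in particular $\phi$ is non-zero and $\ker\phi$ is a proper right ideal of $L(\Gamma)$.

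The hard part will be to upgrade non-vanishing of $\phi$ to positivity of the Lück dimension of its image, since a non-zero cyclic $L(\Gamma)$-module may a priori have trivial Lück dimension (torsion modules are the standard counterexamples). My plan is to use the finite trace $\tau$ on $L(\Gamma)$ together with the fact that the target $\ell^2(\Gamma)$ is the standard $L(\Gamma)$-bimodule: I would verify that the set of $T\in L(\Gamma)$ for which $c\cdot T$ is bounded is closed in the measure topology on the algebra of operators affiliated with $L(\Gamma)$, estimating the defect of $c\cdot T$ in terms of that of $c$ multiplied by $T$. The weak closure $\overline{\ker\phi}$ is then a weakly closed right ideal, hence of the form $pL(\Gamma)$ for some projection $p\neq 1$, and the complementary summand $(1-p)L(\Gamma)$ embeds into $QH^1(\Gamma,\ell^2(\Gamma))$ as a projective $L(\Gamma)$-submodule of dimension $\tau(1-p)>0$, giving the required positivity. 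Proving this closure property—essentially that a suitably convergent limit of $L(\Gamma)$-translates of $c$ that are bounded remains bounded—is the technical heart of Thom's Lemma~2.8 and is the main obstacle I foresee.
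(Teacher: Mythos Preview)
The paper does not supply a proof of this proposition; it is simply quoted from Thom with the citation \cite{thom}*{Corollary~2.5 and Lemma~2.8}. There is therefore no argument in the present paper against which to compare your proposal.

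On its own merits, your outline is sound and locates the difficulty in the right place. The forward direction is indeed immediate, the right $L(\Gamma)$-module structure together with the map $\phi\colon T\mapsto [c\cdot T]$ is exactly the apparatus one uses, and you correctly observe that non-vanishing of $\phi$ does not by itself yield positive L\"uck dimension---a non-zero cyclic $L(\Gamma)$-module can have dimension zero.

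Where your sketch falls short is in the proposed mechanism for the last step. The estimate you mention---bounding the defect of $c\cdot T$ by the defect of $c$ times $\lVert T\rVert$---controls the quasi-cocycle \emph{defect} of $c\cdot T$, but membership in $\ker\phi$ concerns \emph{boundedness} of $g\mapsto c(g)T$, which is a different quantity and is not governed by that estimate. Moreover, even granting that $\ker\phi$ is closed in the measure topology, it does not follow that its \emph{weak} closure is proper: $\phi$ is not weakly continuous, so $1$ could lie in the weak closure of $\ker\phi$ without lying in $\ker\phi$, and then your projection $p$ would equal $1$. You explicitly flag this step as ``the technical heart of Thom's Lemma~2.8'' and defer to it; that is honest, but it means what you have written is a plan pointing at Thom's argument rather than an independent proof.
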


The following theorem was proved by Bestvina--Fujiwara~\cite{bestvina+fujiwara} and Hamenst\"adt~\cite{hamenstaedt} for classes 
of groups that were later identified as acylindrically hyperbolic groups by Osin~\cite{osin}.

\begin{theorem}[\cite{osin}*{Theorem~8.3}]\label{thm: acylindrically hyperbolic contained in Dreg} The class $\Dreg$ strictly contains the class of acylindrically hyperbolic groups. 	
\end{theorem}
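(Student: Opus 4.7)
My plan is to prove the two parts of the statement, containment and strictness, separately, using Proposition~\ref{prop: nonamenable groups in Dreg} to reduce membership in $\Dreg$ to the existence of an unbounded quasi-$1$-cocycle $\Gamma\to\ell^2(\Gamma)$.

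For the containment, let $\Gamma$ be acylindrically hyperbolic, meaning it admits a (non-elementary) acylindrical action on a hyperbolic space $X$ with a loxodromic element $g$ satisfying the WPD condition. The idea is to upgrade the Bestvina--Fujiwara/Hamenst\"adt construction of unbounded quasimorphisms to one taking values in $\ell^2(\Gamma)$ with respect to the regular representation. Concretely, fix a basepoint $o\in X$ and a long oriented subarc $\alpha$ of an axis of $g$. For $\gamma\in\Gamma$ pick a geodesic $[o,\gamma o]$ and count, with sign, copies of $\Gamma$-translates of $\alpha$ along it; this defines a Brooks-type counting function $c(\gamma)\in\ell^2(\Gamma)$ whose value on a group element $h$ is (weighted by) the number of times the translate $h\cdot\alpha$ occurs. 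The WPD condition guarantees that only finitely many translates appear with multiplicity along any given geodesic segment, and moreover that the multiplicities can be controlled uniformly, so the sum is genuinely in $\ell^2(\Gamma)$. Coarse additivity of the geodesic segment along a concatenation, together with thin-triangle estimates, shows that $c$ is a quasi-$1$-cocycle for the regular representation; evaluating on powers $g^n$ yields unboundedness because the axis accumulates many disjoint translates of $\alpha$. This reproduces the content of~\cite{osin}*{Theorem~8.3} and Theorem~\ref{thm: acylindrically hyperbolic contained in Dreg}.

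For strictness, I would exhibit a group $\Gamma\in\Dreg$ that is not acylindrically hyperbolic. The simplest candidates are direct products: for instance $\Gamma=F_2\times\bbZ$ has non-trivial center and hence cannot be acylindrically hyperbolic (an acylindrically hyperbolic group has finite amenable radical), while by restricting an unbounded quasi-cocycle $c\colon F_2\to\ell^2(F_2)$ along the projection $\Gamma\to F_2$ and inducing to $\ell^2(\Gamma)\cong\ell^2(F_2)\otimes\ell^2(\bbZ)$ via a unit vector in the second factor, one still gets an unbounded quasi-$1$-cocycle $\Gamma\to\ell^2(\Gamma)$. Thom's paper~\cite{thom} presumably records such examples, and one may also take lattices in products like $F_2\times F_2$ that are not even relatively hyperbolic.

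The main obstacle is the first step: ensuring that the natural Brooks-type construction actually lands in $\ell^2(\Gamma)$ and is a bona fide quasi-cocycle rather than merely a scalar quasimorphism. This requires the WPD property in a quantitative form to bound both the number of translates of $\alpha$ contributing to a given geodesic and the size of stabilizers of pairs of points far apart on the axis; it is exactly here that acylindricity enters, generalizing the much easier hyperbolic-group case treated by Mineyev--Shalom and others. The induction argument in the strictness step is essentially formal once a suitable unbounded quasi-cocycle exists on a factor.
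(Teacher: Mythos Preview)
The paper does not prove this theorem; it is quoted from~\cite{osin}*{Theorem~8.3} (with attribution also to Bestvina--Fujiwara and Hamenst\"adt), so there is no proof in the paper to compare against. Your sketch of the containment part is along the right lines and matches the strategy in those references: one builds an unbounded quasi-cocycle into $\ell^2(\Gamma)$ from a Brooks-type counting construction along a WPD axis, and this is exactly how Osin's Theorem~8.3 is obtained.

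However, your argument for strictness contains a genuine error. The group $F_2\times\bbZ$ (and likewise $F_2\times F_2$) is \emph{not} in $\Dreg$. This is forced by the very paper you are working in: Theorem~\ref{thm: groups in Dreg} (citing~\cite{thom}*{Theorem~3.4}) shows that every group in $\Dreg$ has property~\IRR, i.e.\ is not virtually a product of two infinite groups. Your proposed construction also fails concretely: if you set $\tilde c(g,n)=c(g)\otimes\delta_0$ in $\ell^2(F_2)\otimes\ell^2(\bbZ)$, then for $n\neq 0$ the defect
\[
\pi(g_1,n)\bigl(c(g_2)\otimes\delta_0\bigr)-c(g_1g_2)\otimes\delta_0+c(g_1)\otimes\delta_0
=\bigl(\lambda(g_1)c(g_2)\bigr)\otimes\delta_n-(c(g_1g_2)-c(g_1))\otimes\delta_0
\]
has norm at least $\lVert c(g_2)\rVert$, which is unbounded; so $\tilde c$ is not a quasi-cocycle for the regular representation of $F_2\times\bbZ$. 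The correct witnesses for strictness in~\cite{osin} are of a different nature (they must be \IRR\ groups), and you would need to look there rather than at direct products.
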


We also mention the following result of Sun~\cite{sun}.  

\begin{theorem}\label{thm: convergence groups are acylindrically hyperbolic} Non-elementary convergence groups are acylindrically hyperbolic. 
\end{theorem}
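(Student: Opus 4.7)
The natural strategy is to verify one of Osin's characterizations of acylindrical hyperbolicity: a group is acylindrically hyperbolic iff it is not virtually cyclic and admits a non-elementary action on a Gromov-hyperbolic space with at least one loxodromic element satisfying the weak proper discontinuity condition (WPD). Equivalently, it suffices to exhibit a proper, infinite hyperbolically embedded subgroup. The plan is to extract such data directly from the dynamics of the convergence action.

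First, I would use non-elementarity to harvest hyperbolic elements. In a convergence group action, non-elementarity means the limit set has more than two points, and classical convergence-group dynamics (Tukia, Gehring--Martin) then produce infinitely many loxodromic elements whose fixed-point pairs in the compact model space $M$ are pairwise disjoint and admit ping-pong. Fix a loxodromic $g$, and let $E(g)\le\Gamma$ be the maximal elementary subgroup stabilizing the fixed-point pair $\{g^+,g^-\}$; by standard convergence-group arguments, $E(g)$ is virtually cyclic and contains $\langle g\rangle$ with finite index. This is the candidate hyperbolically embedded subgroup.

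Next, I would construct a hyperbolic space on which $\Gamma$ acts. A workable candidate is the relative Cayley graph $\mathcal{G}=\mathrm{Cay}(\Gamma, S\sqcup E(g))$ for a suitable finite symmetric $S\subset\Gamma$, or, in the spirit of Bowditch, a graph whose vertices encode $\Gamma$-orbits on triples in $M$ and whose edges record bounded overlaps. One checks that $\mathcal{G}$ is $\delta$-hyperbolic and that $\Gamma\acts\mathcal{G}$ is non-elementary with $g$ acting loxodromically; the tree-like geometry is produced by the ping-pong afforded by loxodromic pairs with disjoint fixed-point sets, translated into bigon-thinness of geodesics in $\mathcal{G}$. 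The final and decisive step is the WPD inequality for $g$: given $x\in\mathcal{G}$ and $\epsilon>0$, only finitely many $h\in\Gamma$ should simultaneously $\epsilon$-fix $x$ and $g^Nx$ for some large $N$. Here the convergence axiom enters directly: any such $h$ must displace a triple of points of $M$ near $\{g^+,g^-\}$ and a third limit point by a bounded amount, which contradicts the proper discontinuity of the $\Gamma$-action on the space of distinct triples once $N$ is large enough that $g^N$ separates the triple. Having produced a non-elementary hyperbolic action with a WPD loxodromic element, Osin's theorem yields that $\Gamma$ is acylindrically hyperbolic.

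The main obstacle is the dynamical-to-geometric translation in the middle step: the convergence axiom is qualitative and purely topological, while hyperbolicity of $\mathcal{G}$ and the quantitative WPD bound are geometric. The subtlety is to design $\mathcal{G}$ so that its large-scale geometry is forced by the orbit combinatorics on triples, and to extract uniform bounds (e.g.\ on how much elements of $\Gamma$ can move a triple without being in a fixed compact subset) from proper discontinuity. Once that translation is in place, the WPD verification and the non-elementarity come essentially for free from ping-pong.
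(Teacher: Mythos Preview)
The paper does not prove this theorem; it is stated as a result of Sun and simply cited as~\cite{sun}. So there is no ``paper's own proof'' to compare against, and your proposal is an attempt to reconstruct an argument that the authors themselves outsource.

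That said, your strategy is the right one and is essentially the route Sun takes: locate a loxodromic element $g$ in the convergence action, let $E(g)$ be its maximal elementary (virtually cyclic) subgroup, and show that $E(g)$ is hyperbolically embedded in $\Gamma$, which by Osin's characterization is equivalent to acylindrical hyperbolicity. The existence of loxodromics with the requisite ping-pong dynamics is classical (Tukia, Gehring--Martin, Bowditch), so your first step is fine.

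You correctly identify the real work: passing from the purely topological convergence axiom to a genuine Gromov-hyperbolic $\Gamma$-space on which $g$ is loxodromic and WPD. Your two suggestions for this space are not interchangeable. The relative Cayley graph $\mathrm{Cay}(\Gamma, S\sqcup E(g))$ is only known to be hyperbolic \emph{after} one has established that $E(g)$ is hyperbolically embedded, so invoking it here is circular. The second suggestion---a graph built from the $\Gamma$-action on distinct triples in $M$---is closer to what is needed, but the hyperbolicity of such a graph does not follow from ping-pong alone; one needs a careful combinatorial argument (this is the technical heart of Sun's paper, and also appears in related work of Yang on convergence groups). Your WPD argument is morally correct once the space is in hand: proper discontinuity on triples does force the required finiteness. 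But as written, the middle step is a genuine gap rather than a routine verification, and you should not expect it to come ``essentially for free.''
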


\begin{remark}
	In the sequel, we show properties $\NBC$ and $\CAF$ for groups in $\Dreg$. Using the difficult Theorem~\ref{thm: acylindrically hyperbolic contained in Dreg} 
	this implies $\NBC$ and $\CAF$ for acylindrically hyperbolic groups. We emphasize, however, that one can deduce $\CAF$ and $\NBC$ directly and 
	quite easily for acylindrically hyperbolic groups. For example, $\CAF$ follows directly from~\cite{osin}*{Corollary~1.5} 
	and the non-amenability of acylindrically hyperbolic groups.  
\end{remark}

We show that groups in $\Dreg$ satisfy the $\wNBC$ property for the trivial reason that the situation in which to apply $\wNBC$ cannot happen. That means, we show the following stronger property for groups in $\Dreg$. Since $\Dreg$ is closed under passing to quotients by finite normal subgroups this will imply that groups in $\Dreg$ satisfy \NBC. 

\begin{defn}
A group $\Gamma$ has property \NBCone~if it satisfies the following: 
If $N\normal\Gamma$ is a normal subgroup and $M<\Gamma$ is a 
commensurated subgroup and $N\cap M=\{1\}$, then 
$N$ is finite or $M$ is finite. 
\end{defn}

It is obvious that \NBCone~implies \wNBC.

\begin{lemma}\label{lem:finite index product}
Let $\Lambda$ be a group with a normal subgroup $N\normal\Lambda$ 
and a commensurated subgroup $M<\Lambda$ such that 
$\Lambda=N\cdot M$ and $N\cap M=\{1\}$. 

To every finite subset $F\subset \Lambda$ we can assign subgroups $N_F<N$ and $M_F<M$ in such a 
way that the following holds: 
\begin{enumerate}
	\item $N_F$ is normalized by $M$; 
	\item $M_F$ is a normal subgroup of finite index in $M$;
	\item $F\subset \Lambda_F:=N_F M$ and $N_FM_F<\Lambda_F$ has finite index;
	\item $M_F$ commutes with $N_F$, thus $N_FM_F\cong N_F\times M_F$;
	\item $F\subset F'$ implies $N_F\subset N_{F'}$ and $M_F\supset M_{F'}$. 
\end{enumerate}
In particular, if $N$ is finitely generated, then $M$ has a finite index subgroup 
that commutes with $N$. 
\end{lemma}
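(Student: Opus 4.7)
The plan is to deduce everything from one centralizer lemma: for every $n\in N$, the subgroup $C_M(n):=M\cap C_\Lambda(n)$ has finite index in $M$. To prove this, I would use commensuration of $M$ in $\Lambda$: the subgroup $M\cap nMn^{-1}$ has finite index in $M$. For $m$ in that intersection, $n^{-1}mn\in M$, and I would write $n^{-1}mn=[n^{-1},m]\cdot m$. The commutator $[n^{-1},m]=n^{-1}(mnm^{-1})$ lies in $N$ because $N\normal\Lambda$, whereas $n^{-1}mn$ and $m$ both lie in $M$; hence $[n^{-1},m]\in N\cap M=\{1\}$, so $m$ centralizes $n$. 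This shows $M\cap nMn^{-1}\subseteq C_M(n)$ and yields the finite-index claim.

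Using the unique decomposition $f=n_fm_f$ with $n_f\in N$ and $m_f\in M$, I would set $N_F^0:=\langle n_f:f\in F\rangle$ and let $N_F$ be the subgroup of $N$ generated by all $M$-conjugates of $N_F^0$; this gives (1) and the monotonicity $N_F\subseteq N_{F'}$ for free. On the $M$-side, I would let $M_F^0:=\bigcap_{f\in F}C_M(n_f)$, which has finite index in $M$ by the centralizer lemma applied to each $n_f$, and then take $M_F$ to be the normal core of $M_F^0$ in $M$. The normal core of a finite-index subgroup has finite index, giving (2), and the monotonicity $M_F\supseteq M_{F'}$ is immediate from $M_F^0\supseteq M_{F'}^0$.

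The delicate point is (4), namely that $M_F$ centralizes all of $N_F$, not merely $N_F^0$. I would exploit that $M_F\normal M$ together with $M_F\subseteq M_F^0$. For a typical generator $mn_fm^{-1}$ of $N_F$ and $x\in M_F$, normality gives $m^{-1}xm\in M_F\subseteq C_M(n_f)$, so $m^{-1}xm$ commutes with $n_f$; conjugating by $m$ then yields $x(mn_fm^{-1})=(mn_fm^{-1})x$, proving (4). The rest is bookkeeping: $f=n_fm_f\in N_FM=\Lambda_F$ handles the containment in (3); $N_F\cap M_F\subseteq N\cap M=\{1\}$ together with (4) yields the internal direct product $N_FM_F\cong N_F\times M_F$; and the natural surjection $M\twoheadrightarrow \Lambda_F/N_FM_F$ has kernel $M\cap N_FM_F=M_F$, making this quotient $M/M_F$ finite, which completes (3).

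Finally, if $N=\langle n_1,\ldots,n_k\rangle$ is finitely generated, the subgroup $\bigcap_{i=1}^kC_M(n_i)$ has finite index in $M$ by the centralizer lemma and centralizes all of $N$, giving the ``in particular'' clause. The only real work is the centralizer lemma; once it is in place, the construction and verifications are essentially formal, with the main subtlety being the use of the normal core to promote centralization of the finite generating set of $N_F^0$ to centralization of its full $M$-invariant closure.
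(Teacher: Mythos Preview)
Your proof is correct and follows essentially the same route as the paper. The paper defines $M_F$ as the normal core in $M$ of $\bigcap_{\lambda\in F\cup\{1\}}\lambda M\lambda^{-1}$ and then proves the commutator identity $[n,m]\in N\cap M=\{1\}$ for $n\in q(F)$, $m\in M_F$; since in fact $M\cap nMn^{-1}=C_M(n)$ for $n\in N$ (both inclusions follow from your argument), your $M_F^0=\bigcap_{f\in F}C_M(n_f)$ coincides with the paper's intermediate group, and the resulting $M_F$, $N_F$ are identical. Your presentation is arguably cleaner in isolating the centralizer lemma $[M:C_M(n)]<\infty$ as a standalone fact before building the construction on it; one small point worth making explicit is that $N_FM_F\normal\Lambda_F$ (needed for your ``kernel'' computation in (3)), which follows since $N_F$ is $M$-invariant and $M_F$ is both $M$-normal and centralized by $N_F$.
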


\begin{proof}
For every $F\subset \Lambda$ let $M_F$ be the normal core within $M$ of the subgroup 
\[
	M_F':=\bigcap_{\lambda\in F\cup\{1\}}\lambda M\lambda^{-1}\subset M. 
\]
Since $M$ is commensurated, $M_F'<M$, hence $M_F<M$, are of finite 
index. In other words, $M_F$ is the 
largest normal subgroup of $M$ with the property $\lambda M_F\lambda^{-1}\subset M$ 
for all $\lambda\in F$. 

Let $q\colon\Lambda\to N$ be the map $q(nm)=n$; it is well defined because 
of $N\cap M=\{1\}$. 
Clearly, $M_F'=M_{q(F)}'$, thus $M_F=M_{q(F)}$. Since $N\normal\Lambda$ is normal, 
the commutator $[n,m]=n(mn^{-1}m^{-1})$ lies in $N$. For $m\in M_F=M_{q(F)}$ and 
$n\in q(F)$ we also have $[n,m]=(nmn^{-1})m^{-1}\in M$, hence 
$[n,m]\in M\cap N=\{1\}$. So $M_F$ commutes with the subset $q(F)$. 
Let 
\[
	S:=\bigcup_{m\in M} mq(F)m^{-1}\subset N.
\]
Because of normality $M_F\normal M$ the group $M_F$ commutes with 
the subset $S$ as well. The subgroups $N_F:=\langle S\rangle<N$ and $M_F$ 
satisfy all the required properties. 
\end{proof}

\begin{theorem}\label{thm: groups in Dreg}
Every group in $\Dreg$ is \IRR, \CAF~and \NBCone. 
\end{theorem}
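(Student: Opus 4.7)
By Proposition~\ref{prop: nonamenable groups in Dreg}, I would start by fixing an unbounded quasi-1-cocycle $c\colon\Gamma\to\ell^2(\Gamma)$ with defect $D:=\sup_{g,h}\|c(gh)-\pi(g)c(h)-c(g)\|<\infty$. My plan is to deduce each of the three properties from a single \emph{Mautner-type} fact for $c$: \emph{if $H<\Gamma$ is an infinite subgroup whose centralizer $C_\Gamma(H)$ in $\Gamma$ is infinite, then $c|_H$ is bounded.} Coupled with a companion \emph{amenability} fact---\emph{$c$ is bounded on any infinite amenable commensurated subgroup, and then bounded globally by propagation through the commensuration}---these two inputs yield all three conditions almost mechanically.

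For $\IRR$, suppose that $\Gamma$ were virtually isomorphic to $A\times B$ with both factors infinite. After passing to a finite-index subgroup of $\Gamma$ (which still carries an unbounded quasi-1-cocycle into its $\ell^2$) the groups $A$ and $B$ each sit with an infinite centralizer containing the other factor, so the Mautner principle forces $c$ to be bounded on $A$ and on $B$, and then the quasi-cocycle identity forces it to be bounded on $AB$, contradicting unboundedness. For $\CAF$, the amenability fact directly excludes any infinite amenable commensurated subgroup. For $\NBCone$, given an infinite normal $N\normal\Gamma$ and an infinite commensurated $M<\Gamma$ with $N\cap M=\{1\}$, I would apply Lemma~\ref{lem:finite index product} inside $\Lambda=NM$ to an exhausting chain of finite subsets $F\subset N$: since $N$ is infinite one can arrange $N_F$ to be infinite, giving commuting infinite subgroups $N_F<N$ and $M_F<M$ (with $M_F$ of finite index in $M$). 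The Mautner principle then bounds $c$ on $M_F$, hence on $M$, and the commensuration of $M$ propagates this bound to all of $\Gamma$ via the amenability principle---again a contradiction.

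The main obstacle is the Mautner-type lemma for genuine quasi-cocycles. For an honest $1$-cocycle the identity $c(gh)=c(hg)$ on commuting elements yields $(\pi(g)-1)c(h)=(\pi(h)-1)c(g)$, and the weak convergence $\pi(g_n)v\to 0$ as $g_n\to\infty$ in the mixing representation $\ell^2(\Gamma)$ immediately forces $c(h)=0$. For quasi-cocycles the $O(1)$ defect perturbs the identity to $(\pi(g)-1)c(h)=(\pi(h)-1)c(g)+O(1)$, and one must run a sliding weak-limit argument along an infinite sequence $g_n\in C_\Gamma(H)$ that keeps the right-hand side controlled---this is essentially the argument that Thom develops in~\cite{thom} when introducing the class $\Dreg$. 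The amenability principle, which is a quasi-cocycle analog of Guichardet's vanishing via F\o lner averaging, is technically softer but conceptually parallel. Once both principles are established, the combinatorial reductions above, aided by Lemma~\ref{lem:finite index product}, complete the proof.
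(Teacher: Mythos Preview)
Your strategy for \NBCone~matches the paper's: produce commuting infinite subgroups $N_F$ and $M_F$ via Lemma~\ref{lem:finite index product}, invoke the vanishing of $QH^1$ on a product of two infinite groups (your Mautner principle, which is indeed~\cite{thom}*{Theorem~3.4}), and propagate via the injectivity of restriction to commensurated subgroups~\cite{thom}*{Lemma~3.3}. One small gap: from ``$N$ infinite'' alone you cannot arrange $N_F$ infinite, since an increasing union of finite subgroups can be infinite. The paper first observes that $N$ is non-amenable (an infinite normal, hence commensurated, subgroup of a $\Dreg$ group cannot be amenable by~\CAF), and then some $N_F$ must be non-amenable because amenability passes to directed unions.

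The genuine problem is your \CAF~argument. Your ``amenability principle'' asserts that $c|_M$ is bounded whenever $M$ is an infinite amenable commensurated subgroup, and this is contradicted by the very injectivity you invoke for propagation: \cite{thom}*{Lemma~3.3} says the restriction $QH^1(\Gamma,\ell^2(\Gamma))\to QH^1(M,\ell^2(\Gamma))$ is injective for commensurated $M$, so if $c$ is unbounded on $\Gamma$ then $c|_M$ is unbounded as well. Concretely, the honest cocycle $n\mapsto\chi_{[0,n)}$ shows that quasi-cocycles of an amenable group into its regular representation need not be bounded; no F\o lner averaging repairs this. Thom's actual proof of \CAF~does not track a single quasi-cocycle but the $L(\Gamma)$-dimension of $QH^1$: amenability of $M$ forces this dimension to vanish on the target, and injectivity then forces it to vanish on the source, contradicting $\Gamma\in\Dreg$. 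The paper sidesteps the issue entirely by citing~\cite{thom}*{Theorem~3.4} for both \IRR~and \CAF.
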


\begin{proof}
The properties $\IRR$ and $\CAF$ are shown in~\cite{thom}*{Theorem~3.4}. Let $\Gamma\in\Dreg$, and let 
$N\normal \Gamma$ and $M<\Gamma$ normal and commensurated subgroups, 
respectively, with trivial intersection. Assume by contradiction 
that both subgroups are infinite. According to \emph{loc. cit.} $N$ is 
non-amenable. Let $N=\{n_1,n_2,\ldots\}$ be an enumeration of $N$. 
We refer to the notation of Lemma~\ref{lem:finite index product} applied to our situation. 
The group $N$ is the increasing union of subgroups $N_{F_i}<N$ 
for $F_i=\{n_1,\ldots, n_i\}$. Hence there is 
some $k\in\bbN$ such that $N':=N_{F_k}$ is non-amenable, thus infinite. 
 
The group $N'$ commutes with the finite index subgroup $M':=M_{F_k}$ 
of $M$. Since $M<\Gamma$ is commensurated, also $M'<\Gamma$ 
is commensurated. By~\cite{thom}*{Lemma~3.3} the restriction maps 
$QH^1(\Gamma,\ell^2(\Gamma))\to QH^1(M', \ell^2(\Gamma))$ and 
$QH^1(N'M', \ell^2(\Gamma))\to QH^1(M',\ell^2(\Gamma))$ to commensurated subgroups are 
injective. Hence the restriction map 
\[QH^1(\Gamma, \ell^2(\Gamma))\to QH^1(N'M', \ell^2(\Gamma))\] 
is 
injective. We reach a contradiction as the latter module 
vanishes by~\cite{thom}*{Theorem~3.4} and the fact that $N'M'$ is a product of two infinite groups. 
\end{proof}

\subsection{Conclusion of proof of Theorem~\ref{T:good-groups}}
\label{sub: proof of properties of groups}

\begin{proof}[Proof of property \CAF]
We refer to the cases in the statement of Theorem~\ref{T:good-groups}. 
\begin{enumerate}
	\item Since all $\ell^2$-Betti numbers of an infinite amenable group vanish by a result of Cheeger--Gromov~\cite{cheeger+gromov}, every group with some non-vanishing $\ell^2$-Betti number is \CAF~according to~\cite{BFS}*{Corollary~1.4}. 
	\item This is exactly~\cite{breuillardetal}*{Theorem~6.8}. 
	\item See Theorem~\ref{thm: groups in Dreg}. For the classes included in $\Dreg$ see Theorems~\ref{thm: acylindrically hyperbolic contained in Dreg} and~\ref{thm: convergence groups are acylindrically hyperbolic}. \qedhere
\end{enumerate}
\end{proof}

\begin{proof}[Proof of property \NBC]
We refer to the classes of groups in the statement of Theorem~\ref{T:good-groups}. All these classes 
are closed under passing to quotients by finite normal subgroups: 

For (1') the closure property follows from the fact that the vanishing of $\ell^2$-Betti 
	numbers is an invariant of virtual isomorphism which can be easily deduced from their basic properties~\cite{lueck-book}. Or one may cite the much more general result that the vanishing of $\ell^2$-Betti numbers of lcsc groups is 
	a coarse invariant~\cite{sauer+schroedl}. 
	
	For (2') one argues as follows: Let $\Gamma<\GL_n(k)$ be a linear group with finite 
	normal subgroup $F<\Gamma$. A linear representation of the algebraic group $\bar\Gamma/K$ where 
	$\bar \Gamma$ is the Zariski closure of $\GL_n(k)$ yields a linear embedding of $\Gamma/K$.  

For (3') the closure property is elementary and left to the reader. 

For (4') we argue as follows: In view of Proposition~\ref{prop: nonamenable groups in Dreg} let $c\colon\Gamma\to\ell^2(\Gamma)$ be an unbounded quasi-1-cocycle. Let $F<\Gamma$ be a finite normal subgroup. Let $\pi\colon \ell^2(\Gamma)\to \ell^2(\Gamma/F)$ be the map of unitary $\Gamma$-representation that sends the vector $\gamma\in \ell^2(\Gamma)$ to $\gamma F\in \ell^2 (\Gamma/F)$. Let $s\colon \Gamma/F\to \Gamma$ be a set-theoretic section of the projection $\Gamma\to\Gamma/F$. 
    Then $\pi\circ c\circ s\colon \Gamma/F\to \ell^2(\Gamma/F)$ is easily seen to be an unbounded quasi-1-coycle. 
    
Thus it suffices to show property \wNBC~for the classes in Theorem~\ref{T:good-groups} which we 
do next. 
\begin{enumerate}
	\item[(1')] Let $\Gamma$ be a group with positive first $\ell^2$-Betti number. Assume by contradiction that $\Gamma$ does not have \NBCone. 
	
	Let $M,N<\Gamma$ be subgroups with $M\cap N=\{1\}$ such that $N\normal \Gamma$ is normal 
	and $M$ is commensurated by $\Gamma$. 
	Asssume that neither $M$ nor $N$ are finite. 
    We have to show that 
	$\betti_1(\Gamma)=0$. Since $\Lambda:=NM$ is commensurated by $\Gamma$, the vanishing of 
	$\betti_1(\Lambda)$ would imply this according to~\cite{BFS}*{Corollary~1.4}. 
	By Lemma~\ref{lem:finite index product} the group $\Lambda$ is an ascending union of groups 
	$\Lambda_n$ such that $\Lambda_n$ is virtually a product of infinite groups. 
	A product of infinite groups has vanishing first $\ell^2 $-Betti number because of the 
	K\"unneth formula for $\ell^2$-Betti 
	numbers and the fact that the zeroth $\ell^2$-Betti number of 
	an infinite group vanishes. The same is true for a virtual product, so $\betti_1(\Lambda_n)=0$ for large $n$; see the remark about $\ell^2$-Betti numbers of virtually isomorphic groups above.   
	Finally, $\betti_1(\Lambda_n)=0$ for all $n$ implies $\betti_1(\Lambda)=0$ by~\cite{lueck-book}*{Theorem~7.2 (2)}. 
	\item[(2')] Let $\Gamma<\GL_n(k)$ be a linear group, and let 
$N, M<\Gamma$ subgroups with trivial intersection such that $N\normal\Gamma$ is normal 
and $M<\Gamma$ is commensurated by $\Gamma$.

Let $F\mapsto N_F, M_F$ be an assignment of finite subsets to subgroups as 
in Lemma~\ref{lem:finite index product}. 
By the Noetherian property of the algebraic group $\GL_n(k)$ 
there exists some finite set $F_1 \subset N$ such that 
the Zariski closure $\bar{M}_1$ of $M_{F_1}$ is minimal among 
all finite subsets of $N$. 
Thus, for every $n\in N$, $M_{F_0\cup \{n\}}$ is Zariski dense in $\bar{M}_1$.
Since \[[n,M_{F_0\cup \{n\}}]=1\] we get that $[n,\bar{M}_1]=1$. In particular, 
the finite index subgroup $M_{F_1}$ of $M$ commutes with $N$. This implies 
\wNBC~for linear groups. 
	\item[(3')] As mentioned before, this is~\cite{capraceetal-closure}*{Corollary 18}. 
	\item[(4')] See Theorem~\ref{thm: groups in Dreg}. \qedhere 
\end{enumerate}
\end{proof}

\begin{proof}[Proof of property \IRR]
By the K\"unneth formula for $\ell^2$-Betti numbers and the fact from the previous proof that their vanishing is unaffected by passing to virtually isomorphic groups it follows that groups with positive first $\ell^2$-Betti numbers have property $\IRR$. That groups in $\Dreg$ have \IRR~is contained in Theorem~\ref{thm: groups in Dreg}.
\end{proof}

\subsection{Examples of groups without property $\NBC$} % (fold)
\label{sub: examples_of_groups_that_are_not_in_bbc_}

Among the conditions required for our main result, the \NBC~property is the most opaque. We present two examples of groups that do not have property \NBC. The first example is a wreath product. The second one comes with a lattice envelope that shows the necessity of the \NBC~condition in Theorem~\ref{T:main}.

\begin{example}
For an infinite residually finite group $M$ and a countable group $H$ 
we consider the product group $H^M=\prod_M H$ endowed with the natural (shift) action of $M$. 
Let $N$ be the subgroup in $H^M$ consisting of \emph{periodic} elements, that is, elements having an $M$-stabilizer of finite index in $M$.
$N$ is clearly a countable subgroup on which $M$ acts.
Let $\Gamma=M \ltimes N$.
Observe that $\Gamma=NM$, $N\lhd \Gamma$, $M$ is commensurated by $\Gamma$ and $N\cap M=\{e\}$.
The subgroup $N$ does not commute with any finite index subgroup of $M$, yet any finitely generated subgroup of $N$ does so (cf.~Lemma~\ref{lem:finite index product}).
\end{example}

\begin{example}\label{exa: exotic lattice}
Let $\Lambda$ be an irreducible lattice in $\SL_n(\bbQ_p)\times \SL_n(\bbR)$. 
Let $X$ be the symmetric space of $\SL_n(\bbR)$, and 
let $B$ be the 1-skeleton of the Bruhat-Tits building associated with $\SL_n(\bbQ_p)$. 
Then the action of $\Lambda$ on $B\times X$ can be extended to an action of 
an extension, denoted by $\Gamma$, of $\Lambda$ by the fundamental group $\pi_1(B)$, 
which is an infinitely generated free group, 
on the universal covering $\tilde B\times X$. Note that $T:=\tilde B$ is a tree. 
Moreover, $\Gamma$ is an irreducible lattice in $\Aut(T)\times \SL_n(\bbR)$. 
This type of example originates from the work of Burger--Mozes~\cite{burger+mozes}*{1.8}. 
More general constructions 
in this direction are discussed in Caprace--Monod~\cite{Caprace+Monod:I}*{6.C}. 

The group $\Gamma$ has the properties \BT, \IRR, and \CAF, but it lacks 
property \NBC. Let us show property \CAF~first. 
Let $A<\Gamma$ be a commensurated subgroup. 
The image $p(A)$ under the projection $p:\Gamma\to\Lambda$ is 
an amenable commensurated subgroup of $\Gamma$. 
It follows from Theorem~\ref{T:good-groups} 
that $\Lambda$ has \CAF. Thus $p(A)$ is finite. Since $\pi_1(B)$ satisfies \CAF~by \emph{loc.~cit.}, the intersection 
$A\cap \pi_1(B)$ is finite as well, which implies that $A$ is finite. 

Let $U\subset \Aut(T)$ be the stabilizer of a vertex; it is a compact open subgroup. 
Then it is easy to verify that the normal subgroup $N=\pi_1(B)\normal\Gamma$ and 
the subgroup $M=\Gamma\cap (U\times \SL_n(\bbR))$, which is 
commensurated by Lemma~\ref{L:a-normal}, violate 
property \NBC: No finite index subgroup of $M$ commutes with $N$. 
\end{example}

\section{General facts about locally compact groups and their lattices} % (fold)
\label{sec:preliminaries}

In \S\ref{sub: virtual iso} we introduce the analog of weak commensurability or virtual isomorphism of groups for lattice embeddings. In \S\ref{sub:lattices_in_locally_compact_groups} and \S\ref{sub:closed_subgroup_of_finite_covolume} we have a look at broader classes of subgroups in lcsc groups than lattices, discrete subgroups and closed subgroups of finite covolume. In \S\ref{sub:the_amenable_radical} we recall the definition of the amenable radical and study its behaviour under passage to closed subgroups of finite covolume. Finally, in \S\ref{sub: outer automorphism groups} and \S\ref{sub: QI of locally compact groups} we study automorphisms of lcsc groups and their lattices in different categories: outer automorphism groups and quasi-isometry groups. 

\subsection{Virtual isomorphism of lattice embeddings}\label{sub: virtual iso}

We introduce a relative notion of virtual isomorphism (see Definition~\ref{def: commensurable groups}). 

\begin{definition}\label{def: virtual isomorphism}
	Two lattice embeddings $\Gamma\hookrightarrow G$ and $\Gamma'\hookrightarrow G'$ are \emph{virtually isomorphic} 
	if there is a virtual isomorphism from $G$ to $G'$ that restricts to a virtual isomorphism from $\Gamma$ to $\Gamma'$. 
	
	   A lattice embedding is \emph{trivial} if it is virtually isomorphic to the identity homomorphism of a countable discrete group. 
\end{definition}

\begin{remark}
The above definition generalizes Definition~\ref{def: commensurable groups} in the sense that two countable discrete groups $\Gamma$ and $\Gamma'$ are virtually isomorphic if and only if the lattice embeddings $\id\colon\Gamma\to \Gamma$ and $\id\colon\Gamma'\to \Gamma'$ are virtually isomorphic. 
\end{remark}

Spelling out the definition above, if $\Gamma\hookrightarrow G$ and $\Gamma'\hookrightarrow G'$ are virtually isomorphic then 
there are open finite index subgroups $H<G$ and $H'<G'$ and compact normal subgroups $K\normal H$ and 
	$K'\normal H'$ and a commutative square 
	\[\begin{tikzcd}
          	(\Gamma\cap H)/(\Gamma\cap K)\ar[r, hook]\ar[d, "\cong"]& H/K\ar[d, "\cong"]\\
          	(\Gamma'\cap H')/(\Gamma'\cap K')\ar[r,hook] & H'/K'
      \end{tikzcd}\]
      with vertical isomorphisms. 
The horizontal maps in the above diagram are lattice embeddings. This follows from Lemma~\ref{L:open} and Lemma~\ref{L:reducing}. 

\begin{proposition}\label{prop: virtual iso is equivalence relation}
	Virtual isomorphism of lattice embeddings is an equivalence relation. 
\end{proposition}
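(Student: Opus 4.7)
The plan is to verify the three axioms of an equivalence relation. Reflexivity is immediate by taking $H=G$, $K=\{1\}$, and the identity map. Symmetry is also formal: if $f\colon H/K\xrightarrow{\cong} H'/K'$ realizes a virtual isomorphism of lattice embeddings, then its topological inverse realizes the one in the opposite direction.

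The real content is transitivity. Suppose $f\colon H_1/K_1\xrightarrow{\cong} H_1'/K_1'$ witnesses $(\Gamma\hookrightarrow G)\sim(\Gamma'\hookrightarrow G')$ and $g\colon H_2'/K_2'\xrightarrow{\cong} H_2''/K_2''$ witnesses $(\Gamma'\hookrightarrow G')\sim(\Gamma''\hookrightarrow G'')$. My strategy is to refine both so they factor through a common quotient $H'/N'$, where $H'<G'$ is an open subgroup of finite index and $N'\normal H'$ is a compact normal subgroup; composition then yields the desired virtual isomorphism from $G$ to $G''$. Concretely, take $H':=H_1'\cap H_2'$, $N_i':=K_i'\cap H'$ (compact and normal in $H'$, as $K_i'\normal H_i'\supset H'$), and $N':=N_1'N_2'$. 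The inclusion $H'\hookrightarrow H_i'$ descends to $H'/N_i'\hookrightarrow H_i'/K_i'$ with open finite-index image $H'K_i'/K_i'$, and $H'/N_i'\twoheadrightarrow H'/N'$ is a surjection with compact kernel. Pulling these back via $f$ produces an open finite-index subgroup $\tilde H_1\subset H_1$ and a compact normal subgroup $L_1\normal\tilde H_1$ with $\tilde f\colon\tilde H_1/L_1\xrightarrow{\cong} H'/N'$; symmetrically for $g$. The composition $\tilde g^{-1}\circ\tilde f\colon\tilde H_1/L_1\xrightarrow{\cong}\tilde H_2''/L_2''$ is then a virtual isomorphism from $G$ to $G''$.

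The main obstacle is to verify that this composition restricts to $\Gamma$ and $\Gamma''$. Tracing the image of $\Gamma\cap\tilde H_1$ through $\tilde f$ into $H'/N'$ (using the restriction property of $f$) gives the subgroup $(\Gamma'K_1'\cap H')N'/N'$, while the analogous trace via $\tilde g$ from $\Gamma''\cap\tilde H_2''$ gives $(\Gamma'K_2'\cap H')N'/N'$. For the composition to restrict correctly these two subgroups of $H'/N'$ must coincide, and this is the technical heart of the proof. One arranges agreement by a preliminary refinement of the middle data ensuring $K_1',K_2'\subset H'$: replace each $K_i'$ by the open subgroup $K_i'\cap H_{3-i}'$ (of finite index in the compact group $K_i'$) and replace $H_i'$ by the finite-index normalizer of this intersection, which is open and of finite index because an open finite-index subgroup of the compact group $K_i'$ has only finitely many $H_i'$-conjugates. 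Once $K_i'\subset H'$, the identity $\Gamma'K_i'\cap H'=(\Gamma'\cap H')K_i'$ holds, and both traces collapse to $(\Gamma'\cap H')N'/N'$, so the composed virtual isomorphism restricts as required to a virtual isomorphism between $\Gamma$ and $\Gamma''$.
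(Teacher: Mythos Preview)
Your overall scheme is the same as the paper's: reflexivity and symmetry are trivial, and for transitivity one passes to the intersection $H'=H_1'\cap H_2'$ in the middle group and to the compact normal subgroup $N'=(K_1'\cap H')(K_2'\cap H')$, then pulls back through $f$ and $g$ and composes. The paper simply asserts that the resulting isomorphism ``restricts to the lattices''; you are right to flag this as the crux, and your computation that the two traces in $H'/N'$ are $(\Gamma'K_1'\cap H')N'/N'$ and $(\Gamma'K_2'\cap H')N'/N'$ is correct and more careful than what the paper writes.

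However, your proposed fix does not close the gap. Replacing $K_i'$ by the \emph{smaller} compact group $K_i'\cap H_{3-i}'$ changes the middle quotient from $H_i'/K_i'$ to a \emph{cover} of (an open subgroup of) $H_i'/K_i'$, and there is no way to transport this cover through the given isomorphism $f\colon H_1/K_1\cong H_1'/K_1'$ to produce refined data $(\hat H_1,\hat K_1)$ on the $G$-side. Concretely, take $G'=\bbZ_2\times\bbZ$ with $\Gamma'=\{0\}\times\bbZ$; let $H_1'=G'$, $K_1'=\bbZ_2\times\{0\}$, and let $H_2'=\{(a,n):a+n\equiv 0\ (\mathrm{mod}\ 2)\}$, $K_2'=2\bbZ_2\times\{0\}$. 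Take $G=\bbZ$, $\Gamma=\bbZ$, $H_1=\bbZ$, $K_1=\{0\}$ and $f$ the obvious identification $\bbZ\cong G'/K_1'$. Your refinement replaces $K_1'$ by $K_1'\cap H_2'=2\bbZ_2\times\{0\}$, so the new middle quotient is $G'/(2\bbZ_2\times\{0\})\cong(\bbZ/2)\times\bbZ$; but no open finite-index subgroup of $G=\bbZ$ modulo a finite subgroup is isomorphic to $(\bbZ/2)\times\bbZ$, so there is no $\hat f$. In this same example the two traces in $H'/N'$ are genuinely different ($\bbZ$ versus $2\bbZ$), so the composed map does not restrict in the sense of the definition, and an additional argument is really needed. (There is also a minor circularity in your refinement: shrinking $H_2'$ to a normalizer may throw the new $K_1'$ out of the new $H'$; this can be cured by using normal cores, but the pullback obstruction above remains.)

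One way to repair the argument is to observe that both traces contain the common lattice $(\Gamma'\cap H')N'/N'$ with finite index, and then to pass to a further open finite-index subgroup of $H'/N'$ chosen so that its intersections with the two traces coincide; pulling that subgroup back to the $G$- and $G''$-sides yields the desired restricting virtual isomorphism.
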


\begin{proof} The only non-obvious claim is transitivity. Its proof is similar to the proof that virtual isomorphism of groups is transitive. 

Let $j_l\colon \Gamma_l\hookrightarrow G_l$ for $l\in\{1,2,3\}$ be lattice embeddings such 
that $j_1, j_2$ and $j_2, j_3$ are virtually isomorphic. So for $l\in\{1,2,3\}$ there are finite index subgroups 
$H_l<G_l$ and $H_2'<G_2$ and compact normal subgroups $K_l\normal H_l$ and $K_2'\normal H_2'$ and isomorphisms 
\[ H_1/K_1\xrightarrow{\cong} H_2/K_2~\text{ and }~H_2'/K_2'\xrightarrow{\cong} H_3/K_3\]
that restrict to isomorphisms 
\begin{align*}	
(\Gamma_1\cap H_1)/(\Gamma_1\cap K_1) &\cong (\Gamma_2\cap H_2)/(\Gamma_2\cap K_2)~\text{ and } \\
(\Gamma_2\cap H_2')/(\Gamma_2\cap K_2')&\cong (\Gamma_3\cap H_3)/(\Gamma_3\cap K_3),
\end{align*}
respectively. 
The subgroup $H_2'':=H_2\cap H_2'<G_2$ has finite index in $G_2$. Let \[\pr\colon H_2''\to Q_2:=H_2''/(K_2\cap H_2')\] be the projection.  
Let $K_2''$ be the kernel of the composition of quotient maps  
\[ H_2''\to Q_2\to Q_2/\pr(K_2'\cap H_2).\]
Note that $K_2''$ contains $K_2\cap H_2'=K_2\cap H_2''$ and $K_2'\cap H_2=K_2'\cap H_2''$. 
Since $K_2$ and $\pr(K_2'\cap H_2)$ are compact, $K_2''\normal H_2''$ is a compact normal subgroup of $H_2''$. 
Consider the compositions of quotient maps and isomorphisms as above 
\[ f_1\colon H_1\to H_1/K_1\xrightarrow{\cong} H_2/K_2~\text{ and }~g_3\colon H_2'\to H_2'/K_2'\xrightarrow{\cong} H_3/K_3.\]
Similarly, using the inverses of the above isomorphisms we obtain homomorphisms 
\[ g_1\colon H_2\to H_2/K_2\xrightarrow{\cong} H_1/K_1~\text{ and }~f_3\colon H_3\to H_3/K_3\xrightarrow{\cong} H_2'/K_2'.\]
We set $C_i:=g_i(H_2'')$ and $E_i:=g_i(K_2'')\normal C_i$ for $i\in\{1,3\}$; further, let $\tilde C_i<H_i$ and 
$\tilde E_i<H_i$ be the preimages of $C_i$ and $E_i$ under the quotient maps. Then $\tilde C_i$ has finite index 
in $H_i$, thus also in $G_i$, and $\tilde E_i$ is a compact normal subgroup of $\tilde C_i$ for $i\in\{1,3\}$. 
The homomorphism 
\[ \tilde C_i/\tilde E_i\to H_2''/K_2'',~[x]\mapsto [f_i(x)]\]
is an isomorphism for $i\in\{1,3\}$ with inverse map $[y]\mapsto [g_i(y)]$. Since $f_i$ and $g_i$ preserve the lattices, this isomorphism restricts to an isomorphism 
\[ (\Gamma_i\cap \tilde C_i)/(\Gamma_i\cap \tilde E_i)\cong (\Gamma_2\cap H_2'')/(\Gamma_2\cap K_2'')\] 
for $i\in\{1,3\}$. This finishes the proof of transitivity. 
\end{proof}

\subsection{Discrete subgroups of locally compact groups} % (fold)
\label{sub:lattices_in_locally_compact_groups}

We collect some important, albeit easy facts used in the proof of the main result. 

\begin{lemma}[cf.~\cite{Caprace+Monod:kacmoody}*{2.C}]\label{L:open}
	Let $\Gamma<G$ be a lattice in an lcsc group. Let $U<G$ be an open subgroup.
	Then $\Gamma\cap U$ is a lattice in $U$.
\end{lemma}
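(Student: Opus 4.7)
The plan is to identify $U/(\Gamma\cap U)$ with the open subset $U\Gamma/\Gamma \subseteq G/\Gamma$ and then transport finiteness of the invariant measure across this identification.

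First, I would show that $\Gamma\cap U$ is discrete in $U$: if $V\subseteq G$ is an identity neighborhood with $V\cap\Gamma=\{1\}$, then $V\cap U$ is an identity neighborhood in $U$ meeting $\Gamma\cap U$ only at $1$. Since $U$ is itself lcsc (as an open subgroup of an lcsc group), the quotient $U/(\Gamma\cap U)$ therefore admits a $U$-invariant Radon measure, unique up to positive scalar; one realization is the push-down of the Haar measure of $U$, the latter being the restriction of the Haar measure of $G$ since $U$ is open.

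Next, I would consider the natural $U$-equivariant map
\[
\iota\colon U/(\Gamma\cap U)\longrightarrow G/\Gamma,\qquad u(\Gamma\cap U)\mapsto u\Gamma.
\]
It is well-defined, continuous, and injective, because $u_1\Gamma=u_2\Gamma$ with $u_i\in U$ forces $u_2^{-1}u_1\in \Gamma\cap U$. Its image is $U\Gamma/\Gamma$, which is open in $G/\Gamma$ because $U\Gamma$ is a union of left cosets of the open set $U\subseteq G$. Openness of $U$ also makes $\iota$ an open map, so $\iota$ is a homeomorphism onto the open subset $U\Gamma/\Gamma\subseteq G/\Gamma$.

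Finally, I would restrict the finite $G$-invariant measure on $G/\Gamma$ to $U\Gamma/\Gamma$ and transport it back through $\iota$. This yields a finite, nonzero, $U$-invariant Radon measure on $U/(\Gamma\cap U)$, which by the uniqueness up to scalar must be a positive multiple of the quotient Haar measure; hence the latter is finite and $\Gamma\cap U$ is a lattice in $U$. The only point requiring any care is this uniqueness-of-invariant-measure comparison at the end, but since $\Gamma\cap U$ is discrete in the lcsc group $U$ it is entirely standard and so no real obstacle arises.
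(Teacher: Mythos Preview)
The paper does not give its own proof of this lemma; it merely records the statement with a pointer to Caprace--Monod. Your argument is the standard one and is correct.

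One small remark: the closing appeal to uniqueness of invariant measures is unnecessary. By the paper's definition, a lattice is a discrete subgroup whose quotient carries \emph{some} finite invariant measure; you have already exhibited one by restricting the finite $G$-invariant measure on $G/\Gamma$ to the open set $U\Gamma/\Gamma$ and pulling it back through~$\iota$. (Nonvanishing is automatic: $G$ acts transitively on $G/\Gamma$, so every nonempty open subset has positive measure.) The detour through the ``quotient Haar measure'' also tacitly uses that $U$ is unimodular --- true here, since $G$ is unimodular as it contains a lattice and $U$ is open in $G$ --- but you can sidestep that entirely by stopping once the pulled-back measure is in hand.
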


\begin{lemma}[{\cite{Raghunathan:book}*{Theorem~1.13 on p.~23}}]\label{L:reducing}
	Let $G$ be an lcsc group and $N\normal G$ a normal closed subgroup. 
	The projection of a lattice $\Gamma<G$ to $G/N$ is discrete if and only if $\Gamma\cap N$ is a lattice in $N$. 
	If so, then the projection of $\Gamma$ to $G/N$ is also a lattice. 
\end{lemma}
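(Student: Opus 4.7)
The plan is to combine a short Baire-category reduction with Weil's quotient integration formula applied to the tower $N \hookrightarrow G \twoheadrightarrow G/N$. First I observe that $\pi(\Gamma) = \Gamma N/N$ is a countable subgroup of the lcsc group $G/N$, and any countable closed subgroup of an lcsc group is automatically discrete: viewed with its subspace topology it is Polish, hence Baire, so some (therefore every) point is isolated. Hence ``$\pi(\Gamma)$ is discrete'' is equivalent to ``$\pi(\Gamma)$ is closed,'' equivalently to ``$\Gamma N = \pi^{-1}(\pi(\Gamma))$ is closed in $G$.'' Moreover $\Gamma \cap N \subset \Gamma$ is discrete in $G$ for free, hence discrete in $N$. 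So in both directions the biconditional reduces to the question of finite covolume.

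\emph{Direction 1 (``closed $\Rightarrow$ lattice'').} Assume $\Gamma N$ is closed. The natural $G$-equivariant continuous map $G/\Gamma \to G/\Gamma N \cong (G/N)/\pi(\Gamma)$ has fibers homeomorphic to $N/(\Gamma \cap N)$. Weil's quotient integration formula applied to $N \normal G \twoheadrightarrow G/N$ — valid here because $N$ is closed and normal and $G$ is unimodular (since it admits a lattice) — disintegrates the finite positive $G$-invariant mass on $G/\Gamma$ as the integral over $G/\Gamma N$ of the fiber volume $\vol\bigl(N/(\Gamma\cap N)\bigr)$. Finiteness and positivity of the total mass simultaneously force both the base mass and the fiber volume to be finite and positive. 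Hence $\Gamma \cap N$ is a lattice in $N$ and $\pi(\Gamma)$ is a lattice in $G/N$, which yields the ``if so'' part of the statement at no extra cost.

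\emph{Direction 2 (``lattice $\Rightarrow$ closed'').} Assuming $\Gamma \cap N$ is a lattice in $N$, I aim to show that the $N$-orbit $N \cdot e\Gamma \subset G/\Gamma$ is closed, equivalently that $\Gamma N$ is closed in $G$. Pushing the finite $N$-invariant measure on $N/(\Gamma \cap N)$ forward along the orbit map yields a finite $N$-invariant Borel measure $\mu$ on $G/\Gamma$ whose support is the closure of $N \cdot e\Gamma$. Running the Weil formula in reverse, by integrating a compactly supported test function on $G$ against Haar and comparing with the finite $G$-invariant mass on $G/\Gamma$, one shows that $\mu$ cannot charge any point outside $N \cdot e\Gamma$, so $\supp(\mu) = N\cdot e\Gamma$ is closed. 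The delicate step is precisely this second direction: finite covolume of $\Gamma \cap N$ in $N$ does not imply cocompactness, so there is no direct reduction to a ``discrete plus bounded is finite'' argument, and the content must be extracted as a measure-theoretic consequence via the Weil formula, leveraging normality of $N$ and unimodularity of $G$.
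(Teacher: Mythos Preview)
The paper does not give its own proof of this lemma; it simply quotes Raghunathan's book. So there is no ``paper's proof'' to compare against, and I will just evaluate your argument on its merits.

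Your reduction at the start (countable closed $\Leftrightarrow$ discrete in an lcsc group, hence the question is about closedness of $\Gamma N$) is correct and useful. Direction~1 is fine: once $\Gamma N$ is closed, the tower $\Gamma<\Gamma N<G$ together with Weil's quotient formula does exactly what you say, and you get both the fiber and base covolumes finite in one stroke.

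Direction~2, however, has a genuine gap. You push forward the finite $N$-invariant measure on $N/(\Gamma\cap N)$ along the orbit map to obtain $\mu$ on $G/\Gamma$, note correctly that $\supp(\mu)=\overline{N\cdot e\Gamma}$, and then assert that ``$\mu$ cannot charge any point outside $N\cdot e\Gamma$, so $\supp(\mu)=N\cdot e\Gamma$ is closed.'' This step is circular. As a pushforward, $\mu$ is automatically concentrated on $N\cdot e\Gamma$; that is not new information. What you would need is that no point of $\overline{N\cdot e\Gamma}\setminus N\cdot e\Gamma$ lies in $\supp(\mu)$. But for any such boundary point $x$, every open neighbourhood $U$ of $x$ meets the orbit, the preimage $\phi^{-1}(U)$ is a nonempty open subset of $N/(\Gamma\cap N)$, and hence $\mu(U)>0$; so $x\in\supp(\mu)$ after all. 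In other words, ``$\supp(\mu)\subset N\cdot e\Gamma$'' is \emph{equivalent} to the orbit being closed, and your Weil-formula hand-wave does not supply an independent reason for it. The phrase ``running the Weil formula in reverse'' does not name an actual computation here.

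Raghunathan's argument for this direction does not go through the support of a pushforward measure. One workable route is to use a Borel fundamental domain $D\subset N$ for $\Gamma\cap N$ with $m_N(D)<\infty$, take a relatively compact symmetric neighbourhood $V$ of $e$ in $G$ with $V^{-1}V\cap\Gamma=\{e\}$, and show by a direct unfolding computation with the Weil formula that the translates $\gamma VD$ for $\gamma$ ranging over coset representatives of $\Gamma\cap N$ in $\Gamma$ with $\pi(\gamma)\in\pi(V)$ are pairwise disjoint and all sit inside a set of finite $m_G$-measure; this forces only finitely many such cosets, hence $\pi(\Gamma)\cap\pi(V)$ is finite, hence $\pi(\Gamma)$ is discrete. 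You correctly identified that the non-cocompact case is the delicate one, but the measure-theoretic content has to be a finiteness/disjointness count of this kind, not a support argument.
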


The following statement is our main source of commensurated subgroups. 

\begin{lemma}\label{L:a-normal}
	Let $G$ be an lcsc group and $U<G$ be a compact open subgroup. Then $U$ is commensurated by $G$. 
	For any subgroup $\Lambda<G$ the intersection $\Lambda\cap U$ is commensurated in $\Lambda$.
\end{lemma}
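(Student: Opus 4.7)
The plan is to establish the first assertion directly from compactness, and then obtain the second as a formal consequence. For the first claim, I would fix $g\in G$ and observe that $gUg^{-1}$ is again a compact open subgroup of $G$, since conjugation is a topological automorphism. The intersection $U\cap gUg^{-1}$ is then an open subgroup of both $U$ and $gUg^{-1}$. The key point, and essentially the only content of the proof, is the standard fact that an open subgroup of a compact group automatically has finite index: its cosets form a disjoint open cover of a compact space and must therefore be finite in number. Applying this inside the two compact groups $U$ and $gUg^{-1}$ gives the two finite-index conditions demanded by Definition~\ref{def: commensurated}.

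For the second statement, I would invoke the following elementary principle: whenever $H_1<H_2$ is a subgroup of finite index $n$ and $K$ is any further subgroup of an ambient group, the inclusion $K\cap H_1<K\cap H_2$ has index at most $n$, since the $H_1$-cosets in $H_2$ restrict to at most $n$ cosets of $K\cap H_1$ in $K\cap H_2$. Taking $H_2=U$, $H_1=U\cap \lambda U\lambda^{-1}$, and $K=\Lambda$, and symmetrically with the roles of $U$ and $\lambda U\lambda^{-1}$ swapped, the first part of the lemma yields that $\Lambda\cap U\cap \lambda U\lambda^{-1}$ has finite index in both $\Lambda\cap U$ and $\Lambda\cap \lambda U\lambda^{-1}=\lambda(\Lambda\cap U)\lambda^{-1}$, which is exactly the commensuration condition for $\Lambda\cap U$ inside $\Lambda$.

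There is no genuine obstacle here; the proof amounts to a one-line compactness argument combined with a purely set-theoretic manipulation of cosets, and no hypothesis beyond openness and compactness of $U$ is needed.
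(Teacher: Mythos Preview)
Your proof is correct and follows essentially the same approach as the paper: the compactness/openness argument for the first part, and then intersecting with $\Lambda$ for the second. The paper's version is terser, simply noting that the second statement ``follows by taking $g\in\Lambda$ and taking intersections with $\Lambda$,'' but your explicit coset-counting justification is exactly what underlies that sentence.
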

\begin{proof}
	For any $g\in G$, $U\cap gUg^{-1}$ is an open subgroup of the compact group $U$ (resp. in $gUg^{-1}$), 
	hence it has finite index in $U$ (resp. in $gUg^{-1}$).
	The second statement follows by taking $g\in\Lambda$ and taking intersections with $\Lambda$.
\end{proof}

\begin{lemma}[{\cite{Caprace+Monod:II}*{Lemma 2.12}}]\label{L:fg-cg}
	Any lattice envelope of a finitely generated group is compactly generated.
\end{lemma}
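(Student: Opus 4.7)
The plan is to construct a compactly generated open subgroup $H$ of $G$ that contains $\Gamma$, and then to argue that $H$ must have finite index in $G$. Take a finite symmetric generating set $S \subset \Gamma$ and a compact symmetric identity neighborhood $V \subset G$, which exists because $G$ is locally compact. I would set $H := \langle S \cup V \rangle$. Since $V$ contains an open neighborhood of $1$, each element $h \in H$ has the open neighborhood $hV \subset H$, so $H$ is an open subgroup of $G$. It is compactly generated, by the compact set $S \cup V$.

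The key step is to show $[G:H] < \infty$. Because $H$ is open and contains $\Gamma$, Lemma~\ref{L:open} (applied to $H$) gives that $\Gamma = \Gamma \cap H$ is a lattice in $H$, so $\vol(H/\Gamma) < \infty$; of course $\vol(G/\Gamma) < \infty$ by hypothesis. Since $H$ is open, the coset space $G/H$ is discrete, and the unique (up to scalar) $G$-invariant measure on it is counting measure. The $G$-equivariant projection $G/\Gamma \twoheadrightarrow G/H$ has fibers $gH/\Gamma$ (well-defined since $\Gamma \subset H$), each a translate of $H/\Gamma$, and the Haar measure on $G$ disintegrates over the partition of $G$ into cosets of $H$ as counting measure on $G/H$ against Haar measure on $H$. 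Integrating, this yields
\[
\vol(G/\Gamma) \;=\; |G/H|\cdot \vol(H/\Gamma),
\]
which forces $|G/H| < \infty$.

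Once $[G:H]$ is known to be finite, compact generation of $G$ is immediate: picking a finite set $F \subset G$ of coset representatives for $H$ in $G$, the compact set $S \cup V \cup F$ generates $G$, since every element of $G$ is an element of $H$ times an element of $F$, and $H$ is generated by $S \cup V$.

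The only mildly subtle point is the volume computation in the middle paragraph, but this is routine for \emph{open} subgroups: a fundamental domain $D \subset H$ for $\Gamma$ in $H$ yields a fundamental domain $\bigsqcup_{gH \in G/H} gD$ for $\Gamma$ in $G$ by choosing one coset representative per element of $G/H$, and the equality of volumes follows from left-invariance of Haar measure. No further obstacle is expected; the argument is essentially the one in \cite{Caprace+Monod:II}*{Lemma 2.12}.
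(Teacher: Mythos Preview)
Your argument is correct. The paper does not give its own proof of this lemma; it simply cites \cite{Caprace+Monod:II}*{Lemma 2.12}, and your proposal is essentially the standard argument found there: produce a compactly generated open subgroup $H\supset\Gamma$ and use the finite-covolume hypothesis to force $[G:H]<\infty$.
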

% \begin{proof}
% 	Let $\Gamma<G$ be a lattice in an lcsc group, and let $\Gamma$ be generated by a finite set
% 	$F\subset \Gamma$. Let $V\subset G$ be an open neighborhood of the identity 
% 	with compact closure. We claim that the compact set $K=\overline{V}\cup F$
% 	generates a subgroup $G'$ of finite index in $G$.
% 	Indeed, $G'$ contains the group generated by $V$, hence $G'$ is open.
% 	Thus $G/G'$ is a countable discrete space. But $\Gamma\subset G'$, hence $G/G'$ is finite.
% 	Hence by adding a finite set to $K$ one obtains a compact generating set for $G$.
% \end{proof}

\subsection{Closed subgroups of finite covolume} % (fold)
\label{sub:closed_subgroup_of_finite_covolume}

% subsection closed_subgroup_of_finite_covolume (end)
The notion of a lattice, that is a discrete subgroup of finite covolume, can be generalized
to closed, but not necessarily discrete, subgroups as follows. 
	A closed subgroup $H<G$ of an lcsc group~$G$ is said to have \emph{finite covolume} in $G$ if
	$G/H$ carries a finite $G$-invariant Borel measure.

If $H$ has finite covolume in $G$, then $G$ is unimodular if and only if 
$H$ is unimodular~\cite{Raghunathan:book}*{Lemma~1.4 on p.~18}.

\begin{lemma}[\cite{Raghunathan:book}*{Lemma~1.6 on p.~20}] \label{L:intermediate}
	Let $G$ be an lcsc group, and $H<L<G$ closed subgroups. 
	Then $H<G$ has finite covolume if and only if both $L<G$ and $H<L$ have finite covolumes.
\end{lemma}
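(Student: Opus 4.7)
The plan is to establish both directions via pushforward and disintegration of measures. For the forward implication, assume $H<G$ has finite covolume, with $\lambda$ a finite $G$-invariant Borel measure on $G/H$. The natural $G$-equivariant projection $p\colon G/H\to G/L$ yields a finite $G$-invariant measure $p_\ast\lambda$ on $G/L$, so $L<G$ has finite covolume. To see that $H<L$ has finite covolume, I would disintegrate $\lambda$ along $p$: the conditional measures $\{\lambda_x\}_{x\in G/L}$ are supported on the fibers, each of which is canonically (after choosing a base point) a copy of $L/H$. Using the $G$-equivariance of both $\lambda$ and $p$, the family $\{\lambda_x\}$ is essentially a single measure $\nu$ on $L/H$, transported around by the $G$-action; this $\nu$ is $L$-invariant, and it is finite since almost every $\lambda_x$ is.

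For the converse, given a finite $G$-invariant measure $\mu$ on $G/L$ and a finite $L$-invariant measure $\nu$ on $L/H$, I would construct a finite $G$-invariant measure on $G/H$ by integration. Choose a Borel section $s\colon G/L\to G$ (which exists by standard measurable selection in the Polish setting), and transport $\nu$ via $s(x)$ to a measure $\nu_x$ on the fiber $p^{-1}(x)\subset G/H$. Then define $\lambda\defq\int_{G/L}\nu_x\,d\mu(x)$. Total finiteness is immediate from finiteness of $\mu$ and $\nu$. The $L$-invariance of $\nu$ ensures that $\nu_x$ is independent of the chosen section representative, and $G$-invariance of $\lambda$ follows by a change-of-variables argument using the $G$-invariance of $\mu$ together with the cocycle relating $s(gx)$ and $g\cdot s(x)$.

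The main obstacle will be the bookkeeping required to verify $G$-invariance in both directions: checking that the disintegration is genuinely $G$-equivariant, and that the integrated measure $\lambda$ does not depend on the section $s$. Both rest on the uniqueness (up to scalar) of invariant measures on homogeneous spaces, and can be handled cleanly by testing against compactly supported continuous functions and appealing to Fubini. A more elegant packaging is to invoke Weil's integration formula for the tower $H<L<G$: given left Haar measures with compatible modular function conditions, this formula both produces the invariant measure on $G/H$ from the pair of invariant measures on $G/L$ and $L/H$, and yields their recovery by disintegration, thereby encoding both directions of the lemma simultaneously.
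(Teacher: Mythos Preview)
The paper does not supply its own proof of this lemma; it is quoted directly from Raghunathan's book \cite{Raghunathan:book}*{Lemma~1.6 on p.~20} and used as a black box. Your sketch is essentially the standard argument one finds there (and in Weil), so there is nothing to compare: your pushforward/disintegration approach, packaged via the integration formula for the tower $H<L<G$, is exactly the classical proof.

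One small point worth making explicit in your write-up: the existence of invariant measures on the relevant quotients requires the modular compatibility $\Delta_G|_H=\Delta_H$, etc. In the forward direction this is automatic once you assume a finite invariant measure on $G/H$ exists; in the converse direction you should note that $\Delta_G|_L=\Delta_L$ and $\Delta_L|_H=\Delta_H$ combine to give $\Delta_G|_H=\Delta_H$, so that $G/H$ indeed carries an invariant measure before you start building it. This is the only place your sketch is slightly underspecified.
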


It is well known that a torsion-free lattice in a tdlc 
group is uniform. A mild generalization is given by the following.
%of this fact, pertaining to finite covolume subgroups. 

\begin{lemma}[\cite{BCGM}*{Corollary~4.11}] \label{L:bounded-clopen}
Let $G$ be a unimodular tdlc group. Let $H<G$ be a closed subgroup of finite covolume. 
Let $m_H$ be the Haar measure of $H$. 
If there is an upper bound on the Haar measures of compact open subgroups in $H$, i.e.
\[
	\sup \bigl\{ m_H(U) \mid U\textrm{\ is\ a\ compact\ open\ subgroup\ in\ } H \bigr\}<\infty,
\]
then $G/H$ is compact.
\end{lemma}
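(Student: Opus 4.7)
The plan is to combine van Dantzig's theorem with Weil's integration formula. For a fixed compact open subgroup $V<G$, I will compute the $\mu$-measure of each double coset image $Vg_iH/H$ in $G/H$ and then use the hypothesized upper bound on the Haar measure of compact open subgroups of $H$ to rule out the existence of infinitely many such double cosets.

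First I would fix a compact open subgroup $V<G$, provided by van Dantzig's theorem. Since $G$ is unimodular and $H<G$ has finite covolume, $H$ is also unimodular (as recalled in \S\ref{sub:closed_subgroup_of_finite_covolume}), so for a suitable normalization of Haar measures $m_G$ on $G$ and $m_H$ on $H$, the finite $G$-invariant measure $\mu$ on $G/H$ satisfies Weil's formula
\[
\int_G f\,dm_G=\int_{G/H}\int_H f(gh)\,dm_H(h)\,d\mu(gH).
\]

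Next, I would decompose $G$ into the disjoint union of $V$-$H$ double cosets $G=\bigsqcup_{i} Vg_iH$, which is a countable decomposition by second-countability of $G$. Plugging $f=\mathbf 1_{Vg_i}$ into Weil's formula, a short computation shows that for $g=vg_ih_0\in Vg_iH$ one has $g^{-1}Vg_i\cap H=h_0^{-1}U_i$, a left translate of the compact open subgroup $U_i\defq g_i^{-1}Vg_i\cap H<H$, while this intersection is empty for $g\notin Vg_iH$. By $H$-invariance of $m_H$, the inner integral equals $m_H(U_i)$ on $Vg_iH/H$ and vanishes elsewhere, so the formula collapses to
\[
m_G(V)=m_H(U_i)\cdot\mu(Vg_iH/H).
\]

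Finally, the hypothesis furnishes a constant $C$ with $m_H(U_i)\le C$ for all $i$, hence $\mu(Vg_iH/H)\ge m_G(V)/C>0$ for every $i$. Since $\mu(G/H)=\sum_i\mu(Vg_iH/H)<\infty$ and each summand is bounded below by the same positive constant, the index set must be finite, so $G=\bigsqcup_{i=1}^n Vg_iH$ and $G/H$ is a finite union of continuous images of the compact sets $Vg_i$, hence compact. The only step that requires any care is the Weil-formula bookkeeping identifying $g^{-1}Vg_i\cap H$ with $h_0^{-1}U_i$; I do not anticipate any serious obstacle.
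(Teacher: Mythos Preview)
Your argument is correct. The paper itself does not supply a proof of this lemma; it merely cites \cite{BCGM}*{Corollary~4.11}. Your self-contained proof via Weil's integration formula is the standard one and is exactly the kind of argument underlying that reference: fix a compact open $V<G$, decompose $G$ into the (open, hence countably many) double cosets $Vg_iH$, and use Weil's formula with $f=\mathbf 1_{Vg_i}$ to obtain $m_G(V)=m_H(U_i)\cdot\mu(Vg_iH/H)$ with $U_i=g_i^{-1}Vg_i\cap H$ a compact open subgroup of $H$. The uniform bound $m_H(U_i)\le C$ then forces finitely many double cosets, hence compactness of $G/H$. The bookkeeping step you flagged is fine: for $g=vg_ih_0$ one has $g^{-1}Vg_i=h_0^{-1}g_i^{-1}Vg_i$, and intersecting with $H$ and using $h_0\in H$ gives $h_0^{-1}U_i$, whose $m_H$-measure equals $m_H(U_i)$ by left invariance.
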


\begin{lemma}\label{lem: ss-no-intermediate}
	Let $G$ be a connected, semi-simple, center-free, real Lie group without compact factors. Every closed subgroup $H<G$ of finite covolume 
	has the form $H=\Delta\times G_2$, 
	where $G_2$ is a direct factor of $G$, $G=G_1\times G_2$, and $\Delta<G_1$ is a lattice. 
\end{lemma}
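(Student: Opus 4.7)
The plan is to reduce the statement to an ideal decomposition of $\mathfrak{g}$. Writing $H^0$ for the identity component of $H$ and $\mathfrak{h}\subset\mathfrak{g}$ for its Lie algebra, my main goal will be to show that $\mathfrak{h}$ is an ideal of $\mathfrak{g}$; once this is done, the remainder of the argument is routine Lie-theoretic bookkeeping.

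To prove $\mathfrak{h}\normal\mathfrak{g}$, I will first observe that $H^0$ is characteristic in $H$, so $H$ normalizes $\mathfrak{h}$ via $\Ad$ and is therefore contained in the $\Ad$-stabilizer $N:=\{g\in G:\Ad(g)\mathfrak{h}=\mathfrak{h}\}$. By Lemma~\ref{L:intermediate}, $N$ inherits from $H$ the property of having finite covolume in $G$. The main obstacle will then be to promote this to $N=G$, and the natural tool is the Borel density theorem: the Grassmannian $\operatorname{Gr}(\dim\mathfrak{h},\mathfrak{g})$ is a real algebraic $G$-variety containing the orbit of $\mathfrak{h}$ with stabilizer $N$, and the finite $G$-invariant measure on $G/N$ pushes forward to a finite $G$-invariant measure on this orbit. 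Since $G$ is a connected, semi-simple real Lie group without compact factors, Borel density forces this measure to be supported on $G$-fixed points, so $\mathfrak{h}$ itself must be $G$-fixed, i.e., an ideal of $\mathfrak{g}$.

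With $\mathfrak{h}$ an ideal, I would decompose $\mathfrak{g}=\mathfrak{g}_1\oplus\mathfrak{g}_2$ with $\mathfrak{g}_2=\mathfrak{h}$, let $G_i$ be the connected analytic subgroup with Lie algebra $\mathfrak{g}_i$, and use that $G$ is center-free to conclude $G=G_1\times G_2$ as a direct product of closed subgroups. Since $G_2$ is the unique connected subgroup of $G$ with Lie algebra $\mathfrak{h}$, we have $H^0=G_2$, so $H\supset G_2$ splits as $H=\Delta\times G_2$ where $\Delta:=p_1(H)<G_1$ and $p_1$ is the projection. A short verification then shows that $\Delta$ is closed in $G_1$ (since $\Delta\times\{e\}=H\cap(G_1\times\{e\})$), discrete (since the identity component of $\Delta\times G_2$ must equal $H^0=G_2$, forcing $\Delta^0=\{e\}$), and of finite covolume in $G_1$ (since $G/H\cong G_1/\Delta$ carries the finite invariant measure of $G/H$). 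Hence $\Delta<G_1$ is a lattice, yielding the claimed decomposition.
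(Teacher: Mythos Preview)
Your proof is correct and follows essentially the same approach as the paper: both arguments use Borel's density theorem to show that $\mathfrak{h}$ is an ideal of $\mathfrak{g}$, and then unwind the resulting direct-factor decomposition. The paper phrases the density step as ``$H$ is Zariski dense in $G$, hence the $\Ad(H)$-invariant subspace $\mathfrak{h}$ is $\Ad(G)$-invariant,'' whereas you route it through the normalizer $N$ and the Grassmannian via Lemma~\ref{L:intermediate}; these are equivalent formulations of the same idea, and your subsequent bookkeeping (splitting $H=\Delta\times G_2$ and verifying that $\Delta$ is a lattice) is carried out in slightly more detail than the paper's but with the same content.
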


\begin{proof}
The Lie algebra $\frak{h}$ of the connected component $H^0$ is 
${\rm Ad}(H)$-invariant. 
Since $H$ is Zariski dense in $G$ by Borel's density theorem~\cite{Borel-density}, 
it follows that $\frak{h}$ is an ideal in the semisimple 
Lie algebra $\frak{g}$ of $G$. Let 
\[\frak{g}=\frak{g_1}\oplus \dots\oplus\frak{g_n}\]
be the decomposition into simple Lie algebras. Let 
\[I_1=\{i\in\{1,\dots,n\}\mid \frak{g_i}\cap\frak{h}=\{0\}\},\]
and let $I_2=\{1,\dots, n\}\backslash I_1$. Then $G=G_1\times G_2$ 
with $G_k$ being the Lie subgroup with Lie algebra 
\[\bigoplus_{i\in I_k} \frak{g}_i.\]
Since $\frak{g_i}$ is simple we either have $\frak{g}_i\cap\frak{h}=\{0\}$ 
or $\frak{g}_i\cap\frak{h}=\frak{g}_i$. 
This implies $H=(G_1\times \{1\}\cap H)\times G_2$ where 
$\Delta:=G_1\times \{1\}\cap H$ is discrete. Since $H$ has finite covolume, $\Delta$ is a lattice. 
\end{proof}

\subsection{The amenable radical} % (fold)
\label{sub:the_amenable_radical}

\begin{definition}
	The \emph{amenable radical} of an lcsc group~$G$ is the maximal closed normal 
	amenable subgroup. We denote it by $\Radam(G)\normal G$. 
\end{definition}

\begin{lemma}\label{L:Radam-and-fincovol}
	Let $G$ be an lcsc group, and let  
	$H<G$ be a closed subgroup of finite covolume. 
	Then $\Radam(H)=\Radam(G)\cap H$. 
\end{lemma}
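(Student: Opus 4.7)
The plan is to prove both inclusions separately; the inclusion $\Radam(G) \cap H \subseteq \Radam(H)$ is nearly formal, while the reverse is the heart of the matter. For the easy inclusion, it suffices to verify that $\Radam(G) \cap H$ is a closed normal amenable subgroup of $H$ and invoke the maximality property of $\Radam(H)$: closedness is automatic as an intersection; amenability follows since closed subgroups of amenable groups are amenable; and normality in $H$ follows from $\Radam(G) \normal G$ together with $H \subseteq G$.

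For the reverse inclusion, set $A := \Radam(H)$ and let $N := \overline{\langle g A g^{-1} : g \in G \rangle}$ be the closed normal closure of $A$ in $G$. Once $N$ is shown to be amenable, it is contained in $\Radam(G)$ by maximality, and hence so is $A$. A useful preliminary observation is that $A$ is characteristic in $H$, so the normalizer $N_G(A)$ contains $H$ and is therefore closed of finite covolume in $G$ by Lemma~\ref{L:intermediate}. Thus the conjugates $g A g^{-1}$ are parametrized by the finite-measure space $G/N_G(A)$, and each is isomorphic to $A$ as a topological group, in particular closed and amenable.

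The main obstacle is showing that $N$ itself is amenable. My plan would be to build a $G$-invariant mean on $L^\infty(N)$ by starting from an $A$-invariant mean $m$ on $L^\infty(A)$, transferring it by conjugation to each $g A g^{-1}$, and integrating against the finite $G$-invariant measure on $G/N_G(A)$ via a Borel section. The delicate point is that distinct conjugates of $A$ may have nontrivial overlaps and jointly generate elements lying in no single conjugate, so the averaging must be shown to yield a well-defined, $G$-invariant mean on all of $N$ rather than merely on the union of the individual pieces. A more conceptual alternative would invoke the Furstenberg-boundary characterization $\Radam(G) = \ker(G \acts \partial_F G)$ and argue, using the finite covolume of $H$ in $G$, that the closed normal amenable subgroup $A$ of $H$ must act trivially on every minimal proximal $G$-flow, which places $A$ directly into $\Radam(G)$.
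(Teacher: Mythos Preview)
Your easy inclusion is fine and matches the paper.

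For the reverse inclusion, your first plan---forming the closed normal closure $N$ of $A=\Radam(H)$ in $G$ and averaging means over $G/N_G(A)$---has a gap you yourself identify but do not close, and I do not see how to close it. A mean on each conjugate $gAg^{-1}$ gives you nothing on products of elements drawn from different conjugates, and $N$ is generated by such products. There is no general principle that the normal closure of an amenable subgroup is amenable (in a simple non-amenable group every nontrivial cyclic subgroup has the whole group as its normal closure), so the finite-covolume hypothesis must enter in a more substantive way than just making $G/N_G(A)$ carry a finite invariant measure. I would abandon this line.

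Your ``more conceptual alternative'' is exactly the paper's argument, but two points need sharpening. First, the relevant characterization is that $\Radam(G)$ is the common kernel of all \emph{minimal strongly proximal} $G$-actions on compact metrizable spaces (equivalently, the kernel of the action on the Furstenberg boundary); you wrote ``minimal proximal'', which is a weaker notion and not the right one here. Second, the substance of the argument is the fact (due to Furstenberg, stated in the paper as Lemma~\ref{L:Furst}) that if $G\curvearrowright M$ is minimal and strongly proximal and $H<G$ is closed of finite covolume, then the restricted $H$-action is again minimal and strongly proximal. Once you have that, $\Radam(H)$ acts trivially on every such $M$, hence lies in $\Radam(G)$. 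You gestured at ``using the finite covolume of $H$'' but did not isolate this lemma, and it is the entire content of the proof.
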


This lemma can be deduced from the following result. 

\begin{lemma}[\cite{furstenberg-bourbaki}*{Prop 4.4 and Theorem 4.5}]\label{L:Furst}
	Let $G$ be an lcsc group and $G\acts M$ a continuous action that is minimal and strongly proximal.
	Let $H<G$ be a closed subgroup of finite covolume. 
	Then the restriction of the $G$-action to $H$ is also minimal and strongly proximal.
\end{lemma}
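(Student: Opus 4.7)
The plan is to follow Furstenberg's original arguments in~\cite{furstenberg-bourbaki}, using the $G$-invariant probability measure $\nu$ on $G/H$ provided by the finite-covolume hypothesis as the single extra ingredient beyond the hypotheses already in force.

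First I would establish a preliminary ``no-invariant-measure'' dichotomy: a minimal strongly proximal $G$-action on a compact space $M$ admits no $G$-invariant probability measure unless $M$ is a singleton. Indeed, any $G$-fixed $\mu\in\Prob(M)$ has $\overline{G\mu}=\{\mu\}$, while strong proximality forces $\overline{G\mu}$ to contain a Dirac, so $\mu=\delta_x$ for a $G$-fixed $x\in M$, whence minimality collapses $M$ to $\{x\}$. This transfers to $H$: any $\mu\in\Prob(M)^H$ averages against $\nu$ to the well-defined (because $g.\mu$ depends only on $gH$) $G$-invariant probability measure
\[
  \tilde\mu:=\int_{G/H}g.\mu\,d\nu(gH)\in\Prob(M)^G,
\]
excluding any non-trivial $H$-invariant probability on $M$.

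For minimality, suppose $F\subsetneq M$ is non-empty, closed, $H$-invariant and, by Zorn, $H$-minimal. The closed subgroup $N:=\{g\in G:gF=F\}$ contains $H$, hence has finite covolume in $G$ by Lemma~\ref{L:intermediate}, with invariant probability $\nu_N$ on $G/N$. Given an $N$-invariant probability measure $\rho$ on $F$, the bundle-average $\int_{G/N}g_*\rho\,d\nu_N(gN)\in\Prob(M)$ is $G$-invariant and non-trivial, contradicting the preliminary step. For strong proximality I run the same scheme one level up in $\Prob(M)$: given $\mu\in\Prob(M)$, set $Q:=\overline{H.\mu}$ and assume $Q\cap\Delta(M)=\emptyset$ where $\Delta(M)\subseteq\Prob(M)$ denotes the Dirac locus. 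Pass to an $H$-minimal $Q_0\subseteq Q$, take its $G$-stabilizer $N$ (again of finite covolume), and $\nu_N$-average an $N$-invariant probability on $Q_0$ to obtain a $G$-invariant probability on $\Prob(M)$ supported outside $\Delta(M)$. But $G$-strong proximality of $M$ forces every $G$-invariant probability on $\Prob(M)$ to be supported on $\Delta(M)$: the $G$-pushforward of such a measure under the barycenter map is a $G$-invariant probability on $M$, hence trivial by the preliminary step, which forces the original measure to concentrate on Diracs. This yields the required contradiction.

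The principal obstacle is the existence of the $N$-invariant fiber measure $\rho$ on $F$ (and on $Q_0$) when $N$ is non-amenable, since Markov--Kakutani is unavailable for the convex $N$-space $\Prob(F)$. Following Furstenberg, this is handled by a fixed-point argument on the compact convex $G$-space of measurable sections $G/N\to\Prob(F)$ under weak-$*$ convergence, using $\nu_N$ to realize a Ryll--Nardzewski-type fixed point. The finite-covolume hypothesis enters essentially at this juncture---both to ensure that $N$ itself has finite covolume and to supply the integration structure enabling the fixed-point extraction. Verifying this measurable fixed-point construction, and checking that the resulting $G$-invariant measure on $M$ (or on $\Prob(M)$) is non-trivial rather than degenerating on a measure-zero set of fibers, is the technical core of the proof.
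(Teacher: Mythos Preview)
The paper does not give its own proof of this lemma; it cites Furstenberg's Bourbaki notes with the remark that his argument for lattices extends to closed subgroups of finite covolume. So I evaluate your sketch on its own terms. Your architecture is sound up to the obstacle you yourself flag, but the proposed resolution does not work. Ryll--Nardzewski requires the action on the space of sections to be distal (noncontracting), and there is no reason for that here: the $G$-action combines the measure-preserving shift on $G/N$ with the fibrewise action on $\Prob(M)$, and nothing prevents weak-$*$ collapse of differences of sections. Finite covolume supplies an invariant measure to integrate against, not distality, and without the latter you have no mechanism producing an $N$-invariant $\rho\in\Prob(F)$ when $N$ is non-amenable. (A small aside: your assertion that every $G$-invariant probability on $\Prob(M)$ must concentrate on $\Delta(M)$ is off---there simply are none, since the barycenter would lie in $\Prob(M)^G=\emptyset$---though the contradiction you want still goes through.)

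The standard route bypasses the search for an $N$-fixed point altogether. Given a proper closed convex $H$-invariant $C\subsetneq\Prob(M)$, consider the compact convex $G$-space of measurable sections $\sigma\colon G/H\to\Prob(M)$ with $\sigma(gH)\in gC$ (non-empty by measurable selection). The barycenter map $\sigma\mapsto\int_{G/H}\sigma\,d\nu$ is affine and $G$-equivariant, so its image is a non-empty closed convex $G$-invariant subset of $\Prob(M)$; since minimality plus strong proximality make $\Prob(M)$ $G$-irreducible (any such subset contains the unique $G$-minimal set $\Delta(M)$, hence its closed convex hull $\Prob(M)$), the image is all of $\Prob(M)$. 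In particular each Dirac $\delta_x$ is a barycenter of some section, and extremality forces that section to equal $\delta_x$ almost everywhere, whence $\delta_x\in gC$ for $\nu$-a.e.\ $gH$. A Fubini argument against a fully supported reference measure on $M$ then yields some $g$ with $\Delta(M)\subset gC$, so $C=\Prob(M)$, a contradiction. Taking $C=\Prob(F)$ gives $H$-minimality; taking $C=\overline{\conv}(\overline{H\mu})$, whose extreme points lie in $\overline{H\mu}$ by Milman's converse to Krein--Milman, gives $H$-strong proximality.
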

In \cite{furstenberg-bourbaki} Furstenberg was interested in the restriction to lattices, but the proof 
applies to general closed subgroups of finite covolume.

\begin{proof}[Proof of Lemma~\ref{L:Radam-and-fincovol} from Lemma~\ref{L:Furst}]
	The very definition of $\Radam(G)$ yields the inclusion
	\[
		\Radam(G)\cap H<\Radam(H).
	\] 
	For the converse inclusion, we use an equivalent characterization of the amenable radical as the common kernel of all 
	\emph{minimal, strongly proximal} actions on compact metrizable spaces~\cite{Furman-strprox}*{Proposition~7}. 
	Thus for the converse inclusion it suffices to show that given an arbitrary minimal and strongly proximal $G$-action $G\to\Homeo(M)$,
	its restriction to $H$ is also minimal and strongly proximal, because it would show that $\Radam(H)$ acts trivially in every 
	minimal strongly proximal $G$-action, which is the content of Lemma~\ref{L:Furst}.
\end{proof}

% \begin{lemma}\label{L:ABmodAC}
% 	Let $A, B$ be lcsc groups, and $C<B$ a closed subgroup.\\
% 	Then $A\times C<A\times B$ is a subgroup of finite covolume iff $C<B$ has finite covolume.
% \end{lemma}
% \begin{proof}
% 	This is immediate from the the fact that the product of Haar measures gives a Haar measure on the product of groups: 
% 	$m_{A\times B}=m_A\times m_B$.
% \end{proof}

% \begin{lemma}\label{L:}
% 	Let $\Gamma<G$, $\Lambda<H$ be lattices in locally compact groups,
% 	$\rho:G\to H$ a continuous homomorphism, 
% 	so that $\Ker(\rho)\cap \Gamma=\{1\}$ and $\rho(\Gamma)\subset \Lambda$.\\
% 	Then $\Ker(\rho)$ is compact, and $\rho(\Gamma)$ has finite index in $\Lambda$.
% \end{lemma}

% subsection the_amenable_radical (end)

\subsection{(Outer) automorphism groups} % (fold)
\label{sub: outer automorphism groups}
We describe various results about groups of 
automorphisms and groups of outer automorphisms of certain groups.
We are confident that the main result of this section, Theorem~\ref{thm: outer automorphism groups are finite}, is known to experts, but it does not seem to be in the literature in the appropriate generality. 

\begin{theorem}
\label{thm: outer automorphism groups are finite}
Let $K_1,\dots,K_n$ be number fields, and let $\mathbf{H}_i$ be connected, non-commutative, adjoint, absolutely simple $K_i$-groups. Let 
$S_i\subset \calV_i$ be finite compatible sets of places of $K_i$ as in \S\ref{sub:arithmetic}. Then any group that is (abstractly) commensurable with 
\[ \prod_{i=1}^n \mathbf{H}_i(\calO_i[S_i])\]
has a finite outer automorphism group. 
\end{theorem}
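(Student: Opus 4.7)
Set $\Gamma_{\mathrm{ar}}:=\prod_{i=1}^n\mathbf{H}_i(\calO_i[S_i])$, viewed as an S-arithmetic lattice in the semisimple group $G:=\prod_{i=1}^n\prod_{v\in S_i}\mathbf{H}_i(K_{i,v})$. Since every $\mathbf{H}_i$ is adjoint, $G$ is center-free; Borel density then forces $Z(\Gamma_{\mathrm{ar}})=\{1\}$, and consequently any group $\Lambda$ abstractly commensurable with $\Gamma_{\mathrm{ar}}$ has finite center. My plan is first to reduce the theorem to the evaluation of $\Out(\Gamma_{\mathrm{ar}})$ itself, and then to carry out that evaluation via rigidity.

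For the reduction, I would choose finite-index subgroups $\Lambda_0\leq\Lambda$ and $\Gamma_0\leq\Gamma_{\mathrm{ar}}$ with $\Lambda_0\cong\Gamma_0$. After replacing them by their characteristic cores---finite intersections of subgroups of a fixed finite index, which remain of finite index because both ambient groups are finitely generated---I may assume they are characteristic in $\Lambda$ and $\Gamma_{\mathrm{ar}}$ respectively. The restriction map $\Aut(\Lambda)\to\Aut(\Lambda_0)$ then has kernel parametrized by crossed homomorphisms $\Lambda\to C_\Lambda(\Lambda_0)$; the centralizer $C_\Lambda(\Lambda_0)$ embeds into the finite quotient $\Lambda/\Lambda_0$ (as $Z(\Lambda_0)=\{1\}$) and is therefore finite, while $\Lambda$ is finitely generated, so the kernel is finite. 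A parallel bookkeeping for $\Inn(\Lambda)$ versus $\Inn(\Lambda_0)$, together with the analogous reduction for $\Gamma_{\mathrm{ar}}$, transfers finiteness of $\Out(\Gamma_{\mathrm{ar}})$ to $\Out(\Lambda)$.

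For $\Out(\Gamma_{\mathrm{ar}})$ itself, I would argue factor by factor. Each $\Gamma_i:=\mathbf{H}_i(\calO_i[S_i])$ is an irreducible S-arithmetic lattice in the absolutely simple group $G_i:=\prod_{v\in S_i}\mathbf{H}_i(K_{i,v})$. Combining Margulis' commensurator super-rigidity (when the $S_i$-rank of $\mathbf{H}_i$ is at least two) with Mostow--Prasad strong rigidity (in the rank-one case) shows that every abstract automorphism of $\Gamma_i$ is the restriction of a continuous automorphism of $G_i$; the companion arithmeticity theorem of~\cite{BFS-adelic} cited in the paper provides the uniform framework across factors. Because each $\Gamma_i$ is absolutely simple and does not split as a commuting product, any automorphism of $\Gamma_{\mathrm{ar}}$ must permute the $\Gamma_i$ within abstract isomorphism classes, with image in a finite subgroup of $S_n$.

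It remains to observe that the topological outer automorphism group of each $G_i$ is finite: by Borel--Tits it is an extension of a finite group of Dynkin-diagram symmetries of $\mathbf{H}_i$ by the field automorphism groups of the completions $K_{i,v}$, and a number field admits only finitely many field automorphisms (its completions inherit this finiteness). Assembling across the $n$ factors and modding out inner automorphisms---which on each factor cover $\Inn(\Gamma_i)$ since $G_i$ is center-free---yields $\Out(\Gamma_{\mathrm{ar}})$ finite, and by the reduction above the same holds for any commensurable $\Lambda$. The main obstacle is the rigidity step: reconciling Margulis rigidity in higher rank with Mostow--Prasad rigidity in rank one, uniformly across a product of possibly mixed-rank factors; this is precisely what the arithmeticity theorem of~\cite{BFS-adelic} referenced in the paper is designed to handle.
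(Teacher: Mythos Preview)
Your overall strategy---reduce via a characteristic finite-index subgroup and then invoke strong rigidity to embed $\Aut$ into the continuous automorphisms of the ambient semisimple group---is the same as the paper's, but the reduction step as written has a genuine gap. You cannot simultaneously replace $\Lambda_0$ by its characteristic core in $\Lambda$ \emph{and} $\Gamma_0$ by its characteristic core in $\Gamma_{\mathrm{ar}}$ while preserving the isomorphism $\Lambda_0\cong\Gamma_0$: the two cores are taken inside different ambient groups and have no reason to correspond. What actually works (and what the paper does) is to take only the $\Lambda$-side characteristic; then the image in $\Gamma_{\mathrm{ar}}$ is merely some finite-index subgroup, and you must know that \emph{every} finite-index subgroup of $\Gamma_{\mathrm{ar}}$ has finite $\Out$. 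The paper isolates this as the ``strongly outer finite'' property and proves it in Proposition~\ref{prop:outlattices} by applying Mostow--Prasad rigidity to an arbitrary finite-index $\Gamma<\mathbf{H}(\calO[S])$, not just to $\mathbf{H}(\calO[S])$ itself. Your rigidity paragraph contains the ingredients for this, but you must state and use it at that level of generality; reducing only to $\Out(\Gamma_{\mathrm{ar}})$ is not enough.

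Two smaller points. First, the assertion that an automorphism of $\Gamma_{\mathrm{ar}}$ permutes the factors $\Gamma_i$ needs proof; the paper handles this via a standardization lemma (Lemma~\ref{lem:standradization}) showing that any finite-index embedding of a product into a product of strongly irreducible groups is, up to a bijection of index sets, a product of embeddings---and in fact one needs the analogous statement for arbitrary finite-index subgroups of the product (Lemma~\ref{lem:standchar}). Second, the reference to~\cite{BFS-adelic} is misplaced: that paper supplies an arithmeticity theorem for lattices in products, not the rigidity statement you need here. The correct input is the Mostow--Prasad--Margulis strong rigidity theorem~\cite[Theorem~VII.7.1]{Margulis:book}, which covers both the higher-rank and rank-one factors uniformly.
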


The proof of Theorem~\ref{thm: outer automorphism groups are finite} will be given by the end of this subsection.
This theorem will be used in \S\ref{sec:proof of main result}
together with Lemma~\ref{lem: splitting} to split certain 
extensions of groups.

For a subgroup $B<A$ of a group $A$, let $\Aut(A)$, $\Aut_B(A)$, and $\Out(A)$ denote the automorphism group of $A$, the subgroup of $\Aut(A)$ preserving $B$ set-wise, and the outer automorphism group of $A$. For a $\sigma$-compact lcsc group $G$ we denote the group of continuous automorphisms by $\Auttop(G)$ and the group of  
    continuous outer automorphisms by $\Outtop(G)=\Auttop(G)/\Inn G$. Note that a continuous automorphism of $G$ is a homeomorphism by the open mapping theorem~\cite{bourbaki}*{IX. \S 5.3}. 
     
We begin by stating two auxiliary results which are elementary. We leave the first one to the reader and provide a reference for the second one. 

\begin{lemma}\label{lem: splitting}
Let $B\normal A$ be a normal subgroup in a group $A$. Then the kernel of the natural homomorphism 
$A\to \Out(B)$ is $B\calZ_A(B)$, where $\calZ_A(B)$ is the centralizer of $B$ in $A$. If, in addition, $B$ has trivial center then $B\calZ_A(B)$ is a direct product $B\times \calZ_A(B)$. In particular, the short exact sequence $B\hookrightarrow A\twoheadrightarrow A/B$ splits as a direct product provided $B$ has trivial center and $A\to\Out(B)$ has trivial image. 
\end{lemma}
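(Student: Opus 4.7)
The statement is an elementary fact about normal subgroups, so the plan is simply to unpack the definitions and trace through the conjugation action.

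The first step is to identify the homomorphism $A \to \Out(B)$. Since $B \normal A$, conjugation gives a well-defined homomorphism $c\colon A \to \Aut(B)$ by $c(a)(x) = axa^{-1}$, and composing with the projection $\Aut(B) \twoheadrightarrow \Out(B)$ yields the homomorphism in question. An element $a \in A$ lies in the kernel if and only if $c(a)$ is an inner automorphism of $B$, i.e.~there exists $b \in B$ with $axa^{-1} = bxb^{-1}$ for all $x \in B$. Rewriting this as $(b^{-1}a)x(b^{-1}a)^{-1} = x$ for all $x \in B$ shows that $b^{-1}a \in \calZ_A(B)$, so $a \in b\,\calZ_A(B) \subset B\,\calZ_A(B)$. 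Conversely, any $a = bz$ with $b \in B$ and $z \in \calZ_A(B)$ acts on $B$ by conjugation as $b$ does, hence as an inner automorphism, so $a$ is in the kernel. This gives the identification $\ker(A \to \Out(B)) = B\,\calZ_A(B)$.

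For the second claim, assume $B$ has trivial center. Then $B \cap \calZ_A(B)$ is precisely the center of $B$, hence trivial. Moreover, every element of $B$ commutes with every element of $\calZ_A(B)$ by definition of the centralizer, and both subgroups are normal in their product $B\,\calZ_A(B)$ (since they commute elementwise). A subgroup generated by two commuting normal subgroups with trivial intersection is their internal direct product, so $B\,\calZ_A(B) = B \times \calZ_A(B)$.

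Finally, if $B$ has trivial center and the map $A \to \Out(B)$ has trivial image, then by the two statements just proved $A = B \times \calZ_A(B)$, and the projection onto the second factor provides a splitting of $A \twoheadrightarrow A/B$ whose image $\calZ_A(B)$ commutes with $B$, realizing the extension as a direct product. There is no substantive obstacle here; the only point worth checking carefully is the passage from ``$B$ and $\calZ_A(B)$ commute and intersect trivially'' to the internal direct product decomposition, but this is standard.
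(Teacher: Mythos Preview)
Your proof is correct and is exactly the standard argument one would expect. The paper itself does not give a proof of this lemma: it is stated as elementary and explicitly left to the reader, so there is nothing to compare against beyond noting that your write-up is precisely the kind of verification the authors had in mind.
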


\begin{lemma}[\cite{wells}]\label{lem: cocycle1}
Let $B\normal A$ be a normal subgroup in a group~$A$. Let $C$ be the center of $B$. Then the 
sequence of groups 
\begin{equation}\label{eq: nonabelian exact}
 1\to Z^1(A/B; C)\xrightarrow{j} \Aut_B(A)\xrightarrow{\pi} \Aut(B)\times \Aut(A/B)
\end{equation}
is exact. Here the left group is the abelian group of $1$-cocycles of $A/B$ in $C$, $j$ maps 
a cocycle $c$ to $a\mapsto c([a])a$, and $\pi(f)=(f\vert_B, [f])$. 
\end{lemma}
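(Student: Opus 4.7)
The plan is to verify exactness at each spot of the sequence by direct computation, using that $A/B$ acts on $C$ by conjugation (well-defined because $B$ acts trivially on $C = Z(B)$, where $Z(B)$ denotes the center of $B$). Throughout I write $[a] \in A/B$ for the coset of $a \in A$ and use the standard fact that any $1$-cocycle satisfies $c(1) = 1$.

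First I would check that $j$ lands in $\Aut_B(A)$ and is injective. Given a cocycle $c$, the assignment $j(c)(a) := c([a])\,a$ respects multiplication precisely because the cocycle identity reads $c([a][b]) = c([a]) \cdot a\, c([b])\, a^{-1}$ in our notation (the conjugation is well-defined since $c([b]) \in C$). The map $j(c)$ fixes $B$ pointwise, since $c([b]) = c(1) = 1$ for $b \in B$, hence preserves $B$; bijectivity is witnessed by the inverse $a \mapsto c([a])^{-1}\,a$. Injectivity of $j$ is then immediate: $j(c) = \id_A$ forces $c([a]) = 1$ for every $a$. The inclusion $\im j \subset \ker \pi$ follows at once, since $j(c)\vert_B = \id_B$ and $[j(c)(a)] = [a]$ because $c([a]) \in C \subset B$.

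The core step is the reverse inclusion $\ker \pi \subset \im j$. Suppose $f \in \Aut_B(A)$ satisfies $f\vert_B = \id_B$ and $[f] = \id_{A/B}$. Then $f(a)\,a^{-1} \in B$ for all $a$, and I would define $c([a]) := f(a)\,a^{-1}$. This is well-defined on cosets, since for $b \in B$,
\[ f(ab)(ab)^{-1} = f(a)\,b\,b^{-1}\,a^{-1} = f(a)\,a^{-1}. \]
The only non-routine point is that $c([a]) \in C$: for any $b \in B$, normality of $B$ gives $a^{-1}ba \in B$, and evaluating $f$ in two ways yields
\[ a^{-1}\,b\,a = f(a^{-1}\,b\,a) = f(a)^{-1}\,b\,f(a), \]
which rearranges to the statement that $f(a)\,a^{-1}$ commutes with every $b \in B$, i.e.\ lies in $Z(B) = C$. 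The cocycle identity $c([ab]) = c([a]) \cdot a\, c([b])\, a^{-1}$ and the equality $j(c) = f$ are then one-line computations from $f(ab) = f(a)f(b)$.

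The only real content is the centrality step, which is where $f\vert_B = \id_B$ and the normality of $B$ are used together; everything else is bookkeeping built into the cocycle formalism. I do not anticipate any genuine obstacle.
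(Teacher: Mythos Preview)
Your argument is correct. The paper does not actually give a proof of this lemma: it is stated with a citation to Wells and used as a black box, so there is nothing to compare against beyond noting that your direct verification is the standard one. The only minor omission is that you check each $j(c)$ is an automorphism of $A$ fixing $B$ and that $c\mapsto j(c)$ is injective, but you never explicitly verify that $j$ itself is a group homomorphism, i.e.\ $j(c_1c_2)=j(c_1)\circ j(c_2)$; this is a one-line computation using $c_2([a])\in B$ and is needed for ``sequence of groups'' to make sense, but it is entirely routine.
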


%
%
%
%
%
%
%
%
%\begin{lemma} \label{lem:cocycle1}
%Let $A$ be a group and $B\normal A$ a normal subgroup.
%Denote by $Z$ the center of $B$.
%Note that $Z$ is normal in $A$ and the conjugation action of $A$ on $Z$ induces an action of $A/B$ on $Z$, to be denoted $z\mapsto z^x$
%for $z\in Z$ and $x\in A/B$.
%Let $\alpha\in\Aut(A)$ be an automorphism which is trivial on $B$ and 
%such that the induced automorphism of $A/B$ is trivial too.
%Then there exists a map $\phi:A/B \to Z$, which satisfies
%\[ \mbox{for every } x,y\in A/B,\quad \phi(xy)=\phi(x)\phi(y)^x, \]
%such that, denoting by $\Phi$ the precomposition of $\phi$ with 
%$A\to A/B$, we have
%\[ \mbox{for every } a\in A,\quad \alpha(a)=\Phi(a)a. \]
%\end{lemma}
%
%\begin{proof}
%For $a\in A$ denote $\Phi(a)=\alpha(a)a^{-1}$.
%By the fact that $\alpha$ induces a trivial automorphism on $A/B$,
%we get that $\Phi(a)\in B$, that is $\Phi:A\to B$.
%It is easy to see that (since $\alpha$ is a homomorphism) for every 
%$a,a'\in A$, $\Phi(aa')=\Phi(a)\Phi(a')^a$.
%By the assumption that $\alpha|_B=\id_B$ we get that $\Phi|_B$ is the
%constant function $e$.
%We conclude that for every $a\in A$ and $b\in B$, $\Phi(ab)=\Phi(a)$,
%thus $\Phi$ is the composition of 
%$A\to A/B$ with some $\phi:A/B \to B$.
%Fix $a\in A$.
%Then for every $b\in B$, 
%\[ \Phi(a)^b=\Phi(b)\Phi(a)^b=\Phi(ba)=\Phi(ab^{a^{-1}})=\Phi(a), \]
%therefore $\Phi(a)$ commutes with $b$.
%Thus, $\Phi(a)$ is central in $B$.
%We get that $\phi$ ranges in $Z$ and the lemma follows.
%\end{proof}

The following three results are immediate consequences of 
Lemma~\ref{lem: cocycle1}.

\begin{lemma}\label{lem: automorphisms of products}
Every topological automorphism of a product $G=G^0\times\tdf{G}$ 
of a center-free connected group $G^0$ and a tdlc group $\tdf{G}$ 
is a 
product of a topological automorphism of $G^0$ and a topological automorphism of $\tdf{G}$. 
\end{lemma}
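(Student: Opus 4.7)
The plan is to reduce everything to the Wells exact sequence of Lemma~\ref{lem: cocycle1} applied to the normal subgroup $B = G^0 \times \{1\}$ of $G$. The key observation is that because $G^0$ is connected and $\tdf{G}$ is totally disconnected, $B$ coincides with the identity component of $G$, so $B$ is preserved by every topological automorphism. Moreover, $B$ has trivial center by hypothesis. These two facts will make the argument essentially automatic.

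First I would verify that any $\phi \in \Auttop(G)$ lies in $\Aut_B(G)$ via the identity-component characterization of $B$. The restriction $\alpha := \phi|_B$ is then a topological automorphism of $G^0$. Next, since $G = G^0 \times \tdf{G}$ is a direct product, the second-coordinate projection induces a topological isomorphism $G/B \cong \tdf{G}$, so $\phi$ descends to a topological automorphism $\beta \in \Auttop(\tdf{G})$. Now consider the ``split'' candidate $\phi' := \alpha \times \beta$, which is manifestly a continuous automorphism of $G$ lying in $\Aut_B(G)$ and satisfying $\phi'|_B = \alpha$ together with the induced map $\beta$ on $G/B$. Thus both $\phi$ and $\phi'$ have the same image under the map $\pi$ of Lemma~\ref{lem: cocycle1}.

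Finally I apply the exact sequence: since the center $C$ of $B \cong G^0$ is trivial, the cocycle group $Z^1(G/B; C)$ vanishes, so $\pi$ is injective. Therefore $\phi = \phi' = \alpha \times \beta$, which is the desired splitting. There is no real obstacle here; the only thing one must not forget is to note that the quotient $G/B$ is naturally identified with $\tdf{G}$ as a topological group, so that the induced automorphism $\beta$ is indeed continuous and the candidate $\alpha \times \beta$ is a topological (not merely abstract) automorphism of $G$.
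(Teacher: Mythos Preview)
Your proof is correct and follows essentially the same approach as the paper's own proof: both apply Lemma~\ref{lem: cocycle1} with $B=G^0$, use that $G^0$ is the identity component (hence preserved by any topological automorphism), and deduce injectivity of $\pi$ from the triviality of the center $C$, so that $\phi$ equals the split automorphism $\alpha\times\beta$ (which the paper denotes $s(\pi(\alpha))$). Your version just spells out the construction of the right inverse $s$ more explicitly.
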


\begin{proof}
We apply Lemma~\ref{lem: cocycle1} to $A=G$ and $B=G^0$. In this situation the map $\pi$ in~\eqref{eq: nonabelian exact} has an obvious right inverse~$s$. Both $\pi$ and $s$ preserve continuity of automorphisms.  
Let $\alpha\colon G\to G$ be a 
continuous automorphism. Then $\alpha(G^0)=G^0$. Since $G^0$ has trivial center, $\pi$ is injective. So $\alpha = s(\pi(\alpha))$. 
\end{proof}

%\begin{proof}
%Let $\phi\colon G\to G$ be a topological automorphism. The %composition 
%\[G^0=G^0\times\{1\}\hookrightarrow G^0\times %\tdf{G}\xrightarrow{\phi}G^0\times \tdf{G}\twoheadrightarrow %\tdf{G}\]
%is trivial since the domain is connected and the target is totally %disconnected. Hence the composition 
%\[\alpha\colon G^0=G^0\times\{1\}\hookrightarrow G^0\times %\tdf{G}\xrightarrow{\phi}G^0\times \tdf{G}\twoheadrightarrow %G^0\]
%is an injective homomorphism $G^0\to G^0$, thus an automorphism %since $G^0$ is a connected Lie group. Let $\alpha'\colon \tdf{G}\to %G^0$ be defined as $\alpha$ with the first inclusion replaced by %$\tdf{G}\hookrightarrow G$. The homomorphism  
%\[ G^0\times \tdf{G}\xrightarrow{\phi}G^0\times %\tdf{G}\twoheadrightarrow G^0\]
%sends $(x,y)$ to $\alpha(x)\alpha'(y)$. So every $\alpha'(y)$ %commutes with the 
%image of $\alpha$ which is $G^0$. Since the center of $G^0$ is %trivial, the homomorphism $\alpha'$ is trivial. From this the %statement follows. 
%\end{proof}

\begin{lemma}\label{lem: restriction finite kernel}
Let $A$ be a group, and let $B\normal A$ be a characteristic subgroup of finite index. If $B$ has a finite center, then the restriction map $\Aut(A) \to \Aut(B)$ has a finite kernel.
\end{lemma}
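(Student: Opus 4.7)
The plan is to invoke Lemma~\ref{lem: cocycle1} with the given $B\normal A$ and analyze the kernel of restriction through the exact sequence it provides. Since $B$ is characteristic in $A$, every automorphism of $A$ preserves $B$, so $\Aut_B(A)=\Aut(A)$ and Lemma~\ref{lem: cocycle1} gives
\[
1\to Z^1(A/B;C)\xrightarrow{j}\Aut(A)\xrightarrow{\pi}\Aut(B)\times\Aut(A/B),
\]
where $C$ is the (finite, by hypothesis) center of $B$.

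Let $K$ denote the kernel of the restriction $\Aut(A)\to\Aut(B)$. Then for $f\in K$ one has $\pi(f)=(\id_B,[f])$ with $[f]\in\Aut(A/B)$, so composing with the second projection yields a homomorphism $K\to\Aut(A/B)$. Its kernel equals $K\cap\ker(\pi)$, which by the exactness of the sequence is the image of $j$, hence isomorphic to $Z^1(A/B;C)$.

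Now the finiteness of $K$ follows from two counting observations: first, $A/B$ is finite (as $B$ has finite index in $A$), so $\Aut(A/B)$ is finite, bounding the image of $K\to\Aut(A/B)$; second, $Z^1(A/B;C)$ is a subgroup of the set of functions from the finite set $A/B$ to the finite set $C$, hence finite. Since $K$ is an extension of a finite group by a finite group, it is finite, which is the desired conclusion. There is no real obstacle in the argument beyond invoking Lemma~\ref{lem: cocycle1}; the characteristic hypothesis on $B$ is used only to identify $\Aut(A)$ with $\Aut_B(A)$ so that the exact sequence applies to all of $\Aut(A)$.
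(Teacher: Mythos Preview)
Your proof is correct and follows essentially the same approach as the paper: both invoke Lemma~\ref{lem: cocycle1} after noting $\Aut_B(A)=\Aut(A)$, then bound $K$ via the map $K\to\Aut(A/B)$ whose kernel is $Z^1(A/B;C)$, finite because $A/B$ and $C$ are finite. The only (harmless) implicit step is that $\ker(\pi)\subseteq K$, which is why $K\cap\ker(\pi)$ equals all of $\im(j)$ rather than a proper subgroup.
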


\begin{proof} \label{lem:ZZ}
Denote by $K$ the kernel of $\Aut(A)=\Aut_B(A) \to \Aut(B)$ and by $C$ the center of $B$. 
Since $\Aut(A/B)$ is finite, it is enough to show that the kernel $K'$ of $K \to \Aut(A/B)$ is finite.
By Lemma~\ref{lem: cocycle1}, $K'$ is isomorphic to $Z^1(A/B;C)$ which is finite since $A/B$ and $C$ are finite. 
\end{proof}

\begin{cor} \label{cor:outfinite}
If a group $A$ has a characteristic subgroup of finite index that has a finite center and 
a finite outer automorphism group, then also $A$ has a
finite outer automorphism group.
\end{cor}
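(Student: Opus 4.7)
The plan is to combine Lemma~\ref{lem: restriction finite kernel} with the characteristic property of $B$ and the finiteness of $\Out(B)$ to bound $\Out(A)$ from above by the product of two finite numbers.

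First I would observe that since $B$ is characteristic in $A$, every $\alpha\in\Aut(A)$ preserves $B$ set-wise, so $\Aut(A)=\Aut_B(A)$ and restriction yields a homomorphism $\rho\colon\Aut(A)\to\Aut(B)$. Lemma~\ref{lem: restriction finite kernel} applies because $B$ has finite index in $A$ and finite center, giving that $\ker(\rho)$ is finite.

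Next I would compose $\rho$ with the projection $\Aut(B)\twoheadrightarrow\Out(B)$ to obtain a homomorphism $\Aut(A)\to\Out(B)$ with finite target. Let $H<\Aut(A)$ be the preimage of the trivial class; it has finite index in $\Aut(A)$. By definition, for every $\alpha\in H$ there exists $b\in B$ with $\alpha|_B=\operatorname{conj}_b|_B$. Then $\alpha\cdot\operatorname{conj}_b^{-1}$ restricts trivially to $B$, hence lies in $\ker(\rho)$; since conjugation by elements of $B\subset A$ is inner in $A$, this gives $\alpha\in\ker(\rho)\cdot\Inn(A)$. Therefore $H\subset\ker(\rho)\cdot\Inn(A)$, so $\ker(\rho)\cdot\Inn(A)$ itself has finite index in $\Aut(A)$.

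Finally, I would read off that
\[
	[\Aut(A):\Inn(A)]\le[\Aut(A):\ker(\rho)\cdot\Inn(A)]\cdot\bigl|\ker(\rho)\cdot\Inn(A)/\Inn(A)\bigr|,
\]
and the second factor is at most $|\ker(\rho)|$ since $\ker(\rho)\cdot\Inn(A)/\Inn(A)\cong\ker(\rho)/(\ker(\rho)\cap\Inn(A))$. Both factors being finite, $\Out(A)$ is finite. No step is a serious obstacle; the only moving part is the passage from ``$\alpha|_B$ is inner in $B$'' to ``$\alpha$ is inner in $A$ up to $\ker(\rho)$'', which uses only that $B\subset A$ so that conjugation by $b\in B$ is realized inside $\Inn(A)$.
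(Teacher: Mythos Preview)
Your proof is correct and follows essentially the same strategy as the paper's: both use Lemma~\ref{lem: restriction finite kernel} to get a finite kernel for the restriction $\Aut(A)\to\Aut(B)$, then pass to $\Out(B)$ and use its finiteness to squeeze $\Out(A)$. The only cosmetic difference is that the paper quotients by (the image of) $B$ rather than by $\Inn(A)$, writing the argument as the exact sequence $1\to\ker\rho\to\Aut(A)/B\to\Out(B)$ and noting that $\Aut(A)/B$ surjects onto $\Out(A)$; your version unpacks the same idea via the inclusion $H\subset\ker(\rho)\cdot\Inn(A)$.
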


\begin{proof}
Let $B\normal A$ be characteristic with finite center and 
finite outer automorphism group. 
Since the group $\Aut(A)/B$ surjects onto $\Aut(A)/A=\Out(A)$, it is enough to show that
the former is finite.
This follows by the exactness of 
\[ 1\to\ker\bigl(\Aut(A)\to\Aut(B)\bigr) \to \Aut(A)/B \to \Out(B), \]
as $\Out(B)$ is finite by assumption, and the kernel is finite by Lemma~\ref{lem: restriction finite kernel}. 
\end{proof}

For the purposes of this subsection, we introduce the following definition.

\begin{defn}
An infinite group $A$ is said to be {\em strongly irreducible} if
for every homomorphism $\pi:B_1\times B_2\to A$ with finite cokernel, either $B_1<\ker \pi$ or $B_2<\ker \pi$.

A group is said to be {\em strongly outer finite} if the outer automorphism group of any of its finite index subgroups is finite.
\end{defn}

\begin{lemma} \label{lem:strongirr}
A Zariski dense subgroup of the $K$-points of a connected, non-commutative, adjoint, $K$-simple $K$-algebraic group is strongly irreducible. 
\end{lemma}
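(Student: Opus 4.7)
The plan is to take Zariski closures of the two factor images and exploit the algebraic structure of $\mathbf{G}$. Write the hypothesized map as $\pi\colon B_1\times B_2\to A$ with finite cokernel, and observe that $A$ is automatically infinite because $\mathbf{G}$ is positive-dimensional (being connected and non-commutative). Let $H_i\subseteq\mathbf{G}$ denote the Zariski closure of $\pi(B_i)$; each is a $K$-defined subgroup of $\mathbf{G}$, and since elementwise commutation is a Zariski-closed condition, the commutation of $\pi(B_1)$ with $\pi(B_2)$ inside $A$ upgrades to commutation of $H_1$ with $H_2$.

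The structural core of the argument will be to show that $H_1H_2=\mathbf{G}$ and that the identity components $H_i^0$ are connected normal $K$-subgroups of $\mathbf{G}$. Because $H_1,H_2$ commute, the product $H_1H_2$ is already a subgroup, and as the image of the multiplication morphism $H_1\times H_2\to\mathbf{G}$ it is constructible; a constructible subgroup of an algebraic group is Zariski closed. This product contains $\pi(B_1\times B_2)$, which has finite index in the Zariski dense group $A$ and is therefore itself Zariski dense in $\mathbf{G}$, so $H_1H_2=\mathbf{G}$. Each $H_i^0$ is normal in $H_i$ and is centralized, hence normalized, by $H_{3-i}$, so it is normalized by all of $H_1H_2=\mathbf{G}$; the $K$-simplicity hypothesis then forces $H_i^0\in\{\{1\},\mathbf{G}\}$.

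A short case analysis concludes the proof. If $H_1^0=\mathbf{G}$ then $H_1=\mathbf{G}$, and $H_2$, commuting with $H_1$, lies in the center of $\mathbf{G}$; the adjoint hypothesis makes this center trivial, so $\pi(B_2)=\{1\}$ and $B_2\subseteq\ker\pi$. Symmetrically, $H_2^0=\mathbf{G}$ gives $B_1\subseteq\ker\pi$. The remaining possibility $H_1^0=H_2^0=\{1\}$ would make both $H_i$ finite, so $\mathbf{G}=H_1H_2$ would be finite, contradicting positive-dimensionality of the connected non-commutative group $\mathbf{G}$. The most delicate point in the write-up is to verify that $H_i$ and $H_i^0$ are genuinely $K$-defined, so that $K$-simplicity (as opposed to absolute simplicity) is the right hypothesis to invoke; this is standard, but worth recording explicitly since the argument would otherwise need to be reformulated over an algebraic closure.
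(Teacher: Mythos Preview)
Your proof is correct and follows essentially the same approach as the paper's: take Zariski closures $H_i$ of the factor images, observe they commute and together generate $\mathbf{G}$, and use simplicity to force one of them to be trivial. You are more explicit than the paper in passing to the identity components $H_i^0$ and running the case analysis (the paper simply asserts ``by the simplicity of $\mathbf{H}$ \dots\ $\mathbf{H}_i$ is trivial''), and your remark about $K$-definedness is a fair caveat, but the underlying argument is the same.
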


\begin{proof}
Let $\mathbf{H}$ be a connected, non-commutative, adjoint, $K$-simple $K$-group, and let 
$\Gamma<\mathbf{H}(K)$ be a Zariski dense subgroup. 

Let $\pi\colon B_1\times B_2\to \Gamma$ be a homomorphism with finite cokernel. 
Since $\Gamma<\mathbf{H}(K)$ is Zariski dense, the Zariski closure of $\im(\pi)$ has to be $\mathbf{H}$ as a finite index subgroup of the connected group $\mathbf{H}$. 
The Zariski closure of $\pi(B_i)$ define $K$-subgroups $\mathbf{H}_i$, $i\in\{1,2\}$ 
that commute in $\mathbf{H}$ and together generate $\mathbf{H}$. 
By the simplicity of $\mathbf{H}$ we conclude that for some $i\in\{1,2\}$,
$\mathbf{H}_i$ is trivial. It follows that $B_i<\ker \pi$.
\end{proof}

\begin{theorem} \label{thm:outfinite}
Let $B$ be a group that is (abstractly) commensurable to a product of a finite family 
of finitely generated, strongly irreducible, strongly outer finite groups. Then 
$B$ has a finite outer automorphism group. 
\end{theorem}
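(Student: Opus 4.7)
The strategy is to apply Corollary~\ref{cor:outfinite}: it suffices to exhibit a characteristic finite-index subgroup of $B$ with trivial center and finite outer automorphism group. Since $B$ is finitely generated (inherited via commensurability from $A := \prod_{i=1}^n A_i$), it has only finitely many subgroups of any given index, so every finite-index subgroup of $B$ contains a characteristic one---in particular a characteristic $\tilde B$ sitting inside some $B_0<B$ isomorphic to a finite-index $A_0<A$, and hence corresponding to a finite-index $\tilde A<A$. Thus it suffices to show $Z(\tilde A)=1$ and $\Out(\tilde A)$ finite, for which it in turn suffices to show that $A$ is \emph{strongly outer finite}.

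First I would treat $A$ itself. Strong irreducibility applied to the surjection $Z(A_i)\times A_i\to A_i$, $(z,a)\mapsto za$, yields $Z(A_i)=1$, so $Z(A)=1$. For any $\phi\in\Aut(A)$ and each index $j$, iterated application of strong irreducibility of $A_j$ to successive binary splittings of $A$ in the surjection $\pi_j\circ\phi$ forces all but one factor $A_{\sigma(j)}$ into the kernel; symmetry with $\phi^{-1}$ shows $\sigma$ is a bijection, and $\phi$ restricts to an isomorphism $A_{\sigma(j)}\to A_j$. This places $\Aut(A)$ in an exact sequence $1\to\prod\Aut(A_i)\to\Aut(A)\to S_n$, and since $\Inn(A)=\prod\Inn(A_i)$ (using $Z(A_i)=1$), $\Out(A)$ is an extension of a subgroup of $S_n$ by $\prod\Out(A_i)$, hence finite.

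The crux is extending this to any finite-index $C<A$; this is where I expect the main obstacle. Set $C^\flat:=\prod_{i=1}^n(C\cap A_i)\subset C$, a finite-index subgroup. The decisive claim is that $C^\flat$ is \emph{characteristic} in $C$: given $\phi\in\Aut(C)$, the image $(\pi_j\circ\phi)(C^\flat)$ is a finite-index subgroup of $A_j$, so iterated strong irreducibility of $A_j$ forces all but one factor $C\cap A_{\sigma(j)}$ into the kernel, and the symmetry argument then gives $\phi(C\cap A_i)=C\cap A_{\sigma^{-1}(i)}$. Each $C\cap A_i$ inherits finite generation, strong irreducibility, and strong outer finiteness from $A_i$, so the previous paragraph applied to the product $C^\flat$ gives $Z(C^\flat)=1$ and $\Out(C^\flat)$ finite. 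Triviality of $Z(C)$ follows from $Z(C)\subset C_A(C^\flat)=\prod_i C_{A_i}(C\cap A_i)=1$, where the last equality applies strong irreducibility to $\langle c\rangle\times(C\cap A_i)\to A_i$. Corollary~\ref{cor:outfinite} then delivers $\Out(C)$ finite for every such $C$; specializing to $C=\tilde A$ and transporting via $\tilde B\cong\tilde A$, a final invocation of Corollary~\ref{cor:outfinite} on $B$ with characteristic subgroup $\tilde B$ completes the proof.
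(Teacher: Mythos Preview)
Your proof is correct and follows essentially the same approach as the paper: both reduce to Corollary~\ref{cor:outfinite} by exhibiting a characteristic finite-index subgroup that is (or contains as characteristic) a standard product $\prod_i C_i$ with $C_i<A_i$ of finite index, and both establish this via the same strong-irreducibility argument that automorphisms must permute the factors. The only organizational difference is that the paper isolates this permutation argument as separate lemmas (Lemma~\ref{lem:standradization}, Corollary~\ref{cor:autprod}, Lemma~\ref{lem:standchar}) and thereby finds a standard subgroup characteristic in $B$ in one pass, whereas you inline those arguments and invoke Corollary~\ref{cor:outfinite} twice (once for $C$ via $C^\flat$, once for $B$ via $\tilde B$).
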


The proof of Theorem~\ref{thm:outfinite} will be preceded by some
preparation.
First we make the following ad-hoc definition, which is not 
standard.

\begin{defn}
A \emph{standard subgroup} of a product $\prod_{i\in I} A_i$ of groups 
is a subgroup of the form $\prod_{i\in I} B_i$ with each $B_i<A_i$ being 
a finite index subgroup.
\end{defn}

\begin{lemma} \label{lem:standradization}
Let $(A_i)_{i\in I}$ be a finite family of strongly irreducible groups, and let 
$(B_j)_{j\in J}$ be a finite family of non-trivial groups with $|J|\ge |I|$. If 
\[ \prod_{j\in J}B_j\xrightarrow{\phi} \prod_{i\in I}A_i\] 
is a monomorphism with finite cokernel, then there are a bijection $f\colon J\to I$ and, for each $j\in J$, a monomorphism $\phi_j\colon B_j\to A_{f(j)}$ with finite cokernel such that $\phi$ is the product of the maps $\phi_j$. In particular, the image $\im(\phi)$ is a standard subgroup. 
\end{lemma}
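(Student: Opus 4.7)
The plan is to use strong irreducibility of the $A_i$ to ``diagonalize'' the monomorphism $\phi$. For each $i\in I$, set $\psi_i := \pi_i \circ \phi \colon \prod_{j\in J} B_j \to A_i$, where $\pi_i$ is the projection onto the $i$-th factor. Since $\phi$ has finite cokernel and $\pi_i$ is surjective, $\psi_i$ inherits finite cokernel.

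The central step is to apply strong irreducibility of $A_i$ to $\psi_i$, viewed for each $j_0\in J$ as a map on $B_{j_0}\times\prod_{j\neq j_0}B_j$: it forces either $B_{j_0}$ or the complementary product to lie in $\ker\psi_i$. Varying $j_0$ yields a unique index $g(i)\in J$ with $B_{g(i)}\not\subset\ker\psi_i$---existence because $A_i$ is infinite (so $\psi_i$ cannot be trivial with finite cokernel), uniqueness because two such indices $j_1\neq j_2$ would, by the dichotomy applied with $j_0=j_1$, force $B_{j_2}\subset\ker\psi_i$. This is where the main work of the lemma lies; everything that follows is bookkeeping. Next I would argue surjectivity of $g\colon I\to J$: any $j_0\notin\im g$ would satisfy $B_{j_0}\subset\bigcap_i\ker\psi_i=\ker\phi=\{1\}$, contradicting nontriviality of $B_{j_0}$. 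Combined with $|J|\ge|I|$, this forces $g$ to be a bijection; set $f:=g^{-1}$.

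For each $j\in J$, the map $\psi_{f(j)}$ kills every $B_{j'}$ with $j'\neq j$, so it factors through the projection $\pi_j'\colon\prod_{j'}B_{j'}\to B_j$ as $\psi_{f(j)}=\phi_j\circ\pi_j'$ for a homomorphism $\phi_j\colon B_j\to A_{f(j)}$, which inherits finite cokernel. Injectivity of $\phi_j$ follows because any $b\in\ker\phi_j$, embedded in $\prod_{j'}B_{j'}$ along the $j$-coordinate, lies in $\ker\psi_{f(j)}$ by construction and in $\ker\psi_{f(j')}$ for every $j'\neq j$ (since $B_j\subset\ker\psi_{f(j')}$), hence in $\ker\phi=\{1\}$. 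Finally, the identities $\pi_i\circ\phi=\phi_{g(i)}\circ\pi_{g(i)}'$ show that $\phi$ agrees with the product of the $\phi_j$ under the reindexing bijection $f$, so $\im\phi$ is the standard subgroup $\prod_i\im\phi_{f^{-1}(i)}$ of $\prod_i A_i$.
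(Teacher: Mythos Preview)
Your proof is correct and takes a genuinely different route from the paper's. The paper argues by induction on $|I|$: it splits $J$ non-trivially as $J_1\sqcup J_2$, sets $B^{(k)}=\prod_{j\in J_k}B_j$, and uses strong irreducibility of each $A_i$ applied to the single decomposition $B^{(1)}\times B^{(2)}$ to obtain a partition $I=I_1\sqcup I_2$ with $B^{(3-k)}\subset\ker(\pi_i\circ\phi)$ for $i\in I_k$; this yields monomorphisms $\phi_k\colon B^{(k)}\to\prod_{i\in I_k}A_i$ with finite cokernel, and the induction hypothesis finishes. Your argument, by contrast, is direct: you apply strong irreducibility to the decompositions $B_{j_0}\times\prod_{j\ne j_0}B_j$ for each pair $(i,j_0)$ separately, and read off the bijection $g\colon I\to J$ in one step. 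The core mechanism---strong irreducibility forces a dichotomy that aligns factors---is the same; what differs is the organization. Your approach avoids the inductive bookkeeping and produces the bijection more explicitly, at the cost of invoking the strong irreducibility dichotomy more often (roughly $|I|\cdot|J|$ times rather than $|I|$ times recursively). Both arguments are of comparable length and difficulty.
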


\begin{proof}
Let $A$ and $B$ be the product of $(A_i)_{i\in I}$ and $(B_j)_{j\in J}$, respectively. 
The proof is by induction on $|I|$.
If $|I|=1$, there is nothing to prove.
Let $|I|>1$. Then $|J|>1$, and we can decompose it non-trivially as $J=J_1\sqcup J_2$.
For $k\in\{1,2\}$ we set $B^{(k)}=\prod_{j\in J_k} B_j$.
We let 
\[ I_k=\{i\in I\mid B_{3-k} <\ker \pi_i\} \quad \mbox{and} \quad A^{(k)}=\prod_{i\in I_k} A_i, \]
where $\pi_i\colon A\to A_i$ is the $i$-th projection.
By the strong irreducibility of the $A_i$'s, $I=I_1\sqcup I_2$ is a partition.
Hence the composition of $\phi\colon B\to A$ with the projection
$A \to A^{(k)}$ factors through $B^{(k)}\cong B/B^{(3-k)}$, thus giving 
a homomorphism $\phi_k\colon B^{(k)}\to A^{(k)}$.
We observe that these $\phi_k$ are 
injective homomorphisms with finite cokernels.
Without loss of generality, $|I_1|\leq |J_1|$.
By induction hypothesis, $|I_1|=|J_1|$
and therefore $|I_2|\leq |J_2|$. Applying the induction hypothesis to both $\phi_k$,
the statement follows.
\end{proof}

Applying Lemma~\ref{lem:standradization} to automorphisms $\phi$, 
we obtain the following. 

\begin{cor} \label{cor:autprod}
Let $(C_i)_{i\in I}$ be a finite family of strongly irreducible groups. 
Then the obvious embedding 
\[\prod_{i\in I} \Aut(C_i)\to \Aut\bigl(\prod_{i\in I} C_i\bigr)\]
has a finite cokernel. 
\end{cor}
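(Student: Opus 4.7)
The plan is to apply Lemma~\ref{lem:standradization} directly to automorphisms of $C=\prod_{i\in I} C_i$, viewed as monomorphisms with trivial (hence finite) cokernel. Given $\phi\in\Aut(C)$, taking both families in the lemma to be $(C_i)_{i\in I}$ produces a bijection $f_\phi:I\to I$ and homomorphisms $\phi_i:C_i\to C_{f_\phi(i)}$ of finite cokernel whose product equals $\phi$. Since $\phi$ is surjective, each $\phi_i(C_i)$ must equal $C_{f_\phi(i)}$, so each $\phi_i$ is in fact an isomorphism.

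I would then verify that the assignment $\Phi:\Aut(C)\to\Sym(I)$, $\phi\mapsto f_\phi$, is a well-defined group homomorphism. For well-definedness, identify $C_i$ with the $i$-th factor in $C$ and observe that $\phi(C_i)$ is an infinite subgroup contained in the factor $C_{f_\phi(i)}$; since distinct factors of $C$ intersect trivially, the index $f_\phi(i)$ is uniquely determined by $\phi$. The homomorphism property is immediate: if $\phi$ permutes the factors by $f$ and $\psi$ by $g$, then $\psi\phi$ sends the $i$-th factor into the $g(f(i))$-th factor.

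Finally, I would observe that $\ker(\Phi)$ consists precisely of those $\phi$ with $f_\phi=\id$, which by the factorization above are exactly the automorphisms of the form $\prod_i\phi_i$ with $\phi_i\in\Aut(C_i)$. This kernel coincides with the image of the obvious embedding $\prod_i\Aut(C_i)\hookrightarrow\Aut(C)$, so the cokernel injects into the finite group $\Sym(I)$ and is in particular finite. There is no real obstacle at this stage: the substantial work is already contained in Lemma~\ref{lem:standradization}, and the present corollary is essentially its packaging for the automorphism case, with strong irreducibility of the $C_i$ ensuring that product decompositions can only be rearranged by permutations of the factor set.
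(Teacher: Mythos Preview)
Your proof is correct and follows exactly the approach the paper intends: apply Lemma~\ref{lem:standradization} to an automorphism $\phi$ of $\prod_i C_i$, extract the permutation $f_\phi$ of $I$, and observe that the assignment $\phi\mapsto f_\phi$ is a homomorphism to $\Sym(I)$ whose kernel is the image of $\prod_i\Aut(C_i)$. The paper compresses all of this into a single sentence, so your write-up simply makes explicit the details that are left to the reader.
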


\begin{lemma} \label{lem:standchar}
Let $(A_i)_{i\in I}$ be a finite family of strongly irreducible groups, and let $A$ be its product. 
Let $B<A$ be a subgroup of finite index. Then $\prod_{i\in I} (B\cap A_i)$ is a characteristic subgroup of $B$ of finite index. 
\end{lemma}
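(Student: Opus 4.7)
The plan is to reduce the statement directly to Lemma~\ref{lem:standradization}. Write $H := \prod_{i\in I}(B\cap A_i)$. The finite-index assertion is immediate: each $B\cap A_i$ has finite index in $A_i$ since $B$ has finite index in $A$, so $H$ has finite index in $A$ and therefore in $B$.

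For the characteristic property, let $\phi\in\Aut(B)$ be arbitrary. It suffices to show $\phi(H)\subset H$: both subgroups will then have the same (finite) index in $B$ and must coincide. The first observation I would record is that each factor $B\cap A_i$ inherits strong irreducibility from $A_i$. Indeed, any homomorphism $C_1\times C_2 \to B\cap A_i$ with finite cokernel has finite cokernel when viewed as a map into $A_i$, so strong irreducibility of $A_i$ forces one of the factors into the kernel. In particular each $B\cap A_i$ is infinite (strongly irreducible groups are infinite by definition), hence non-trivial.

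The composition
\[ H \hookrightarrow B \xrightarrow{\phi} B \hookrightarrow A = \prod_{i\in I} A_i \]
is then a monomorphism from a product of $|I|$ non-trivial groups, with finite cokernel: its image has the same index in $B$ as $H$, and $B$ has finite index in $A$. Lemma~\ref{lem:standradization} therefore supplies a bijection $f\colon I\to I$ and monomorphisms $\phi_i\colon B\cap A_i\to A_{f(i)}$ of finite cokernel such that $\phi(B\cap A_i)\subset A_{f(i)}$ for every $i$. Combined with $\phi(B\cap A_i)\subset B$, this yields $\phi(B\cap A_i)\subset B\cap A_{f(i)}$, and hence $\phi(H)\subset H$.

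I do not anticipate a genuine obstacle, since Lemma~\ref{lem:standradization} already packages the main content. The only points to check carefully are that strong irreducibility passes to finite-index subgroups (so that each $B\cap A_i$ is both strongly irreducible and non-trivial, as required to invoke Lemma~\ref{lem:standradization}), and that an inclusion $\phi(H)\subset H$ between equal-index subgroups of $B$ automatically upgrades to an equality.
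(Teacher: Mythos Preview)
Your proof is correct and follows essentially the same route as the paper: both reduce directly to Lemma~\ref{lem:standradization} applied to the composition $H\hookrightarrow B\xrightarrow{\phi}B\hookrightarrow A$. The paper phrases the conclusion slightly differently---it observes that automorphisms of $B$ permute the ``substandard'' subgroups (standard subgroups of $A$ contained in $B$) and that $H=\prod_i(B\cap A_i)$ is the unique maximal such---while you instead conclude $\phi(H)\subset H$ and then use equality of indices; these are equivalent endgames. One small remark: Lemma~\ref{lem:standradization} only requires the source factors to be non-trivial (the strong irreducibility hypothesis is on the \emph{target} factors $A_i$), so your verification that each $B\cap A_i$ is strongly irreducible is more than needed---non-triviality already follows from $[A_i:B\cap A_i]<\infty$ and $A_i$ infinite.
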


\begin{proof}
Let us call a subgroup of $B$ that is simultaneously a standard subgroup of $A$ a \emph{substandard subgroup}.  
By Lemma~\ref{lem:standradization}, every $\phi\in\Aut(B)$ preserves substandard subgroups. 
Hence $\prod_{i\in I} (B\cap A_i)$ is characteristic in $B$ as it is the unique maximal element in the collection of substandard subgroups. 
\end{proof}

\begin{proof}[Proof of {Theorem~\ref{thm:outfinite}}]
By assumption, $B$ has a finite index subgroup $C'<B$ that is isomorphic to a finite index subgroup of the product $A$ of a finite family $(A_i)_{i\in I}$ of finitely generated, strongly irreducible, strongly outer finite groups. From now on we will identify $C'$ and its various subgroups with
their images in $A$ under this isomorphism. 
As each $A_i$ is finitely generated, $A$, $C'$, and thus $B$ are finitely generated too. Thus $C'$ has a finite index subgroup $C''$ that is characteristic in $B$ (e.g.~the intersection of all subgroups of $B$ with index $[B:C']$).
By Lemma~\ref{lem:standchar} $C''$ has a characteristic subgroup 
$C$ of finite index which is standard in $A$. That is, $C=\prod_{i\in I} C_i$, where 
$C_i<A_i$ are of finite index. Strongly irreducibility passes to finite index subgroups. So each 
$C_i$ is strongly irreducible. Being characteristic is transitive, so $C$ is characteristic in $B$. 

By Corollary~\ref{cor:autprod},
the obvious embedding $\prod_{i\in I} \Aut(C_i)\to \Aut(C)$
has a finite cokernel. 
By the strong outer finite assumption, $\Out(C_i)$ 
is finite for each $i\in I$. Hence $\Out(C)$ is finite.
By the strong irreducibility assumption $C$ has a 
finite center.
Thus, Corollary~\ref{cor:outfinite} implies that $\Out(B)$ is
finite.
\end{proof}

The following proposition is well known. 

\begin{prop} \label{prop: topological automorphisms}
Let $K$ be a number field, and let $\mathbf{H}$ be non-commutative, connected, adjoint, absolutely simple $K$-group. Let 
$S \subset \calV$ be a finite compatible set of places of $K$ as in \S\ref{sub:arithmetic}. 
Then $\prod_{\nu\in S} \mathbf{H}(K_\nu)$ has a finite group of continuous outer
automorphisms.
\end{prop}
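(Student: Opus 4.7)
The plan is to establish an exact sequence
\[ 1 \to \prod_{\nu\in S}\Outtop(\mathbf{H}(K_\nu)) \to \Outtop\Bigl(\textstyle\prod_{\nu\in S}\mathbf{H}(K_\nu)\Bigr) \to \Sigma,\]
with $\Sigma$ a finite group of permutations of $S$, and then to argue that each $\Outtop(\mathbf{H}(K_\nu))$ is finite; since $|S|<\infty$, the middle term will then be finite as well. To set up this sequence, write $G = \prod_{\nu\in S}\mathbf{H}(K_\nu)$ and let $N_\nu = \mathbf{H}(K_\nu)^+$ denote the subgroup generated by the $K_\nu$-points of unipotent radicals of minimal parabolic $K_\nu$-subgroups of $\mathbf{H}$. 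Since $\mathbf{H}$ is absolutely simple, adjoint and noncommutative, $N_\nu$ is topologically simple, closed, center-free and of finite index in $\mathbf{H}(K_\nu)$ (by Tits and Kneser--Tits). A short argument using that each $N_\nu$ is topologically simple and has trivial centralizer in $\mathbf{H}(K_\nu)$ shows that the $N_\nu$'s are precisely the topologically simple closed normal subgroups of $G$, so any $\alpha\in\Auttop(G)$ permutes the family $\{N_\nu\}_{\nu\in S}$. Because $\mathbf{H}(K_\nu)$ is recovered as the centralizer in $G$ of $\prod_{\mu\neq\nu} N_\mu$, the automorphism $\alpha$ permutes the full factors $\mathbf{H}(K_\nu)$ as well, giving a homomorphism $\Outtop(G)\to\Sigma\subseteq \Sym(S)$ with finite image. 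The kernel consists of automorphisms preserving each factor; using $\Inn(G)=\prod_\nu\Inn(\mathbf{H}(K_\nu))$ (which holds because $\mathbf{H}$ is adjoint), this kernel identifies with $\prod_\nu\Outtop(\mathbf{H}(K_\nu))$.

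Next I would show that $\Outtop(\mathbf{H}(K_\nu))$ is finite for each $\nu\in S$. In the archimedean case, $\mathbf{H}(K_\nu)$ is a center-free semisimple real Lie group; continuous automorphisms are automatically smooth, and differentiation at the identity embeds $\Outtop(\mathbf{H}(K_\nu))$ into the finite outer automorphism group of the Lie algebra $\mathfrak{h}$ (a subgroup of the diagram symmetries, possibly composed with complex conjugation when $K_\nu = \bbC$). In the non-archimedean case $K_\nu$ is a finite extension of some $\bbQ_p$, and I would invoke the continuous analogue of Borel--Tits rigidity: every continuous automorphism of $\mathbf{H}(K_\nu)$ is the composition of an algebraic $K_\nu$-automorphism of $\mathbf{H}$ with a continuous field automorphism of $K_\nu$. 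The algebraic outer automorphism group is finite (a finite extension of diagram symmetries by the Galois data defining the $K_\nu$-form of $\mathbf{H}$), and the group of continuous field automorphisms of $K_\nu$ equals the finite Galois group $\mathrm{Gal}(K_\nu/\bbQ_p)$.

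I expect the main obstacle to be the non-archimedean half of the second step, namely rigorously ruling out continuous automorphisms of $\mathbf{H}(K_\nu)$ that are not of algebraic origin. Fortunately the continuous statement is much softer than the full abstract Borel--Tits theorem: one may either use the induced action on the Bruhat--Tits building of $\mathbf{H}$ over $K_\nu$, whose automorphism group modulo $\mathbf{H}(K_\nu)$ is finite, or differentiate a continuous automorphism to a $\bbQ_p$-linear automorphism of the semisimple $\bbQ_p$-Lie algebra underlying $\mathfrak{h}$ and classify such linear automorphisms directly via restriction of scalars. Combining either route with the exact sequence from the first step completes the proof.
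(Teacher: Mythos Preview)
Your overall architecture is correct and matches the paper's: reduce from the product to the individual factors, then show $\Outtop(\mathbf{H}(K_\nu))$ is finite for each $\nu\in S$. The difference lies in how the reduction is carried out. The paper invokes the abstract machinery it has already developed for Theorem~\ref{thm: outer automorphism groups are finite}: each $\mathbf{H}(K_\nu)$ is \emph{strongly irreducible} (Lemma~\ref{lem:strongirr}), so by Corollary~\ref{cor:autprod} the embedding $\prod_\nu\Aut(\mathbf{H}(K_\nu))\hookrightarrow\Aut(\prod_\nu\mathbf{H}(K_\nu))$ has finite cokernel, and one checks that this restricts to the continuous automorphism groups. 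Your route is more structural and self-contained: you characterize the factors intrinsically via the topologically simple closed normal subgroups $\mathbf{H}(K_\nu)^+$ and their centralizers. Both arguments are valid; yours avoids the strong-irreducibility framework at the cost of invoking the Kneser--Tits simplicity of $\mathbf{H}(K_\nu)^+$, while the paper's approach amortizes effort already spent on the surrounding lemmas.

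For the per-factor step the paper simply cites \cite{Margulis:book}*{Chapter~I,~1.8.2(IV)} together with the observation that a local field of characteristic zero has only finitely many continuous field automorphisms. Your archimedean argument and your $\bbQ_p$-Lie-algebra argument in the non-archimedean case both recover this. One caution: your first suggested non-archimedean route, via the automorphism group of the Bruhat--Tits building, breaks down when the $K_\nu$-rank of $\mathbf{H}$ is $1$, since then the building is a tree and $\Aut(T_\nu)/\mathbf{H}(K_\nu)$ is far from finite (cf.\ Theorem~\ref{thm: finite index in automorphisms of building}, which requires rank~$\ge 2$). Stick with the Lie-algebra or Borel--Tits description there.
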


\begin{proof}
By Lemma~\ref{lem:strongirr} each group $\mathbf{H}(K_\nu)$
is strongly irreducible. 
By Corollary~\ref{cor:autprod}, the natural embedding  
\[\prod_{\nu\in S} \Aut(\mathbf{H}(K_\nu))\xrightarrow{j} \Aut(\prod_{\nu\in S} \mathbf{H}(K_\nu))\] has a finite cokernel. 
Further, if $f\in\im(j)$ is a continuous automorphism then $f$ is a product of continuous automorphisms. This implies that the restriction of $j$  
\[\prod_{\nu\in S} \Auttop(\mathbf{H}(K_\nu))\to \Auttop(\prod_{\nu\in S} \mathbf{H}(K_\nu))\] 
has a finite cokernel, too. 
It is thus enough to show that $\Outtop(\mathbf{H}(K_\nu))$ is finite for every $\nu\in S$. 
Since every continuous field automorphism of $K_\nu$ is trivial on
the closure of $\mathbb{Q}$ over which $K_\nu$ is finite, the field $K_\nu$ has a finite group of continuous field automorphisms. 
Moreover, the group $\mathbf{H}$
has a finite group of algebraic outer automorphisms, given by 
Dynkin diagram automorphisms. 
Thus, $\Outtop(\mathbf{H}(K_\nu))$ is indeed finite by~\cite[Chapter I,~1.8.2(IV)]{Margulis:book}.
\end{proof}

\begin{prop} \label{prop:outlattices}
Let $K$ be a number field, and let $\mathbf{H}$ be a non-commutative, connected, adjoint, absolutely simple $K$-group. Let 
$S \subset \calV$ be a finite compatible set of places of $K$ as in \S\ref{sub:arithmetic}. 
Then $\mathbf{H}(\calO[S])$ is finitely generated, strongly irreducible and strongly outer finite.
\end{prop}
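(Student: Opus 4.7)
The three claims are of different nature, so I would handle them separately. Finite generation is classical: $\mathbf{H}(\calO[S])$ is a lattice in the compactly generated lcsc group $G\defq\prod_{\nu\in S}\mathbf{H}(K_\nu)$, and finite generation of $S$-arithmetic subgroups of semisimple algebraic groups over global fields is due to Borel--Harish-Chandra in the arithmetic case and Borel--Serre (resp.\ Raghunathan) in the $S$-arithmetic setting; this can simply be cited.

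For strong irreducibility I would reduce immediately to Lemma~\ref{lem:strongirr}. By Borel's density theorem $\mathbf{H}(\calO[S])$ is Zariski dense in the $K$-group $\mathbf{H}$, and $\mathbf{H}$ is $K$-simple because it is absolutely simple. Hence Lemma~\ref{lem:strongirr} applies and gives strong irreducibility directly.

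The real work is strong outer finiteness. Let $\Gamma'<\mathbf{H}(\calO[S])$ be of finite index. The same density argument shows that $\Gamma'$ is a Zariski dense lattice in $G$ with trivial centre (since $\mathbf{H}$ is adjoint, $Z(G)=1$, and $Z(\Gamma')\subset Z(G)$ by Borel density). The central step, and I expect the main obstacle, is to extend every abstract automorphism $\phi$ of $\Gamma'$ to a continuous automorphism $\tilde\phi$ of $G$; uniqueness of $\tilde\phi$ is immediate from Zariski density of $\Gamma'$ in $\mathbf{H}$ at each place. For the extension itself one invokes Margulis' superrigidity in the higher-rank situation, and Mostow--Prasad strong rigidity together with Margulis arithmeticity in the rank-one situations permitted by the compatibility assumption on $S$ spelled out in \S\ref{sub:arithmetic}. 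This gives an injective homomorphism $\rho\colon \Aut(\Gamma')\hookrightarrow \Auttop(G)$.

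Once $\rho$ is in hand, the rest is formal. Compose $\rho$ with the projection $\Auttop(G)\to \Outtop(G)$. The target is finite by Proposition~\ref{prop: topological automorphisms}, so it suffices to show that the kernel of the composition is a finite extension of $\Inn(\Gamma')$. An element of this kernel is some $\phi$ with $\tilde\phi$ inner in $G$, i.e.\ conjugation by some $g\in G$; since $\tilde\phi(\Gamma')=\Gamma'$, we have $g\in N_G(\Gamma')$. The identity component $N_G(\Gamma')^0$ centralises the Zariski dense subgroup $\Gamma'$ and hence all of $\mathbf{H}$, so it is trivial because $\mathbf{H}$ is adjoint; thus $N_G(\Gamma')$ is discrete, and containing the lattice $\Gamma'$ it is itself a lattice in $G$, so $[N_G(\Gamma'):\Gamma']<\infty$. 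Quotienting by $\Inn(\Gamma')\cong\Gamma'$ identifies the kernel modulo $\Inn(\Gamma')$ with the finite group $N_G(\Gamma')/\Gamma'$. Therefore $\Out(\Gamma')$ is finite, and since $\Gamma'<\mathbf{H}(\calO[S])$ was an arbitrary finite index subgroup, $\mathbf{H}(\calO[S])$ is strongly outer finite.
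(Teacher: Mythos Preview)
Your approach is essentially the paper's: finite generation by citation, strong irreducibility via Lemma~\ref{lem:strongirr}, and strong outer finiteness by extending automorphisms of $\Gamma'$ to continuous automorphisms of $G$ via strong rigidity and then controlling the normaliser. There is, however, a genuine gap in your discreteness step. You argue that $N_G(\Gamma')^0$ centralises $\Gamma'$ and is therefore trivial, and then write ``thus $N_G(\Gamma')$ is discrete''. That inference is valid when $G$ is a Lie group, but here $G=\prod_{\nu\in S}\mathbf{H}(K_\nu)$ always has non-archimedean factors (the compatibility hypothesis in \S\ref{sub:arithmetic} forces $S^\fin\neq\emptyset$), and in a group with a non-trivial totally disconnected part a closed subgroup with trivial identity component need not be discrete --- any compact open subgroup of $\mathbf{H}(K_\nu)$ for finite $\nu$ already shows this.

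The paper closes the gap differently: it shows the normaliser is \emph{countable}. Your own Zariski-density argument gives $\calZ_G(\Gamma')=\{1\}$, so the conjugation map $N_G(\Gamma')\to\Aut(\Gamma')$ is injective; since $\Gamma'$ is finitely generated, $\Aut(\Gamma')$ is countable, hence so is $N_G(\Gamma')$, and a closed countable subgroup of an lcsc group is discrete. (Equivalently: any compact subgroup $K<N_G(\Gamma')$ acts on the finitely generated discrete group $\Gamma'$ through a finite quotient, so an open subgroup of $K$ lies in the trivial centraliser; thus $K$ is finite, and a tdlc group whose compact open subgroups are finite is discrete.) With this correction your argument goes through and matches the paper's.

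One minor remark: the compatibility condition ensures $|S|\ge 2$ with each factor isotropic, so the total $S$-rank is always at least $2$ and Margulis' theorem \cite{Margulis:book}*{Theorem~VII.7.1} applies uniformly; there is no separate rank-one case to handle.
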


\begin{proof}
By \cite[Chapter~IX,~Theorem~3.2]{Margulis:book} $\mathbf{H}(\calO[S])$ is finitely generated, and by Lemma~\ref{lem:strongirr} $\mathbf{H}(\calO[S])$ is strongly 
irreducible. We are left to prove that $\mathbf{H}(\calO[S])$ is strongly outer finite. To this end, consider a subgroup $\Gamma<\mathbf{H}(\calO[S])$ of finite index. 

The group $\Gamma$ is an irreducible lattice in
$H=\prod_{\nu\in S} \mathbf{H}(K_\nu)$.
Let $A=\Auttop(H)$.  
The conjugation homomorphism $H\to A$ is injective as $H$ is center-free and has a finite 
cokernel by Proposition~\ref{prop: topological automorphisms}. Then $\Gamma\hookrightarrow A$ embeds as a lattice. We claim: 
\begin{enumerate}
	\item The normalizer $N=\calN_A(\Gamma)$ of $\Gamma$ in $A$ is discrete in $A$. 
	\item The conjugation action $N\to\Aut(\Gamma)$ is an isomorphism.  	
\end{enumerate}

Let us first conclude the finiteness of $\Out(\Gamma)$ from $(1)$ and $(2)$. 
Since $N$ contains the lattice $\Gamma$ and $N$ is discrete in~$A$, $N<A$ is also a lattice. Hence $[N:\Gamma]<\infty$. By $(2)$, $\Out(\Gamma)$ is isomorphic to $N/\Gamma$, hence finite. 

Regarding~$(1)$, discreteness will follow once we show that $N$ is countable because $N<A$ is closed. 
Since $\Gamma$ is finitely generated, $\Aut(\Gamma)$ is countable. 
Hence $N/\calZ_A(\Gamma)$ is countable, where $\calZ_A(\Gamma)$ denotes the centralizer of $\Gamma$ in $A$. 
Each factor of $H$ is center-free, so $\calZ_H(\Gamma)$ is trivial by the density of the projection of $\Gamma$ in each factor.
Since $H<A$ is of finite index, $\calZ_A(\Gamma)$ is finite, and the countability of $N$ follows.

To verify~$(2)$, 
it is enough to show that every automorphism of $\Gamma$ induces a continuous automorphism of $H$. This is a consequence of the Mostow-Prasad strong rigidity theorem for $H$~\cite[Theorem~VII~7.1]{Margulis:book}.
\end{proof}

\begin{proof}[Proof of {Theorem~\ref{thm: outer automorphism groups are finite}}]
By Proposition~\ref{prop:outlattices} the groups $\mathbf{H}_i(\calO_i[S_i])$ are finitely generated, strongly irreducible and strongly outer finite. 
Thus the proof follows at once from Theorem~\ref{thm:outfinite}.
\end{proof}

Finally, we need a classical result from the work of Tits and Borel-Tits. 

\begin{theorem}\label{thm: finite index in automorphisms of building}
Let $K$ be a number field, and let $\nu$ a finite place of $K$. 
Let $\mathbf{H}$ be a non-commutative, connected, adjoint, absolutely simple $K$-group of $K_\nu$-rank~$\ge 2$. Then the automorphism group of the Bruhat-Tits building associated to $\mathbf{H}$ and $\nu$ contains $\mathbf{H}(K_\nu)^+$ as a subgroup of finite index. 
\end{theorem}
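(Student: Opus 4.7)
The plan is to combine two standard results from Bruhat–Tits theory. Let $X$ denote the Bruhat–Tits building associated to $\mathbf{H}$ and $\nu$. Because $\mathbf{H}$ is adjoint, the natural simplicial action of $\mathbf{H}(K_\nu)$ on $X$ has trivial kernel, so $\mathbf{H}(K_\nu)$ and its subgroup $\mathbf{H}(K_\nu)^+$ embed into $\Aut(X)$. The theorem will then follow once both indices $[\mathbf{H}(K_\nu) : \mathbf{H}(K_\nu)^+]$ and $[\Aut(X) : \mathbf{H}(K_\nu)]$ are shown to be finite.

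The first of these finiteness statements is a consequence of the Kneser–Tits conjecture, which is known to hold for local fields. By a theorem of Borel–Tits, $\mathbf{H}(K_\nu)^+$ coincides with the image of the isogeny $\mathbf{H}^{\mathrm{sc}}(K_\nu) \to \mathbf{H}(K_\nu)$, and the cokernel of this isogeny is controlled by Galois cohomology of the finite center of $\mathbf{H}^{\mathrm{sc}}$ over the local field $K_\nu$, which is a finite group.

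For the second finiteness statement I would appeal to Tits' rigidity theorem for Bruhat–Tits buildings. Since $\mathbf{H}$ is absolutely simple of $K_\nu$-rank $\ge 2$, the affine building $X$ is irreducible of dimension $\ge 2$, and its spherical building at infinity is Moufang of rank $\ge 2$, hence falls within the scope of Tits' classification of higher-rank spherical buildings. The upshot is that every polysimplicial automorphism of $X$ is induced by a triple consisting of an element of $\mathbf{H}(K_\nu)$, a continuous field automorphism of $K_\nu$, and a Dynkin diagram automorphism of $\mathbf{H}$. The continuous field automorphisms form a finite group because $K_\nu$ is a finite extension of $\mathbb{Q}_p$, and the Dynkin diagram automorphisms obviously form a finite group, so $\Aut(X)/\mathbf{H}(K_\nu)$ is finite.

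Combining the two paragraphs yields $[\Aut(X) : \mathbf{H}(K_\nu)^+] < \infty$, as required. The one substantive ingredient is Tits' rigidity in the affine setting; the $K_\nu$-rank $\ge 2$ hypothesis is essential, since in the rank $1$ case $X$ is a locally finite tree whose automorphism group is vastly larger than $\mathbf{H}(K_\nu)$.
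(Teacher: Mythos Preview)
Your proposal is correct and follows essentially the same route as the paper's proof: both invoke Tits' rigidity for higher-rank Bruhat--Tits buildings to identify $\Aut(X)$ with the abstract automorphism group of $\mathbf{H}(K_\nu)$, then use that the latter is a finite extension of $\mathbf{H}(K_\nu)$ by Dynkin diagram automorphisms and continuous field automorphisms of $K_\nu$ (finite since $K_\nu$ is a finite extension of $\mathbb{Q}_p$). The paper cites \cite{bader+caprace+lecureux}*{Proposition~C.1} for the rigidity step and is terser about the index $[\mathbf{H}(K_\nu):\mathbf{H}(K_\nu)^+]$, which you spell out via Galois cohomology; otherwise the arguments coincide.
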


\begin{proof}
In higher rank the automorphism group of the associated Bruhat-Tits building $X$ is isomorphic 
to $\Aut(\mathbf{H}(K_\nu))$ which is isomorphic to an extension of the group 
$\Aut(\mathbf{H})(K_\nu)$ of algebraic automorphisms over $K_\nu$ by a subgroup of field automorphisms of $K_\nu$. 
This follows for example from~\cite{bader+caprace+lecureux}*{Proposition~C.1} and the fact
that a local field of zero characteristic has finitely many continuous field automorphisms.
\end{proof}

\subsection{Quasi-isometries of locally compact groups}\label{sub: QI of locally compact groups}

Recall that a map between metric spaces $f\colon (X,d_X)\to (Y,d_Y)$ is an 
\emph{$(L,C)$-quasi-isometry} if
	\[
		\frac{1}{L}\cdot d_X(x,x')-C< d_Y(f(x),f(x')) < L\cdot d_X(x,x')+C
	\]
	for all $x,x'\in X$ 
	and for every $y\in Y$ there is $x\in X$ so that 
	$d_Y(y,f(x))<C$. 
	Two maps $f,g\colon  (X,d_X)\to (Y,d_Y)$ have \emph{bounded distance} if 
	\[\sup_{x\in X} d_Y(f(x),g(x))<\infty.\] 
	The \emph{quasi-isometry group} $\QI(X,d_X)$ of $(X, d_X)$ is the 
	group (under composition) of equivalence classes of quasi-isometries $(X,d_X)\to (X,d_X)$ under bounded distance. 
		
 	\begin{lemma}\label{lem: QI of lcsc group}
		Let $G$ be an lcsc group containing a closed subgroup $H<G$ such that $G/H$ is compact, the projection $G\to H\bs G$ has locally a continuous cross-section, and $H$ is compactly generated. Let $H\to \Isom(X,d)$ be a properly discontinuous cocompact action on some proper geodesic metric space $(X,d)$. Then the natural homomorphism 
		\[
			\phi\colon H\to \Isom(X,d)\rightarrow \QI(X,d) 
		\]
        extends to a homomorphism $\bar\phi\colon G\to \QI(X,d)$ with the following property.  
		There exist constants $L,C>0$ so that each $\bar\phi(g)$, $g\in G$, is 
		represented by a $(L,C)$-quasi-isometry $f_g\colon X\to X$ such that 
					for every bounded set $B\subset X$ there is a neighbourhood 
			of the identity $V\subset G$ with 
			\begin{equation}\label{eq: quantitative statement in QI lemma}
							\forall_{g\in V}\forall_{x\in B}~~d(f_g(x),x)\le C.
						\end{equation}
						In particular, the isometric action of $H$ on $X$ extends to a quasi-action of $G$ on $X$. 
	\end{lemma}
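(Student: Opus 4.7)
The plan is to realize the claimed quasi-action as the transfer of the isometric left-multiplication action $G\acts G$ via a quasi-isometry $\alpha\colon G\to X$. Since $H$ is compactly generated and $H<G$ is cocompact, $G$ itself is compactly generated, and we equip it with a proper left-invariant metric $d_G$, with respect to which each $L_g\colon G\to G$ is an isometry. Fixing a basepoint $x_0\in X$, the \v{S}varc--Milnor lemma applied to the proper cocompact $H$-action on $(X,d)$ shows that the orbit map $h\mapsto hx_0$ is a quasi-isometry $(H,d_G|_H)\to (X,d)$.

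To build $\alpha$ I first construct a ``retraction'' $c\colon G\to H$. Using compactness of $H\bs G$ and the assumed local continuous cross-sections to $G\to H\bs G$, one assembles a Borel section $\sigma\colon H\bs G\to G$ with image contained in a compact set $K\subset G$ such that $\sigma(H)=e$ and $\sigma$ is continuous on some open neighborhood $U$ of the identity coset $H\in H\bs G$. Setting $c(g)\defq g\sigma(Hg)^{-1}\in H$ gives a map with $c|_H=\id_H$ and $d_G(c(g),g)\le\diam_{d_G}(K)$, so $c$ is coarsely Lipschitz and is a coarse inverse to the cocompact inclusion $H\hookrightarrow G$. Composing with the orbit map gives the quasi-isometry $\alpha(g)\defq c(g)x_0$ from $G$ to $X$. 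Choose a quasi-inverse $\beta\colon X\to G$ landing in $H$ by picking $\beta(x)\in H$ with $d(\beta(x)x_0,x)\le C_0$ for some fixed $C_0$. Finally define $f_g\defq \alpha\circ L_g\circ\beta\colon X\to X$ and $\bar\phi(g)\defq [f_g]\in \QI(X,d)$.

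The quasi-isometry constants $(L,C)$ of $f_g$ depend only on those of $\alpha$ and $\beta$ because $L_g$ is an isometry, hence are uniform in $g$. The homomorphism property $\bar\phi(g_1g_2)=\bar\phi(g_1)\bar\phi(g_2)$ follows because $\beta\circ\alpha$ is within bounded distance of $\id_G$ and $L_{g_1}$ is an isometry, so $f_{g_1}\circ f_{g_2}=\alpha L_{g_1}(\beta\alpha)L_{g_2}\beta$ is within bounded distance of $\alpha L_{g_1}L_{g_2}\beta=f_{g_1g_2}$. The equality $\bar\phi|_H=\phi$ follows from the computation $f_h(x)=c(h\beta(x))x_0=h\beta(x)x_0$, valid because $h\beta(x)\in H$ and $c|_H=\id$, which is within $C_0$ of $hx$.

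For the local bound~\eqref{eq: quantitative statement in QI lemma}, given bounded $B\subset X$ let $B'\defq \overline{\beta(B)}\subset H$, which is compact. By continuity of the right $G$-action on $H\bs G$ and compactness of $B'$, for every open $U_0\subset U$ containing $H$ there is a neighborhood $V\subset G$ of $e$ with $Hgh\in U_0$ for all $g\in V$ and $h\in B'$; continuity of $\sigma$ on $U$ together with $\sigma(H)=e$ then imply that for suitably small $V$ the element $\sigma(Hgh)$ lies in any preassigned neighborhood of $e$, uniformly in $h\in B'$. Hence $c(gh)=gh\sigma(Hgh)^{-1}$ is uniformly close to $h$ in $G$, and the joint continuity of the $H$-action on $X$ gives that $d(c(gh)x_0,hx_0)$ is uniformly small for $g\in V$, $h\in B'$. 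Combined with $d(\beta(x)x_0,x)\le C_0$ this bounds $d(f_g(x),x)$ by any preassigned constant and in particular by $C$ once $C$ has been enlarged beyond $C_0$. The main technical obstacle is the construction of $\sigma$ with compact image that is \emph{continuous at} the identity coset with $\sigma(H)=e$; once this is in place, the remainder of the argument is a routine bookkeeping of quasi-isometry constants through a composition whose middle factor is a genuine isometry.
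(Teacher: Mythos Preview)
Your proof is correct and follows essentially the same approach as the paper's. Your retraction $c$ is the paper's $j'$, your $\alpha=f\circ j'$ and $\beta=j\circ f'$ in the paper's notation, so your $f_g=\alpha\circ L_g\circ\beta$ coincides with the paper's $f_g=f\circ j'\circ l_g^G\circ j\circ f'$; the verification of the local bound via continuity of the section at the identity coset and compactness of $\overline{\beta(B)}$ is likewise the same argument.
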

	
	\begin{remark}\label{rem: continuous local cross section}
    The projection $G\to H\bs G$ of an lcsc group $G$ to the coset space of a closed subgroup $H<G$ 
    has locally a continuous cross-section if $G$ is finite-dimensional with respect to covering dimension~\cite{karube}*{Theorem~2}. 
   	\end{remark}

	\begin{proof}[Proof of Lemma~\ref{lem: QI of lcsc group}]
		Since $H$ is compactly generated, and $H\bs G$ is compact, $G$ is also compactly generated.
		Thus $G$ and $H$ possess left-invariant word metrics that are unique up to quasi-isometry. The inclusion $j\colon H\to G$ is a quasi-isometry. We construct a quasi-inverse $j'\colon G\to H$ of $j$, which is unique up to bounded distance, in the 
		following way. 
		
		Let $\pr\colon G\to H\bs G$ be the projection. Let $s\colon U_0\to G$ be a local continuous  cross-section of $\pr$ defined on some compact neighbourhood $U_0\subset H\bs G$ of $H1$ such that $s(H1)=1$. 
		We can extend $s$ to a global cross-section $s\colon H\bs G\to G$ with relatively compact image $s(H\bs G)$, which is not necessarily continuous. Then 
		\[ j'\colon G\to H,~g\mapsto g\cdot s(\pr(g))^{-1}\]
		is a quasi-inverse of $j$.

		The assignment $f\mapsto j\circ f\circ j'$ yields an isomorphism $j_\ast\colon \QI(H)\to \QI(G)$. Let $x_0\in X$. By the \v Svarc-Milnor theorem the map $h\to hx_0$ is a quasi-isometry $f\colon H\to X$. We pick a quasi-inverse $f'\colon X\to H$ of $f$. Similarly, we obtain an isomorphism $f_\ast\colon \QI(H)\to \QI(X)$. Let $H\to \QI(H)$ be the homomorphism that sends a group element to its left translation. Similarly, $G\to \QI(G)$. 
		
		It is easy to see that the map   	
		$\phi$ coincides with the composition 
		\[ H\to \QI(H)\xrightarrow{f_\ast} \QI(X).\]
		We define $\bar\phi$ to be the composition 
		\[ G\to \QI(G)\xrightarrow{j_\ast^{-1}} \QI(H)\xrightarrow{f_\ast} \QI(X).\]
		Clearly, $\bar\phi$ extends $\phi$. Let $l_g^G\colon G\to G$ denote left translation by $g\in G$ in $G$. 
		Similarly, let $l_h^H\colon H\to H$ be left translation by $h\in H$. 
		Then  
		$\bar\phi(g)$ is represented by the quasi-isometry 
		\[ f_g:= f\circ j'\circ l_g^G\circ j\circ f'.\]  
		Clearly, there are constants $C,L>0$ only depending on $j, j', f$, and $f'$ such that $f_g$ is a $(L,C)$-quasi-isometry. 

        Let $B\subset X$ be a bounded subset. Let 
        \[ D:= \sup_{x\in X}d\bigl(x, f(f'(x))\bigr)=\sup_{x\in X}d\bigl(x, f'(x)x_0\bigr)<\infty.\] 
        The set 
        \[ \bigl\{ f'(x)\mid x\in B\bigr\}\subset H \]
        is bounded, thus relatively compact. Let $K\subset H$ be its closure. The continuous 
        map 
        \[ H\times G\to H\bs G,~(h,g)\mapsto \pr(gh)\]
        sends $H\times\{1\}$ to $H1$. By continuity and compactness of $K$ there is an 
        identity neighbourhood $V_0\subset G$ such that $K\times V_0$ is mapped to $U_0\subset H\bs G$. Then the function 
        \[ \psi\colon K\times V_0\to \bbR,~(h,g)\mapsto d\bigl( ghs(\pr(gh))^{-1}x_0, hx_0\bigr)\]
        is continuous and maps $K\times \{1\}$ to $0$. 
        By continuity and compactness of $K$ there is an identity neighborhood $V\subset V_0$ 
        such that $\psi(K\times V)\subset [0,1]$. 
        
        Now let $x\in B$ and $g\in V$. Let $h=f'(x)\in K$. 
        Then 
        \begin{align*}
        	d\bigl( f_g(x), x\bigr) =  d\bigl( f(j'(gj(f'(x)))), x) 
        							&=  d\bigl( j'(gf'(x))x_0, x)\\
        							&\le   d\bigl( j'(gh)x_0, hx_0)+D\\
        							&=  d\bigl( ghs(\pr(gh))^{-1} x_0, hx_0)+D\\
        							&\le D+1.
        	    	\end{align*}

        Upon 
		replacing the constant $C$ by $\max\{C,D+1\}$, the statement follows. 	
\end{proof}

\section{Arithmetic lattices, tree extensions, and the arithmetic core} % (fold)
\label{sec:arithmetic_case}

In \S\ref{sub:arithmetic} we introduce 
the notion of tree extensions of $S$-arithmetic lattices, which appears in our main result, Theorem~\ref{T:main}. In \S\ref{sub:reduction_a_product} we discuss an arithmeticity result for lattices in a product of a semisimple Lie group and a tdlc group, which identifies the second case in Theorem~\ref{T:main}. 

\subsection{Arithmetic lattices and tree extensions} % (fold)
\label{sub:arithmetic}
 \begin{setup}\label{setup:arith}
	Let $K$ be a number field, $\calO$ its ring of integers, and let 
	$\mathbf{H}$ be a connected, noncommutative, absolutely simple adjoint $K$-group. 
	Let $\calV$ be the set of inequivalent valuations (places) of $K$, 
	let $\calV^\infty$ denote the archimedean ones, and $\calV^\fin=\calV-\calV^\infty$ 
	the non-archimedean ones (finite places).  
	For $\nu\in\calV$ we denote by $K_\nu$ the completion of $K$ with respect to $\nu$; it is a local field.
	Let $S\subset \calV$ be a finite subset of places that is \emph{compatible with $\mathbf{H}$} in the sense 
	of~\cite{BFS-adelic}. Explicitly, this means the following: 
	\begin{enumerate}
\item For every $\nu\in S$, $\mathbf{H}$ is $K_\nu$-isotropic.
\item $S$ contains all $\nu\in\calV^\infty$ for which $\mathbf{H}$ is $K_\nu$-isotropic. 
\item $S$ contains at least one finite and one infinite place. 
\end{enumerate}
	Let $S^\infty=S\cap \calV^\infty$ and $S^\fin=S\cap \calV^\fin$. 
	Let $\calO[S]\subset K$ be the ring of $S$-integers.
\end{setup}

For $\nu\in\calV$ let $\mathbf{H}(K_\nu)^+<\mathbf{H}(K_\nu)$ be the normal 
subgroup as defined in~\cite{borel+tits-abstract}*{Section~6}. 
Since $K_\nu$ is perfect, the group $\mathbf{H}(K_\nu)^+$ is the subgroup generated by all 
unipotent elements and it has finite index in $\mathbf{H}(K_\nu)$. 
If $\nu\in \calV^\infty$, then 
$\mathbf{H}(K_\nu)^+$ is just the connected component of the identity in the real Lie group $\mathbf{H}(K_\nu)$. 
Define  
\begin{equation*}
	\mathbf{H}_{K,S}:=\prod_{\nu\in S}\mathbf{H}(K_\nu)\qquad 
	\mathbf{H}_{K,S}^+:=\prod_{\nu\in S}\mathbf{H}(K_\nu)^+.
\end{equation*}
The quotient $\mathbf{H}_{K, S}/\mathbf{H}_{K,S}^+$ is finite.

By reduction theory after Borel and Harish-Chandra the diagonal 
embedding realizes $\mathbf{H}(\calO[S])$ is a lattice in $\mathbf{H}_{K,S}$. 
Note that 
\[
	\mathbf{H}_{K,S}=\mathbf{H}_{K,S^\infty}\times \mathbf{H}_{K,S^\fin}
\]
is the splitting into a semi-simple real Lie group and a totally 
disconnected locally compact group. 

\begin{remark}\label{rem: meaning of S-arithmetic}
If $S$ is finite set of places compatible with $\mathbf{H}$, then $\mathbf{H}(\calO[S])$ is a 
lattice in a product of at least two lcsc groups, one of them being a Lie group and one of them being a tdlc group. 
\end{remark}

We now exhibit a generalization of the previous type of lattices.

\begin{defn}\label{defn: generalized S-arithmetic lattice embedding}
Let $S$ be finite. Let $S^\fin_1$ denote the finite places $\nu\in S^\fin$
such that the $K_\nu$-rank of $\mathbf{H}$ is $1$; denote by $T_\nu$ the associated
Bruhat--Tits tree and by $\Aut(T_\nu)$ the tdlc group of automorphisms of this simplicial tree. 
Any closed intermediate group  
\[
	\mathbf{H}_{K,S}^+=\prod_{\nu\in S} \mathbf{H}(K_\nu)^+ < 
	H^* < \prod_{\mu\in S-S^\fin_1}\mathbf{H}(K_\mu)\times \prod_{\nu\in S^\fin_1}\Aut(T_\nu)
\]
is called a \emph{tree extension} of $\mathbf{H}_{K,S}$. 
\end{defn}

Since $\mathbf{H}_{K,S}^+=\prod_{\nu\in S} \mathbf{H}(K_\nu)^+ < 
	H^*$ is cocompact, the 
	inclusion of a subgroup commensurable to $\mathbf{H}(\calO[S])$ into a tree extension 
of $\mathbf{H}_{K,S}$ is a lattice embedding by Lemma~\ref{L:intermediate}. 

\begin{definition}\label{def: definition of S-arithmetic lattice up to tree extension}
Retaining Setup~\ref{setup:arith}, we call the inclusion of a subgroup commensurable to $\mathbf{H}(\calO[S])$ into $\mathbf{H}_{K,S}$ or into a tree extension 
of $\mathbf{H}_{K,S}$ an \emph{$S$-arithmetic lattice (embedding)} or an \emph{$S$-arithmetic lattice (embedding) up to tree extension}, respectively. 
\end{definition}

\begin{remark}\label{rem: tree extension up to finite index}
Denote by $X_\nu$ the Bruhat--Tits buildings or the symmetric space of $\mathbf{H}(K_\nu)$.
For $\nu\in S-S^\fin_1$ the group $\Isom(X_\nu)$ contains $\mathbf{H}(K_\nu)^+$ as a subgroup
of finite index by Theorem~\ref{thm: finite index in automorphisms of building}. 
Further, $\prod_{\nu\in S}\Isom(X_\nu)$ is a finite index subgroup of 
$\Isom(\prod_{\nu\in S} X_\nu)$. This follows from a generalization of the de Rham decomposition~\cite{Caprace+Monod:I}*{Theorem~1.9}. Thus any closed intermediate subgroup
\[
	\mathbf{H}_{K,S}^+<H^* < \Isom(\prod_{\nu\in S} X_\nu)
\]
is, up to passing to a finite index subgroup, a tree extension of $\mathbf{H}_{K,S}$.  
\end{remark}

\subsection{Arithmetic core theorem}
Next we state an arithmeticity result for lattices in products that provides a key step in the proof of Theorem~\ref{T:main}.

\begin{theorem}[Arithmetic Core Theorem]\label{T:arithmetic-core}\hfill{}\\
		Let $H$ be a connected, center-free, semi-simple, real Lie group 
		without compact factors,
		$D$ be a tdlc group and $\Gamma<H\times D$ be a lattice. Assume that 
		\begin{enumerate}
			\item the projection $\Gamma\to D$ has a dense image,
			\item the projection $\Gamma\to H$ has a dense image,
			\item the projection $\Gamma\to H$ is injective,
			\item $D$ is compactly generated.
		\end{enumerate}
		Then there exist number fields $K_1,\dots,K_n$, connected, non-commutative, adjoint, 
		absolutely simple $K_i$-groups $\mathbf{H}_i$, 
		finite sets $S_i\subset \calV_i$ of places of $K_i$ compatible with $\mathbf{H}_i$ as in 
		\S\ref{sub:arithmetic} with the following properties: 
		There is a topological isomorphism
		\begin{equation*}
			H \cong \prod_{i=1}^n (\mathbf{H}_i)_{K_i,S_i^\infty}^0
		\end{equation*}
		and a continuous epimorphism $D\twoheadrightarrow Q$ with compact kernel such that $Q$ is a closed 
		intermediate subgroup  
		\[
			\prod_{i=1}^n (\mathbf{H}_i)_{K_i,S_i^\fin}^+<Q<\prod_{i=1}^n (\mathbf{H}_i)_{K_i,S_i^\fin}
		\]
		and the image of $\Gamma$ in $H\times Q$ 
		is commensurable with the image of 
		\[
			\prod_{i=1}^n \mathbf{H}_i(\calO_i[S_i])\overto{} 
			\prod_{i=1}^n (\mathbf{H}_i)_{K_i,S_i^\infty}\times\prod_{i=1}^n (\mathbf{H}_i)_{K_i,S_i^\fin}.
		\]
\end{theorem}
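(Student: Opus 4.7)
The plan is to produce an arithmetic lattice inside $H$ via Margulis' commensurator arithmeticity theorem and then use the density and compact generation hypotheses to recognize $D$ as the (almost) non-Archimedean part of an $S$-arithmetic setup. The core of this reduction is exactly the adelic arithmeticity theorem proved in the companion paper \cite{BFS-adelic}, so the main task here is to verify that its hypotheses are met.

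First I would extract a lattice in $H$ that is commensurated by a dense subgroup. Choose a compact open subgroup $U<D$, which exists since $D$ is tdlc. The open subgroup $H\times U<H\times D$ intersects the lattice $\Gamma$ in a lattice $\Lambda:=\Gamma\cap(H\times U)$ by Lemma~\ref{L:open}; since $U$ is compact, Lemma~\ref{L:reducing} gives that the projection $\bar\Lambda$ of $\Lambda$ to $H$ is a lattice there, and by hypothesis~(3) the map $\Lambda\to\bar\Lambda$ is an isomorphism. Lemma~\ref{L:a-normal} tells us $U<D$ is commensurated, hence $H\times U<H\times D$ and thus $\Lambda<\Gamma$ are commensurated; pushing this forward under the injective dense projection $\Gamma\to H$ shows that the commensurator of $\bar\Lambda$ in $H$ contains the dense subgroup $\Gamma\cdot\bar\Lambda$.

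Next I would invoke Margulis' commensurator superrigidity/arithmeticity in the generality of \cite{BFS-adelic} applied to $\bar\Lambda<H$: this furnishes number fields $K_i$, connected noncommutative absolutely simple adjoint $K_i$-groups $\mathbf{H}_i$, and finite compatible infinite parts $S_i^\infty\subset\calV_i^\infty$ together with a topological isomorphism $H\cong\prod_i(\mathbf{H}_i)_{K_i,S_i^\infty}^0$ identifying $\bar\Lambda$ with a group commensurable to $\prod_i\mathbf{H}_i(\calO_i)$. Each commensurator in $\Gamma$ defines, via its action on $\bar\Lambda$, an element of $\prod_i\mathbf{H}_i(K_i)$ sitting diagonally in the restricted adelic product over the finite places; thus the second coordinate of $\Gamma$ inside $D$ is forced to read off the non-Archimedean completions of these arithmetic data.

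The last and hardest step is to identify $D$ itself. Using hypothesis~(4) that $D$ is compactly generated one can show that only finitely many finite places $S_i^\fin$ carry non-trivial image, forming a finite set $S_i=S_i^\infty\cup S_i^\fin$ compatible with $\mathbf{H}_i$; the density of $\Gamma\to D$ together with the surjectivity of the Borel--Harish-Chandra image onto the non-Archimedean factors forces the quotient $Q$ to sit between $\prod_i(\mathbf{H}_i)^+_{K_i,S_i^\fin}$ and $\prod_i(\mathbf{H}_i)_{K_i,S_i^\fin}$; finally, any element of $D$ that acts trivially on all of this arithmetic data lies in a closed subgroup whose image in every arithmetic completion is trivial, and by Lemma~\ref{L:Radam-and-fincovol} (comparing the amenable radical of $H\times D$ to that of the ambient arithmetic group, which is trivial) this kernel is compact. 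The main obstacle throughout is this final packaging step identifying $D$ up to a compact kernel with the non-Archimedean part $Q$; this is precisely where the more delicate arithmeticity theorem of \cite{BFS-adelic}, slightly strengthening Caprace--Monod's \cite{Caprace+Monod:II}*{Theorem~5.20}, is essential.
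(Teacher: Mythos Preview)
Your proposal is essentially the paper's own sketch: extract a commensurated lattice $\bar\Lambda<H$ from $\Gamma\cap(H\times U)$, apply Margulis' commensurator arithmeticity to obtain the $K_i$, $\mathbf{H}_i$, and $S_i$, then identify $D$ with the non-Archimedean part up to compact kernel, with the full argument deferred to \cite{BFS-adelic}. The one place where you diverge is the compact-kernel step: the paper does not use Lemma~\ref{L:Radam-and-fincovol} here but instead considers the closure $L$ of the diagonal image of $\Gamma$ in $D\times\mathbf{H}_{K,S^\fin}$ and shows directly that $L$ is the graph of a continuous epimorphism $D\twoheadrightarrow\mathbf{H}_{K,S^\fin}$ with compact kernel (using that both projections of $\Gamma$ are dense and that the closure of $\Delta\simeq\mathbf{H}(\calO)$ is compact open in both). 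Your appeal to Lemma~\ref{L:Radam-and-fincovol} does not quite give this, since that lemma compares amenable radicals rather than producing the epimorphism, but the overall architecture is the same.
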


This result is quite close to Theorem 5.20 in the paper \cite{Caprace+Monod:II} of Caprace and Monod. In fact, if $H$ is assumed to be simple instead of semi-simple, Theorem~\ref{T:arithmetic-core} is essentially 
their theorem. 
In our companion paper~\cite{BFS-adelic} we prove Theorem~\ref{T:arithmetic-core} deducing it from a more general statement, 
in which $D$ is not assumed to be compactly generated.
In this more general case the sets $S_i$ of places of $K_i$ might be infinite, 
and the gap between $\mathbf{H}^+_{K,S}$ and $\mathbf{H}_{K,S}$ becomes large.

In the current paper we use the above arithmetic core theorem~\ref{T:arithmetic-core} in  
\S\ref{sec:proof of main result}, Step 3, case (iii).
At this point in the proof we do not know yet that $\Gamma$ (called $\Gamma_3$ there) 
is finitely generated, but do know compact generation of $D$ (called $\tdf{G}_3$ there)
inherited from the compact generation assumption on $G$.

\medskip

For reader's convenience we sketch the idea of the proof of Theorem~\ref{T:arithmetic-core}
as it appears in~\cite{BFS-adelic}. Apart from step (1) below 
our approach differs from that taken by Caprace and Monod in \cite{Caprace+Monod:II}.
In the following sketch we ignore some important details for the sake of transparency.

\begin{enumerate}
	\item 
	Choose a compact open subgroup $U<D$, and observe that the projection $\Delta$ of 
	$\Gamma_U:=\Gamma\cap (H\times U)$ 
	to $H$ is a lattice in $H$, commensurated by the projection of all of $\Gamma$ to $H$.
	
	If $\Delta$ is irreducible, Margulis' commensurator super-rigidity and arithmeticity theorems 
	provide a number field $K$ and a simple $K$-algebraic group $\mathbf{H}$, so that 
	the semi-simple real Lie group $H$ is locally isomorphic to $\mathbf{H}_{K,S^\infty}$ and $\Delta$ is commensurable
	to the arithmetic lattice $\mathbf{H}(\calO)$; here $S^\infty$ is the set of infinite places $\nu\in\calV^\infty$
	for which $\mathbf{H}(K_\nu)$ is non-compact. 
	If $\Delta$ is reducible, it is commensurable to a product \[\Delta_1\times\cdots \times\Delta_n<H_1\times\cdots\times H_n\]
	where $\Delta_i<H_i$ are irreducible lattice, leading to fields $K_i$ and $K_i$-simple groups $\mathbf{H}_i$, $i=1,\dots,n$.
	For clarity we continue with $n=1$ case.
	
	\item
	We now view $\Gamma$ as a subgroup of the commensurator ${\rm Commen}_H(\mathbf{H}(\calO))$ of the arithmetic 
	lattice $\mathbf{H}(\calO)\simeq\Delta$. 
	If one glosses over the difference between the simply-connected and adjoint forms of $\mathbf{H}$
	(such as $\SL_d$ and $\PGL_d$) then the above commensurator is the subgroup of rational points
	$\mathbf{H}(K)$, and therefore $\Gamma<\mathbf{H}(K)$.
	Define $S^\fin$ to be the set of those non-archimedean places $\nu\in \calV^\fin$ for which
	the image of $\Gamma$ in $\mathbf{H}(K_\nu)$ is unbounded.
	(For example, for $\SL_d(\bbZ)<\Gamma<\PSL(\bbQ)$ the set $S^\fin$ would consist of primes $p$ that appear 
	with arbitrarily high powers in denominators of entries of $\gamma\in\Gamma$).
	Then $\Gamma$ is commensurable to a subgroup of $\mathbf{H}(\calO[S])$.
	
	\item
	For $\nu\in S^\fin$ the image of $\Gamma$ in the tdlc group $\mathbf{H}(K_\nu)$ is not precompact.
	It can be showed using Howe-Moore's theorem (alternatively, use~\cite{Prasad}) that this implies 
	that the image of $\Gamma$ is dense in (an open subgroup of finite index in) $\mathbf{H}(K_\nu)$.
	One can even show that the image of $\Gamma$ is dense in (an open subgroup of finite index in) $\mathbf{H}_{K,S^\fin}$.
	
	\item
	One now considers the closure $L$ of the diagonal imbedding of $\Gamma$ in $D\times \mathbf{H}_{K,S^\fin}$.
	Using the fact that the projections to both factors are dense and that the closure of $\Delta\simeq \mathbf{H}(\calO)$
	in both projections is open compact, one shows that $L$ is a graph of a continuous epimorphism $D\to \mathbf{H}_{K,S^\fin}$
	with compact kernel.
	
	\item
	Finally, the fact that $\Gamma<H\times D$ is a lattice, implies that its image, contained in the $S$-arithmetic lattice
	$\mathbf{H}(\calO[S])$, is a lattice in $H\times \mathbf{H}_{K,S^\fin}\simeq \mathbf{H}_{K,S}$.
	Thus $\Gamma\simeq \mathbf{H}(\calO[S])$.
	
	If $D$ is compactly generated, then so is $\mathbf{H}_{K,S^\fin}$, which implies that $S^\fin$ (hence also all of $S$) is finite.
	It also follows that $\Gamma$ is finitely generated.
\end{enumerate}

\section{Proof of Theorem~\ref{T:main}} % (fold)
\label{sec:proof of main result}

The starting point of the proof of Theorem~\ref{T:main} is a consequence of Hilbert's 5th problem 
which was observed by Burger--Monod~\cite{burger+monod}*{Theorem~3.3.3}.

\begin{theorem}\label{T:lcsc-mod-ramen}
	Every locally compact group $H$ contains an open, normal,  finite index subgroup $H'$ containing $\Radam(H)$ such that 
	the quotient $H'/\Radam(H)$ is topologically isomorphic to a direct product 
	of a connected, center-free, semi-simple, real Lie group without compact factors 
	and a tdlc group with trivial amenable radical. 
\end{theorem}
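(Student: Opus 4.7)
The plan is to reduce to $\Radam(H) = \{1\}$ and then analyze the identity component of the quotient. Set $G := H/\Radam(H)$; since any closed normal amenable subgroup of $G$ pulls back to a closed normal extension of $\Radam(H)$ by an amenable group --- hence amenable, hence contained in $\Radam(H)$ --- we have $\Radam(G) = \{1\}$. It therefore suffices to produce an open, normal, finite-index subgroup $G_1 < G$ of the form $G_1 \cong L \times D$, with $L$ a connected center-free semi-simple real Lie group without compact factors and $D$ tdlc with trivial amenable radical; then $H' := \pi^{-1}(G_1)$, where $\pi \colon H \twoheadrightarrow G$ is the quotient map, will do.

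The next step is to show that $G^0$ itself is the desired Lie factor. Yamabe's solution of Hilbert's fifth problem supplies the maximal compact normal subgroup $K \triangleleft G^0$; being characteristic in $G^0$, it is normal in $G$, so $K$ is a closed normal amenable subgroup of $G$ and must vanish. Thus $G^0$ is a connected Lie group. The same characteristic-hence-contained-in-$\Radam(G)$ argument then kills, in turn, the solvable radical of $G^0$ (yielding semi-simplicity), the discrete center $Z(G^0)$ (yielding center-freeness, after noting $Z(G^0)$ is closed in $G$ since $G^0$ is), and the product of the compact simple factors.

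To split off the tdlc complement, set $C := C_G(G^0)$. Since $Z(G^0) = \{1\}$ we have $C \cap G^0 = \{1\}$, so $CG^0 = C \times G^0$; and any connected subgroup of $G$ lies in $G^0$, so $C^0 \subseteq C \cap G^0 = \{1\}$ and $C$ is totally disconnected. The crucial input that makes the index finite is that for a connected center-free semi-simple real Lie group $L$, the group $\Auttop(L)$ is a Lie group containing $\Inn(L) \cong L$ as an open normal subgroup of finite index, so $\Outtop(L)$ is finite and discrete. Applied to $L = G^0$, the continuous conjugation homomorphism $G \to \Auttop(G^0)$ has kernel $C$, and composing with the projection onto $\Outtop(G^0)$ yields a continuous homomorphism to a finite discrete group whose kernel $G_1$ is open, normal, and of finite index. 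By construction $g \in G_1$ iff conjugation by $g$ on $G^0$ agrees with some inner automorphism $c_{g_0}$ of $G^0$, i.e., iff $g_0^{-1}g \in C$, so $G_1 = G^0 C = G^0 \times C$. Finally, $\Radam(C)$ is characteristic in $C$, normalized by $G$ (since $G$ normalizes $C$), closed in $G$, and amenable, hence contained in $\Radam(G) = \{1\}$.

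The main obstacle I anticipate is the first reduction step --- producing $G^0$ as a Lie group --- which rests on Yamabe's theorem; thereafter everything runs on the single principle that characteristic amenable subgroups of $G$ must be trivial. The one remaining subtlety is the openness of $G_1$, which is free once one knows $\Outtop(G^0)$ is discrete; this in turn is a concrete Lie-theoretic fact about adjoint semi-simple real Lie groups.
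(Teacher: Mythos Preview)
Your proof is correct and follows precisely the approach the paper indicates: after passing to $G=H/\Radam(H)$, you take $H'$ to be the preimage of $G_1=\ker\bigl(G\to\Outtop(G^0)\bigr)$, which is exactly the formula the paper records just below the theorem statement. The paper itself gives no further argument (it cites Burger--Monod~\cite{burger+monod}*{Theorem~3.3.3}), so your write-up is in fact a detailed fleshing-out of the same proof; the only point worth tightening is the passage from the algebraic splitting $G_1=G^0\cdot C$ to a \emph{topological} direct product, which follows since the continuous map $\psi\colon G_1\to\Inn(G^0)\cong G^0$ has the inclusion $G^0\hookrightarrow G_1$ as a continuous section, making $g\mapsto(\psi(g),\psi(g)^{-1}g)$ a continuous inverse to multiplication.
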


In fact, $H'=\ker\left[H\to \Out\left((H/\Radam(H))^0\right)\right]$ is the kernel of the homomorphism to the (finite) outer automorphism
group of the semi-simple, center-free, real Lie group $(H/\Radam(H))^0$ --  the connected component of the identity of
the lcsc group $H/\Radam(H)$.

\subsection*{Step 1. Reduction to a lattice in a product} % (fold)
\label{sub:reduction_a_product}

It is only the very first step of the proof where we shall take advantage of property~\CAF. 
\begin{theorem}\label{T:R-compact} 
   The amenable radical of a lattice envelope of a group with property~\CAF\ 
   is compact. 
\end{theorem}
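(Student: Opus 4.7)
Set $R:=\Radam(G)$. The strategy is in two stages: first, use property \CAF\ to show that the intersection $R\cap\Gamma$ is finite; then, leverage the lattice structure to upgrade this finiteness to compactness of $R$.

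The first stage is essentially immediate. Since $\Gamma<G$ is a lattice, it is in particular a closed subgroup of finite covolume, so Lemma~\ref{L:Radam-and-fincovol} applied with $H=\Gamma$ gives $\Radam(\Gamma)=R\cap\Gamma$. This intersection is amenable (as a subgroup of $R$) and normal in $\Gamma$ (since $R$ is normal in $G$), hence commensurated in $\Gamma$. Property \CAF\ then forces $R\cap\Gamma$ to be finite.

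For the second stage, Lemma~\ref{L:reducing} reduces the desired compactness of $R$ to showing that $R\cap\Gamma$ is a lattice in $R$, equivalently that $R\Gamma$ is closed in $G$: finiteness of $R\cap\Gamma$ combined with its being a lattice in $R$ forces $R$ to have finite Haar measure, hence to be compact. A preliminary reduction allows us to assume $R\Gamma$ is dense in $G$: replacing $G$ by $\overline{R\Gamma}$, the inclusion $\Gamma<\overline{R\Gamma}$ remains a lattice embedding by Lemma~\ref{L:intermediate}, and $R$ remains the amenable radical by Lemma~\ref{L:Radam-and-fincovol} (since $R\subseteq \overline{R\Gamma}$). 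It thus suffices to show that $R$ is compact under the additional hypothesis that $R\Gamma$ is dense in $G$.

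The main obstacle is this last step. My plan is to analyse the measure-preserving action of $R$ on $(G/\Gamma,\mu)$, with $\mu$ the $G$-invariant probability measure. Normality of $R$ makes the stabilizer of $g\Gamma$ equal to the finite group $g(R\cap\Gamma)g^{-1}$, while density of $\Gamma R/R$ in $G/R$ forces the action to be ergodic. Every $R$-orbit is then $R$-equivariantly isomorphic to $R/(R\cap\Gamma)$; if $R$ were non-compact, the unique $R$-invariant measure on this homogeneous space up to scale, namely the quotient of Haar measure, would have infinite total mass, so by uniqueness the restriction of $\mu$ to any single orbit must vanish. Combined with ergodicity and a disintegration of $\mu$ over the orbit space, this would contradict $\mu(G/\Gamma)=1$. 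The technical crux is the disintegration step, which requires the $R$-orbit relation on $G/\Gamma$ to be smooth (equivalently, that $R\Gamma$ be closed in $G$); to establish this I would invoke Theorem~\ref{T:lcsc-mod-ramen} and decompose $G/R$, after passing to a finite index open subgroup, as a product of a connected center-free semisimple real Lie group without compact factors and a tdlc group with trivial amenable radical, so as to analyse the dense image of $\Gamma$ inside this product directly.
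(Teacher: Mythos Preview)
Your first stage is correct: Lemma~\ref{L:Radam-and-fincovol} indeed gives $R\cap\Gamma=\Radam(\Gamma)$, which is amenable and normal in $\Gamma$, hence finite by \CAF. The second stage, however, has a genuine circularity that you identify but do not resolve. Disintegrating $\mu$ over the $R$-orbit space requires the orbit equivalence relation to be smooth, which for a closed normal $R$ acting on $G/\Gamma$ is equivalent to $R\Gamma$ being closed --- the very statement you are trying to prove. Your fallback, to ``analyse the dense image of $\Gamma$ in the product directly,'' is not an argument: nothing you have written prevents that image from being a non-discrete dense subgroup. To see that the obstruction is real and not merely technical, note that your second-stage reasoning uses only that $R$ is closed, normal, amenable, acts ergodically on $G/\Gamma$, and meets $\Gamma$ in a finite group; an irrational line in $G=\bbR^2$ with $\Gamma=\bbZ^2$ has all of these properties and yet is non-compact. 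You must somewhere exploit that $R$ is the \emph{full} amenable radical, and this is precisely where a substantial external input is required.

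The paper's proof does not attempt to show that $R\Gamma$ is closed. Instead it enlarges $R$ to the still-amenable group $K$, namely the preimage in $G'$ of $\{1\}\times U$ for a compact open subgroup $U$ in the tdlc factor of $G'/R$, and invokes the Breuillard--Gelander topological Tits alternative~\cite{BG-Top-Tits}*{Theorem~9.5} (a generalisation of Auslander's theorem) to show that the lattice $\Gamma\cap G'_U$ in the preimage $G'_U$ of $\ssf{G}_1\times U$ projects discretely to the connected semisimple Lie quotient $G'_U/K\cong\ssf{G}_1$. Lemma~\ref{L:reducing} then makes $\Gamma\cap K$ a lattice in the amenable group $K$, hence itself amenable; since $\Gamma\cap K$ is also commensurated in $\Gamma$ (this uses Lemma~\ref{lem: automorphisms of products} to control conjugation by elements outside $G'$), property \CAF\ forces it to be finite, whence $K$ and therefore $R\subset K$ are compact. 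The Breuillard--Gelander theorem is the substantive missing ingredient in your outline; without it or an equivalent discreteness result for projections of lattices past amenable kernels into semisimple Lie groups, the argument does not close.
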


\begin{proof}
Let $G'<G$ be an open, normal, finite index subgroup so that 
\[
	G_1:=G'/\Radam(G)\cong \ssf{G}_1\times\tdf{G}_1
\]
as in Theorem~\ref{T:lcsc-mod-ramen}, namely $\ssf{G}_1$ is a connected, center-free, semi-simple, real Lie group without compact factors,
and $\tdf{G}_1$ is a tdlc group with $\Radam(\tdf{G}_1)=\{1\}$. 
Pick a compact open subgroup $U<\tdf{G}_1$. We consider the following commutative diagram: 

\[\begin{tikzcd}
   \overbrace{\Gamma\cap p_G^{-1}(j(\{1\}\times U))}^{=:A}\arrow[r,hook]\arrow[d, hook'] & p_G^{-1}(j(\{1\}\times U))\arrow[r]\arrow[d,hook'] & j(\{1\}\times U)\arrow[d,hook']\\
   \Gamma\arrow[r, hook]  &G\arrow[r, "p_G"] \arrow[r]& G/\Radam(G)\\
   \Gamma\cap G'\arrow[r, hook]\arrow[u,hook] & G'\arrow[r, "p_{G'}"]\arrow[u, hook] & G'/\Radam(G)\arrow[r,"\cong"]\arrow[u, hook] &\ssf{G}_1\times \tdf{G}_1\arrow[lu, "j"', bend right]
\end{tikzcd}\]

The arrows are the obvious inclusions and projections. Moreover, 
$j$ is defined by requiring commutativity. 

First we show that $A$ is commensurated by $\Gamma$ which is 
equivalent to $j(\{1\}\times U)$ being commensurated by $G/\Radam(G)$ by 
Lemma~\ref{lem: permanence commensurated}. Let $g\in G/\Radam(G)$. Since 
$G'$ is normal in $G$ and thus $G'/\Radam(G)$ is normal $G/\Radam(G)$, conjugation 
by $g$ is a continuous automorphism of $j(\ssf{G}_1\times \tdf{G}_1)\cong \ssf{G}_1\times \tdf{G}_1$. By Lemma~\ref{lem: automorphisms of products} this automorphism is the product of continuous 
automorphisms $\ssf{c}_g\colon \ssf{G}_1\to\ssf{G}_1$ and 
$\tdf{c}_g\colon \tdf{G}_1\to\tdf{G}_1$. Hence conjugation by $g$ maps 
$j(\{1\}\times U)$ to $j(\{1\}\times \tdf{c}_g(U))$. Since the subgroup $\tdf{c}_g(U)<\tdf{G}_1$ is open and compact, the intersection 
$\tdf{c}_g(U)\cap U$ has finite index in~$U$. This implies that 
$j(\{1\}\times U)$ is commensurated by $G/\Radam(G)$ from which we conclude that  
$A$ is commensurated by $\Gamma$. 

Let $G'_U<G'$ be the preimage of $\ssf{G}_1\times U$ under $p_{G'}$, and 
let 
\[ K:=\ker\bigl( G'_U\xrightarrow{p_{G'}} \ssf{G}_1\times U\to \ssf{G}_1\bigr).
\]
The group $K$ is an extension of the amenable group $\Radam(G)$
by the compact group $U$. Hence $K$ is amenable. The group $\Gamma\cap G_U'$ 
is a lattice in $G_U'$ by Lemma~\ref{L:open}.

We now invoke a deep result of Breuillard and Gelander~\cite[\S 9]{BG-Top-Tits}.
It follows from~\cite[Theorem 9.5]{BG-Top-Tits} and $K$ being a closed amenable subgroup that the projection  
of the lattice $\Gamma\cap G_U'$ in $G'_U/K$ is discrete. The result of Breuillard and Gelander is a generalization of Auslander's theorem where $K$ is a solvable Lie group. 

By Lemma~\ref{L:reducing}, the group 
\[A=\Gamma\cap p_{G'}^{-1}(j(\{1\}\times U))=K\cap (\Gamma\cap G_U')\]
is a lattice in $K$, thus is amenable. Note that we used 
$p_{G}^{-1}(j(\{1\}\times U))=p_{G'}^{-1}(j(\{1\}\times U))$ here which follows from the diagram. 
On the other hand $A$ is commensurated by $\Gamma$ and hence finite. This implies that $K$ and its closed subgroup $\Radam(G)$ are compact. 
\end{proof}

Let us summarize the situation after the first step:
\begin{prop}\label{P:step1}
	Let $\Gamma$ have property \CAF,
	and let $\Gamma<G$ be a lattice embedding. Then the amenable 
	radical $K:=\Radam(G)$ of $G$ is compact. 
	By passing
	to an open normal finite index subgroup $G'<G$ containing $K$ and to the finite index subgroup $\Gamma':=\Gamma\cap G'<\Gamma$ and taking 
	quotients by compact or finite normal subgroups
	$G_1=G'/K$ and $\Gamma_1:=\Gamma'/(\Gamma'\cap K)$ one obtains a 
	lattice embedding  
	\[
		\Gamma_1<G_1=\ssf{G}_1\times \tdf{G}_1, 
	\]
	virtually isomorphic to $\Gamma<G$, into a product of a connected, center free, semi-simple real Lie group without compact factors $\ssf{G}_1$
	and a tdlc group with trivial amenable radical $\tdf{G}_1$.
	Furthermore, $G$ is compactly generated if and only if $\tdf{G}_1$ is compactly generated.
\end{prop}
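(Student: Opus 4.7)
The plan is to package the substantive Theorem~\ref{T:R-compact} with Theorem~\ref{T:lcsc-mod-ramen} and then check that the resulting situation fits the definition of virtual isomorphism of lattice embeddings. The genuine content already lies in Theorem~\ref{T:R-compact} (which exploits \CAF~and the Breuillard--Gelander generalization of Auslander's theorem to conclude compactness of $K:=\Radam(G)$); from there on the work is organizational.

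First I would apply Theorem~\ref{T:R-compact} to conclude that $K=\Radam(G)$ is compact, and then apply Theorem~\ref{T:lcsc-mod-ramen} to the lcsc group $G$ to obtain an open, normal, finite index subgroup $G'<G$ containing $K$ such that the quotient $G_1:=G'/K$ splits topologically as $\ssf{G}_1\times\tdf{G}_1$ in the prescribed way. Setting $\Gamma':=\Gamma\cap G'$, Lemma~\ref{L:open} ensures that $\Gamma'<G'$ is a lattice; since $K$ is compact, $\Gamma'\cap K$ is finite and thus trivially a lattice in $K$, so Lemma~\ref{L:reducing} applied to $K\normal G'$ shows that the image $\Gamma_1:=\Gamma'/(\Gamma'\cap K)$ in $G_1$ is a lattice.

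To verify that $\Gamma<G$ is virtually isomorphic to $\Gamma_1<G_1$ in the sense of Definition~\ref{def: virtual isomorphism}, I would take as data on the $G$-side the open finite index subgroup $G'<G$ together with the compact normal subgroup $K\normal G'$, and on the $G_1$-side the trivial data ($G_1$ itself with trivial compact normal subgroup). The canonical quotient map $G'/K\xrightarrow{\cong}G_1$ restricts, by the very construction of $\Gamma_1$, to an isomorphism $(\Gamma\cap G')/(\Gamma\cap K)\xrightarrow{\cong}\Gamma_1$, supplying the required virtual isomorphism of lattice embeddings.

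For the final clause, compact generation is preserved and reflected both by passing to an open finite index subgroup and by taking the quotient by a compact normal subgroup, so $G$ is compactly generated if and only if $G_1=\ssf{G}_1\times\tdf{G}_1$ is; as $\ssf{G}_1$ is a connected Lie group, hence compactly generated, this is in turn equivalent to compact generation of $\tdf{G}_1$. The only mild care is in bookkeeping of the descent through these finite index and compact kernel extensions, which I regard as the only real obstacle—modest compared to the deep input already inside Theorem~\ref{T:R-compact}.
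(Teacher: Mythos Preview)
Your proposal is correct and matches the paper's approach exactly: Proposition~\ref{P:step1} is stated in the paper as a summary of the situation after Step~1 with no separate proof given, the substantive content being Theorem~\ref{T:R-compact} and Theorem~\ref{T:lcsc-mod-ramen}, and the packaging via Lemmas~\ref{L:open} and~\ref{L:reducing} together with the definition of virtual isomorphism is precisely what you outline.
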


% subsection reduction_to_gamma_htimes_m_ (end)

\medskip

\subsection*{Step 2: Projection to the semi-simple factor is irreducible} % (fold)
\label{sub:step_2} \hfill

Let $\Gamma_1<G_1=\ssf{G}_1\times \tdf{G}_1$ be a lattice in a product as in Proposition~\ref{P:step1}. Since compact generation of $G$ is assumed in Theorem~\ref{T:main}, the 
group $G_1$ is compactly generated. 
The center-free, semi-simple, real Lie group $\ssf{G}_1$ splits as a direct product of 
the simple factors:
\[
	\ssf{G}_1=S_1\times \cdots \times S_n.
\]
For a subset $J=\{j_1,\dots,j_k\}\subset \{1,\dots,n\}$ we denote by $S_J=S_{j_1}\times\cdots\times S_{j_k}$ 
the sub-product, which can be viewed both as a subgroup and as a factor of the semi-simple group $\ssf{G}_1$.
Set $S_\emptyset=\{1\}$.
Given a subset $J\subset \{1,\dots,n\}$ consider the image of $\Gamma_1$ under the projection
\[
	\pr_J\colon G_1\overto{} \ssf{G}_1\overto{} S_J.
\]
Note that it is possible that $\pr_{\{i\}}(\Gamma_1)$ is dense in $S_i$ for each $i\in \{1,\dots,n\}$,
but $\pr_J(\Gamma_1)$ is discrete for some non-empty proper subset $J\subset \{1,\dots,n\}$.
\begin{lemma}\label{L:max-discrete-proj}
	There is a unique maximal subset $J\subset \{1,\dots,n\}$ for which 
	the projection $\pr_J(\Gamma_1)$ is discrete in $S_J$.
\end{lemma}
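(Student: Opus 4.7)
The plan is to show that the collection $\mathcal{J}$ of subsets $J \subset \{1,\dots,n\}$ for which $\pr_J(\Gamma_1)$ is discrete in $S_J$ is closed under finite unions. Since $\emptyset \in \mathcal{J}$ and $\{1,\dots,n\}$ is finite, this immediately yields a unique maximal element.

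The main tool is Lemma~\ref{L:reducing}: for a closed normal subgroup $N$ of an lcsc group, a lattice projects discretely onto the quotient if and only if its intersection with $N$ is a lattice in $N$. For $J \subset \{1,\dots,n\}$ write $N_J := S_{J^c} \times \tdf{G}_1$ for the kernel of $\pr_J$, where $J^c$ denotes the complement. The crucial algebraic observations are $N_{J_1\cup J_2} = N_{J_1} \cap N_{J_2}$, that $N_{J_1 \cup J_2}$ is normal in $N_{J_1}$, and that the quotient $N_{J_1}/N_{J_1 \cup J_2}$ is canonically isomorphic to $S_{J_2 \setminus J_1}$ (viewed as a closed subgroup of $S_{J_2}$ via the obvious inclusion).

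Given $J_1, J_2 \in \mathcal{J}$, I would first apply Lemma~\ref{L:reducing} to $\Gamma_1 < G_1$ to conclude that $\Gamma_1 \cap N_{J_i}$ is a lattice in $N_{J_i}$ for $i=1,2$. Then I would examine the image of the lattice $\Gamma_1 \cap N_{J_1}$ under the quotient map $N_{J_1} \twoheadrightarrow N_{J_1}/N_{J_1 \cup J_2} \cong S_{J_2 \setminus J_1}$. By construction this image is contained in $\pr_{J_2}(\Gamma_1) \cap S_{J_2\setminus J_1}$; since $\pr_{J_2}(\Gamma_1)$ is discrete in $S_{J_2}$ and $S_{J_2\setminus J_1}$ is closed in $S_{J_2}$, this intersection (and hence the image) is discrete.

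Applying Lemma~\ref{L:reducing} a second time to the lattice $\Gamma_1 \cap N_{J_1}$ in $N_{J_1}$ with respect to the normal subgroup $N_{J_1 \cup J_2}$, the discreteness of this image gives that $\Gamma_1 \cap N_{J_1 \cup J_2}$ is a lattice in $N_{J_1 \cup J_2}$. A third application of Lemma~\ref{L:reducing} — now back to $\Gamma_1 < G_1$ with normal subgroup $N_{J_1 \cup J_2}$ — yields that $\pr_{J_1 \cup J_2}(\Gamma_1)$ is discrete, so $J_1 \cup J_2 \in \mathcal{J}$. There is no serious obstacle: the only bookkeeping that needs care is the identification of $N_{J_1}/N_{J_1 \cup J_2}$ with $S_{J_2 \setminus J_1}$ and the verification that the projection of $\Gamma_1 \cap N_{J_1}$ lies in $\pr_{J_2}(\Gamma_1) \cap S_{J_2 \setminus J_1}$; both are immediate from the product structure of $G_1$.
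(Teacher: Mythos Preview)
Your proof is correct, but it takes a heavier route than the paper's. Both arguments reduce to showing that the collection $\mathcal{J}$ of subsets with discrete projection is closed under unions; the difference is how the union step is handled.

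The paper's proof is purely elementary and never invokes the lattice hypothesis. Given $J,K\in\mathcal{J}$, it picks identity neighborhoods $V\subset S_J$, $W\subset S_K$ that meet the respective projections only at the identity, embeds $S_{J\cup K}$ diagonally into $S_J\times S_K$, and observes that $U=S_{J\cup K}\cap(V\times W)$ is an identity neighborhood in $S_{J\cup K}$ meeting $\pr_{J\cup K}(\Gamma_1)$ only at the identity. This works for an arbitrary subgroup $\Gamma_1<G_1$, not just a lattice.

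Your argument, by contrast, exploits the lattice property via three applications of Lemma~\ref{L:reducing}. This is perfectly valid and has the side benefit of showing that the intersection $\Gamma_1\cap N_{J_1\cup J_2}$ is itself a lattice in $N_{J_1\cup J_2}$, but that extra information is not needed here, and the machinery is overkill for what is at bottom a two-line topological observation about product groups.
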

\begin{proof}
	It suffices to show that the collection of all subsets $J\subset \{1,\dots,n\}$
	with discrete projection to $S_J$, is closed under union. 
	Let $J, K\subset \{1,\dots,n\}$ be subsets such that
	$\pr_J(\Gamma_1)$ is discrete in $S_J$ and $\pr_K(\Gamma_1)$ is discrete in $S_K$.
	Let $V\subset S_J$, $W\subset S_K$ be open neighborhoods of the identity with
	\[
		\pr_J(\Gamma_1)\cap V=\{1\},\qquad \pr_K(\Gamma_1)\cap W=\{1\}.
	\] 
	View $S_{J\cup K}$ as a subgroup in $S_J\times S_K$.
	Then $U=S_{J\cup K}\cap (V\times W)$ is an open neighborhood of the identity 
	with $\pr_{J\cup K}(\Gamma_1)\cap U=\{1\}$. Therefore $\pr_{J\cup K}(\Gamma_1)$ 
	is discrete in $S_{J\cup K}$. This proves the Lemma.
\end{proof}

\medskip

Let $J\subset \{1,\dots,n\}$ be the maximal subset as in Lemma~\ref{L:max-discrete-proj}. Denote 
\[
	L=S_J=\prod_{j\in J} S_j,\qquad H=S_{J^c}=\prod_{i\notin J} S_i.
\]
We have $\ssf{G}_1=L\times H$, and consider the projection 
\[
	\pr_L\colon G_1=L\times H\times \tdf{G}_1\to L,\qquad \Gamma_0=\pr_L(\Gamma_1). 
\]
Define $\Gamma_2$ to be the kernel of this projection
\[
	\Gamma_2=\Ker(\pr_L:\Gamma_1\overto{}\Gamma_0).
\]
Then $\Gamma_0<L$ and $\Gamma_2<H\times \tdf{G}_1$ are lattices (Lemma~\ref{L:reducing}).
Consider the projections
\[
	p\colon H\times \tdf{G}_1\overto{} H,\qquad q\colon H\times \tdf{G}_1\overto{}\tdf{G}_1.
\]
\begin{lemma}
	The projection $p(\Gamma_2)$ of $\Gamma_2$ to $H$ is dense.
\end{lemma}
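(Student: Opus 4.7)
The plan is to show that the closure $M := \overline{p(\Gamma_2)}$ equals all of $H$, by exploiting the maximality of $J$ established in Lemma~\ref{L:max-discrete-proj}.

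First, I would observe that $\Gamma_2 = \Gamma_1 \cap (\{1\} \times H \times \tdf{G}_1)$ is a lattice in $H \times \tdf{G}_1$. This is immediate from Lemma~\ref{L:reducing} applied to the closed normal subgroup $\{1\} \times H \times \tdf{G}_1 \normal G_1$, using that $\pr_L(\Gamma_1) = \Gamma_0$ is discrete in $L$ by the very choice of $J$. Consequently $M \times \tdf{G}_1$ is a closed subgroup of $H \times \tdf{G}_1$ containing the lattice $\Gamma_2$, and Lemma~\ref{L:intermediate} gives that $M \times \tdf{G}_1$ has finite covolume in $H \times \tdf{G}_1$. Since the quotient map $(H \times \tdf{G}_1)/(M \times \tdf{G}_1) \to H/M$ is a homeomorphism, $M$ has finite covolume in $H$.

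Next, Lemma~\ref{lem: ss-no-intermediate} yields a decomposition $H = H_1 \times H_2$ and a lattice $\Delta < H_1$ with $M = \Delta \times H_2$. Because simple ideals of a semi-simple Lie algebra are unique up to ordering, any splitting of $H$ as a direct product of closed subgroups regroups the simple factors $S_i$, $i \notin J$; hence $H_1 = S_I$ for some subset $I \subseteq J^c$.

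The key step is then to show that $I$ must be empty. I claim that $\pr_{J \cup I}(\Gamma_1)$ is discrete in $S_{J \cup I} = L \times S_I$, which by maximality in Lemma~\ref{L:max-discrete-proj} forces $J \cup I = J$, i.e.\ $I = \emptyset$. To establish discreteness, choose identity neighborhoods $V_L \subset L$ and $V_I \subset S_I$ with $\Gamma_0 \cap V_L = \{e\}$ and $\Delta \cap V_I = \{e\}$, which is possible since $\Gamma_0$ is discrete in $L$ and $\Delta$ is a lattice, hence discrete, in $S_I$. For any $\gamma \in \Gamma_1$ with $\pr_{J \cup I}(\gamma) \in V_L \times V_I$, discreteness of $\Gamma_0$ forces $\pr_L(\gamma) = e$, so $\gamma \in \Gamma_2$; then $\pr_I(\gamma) \in \pr_I(p(\Gamma_2)) \subseteq \Delta$, and $\pr_I(\gamma) \in V_I$ gives $\pr_I(\gamma) = e$. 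Hence $\pr_{J \cup I}(\gamma) = (e,e)$, as required. Concluding $I = \emptyset$ yields $H_1 = \{1\}$, $M = H_2 = H$, and the density of $p(\Gamma_2)$ in $H$.

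The main point is not computational but organizational: keeping track of several projections of $\Gamma_1$ and relating the abstract Lie-theoretic splitting of $M$ coming from Lemma~\ref{lem: ss-no-intermediate} to the fixed simple factorization $\ssf{G}_1 = S_1 \times \cdots \times S_n$. Once that identification is made, the argument collapses to a short diagram chase built on the three structural Lemmas~\ref{L:reducing}, \ref{L:intermediate}, and \ref{lem: ss-no-intermediate}.
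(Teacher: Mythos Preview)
Your proof is correct and follows essentially the same route as the paper: pass to the closure $M=\overline{p(\Gamma_2)}$, use Lemma~\ref{L:intermediate} to see $M$ has finite covolume in $H$, apply Lemma~\ref{lem: ss-no-intermediate} to split $H=H_1\times H_2$ with $M=\Delta\times H_2$, and then invoke the maximality of $J$ from Lemma~\ref{L:max-discrete-proj} to force $H_1=\{1\}$. Your discreteness argument for $\pr_{J\cup I}(\Gamma_1)$ is in fact more carefully stated than the paper's, which asserts that this projection \emph{lies in} $\Gamma_0\times\Delta$; that containment is not literally true for $\gamma\notin\Gamma_2$, and what is really needed---and what you prove---is only that the projection meets a suitable identity neighborhood trivially.
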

\begin{proof}
	Let $H'=\overline{p(\Gamma_2)}$ be the closure of the projection of $\Gamma_2$ to $H$.
	Then $H'\times \tdf{G}_1$ is a closed subgroup of $H\times \tdf{G}_1$ containing 
	the lattice $\Gamma_2$. By Lemma~\ref{L:intermediate}, $H'\times \tdf{G}_1$
	is a closed subgroup of finite covolume in $H\times \tdf{G}_1$.
	It follows that $H'$ has finite covolume in $H$. 
	By Lemma~\ref{lem: ss-no-intermediate} $H'$ has the
	form $H'=\Delta\times H_2$, 
	where $H_2$ is a direct factor of $H$, $H=H_1\times H_2$, and $\Delta<H_1$ is a lattice. 
	In our setting such a splitting has to be trivial: 
	\[
		H'=H_2=H,\qquad H_1=\Delta=\{1\}.
	\]
	Indeed, otherwise the semi-simple group $\ssf{G}_1$
	splits as $\ssf{G}_1=(L\times H_1)\times H_2$,
	where the projection of $\Gamma_1$ to the $L\times H_1$-factor lies in $\Gamma_0\times \Delta$
	which is discrete.
	This contradicts the maximality of $L$ as such a factor.
	This completes the proof of the lemma. 
\end{proof}

We now define an lcsc group $G_2$ to be $G_2=\ssf{G}_2\times \tdf{G}_2$, where 
\[
	\ssf{G}_2=H,\qquad \tdf{G}_2=\overline{q(\Gamma_2)}<\tdf{G}_1.
\]
Being a closed subgroup of a tdlc group $\tdf{G}_1$, the group $\tdf{G}_2$ itself is  tdlc.  The group $\ssf{G}_2=H$ is a connected, center-free, semi-simple, real Lie group.

The group $G_2$ is a closed subgroup of $H\times\tdf{G}_1$ containing a lattice $\Gamma_2<H\times\tdf{G}_1$. 
By Lemma~\ref{L:intermediate}, $\Gamma_2$ forms a lattice in $G_2$, and $G_2=H\times \tdf{G}_2$ is a finite covolume subgroup of $H\times\tdf{G}_1$.
Thus $\tdf{G}_2$ is a subgroup of finite covolume in $\tdf{G}_1$.
Since $\Radam(\tdf{G}_1)=\{1\}$ we deduce that $\Radam(\tdf{G}_2)=\{1\}$ using Lemma~\ref{L:Radam-and-fincovol}. We summarize: 
\begin{prop}\label{P:step2}
Let $\Gamma_1<G_1=\ssf{G}_1\times \tdf{G}_1$ be as in Proposition~\ref{P:step1}.
Then there is a splitting $\ssf{G}_1=L\times H$ and a closed subgroup
$\tdf{G}_2<\tdf{G}_1$ such that by setting $\ssf{G}_2=H$ and $G_2=\ssf{G}_2\times\tdf{G}_2$
we have:
\begin{enumerate}
	\item 
	The projection $\Gamma_0=\pr_L(\Gamma_1)$ of $\Gamma_1$ is a lattice in $L$.
	\item 
	The kernel $\Gamma_2=\Ker(\Gamma_1\overto{\pr_L}\Gamma_0)$ 
	is a lattice in $G_2=\ssf{G}_2\times\tdf{G}_2$.
	\item 
	The projection $\ssf{\pr}_2(\Gamma_2)$ is dense in $\ssf{G}_2=H$.
	\item
	The projection $\tdf{\pr}_2(\Gamma_2)$ is dense in $\tdf{G}_2$.
	\item 
	$\tdf{G}_2<\tdf{G}_1$ is a closed subgroup of finite covolume.
	\item 
	$\tdf{G}_2$ has trivial amenable radical. 
	\item 
	$\tdf{G}_2$ is compactly generated. 
\end{enumerate}
\end{prop}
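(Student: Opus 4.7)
My plan is to carry out Step 2 in four stages, guided by the splitting principle that one should isolate the irreducible part of the projection to the Lie factor. The starting point is Lemma~\ref{L:max-discrete-proj}: it supplies a unique maximal $J\subset\{1,\dots,n\}$ for which $\pr_J(\Gamma_1)$ is discrete in $S_J$. I define $L=S_J$ and $H=S_{J^c}$ so that $\ssf{G}_1=L\times H$, set $\Gamma_0=\pr_L(\Gamma_1)$, and let $\Gamma_2$ be the kernel of $\pr_L\colon \Gamma_1\to\Gamma_0$. By construction $\Gamma_0$ is discrete in $L$.

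The next step, giving items~(1) and~(2) of the proposition (up to choosing $\tdf{G}_2$), is to apply Lemma~\ref{L:reducing} to the closed normal subgroup $H\times\tdf{G}_1\normal G_1$: since $\Gamma_0$ is discrete in $L$, the conclusion is that $\Gamma_0$ is a lattice in $L$ and $\Gamma_2$ is a lattice in $H\times\tdf{G}_1$. I then verify that $p(\Gamma_2)$ is dense in $H$. Writing $H'=\overline{p(\Gamma_2)}$, the closed subgroup $H'\times\tdf{G}_1$ contains the lattice $\Gamma_2$ of $H\times \tdf{G}_1$, so by Lemma~\ref{L:intermediate} it has finite covolume in $H\times\tdf{G}_1$, forcing $H'<H$ to have finite covolume. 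Lemma~\ref{lem: ss-no-intermediate} splits $H'=\Delta\times H_2$ with $H=H_1\times H_2$ and $\Delta<H_1$ a lattice. If this splitting were nontrivial, then $\pr_{L\times H_1}(\Gamma_1)\subset \Gamma_0\times\Delta$ would be discrete, contradicting the maximality of $J$. Hence $H'=H$, giving item~(3).

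Now I define $\tdf{G}_2:=\overline{q(\Gamma_2)}<\tdf{G}_1}$, which is closed and tdlc, and set $G_2=H\times\tdf{G}_2$. Item~(4) is immediate from the definition. The group $G_2$ is closed in $H\times\tdf{G}_1$ and contains the lattice $\Gamma_2$, so Lemma~\ref{L:intermediate} gives that $\Gamma_2$ is a lattice in $G_2$ and $G_2$ has finite covolume in $H\times\tdf{G}_1$; projecting to the second factor yields item~(5). Item~(6), triviality of $\Radam(\tdf{G}_2)$, then follows directly from Lemma~\ref{L:Radam-and-fincovol} applied to the finite covolume inclusion $\tdf{G}_2<\tdf{G}_1$ together with $\Radam(\tdf{G}_1)=\{1\}$ from Proposition~\ref{P:step1}.

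The step I expect to be the main obstacle is item~(7), compact generation of $\tdf{G}_2$. We know $\tdf{G}_1$ is compactly generated (inherited from $G$ being compactly generated via Proposition~\ref{P:step1}), but $\tdf{G}_2<\tdf{G}_1$ is only known to have finite covolume, not to be cocompact, and finite covolume alone does not pass compact generation through in the tdlc setting. My plan is to argue via $G_2=H\times\tdf{G}_2$: the real Lie factor $H$ is connected and hence compactly generated, so it suffices to establish compact generation of $G_2$ itself. For this I would try to show that $G_2$ is in fact cocompact in $H\times\tdf{G}_1$ (not merely of finite covolume), using that the factor $H$ is already a full factor of $H\times\tdf{G}_1$ and the subgroup $G_2$ is a graph-type closed subgroup over $\tdf{G}_2<\tdf{G}_1$; a finite-covolume closed subgroup of a tdlc group containing the full image under one projection should be cocompact in the relevant factor, and this should be checkable using unimodularity and Lemma~\ref{L:bounded-clopen} applied to a suitable compact open subgroup of $\tdf{G}_1$ whose intersection with $\tdf{G}_2$ remains open. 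Once $G_2$ is cocompact in the compactly generated $H\times\tdf{G}_1$, compact generation of $G_2$, and hence of $\tdf{G}_2$, is standard.
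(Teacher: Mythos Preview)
Your argument for items (1)--(6) is essentially identical to the paper's: the same choice of $L=S_J$ via Lemma~\ref{L:max-discrete-proj}, the same application of Lemma~\ref{L:reducing} to get items~(1) and~(2), the same density argument for item~(3) via Lemma~\ref{L:intermediate} and Lemma~\ref{lem: ss-no-intermediate}, the same definition $\tdf{G}_2=\overline{q(\Gamma_2)}$, and the same use of Lemma~\ref{L:intermediate} and Lemma~\ref{L:Radam-and-fincovol} for items~(5) and~(6).

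For item~(7) you are right to flag a difficulty; in fact the paper itself states~(7) in the summarizing Proposition~\ref{P:step2} without supplying an argument in Step~2. Your proposed route, however, does not close the gap. To apply Lemma~\ref{L:bounded-clopen} to $\tdf{G}_2<\tdf{G}_1$ you need an \emph{a priori} upper bound on the Haar measures of compact open subgroups of $\tdf{G}_2$, and nothing available at this point---trivial amenable radical, finite covolume, density of $q(\Gamma_2)$---produces such a bound. Cocompactness of $\tdf{G}_2<\tdf{G}_1$ \emph{is} eventually established in the paper, but only in Step~4 (the paragraph ``Compactness of $G/H$''), \emph{after} the Arithmetic Core Theorem has identified $\tdf{G}_2$ with a product $\prod_{\nu}\mathbf{H}(K_\nu)^+$; it is precisely the Bruhat--Tits building structure (finitely many conjugacy classes of vertex stabilizers, via the fixed-point theorem) that supplies the required bound for Lemma~\ref{L:bounded-clopen}. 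So invoking cocompactness here would be circular: you would be using a consequence of the arithmetic identification to obtain the compact generation hypothesis needed to run that identification. Note also that in the degenerate case $H=\{1\}$ one has $\tdf{G}_2=q(\Gamma_2)$ discrete, and compact generation would amount to finite generation of a possibly non-uniform lattice in a general tdlc group---which can genuinely fail---so~(7) is not a formality. The paper's logic only actually requires~(7) in Case~(iii) of Step~3, where $H\neq\{1\}$.
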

Most of the conditions needed for an application of Theorem~\ref{T:arithmetic-core} are satisfied
with the exception of injectivity of the projection
\[
	\ssf{\pr}_2: \Gamma_2\overto{}\ssf{G}_2=H.
\]
This is our next topic of concern.

\medskip

\subsection*{Step 3. Identifying the lattice embedding.} % (fold)
\label{sub:step_3}\hfill

We still denote the projections of $G_1$ and $G_2$ to their Lie and tdlc factors by 
\[
	\ssf{\pr}_i\colon G_i\to\ssf{G}_i~~\text{ and }~~
	\tdf{\pr}_i\colon G_i\to\tdf{G}_i ~~\text{for $i\in\{1,2\}$.}
\] 
The images of a subgroup in $G_i$ under $\ssf{\pr}_i$ and $\tdf{\pr}_i$ will be indicated by superscripts 
$^{\mathrm{ss}}$ and $^{\mathrm{td}}$, respectively. For a subgroup $H<G$ let 
\[
	\calZ_G(H):=\{ g\in G \mid \forall_{h\in H}~ gh=hg\}
\] 
denote the centralizer of $H$ in $G$. 

Let $U<\tdf{G}_1$ be a compact open subgroup in the tdlc group $\tdf{G}_1$  
such that $\Gamma_1\cap (\{1\}\times U)=\{1\}$. 
We define the following groups: 
\begin{align}\label{eq: definition of groups in step 3}
	N_i &:=\Gamma_i \cap (\{1\}\times \tdf{G}_i)~\text{ for $i\in\{1,2\}$},\notag\\
	M_1 &:= \Gamma_1\cap (\ssf{G}_1\times U),\notag\\
	M_2 &:= \Gamma_2\cap M_1,\\
	\Gamma_3 &:= \Gamma_2/N_2,\notag\\
	M_3 &:= p(M_2)~\text{ where $p\colon \Gamma_2\twoheadrightarrow \Gamma_3$ is the projection},\notag\\
	\ssf{G}_3 &:= \ssf{G}_2,\quad \tdf{G}_3 := \tdf{G}_2/\tdf{N}_2,\quad G_3:=\ssf{G}_3\times\tdf{G}_3.\notag
\end{align} 

\begin{remark}
Since $\tdf{N}_2$ is normalized by the dense image of $\Gamma_2$ in $\tdf{G}_2$ 
and the normalizer of a closed subgroup is closed, the subgroup $\tdf{N}_2$ is normal 
in $\tdf{G}_2$. This justifies the last definition. Moreover, since $\ssf{G}_1$ commutes 
with $N_2$ it follows that $N_2$ is normal in $G_2$. 
Note that since compact generation passes to quotients, $\tdf{G}_3$ is compactly generated. 
\end{remark}

\begin{remark}
At this point, we do not claim triviality of the amenable radical of $\tdf{G}_3$,
even though it will follow from later analysis. 
\end{remark}

\begin{lemma}\label{lem: normalizing}
The group $N_1$ commutes with some finite index subgroup of $M_1$. 
\end{lemma}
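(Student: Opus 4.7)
The plan is to apply property \wNBC, but since \wNBC~need not be inherited by virtually isomorphic groups, I would lift the setup from $\Gamma_1$ back to a suitable quotient of the original group $\Gamma$. Let $F := \Gamma \cap K$ where $K := \Radam(G)$ is the compact amenable radical from Proposition~\ref{P:step1}; then $F$ is finite and normal in $\Gamma$, and $\bar\Gamma := \Gamma/F$ inherits \NBC~(hence \wNBC) from $\Gamma$ by Lemma~\ref{lem: NBC and finite kernel}. Moreover, $\bar\Gamma$ embeds naturally into $G/K$, with $\Gamma_1 = \Gamma'/F$ appearing as the finite-index subgroup $\bar\Gamma \cap G_1$, where $G_1 = G'/K$.

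Next I would verify that $N_1$ is normal in $\bar\Gamma$ and $M_1$ is commensurated in $\bar\Gamma$ --- not merely in $\Gamma_1$. Since $G'$ is normal in $G$, the quotient $G_1$ is a normal subgroup of $G/K$, and conjugation by any element of $G/K$ acts on $G_1$ by continuous automorphisms. By Lemma~\ref{lem: automorphisms of products}, every such automorphism respects the decomposition $G_1 = \ssf{G}_1 \times \tdf{G}_1$. Hence the factor $\{1\} \times \tdf{G}_1$ is normal in $G/K$, and $\ssf{G}_1 \times U$ is commensurated by $G/K$ (combining the product splitting with Lemma~\ref{L:a-normal} applied to the compact open subgroup $U < \tdf{G}_1$). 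Intersecting with $\bar\Gamma$ yields $N_1 = \bar\Gamma \cap (\{1\} \times \tdf{G}_1)$ normal in $\bar\Gamma$ and $M_1 = \bar\Gamma \cap (\ssf{G}_1 \times U)$ commensurated in $\bar\Gamma$.

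The intersection $N_1 \cap M_1 = \bar\Gamma \cap (\{1\} \times U)$ is trivial by the choice of $U$. Applying \wNBC~in $\bar\Gamma$ to the pair $(N_1, M_1)$ then produces a finite-index subgroup of $M_1$ that commutes with $N_1$, which is exactly the conclusion. The only subtle point is to ensure that the normality and commensurability lift from $\Gamma_1$ to the larger group $\bar\Gamma$: this is where the product-preservation afforded by Lemma~\ref{lem: automorphisms of products} is crucial, and this step is also the unique place in the entire proof of Theorem~\ref{T:main} that uses property \NBC.
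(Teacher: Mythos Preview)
Your proposal is correct and follows essentially the same route as the paper: pass to $\bar\Gamma=\Gamma/(\Gamma\cap K)$, which retains \NBC\ by Lemma~\ref{lem: NBC and finite kernel}, use Lemma~\ref{lem: automorphisms of products} to see that conjugation by $G/K$ preserves the product decomposition of $G_1$ so that $\{1\}\times\tdf{G}_1$ is normal and $\ssf{G}_1\times U$ is commensurated in $G/K$, and then intersect with $\bar\Gamma$ to apply \wNBC. Your observation that $N_1=\bar\Gamma\cap(\{1\}\times\tdf{G}_1)$ and $M_1=\bar\Gamma\cap(\ssf{G}_1\times U)$ directly (rather than via $\Gamma_1\normal\bar\Gamma$) is a slight streamlining, but the argument is the same.
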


\begin{proof}
At this point we want to apply property \NBC~of the original group $\Gamma$. Note that property \NBC~does not pass to finite index subgroups in general, so we have to argue more specifically. We refer to the notation of 
Proposition~\ref{P:step1}. Note that $\Gamma_1=(\Gamma\cap G')/(\Gamma\cap K)$ 
is a subgroup of $\Gamma/(\Gamma\cap K)$. We show that $N_1$ is normal 
in $\Gamma/(\Gamma\cap K)$ and $M_1$ is commensurated by $\Gamma/(\Gamma\cap K)$. 
From this the claim follows since $\Gamma/(\Gamma\cap K)$ has \NBC~by Lemma~\ref{lem: NBC and finite kernel}. 

The group $\{1\}\times \tdf{G}_1$ is a 
topologically characteristic subgroup of 
$G_1=\ssf{G}_1\times \tdf{G}_1=G'/K$ by Lemma~\ref{lem: automorphisms of products}.  
Since $G'/K<G/K$ is normal, $\{1\}\times\tdf{G}_1$ is normal in $G/K$. Since 
$\Gamma_1=(\Gamma\cap G')/(\Gamma\cap K)$ is a normal subgroup of 
$\Gamma/(\Gamma\cap K)$, the group $N_1$ is normal in $\Gamma/(\Gamma\cap K)$. 
The subgroup $\ssf{G}_1\times U<G_1=G'/K$ is commensurated by $G/K$ since 
$G'/K\normal G/K$ is normal and every (topological) automorphism of 
$G_1$ is a product of an automorphism of $\tdf{G}_1$ and one of $\ssf{G}_1$ by Lemma~\ref{lem: automorphisms of products}.  
Since $\Gamma_1\normal \Gamma/(\Gamma\cap K)$ is normal, $M_1$ is commensurated by 
$\Gamma/(\Gamma\cap K)$. 
\end{proof}

By the previous lemma and upon making $U$ smaller we 
may and will assume that $M_1$ itself centralizes $N_1$ and $M_1\cap N_1=\{1\}$.  
It follows that $M_2$ centralizes $N_2$. We record that 
\begin{equation}\label{e:M1Z1}
	M_i\cap N_i=\{1\}\quad\text{and}\quad [M_i,N_i]=\{1\}~~\text{ for $i\in\{1,2\}$.} 
\end{equation}
In particular, the subgroup $M_iN_i<\Gamma_i$ is isomorphic to $M_i\times N_i$. 

\begin{lemma}\label{lem: new lattices}
	The inclusions $\ssf{M}_i\hookrightarrow \ssf{G}_i$ for $i\in\{1,2,3\}$ and $\Gamma_3\hookrightarrow G_3$ are  
	lattice embeddings.   
\end{lemma}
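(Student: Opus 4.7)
The plan is to derive each of the four assertions by a straightforward application of Lemmas~\ref{L:open} and~\ref{L:reducing}. For $\ssf{M}_1 \hookrightarrow \ssf{G}_1$, I first invoke Lemma~\ref{L:open} with the open subgroup $\ssf{G}_1 \times U < G_1$ to conclude that $M_1 = \Gamma_1 \cap (\ssf{G}_1 \times U)$ is a lattice in $\ssf{G}_1 \times U$. Next, Lemma~\ref{L:reducing} applied with the closed normal direct factor $\{1\} \times U$ requires that $M_1 \cap (\{1\} \times U)$ be a lattice in $U$; by the choice of $U$ this intersection is trivial, which is tautologically a lattice in the compact group $U$. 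The conclusion is that $\ssf{M}_1 < \ssf{G}_1$ is a lattice. For $\ssf{M}_2 \hookrightarrow \ssf{G}_2$ the argument runs parallel inside $G_2$: setting $U' := U \cap \tdf{G}_2$, a compact open subgroup of $\tdf{G}_2$, and using that $\Gamma_2$ has trivial $L$-coordinate, one identifies $M_2 = \Gamma_2 \cap (\ssf{G}_2 \times U')$, so the same two-step argument yields the lattice embedding $\ssf{M}_2 < \ssf{G}_2$.

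For $\Gamma_3 \hookrightarrow G_3$, the preliminary observation is that $N_2 < \Gamma_2$ is discrete in $G_2$ and entirely contained in $\{1\} \times \tdf{G}_2$, so the projection to the tdlc factor identifies $N_2$ with a discrete (hence closed) subgroup $\tdf{N}_2 < \tdf{G}_2$; its normality in $\tdf{G}_2$ is exactly the remark recorded after the definitions~\eqref{eq: definition of groups in step 3}. I then apply Lemma~\ref{L:reducing} to the lattice $\Gamma_2 < G_2$ with the closed normal subgroup $\{1\} \times \tdf{N}_2$: the intersection $\Gamma_2 \cap (\{1\} \times \tdf{N}_2) = N_2$ maps bijectively onto $\tdf{N}_2$, and so is tautologically a lattice there. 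Consequently $\Gamma_3 = \Gamma_2/N_2$ embeds as a lattice in $G_2/(\{1\} \times \tdf{N}_2) = G_3$.

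Finally, for $\ssf{M}_3 \hookrightarrow \ssf{G}_3$ it suffices to observe that the quotient map $G_2 \to G_3$ restricts to the identity on the $\ssf{G}_2$-factor, since we are only quotienting by a subgroup of the tdlc factor. Hence $\ssf{M}_3 = \ssf{\pr}_3(M_3) = \ssf{\pr}_2(M_2) = \ssf{M}_2$ as subgroups of $\ssf{G}_3 = \ssf{G}_2$, and the lattice property transfers from the second claim. None of these steps presents a serious obstacle; the main care required is bookkeeping the various intersections and checking that the chosen compact open subgroups restrict and descend compatibly at each stage.
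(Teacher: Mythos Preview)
Your proof is correct and follows essentially the same approach as the paper's own proof. The only cosmetic difference is that for the passage from $M_i<\ssf{G}_i\times U$ to $\ssf{M}_i<\ssf{G}_i$ you explicitly invoke Lemma~\ref{L:reducing} (checking that the trivial intersection is a lattice in the compact group $U$), whereas the paper simply observes that projecting out a compact factor preserves the lattice property; both are the same argument.
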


\begin{proof}
By Lemma~\ref{L:open}, $M_1$ is a lattice in $\ssf{G}_1\times U$ since $U<\tdf{G}_1$ is 
an open subgroup. Since $U$ is also compact, the image $\ssf{M}_1$ of $M_1$ in $\ssf{G}_1$ 
is a lattice of $\ssf{G}_1$ as well. 
Similarly, $M_2<\ssf{G}_2\times U$ and $\ssf{M}_2<\ssf{G}_2$ are lattices. Since $\ssf{G}_2=\ssf{G}_3$ by 
definition one easily sees that the images of $M_2$ and $M_3$ in $\ssf{G}_2$ coincide, so $\ssf{M}_3=\ssf{M}_2$. 
In particular, $\ssf{M}_3<\ssf{G}_3$ is a lattice. 

Since $N_2\normal \Gamma_2$ is also normal in $G_2$ (see the remark below~\eqref{eq: definition of groups in step 3}), the quotient $\Gamma_3$ is a lattice in $G_3=\ssf{G}_2\times \tdf{G}_2/\tdf{N}_2\cong G_2/N_2$ by 
Lemma~\ref{L:reducing}. 
\end{proof}

We are finally in a position to identify the lattice embedding $\Gamma_1<G_1$, which is 
virtually isomorphic to the original lattice embedding $\Gamma<G$. 
We distinguish three cases depending on the finiteness of the groups $M_1$ and $M_2$.

% subsection the_s_arithmetic_core_gamma_3_g_3_ (end)

\subsubsection*{Case (i): $M_1$ is finite ($\Gamma_1$ is a lattice in a tdlc group)}\hfill{}\\  
In this case, the connected real Lie group $\ssf{G}_1$ without compact factors has a 
finite group $\ssf{M}_1$ as a lattice, thus $\ssf{G}_1$ must be trivial. 
Thus $G_1=\tdf{G}_1$
is a tdlc group with trivial amenable radical that contains 
$\Gamma_1$ as a lattice.
If $\Gamma$ is assumed to have property \BT, the same applies to $\Gamma_1$,
and by Lemma~\ref{L:bounded-clopen}, $\Gamma_1<G_1$ is a uniform lattice. 
Either $G_1$ is discrete which means that the original lattice embedding $\Gamma\hookrightarrow G$ was trivial or we are in case (3) of the main theorem. 

\subsubsection*{Case (ii): $M_1$ is infinite, but $M_2$ 
is finite ($\Gamma_1$ is a lattice in a semi-simple Lie group)}\hfill{}\\  % (fold)
In this case $\ssf{M}_2$ is finite and a lattice in $\ssf{G}_2$. 
As above we conclude that $\ssf{G}_2=H$ is trivial. 
Therefore $\Gamma_2=N_2$, and 
\[
	\tdf{G}_2=\overline{\tdf{\Gamma}_2}=\overline{\tdf{N}_2}=\tdf{N}_2.
\]
The last equality follows from the discreteness of $\tdf{N}_2$. 

Since $\ssf{M}_1=\ssf{\pr}_1(M_1)$ is a lattice in $\ssf{G}_1=L\times H=L$ and 
a subgroup of the lattice $\Gamma_0$ (see Proposition~\ref{P:step2}), 
$\ssf{M}_1<\Gamma_0$ must have finite index. 
We have a short exact sequence
\[
	1\overto{} \Gamma_2\overto{} \Gamma_1\overto{\pr_L} \Gamma_0\overto{} 1. 
\]
The $\pr_L$-preimage of $\ssf{M}_1$ which equals $\Gamma_2\cdot M_1=N_2M_1\cong N_2\times M_1$ 
is a finite index subgroup of $\Gamma_1$. Since $M_1$ is infinite, the \IRR\ condition forces $\Gamma_2=N_2$ to be finite. So $\tdf{G}_2$ is finite.
As $\tdf{G}_2$ has finite covolume in $\tdf{G}_1$, the latter is compact.
In fact, $\tdf{G}_1$ has to be trivial, because it has a trivial amenable radical.
We conclude that $\Gamma_1$ is a classical lattice in a connected, center-free,
semi-simple, real Lie group $L=\ssf{G}_1=G_1$. This lattice is irreducible
due to assumption \IRR.  
This corresponds to case (1) in the main theorem.

\subsubsection*{Case (iii): $M_2$ is infinite ($\Gamma_1$ is an $S$-arithmetic lattice)}\hfill{}\\  % (fold)
Recall that the projection $\ssf{M}_2$ of $M_2$ to $\ssf{G}_2=\ssf{G}_3=H$ is a lattice there.
The assumption that $M_2$ is infinite, means that $\ssf{G}_3$ is non-trivial.
The inclusion 
\[
	\Gamma_3<G_3=\ssf{G}_3\times\tdf{G}_3
\]
is a lattice embedding by Lemma~\ref{lem: new lattices}. 
At this point we may and will apply the Arithmetic Core Theorem~\ref{T:arithmetic-core} to 
$\Gamma_3<G_3$ 
to deduce that, up to dividing $\tdf{G}_3$ by a compact normal subgroup $C$, one
has a product of $S$-arithmetic lattices with non-trivial 
semi-simple and totally disconnected factors.
(It will become clear below that $C$ is trivial and that there 
is only one irreducible $S$-arithmetic lattice).
More precisely, there are number fields $K_1,\dots,K_n$, 
absolutely simple $K_i$-algebraic groups $\mathbf{H}_i$, and 
finite sets $S_i\subset \calV_i$ of places of $K_i$ compatible with $\mathbf{H}_i$ 
such that, denoting the connected real Lie groups by $H_i=(\mathbf{H}_i)^+_{K_i,S_i^\infty}=(\mathbf{H}_i)^\circ_{K_i,S_i^\infty}$, we have
\begin{equation*}
	\ssf{G}_3\times \tdf{G}_3/C=\prod_{i=1}^n H_i\times \prod_{i=1}^n Q_i
\end{equation*}
for certain closed intermediate groups
\[
	(\mathbf{H}_i)^+_{K_i, S_i^\fin}<Q_i<(\mathbf{H}_i)_{K_i, S_i^\fin}.
\]
Moreover, $\Gamma_3$ is commensurable to the product of the $S_i$-arithmetic lattices: There are
finite index subgroups $\Gamma_{3,i}<\mathbf{H}_i(\calO_i[S_i])$ such that 
\begin{equation}\label{e:Gamma3}
	\Gamma_{3,1}\times\cdots\times\Gamma_{3,n}< \Gamma_3, ~~[\Gamma_3:\Gamma_{3,1}\times\cdots\times\Gamma_{3,n}]<\infty 
\end{equation}
has finite index. Upon passing to smaller finite index subgroup, we may assume that~\eqref{e:Gamma3} is the inclusion of a normal subgroup. 
By Lemma~\ref{lem: new lattices}, \[\ssf{M}_3<\ssf{G}_3=H_1\times\dots\times H_n\] is a lattice. It contains a product of irreducible lattices $\Delta_i<H_i$ as a finite index subgroup: 
\[
	\Delta_1\times\cdots\times\Delta_n<\ssf{M}_3, ~~[\ssf{M}_3, \Delta_1\times\cdots\times\Delta_n]<\infty. 
\]
Relying on the \NBC\ condition we showed (see (\ref{e:M1Z1})) 
that $M_2<\Gamma_2$ commutes with $N_2$, so
\begin{equation}\label{eq: M2 contained in centralizer}
	M_2<\calZ_{\Gamma_2}(N_2).
\end{equation}
The abelian subgroup $\calZ_{\Gamma_2}(N_2)\cap N_2=\calZ(N_2)$ is,  
being a characteristic subgroup of $\tdf{N}_2\normal \tdf{G}_2$, normal 
in $\tdf{G}_2$. Since $\tdf{G}_2$ has a trivial amenable radical, 
\begin{equation}\label{eq: away from center}
\calZ_{\Gamma_2}(N_2)\cap N_2=\{1\}.
\end{equation}
Hence the restriction $p\vert_{\calZ_{\Gamma_2}(N_2)}$ of 
the quotient map $p\colon \Gamma_2\to \Gamma_3=\Gamma_2/N_2$ is injective. 
The image $p(\calZ_{\Gamma_2}(N_2))$ of the normal subgroup $\calZ_{\Gamma_2}(N_2)$
is normal in $\Gamma_3$ and contains $M_3$. The subgroup 
\[ T_3:=p(\calZ_{\Gamma_2}(N_2))\cap\prod_{i=1}^n\Gamma_{3,i}\]
has finite index in $p(\calZ_{\Gamma_2}(N_2))$ and is normal in $\Gamma_{3,1}\times\dots\times\Gamma_{3,n}$. The commutative diagram below summarizes the various relations between the groups. 

Since 
$\Delta_i$ and $M_3\cap T_3\cap \Gamma_{3,i}$ are commensurable, the normal subgroup $T_3\cap \Gamma_{3,i}\normal \Gamma_{3,i}$ is infinite. Since each $\Gamma_{3,i}$ is an irreducible 
$S_i$-arithmetic lattice, Margulis' normal subgroup theorem implies that 
each $T_3\cap \Gamma_{3,i}<\Gamma_{3,i}$ is of finite index, thus $p(\calZ_{\Gamma_2}(N_2))<\Gamma_3$ is of finite index.  By the 
injectivity of $p\vert_{\calZ_{\Gamma_2}(N_2)}$, the group 
$\calZ_{\Gamma_2}(N_2)$ is thus (abstractly) commensurable with a product of 
$S$-arithmetic lattices. By Theorem~\ref{thm: outer automorphism groups are finite}, $\calZ_{\Gamma_2}(N_2)$ 
has a finite outer automorphism group. Further, the center of $\calZ_{\Gamma_2}(N_2)\normal \Gamma_2$ is trivial since it lies in the amenable radical of $\Gamma_2$ which is trivial 
by Proposition~\ref{P:step2} and Lemma~\ref{L:Radam-and-fincovol}. 

\begin{equation*}%\label{eq: diagram showing relations between groups}
\begin{tikzcd}
M_2\arrow[d, hook]\arrow[rrd, "\cong", shorten >= 0.7em] & & \prod_{i=1}^n\Delta_i\arrow[r, hook]\arrow[d, hook, "f.i."]& \prod_{i=1}^nH_i\arrow[d,equal]\\
\calZ_{\Gamma_2}(N_2)\arrow[r, "\cong"', "p"]\arrow[d, hook, "\normal"] & p(\calZ_{\Gamma_2}(N_2))\arrow[d, hook, "\normal"] & M_3\arrow[l, hook']\arrow[r, hook]\arrow[d, hook] & \ssf{G}_3\times\{1\}\arrow[d,hook]\\
\Gamma_2\arrow[r, two heads, "p"]&\Gamma_2/N_2\arrow[r,equal]& \Gamma_3\arrow[r,hook]& \ssf{G}_3\times \tdf{G}_3\arrow[d, two heads]\\
& T_3\arrow[u,hook']\arrow[r,hook, "\normal"]& \prod_{i=1}^n\Gamma_{3,i}\arrow[u,hook',"f.i."', "\normal"]\arrow[d, "f.i.",hook]\arrow[r,hook]& \prod_{i=1}^n H_i\times Q_i\arrow[d,hook]\\
 & & \prod_{i=1}^n\mathbf{H}_i(\calO_i[S_i])\arrow[r,hook]& \prod_{i=1}^n (\mathbf{H}_i)_{K_i, S_i^\infty}\times (\mathbf{H}_i)_{K_i, S_i^\fin}
\end{tikzcd}
\end{equation*}

Since $N_2$ is the intersection of $\Gamma_2\normal\Gamma_1$ and $N_1\normal\Gamma_1$ the subgroup $N_2$ is normal in $\Gamma_1$ which implies 
that $\calZ_{\Gamma_2}(N_2)$ is normal in $\Gamma_1$. 

By Lemma~\ref{lem: splitting} the finite index subgroup 
$\Gamma_1'=\ker\bigl(\Gamma_1\to \Out(\calZ_{\Gamma_2}(N_2))\bigr)<\Gamma_1$ splits as a direct product: 
\[
	\Gamma'_1\cong \calZ_{\Gamma_2}(N_2) \times (\Gamma'_1/\calZ_{\Gamma_2}(N_2)).
\]
Assumption \IRR\ implies that one of the factors is finite; and by  
$M_2$ being infinite and~\eqref{eq: M2 contained in centralizer}, we obtain that 
$\calZ_{\Gamma_2}(N_2)$ is of finite index in $\Gamma_1$, hence $\Gamma_2<\Gamma_1$ is of finite index. 
So $\Gamma_0$ is finite. Because of $[\Gamma_1:\calZ_{\Gamma_2}(N_2)]<\infty$ and~\eqref{eq: away from center} we obtain that $N_1$ and $N_2$ are finite. 
The group $L$, being a lattice envelope of $\Gamma_0$, is compact, thus $L$ and $\Gamma_0$ are trivial. 

So $\ssf{G}_1=H=\ssf{G}_2=\ssf{G}_3$. 
Using condition \IRR\ we can now also deduce that there was only one irreducible factor, hence $n=1$ in~\eqref{e:Gamma3}.

Furthermore, the group  $N_2\cong \tdf{N}_2$, that is now known to be finite, has to be trivial,
because $\tdf{N}_2$ is normal in $\tdf{G}_2$, while $\Radam(\tdf{G}_2)=\{1\}$.
It follows that $G_3=G_2$ and $\Radam(\tdf{G}_3)=\{1\}$.
In particular, the compact normal subgroup $C\normal \tdf{G}_3$ is actually trivial.

We deduce that there is a number field $K$, 
a connected, non-commutative, adjoint, absolutely simple $K$-groups $\mathbf{H}$, 
and a finite set $S=S^\infty\cup S^\fin$ of places as in \S\ref{sub:arithmetic} so that 
\[
	\Gamma_1=\Gamma_2=\Gamma_3\simeq \mathbf{H}(\calO[S])^+
\] 
is an irreducible $S$-arithmetic lattice in the lcsc group
\[
	G_2=\ssf{G}_2\times\tdf{G}_2=\mathbf{H}^+_{K,S^\infty}\times\mathbf{H}^+_{K,S^\fin}.
\] 
It remains to identify the lcsc group $G_1$.
We have established the triviality of $\Gamma_0$ and of $L$, hence
\[
	\ssf{G}_1=H\times L=H=\ssf{G}_2=\ssf{G}_3=\mathbf{H}^+_{K,S^\infty}.
\]
It remains to identify the totally disconnected component $\tdf{G}_1$ that contains
\[
	\tdf{G}_2=\mathbf{H}^+_{K,S^\fin}=\prod_{\nu\in S^\fin}\mathbf{H}(K_{\nu})^+
\]
as a closed subgroup of finite covolume. 

% subsection case_iii (end)

\medskip

\subsection*{Step 4: Lattice envelope $\tdf{G}_1$ of $\tdf{G}_2=\mathbf{H}^+_{K,S^\fin}$}\hfill{}

Let us enumerate the elements in $S^\fin=\{\nu_1,\dots, \nu_n\}$ in such a way that  
$\mathbf{H}$ has $K_{\nu_i}$-rank at least two for $1\le i\le k$ and has $K_{\nu_i}$-rank $1$ 
for $k<i\le n$. The extreme cases $k=n$ or $k=0$ are, of course, possible. 

Let $X_i$ be the Bruhat-Tits building associated to $\mathbf{H}(K_{\nu_i})$, and 
let $H_i:= \mathbf{H}(K_{\nu_i})^+$. We write  
\[ 
	H=H_1\times\dots\times H_n~\text{ and }~X=X_1\times\cdots\times X_n.
\]
With that notation, $H=\tdf{G}_2$. 
The spaces $X_i$ are irreducible Euclidean buildings with cocompact
affine Weyl group. 
The group $H$ acts by automorphisms on the simplicial complex $X$,
and this action is \emph{strongly transitive} in the sense that $H$ acts transitively on pairs $(C,A)$
where $C\subset A$ is a chamber in an apartment.

The proof of Theorem~\ref{T:main} is completed by applying the following general theorem to the subgroup $H=\tdf{G}_2$ 
in $G=\tdf{G}_1$, using Remark~\ref{rem: tree extension up to finite index}.

\begin{theorem}\label{T:qi-step}
Let $X$ be a locally  finite affine building without Euclidean factors.
Let $H$ be a lcsc group acting isometrically on $X$ strongly transitively and with a compact kernel.
Assume $\phi\colon H\to G$ is a continuous homomorphism with compact kernel and a closed image of cofinite volume.\\
Then the action of $H$ on $X$ extends via $\phi$ to an isometric action with a compact kernel of $G$ on $X$.
\end{theorem}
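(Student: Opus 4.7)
The plan is to produce a quasi-action of $G$ on $X$ extending the $H$-action, then rigidify it using the geometry of affine buildings without Euclidean factors. First, modulo the standard bookkeeping of passing to quotients by the compact kernels of both $\phi$ and the $H$-action, I may assume that $\phi$ is injective, $H$ is a closed subgroup of $G$ of finite covolume, and $H$ acts faithfully and strongly transitively on $X$. Since $H$ acts properly and cocompactly (strong transitivity on chambers, compact chamber stabilizers) on the proper geodesic space $X$, the Švarc--Milnor lemma yields that $H$ is compactly generated and quasi-isometric to $X$. I would then invoke Lemma~\ref{L:bounded-clopen} to upgrade ``finite covolume'' to ``cocompact'': strong transitivity forces every compact-open subgroup of $H$ to be commensurable with a chamber stabilizer, with uniformly bounded index since it must permute a bounded collection of chambers. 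In particular $G$ is compactly generated and quasi-isometric to $H$, hence to $X$.

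Next, an adaptation of Lemma~\ref{lem: QI of lcsc group} to the proper continuous (rather than properly discontinuous) setting produces a homomorphism $g\mapsto[f_g]$ from $G$ to $\QI(X)$, represented by uniform $(L,C)$-quasi-isometries $f_g\colon X\to X$, extending the natural map $H\to\QI(X)$ and satisfying the quantitative closeness-to-identity property of~\eqref{eq: quantitative statement in QI lemma}. The heart of the argument is then to upgrade this quasi-action to an isometric action. The claim is that each $f_g$ lies at a uniformly bounded distance from a \emph{unique} isometry $\tilde f_g\in\Isom(X)$. For irreducible higher-rank factors of $X$, this is the Kleiner--Leeb quasi-isometric rigidity theorem. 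For tree factors, the existence of $\tilde f_g$ has to be extracted from the fact that $f_g$ coarsely permutes the combinatorial structure of the tree and can be promoted to a genuine combinatorial automorphism, while uniqueness is elementary (two isometries of a locally finite tree without leaves at bounded distance must coincide). The reducible case then follows from the Kleiner--Leeb decomposition of quasi-isometries along irreducible de~Rham factors.

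The uniqueness of $\tilde f_g$ is decisive: since $\tilde f_g\tilde f_h$ is at bounded distance from $f_g\circ f_h$, which is at bounded distance from a representative of $[f_{gh}]$, uniqueness forces $\tilde f_g\tilde f_h=\tilde f_{gh}$, so $\psi\colon g\mapsto\tilde f_g$ is a genuine homomorphism $G\to\Isom(X)$. Continuity of $\psi$ follows by combining~\eqref{eq: quantitative statement in QI lemma} with uniqueness: if $g_n\to 1$ then $f_{g_n}$ is arbitrarily close to the identity on every bounded set, so the unique nearby isometry $\tilde f_{g_n}$ converges to the identity in the pointwise-convergence topology of $\Isom(X)$. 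The same uniqueness identifies $\psi|_H$ with the original $H$-action, and compactness of $\ker\psi$ follows because $\ker\psi\cap H=\{1\}$ (the $H$-action is faithful) and $H$ is cocompact in $G$.

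The main obstacle is Step~3, specifically extracting a nearby isometry from a quasi-isometry when tree factors are present: Kleiner--Leeb handles higher-rank factors cleanly, but for trees one must genuinely use the strong transitivity of the $H$-action to convert the coarse combinatorial data into an honest isometry.
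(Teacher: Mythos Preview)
Your overall architecture---cocompactness via Lemma~\ref{L:bounded-clopen}, a uniform quasi-action via Lemma~\ref{lem: QI of lcsc group}, Kleiner--Leeb for higher-rank factors---matches the paper's. But the tree case is a genuine gap, and you have correctly identified it as the obstacle without resolving it.

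The claim that each $f_g$ lies at bounded distance from an isometry of the tree factor $X_j$ is simply false: for a regular tree $T$, the natural map $\Isom(T)\to\QI(T)$ is far from surjective (its image has infinite index in $\QI(T)\hookrightarrow\Homeo(\partial T)$). So there is no hope of promoting an \emph{individual} quasi-isometry to a nearby isometry of $X_j$, and ``coarsely permutes the combinatorial structure'' does not do this. What the paper does instead is pass to the boundary action $G'\to\Homeo(\partial X_j)$ and then invoke the deep theorem of Mosher--Sageev--Whyte \cite{MSW-annals} (or the alternative of Carette--Dreesen \cite{CD}) to the effect that a cobounded quasi-action on a bounded-valence tree is quasi-conjugate to an isometric action on \emph{some} locally finite tree $T_j$. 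This is where the coboundedness of the whole quasi-action, not a property of a single $f_g$, is used essentially.

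Even after MSW, one is not done: the tree $T_j$ produced is a priori unrelated to $X_j$, and one must show they coincide. The paper does this (Lemma~\ref{lem: homothety between trees}) by exploiting the BN-pair structure of $H_j'$: the two maximal compact subgroups $K_1,K_2$ (vertex stabilizers in $X_j$) must each fix a vertex of $T_j$ by Bruhat--Tits, and a careful analysis of the intersection pattern $K_1\cap K_2$ forces an $H_j'$-equivariant homothety $X_j\to T_j$. This is the precise content of ``using strong transitivity,'' and it is not automatic.

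Two smaller points. First, Lemma~\ref{L:bounded-clopen} requires $G$ to be tdlc, which is not given; the paper first shows $\Radam(G)$ and $G^0$ are compact (using \cite{BG}) and divides them out. Second, your argument that $\ker\psi$ is compact (``$\ker\psi\cap H=\{1\}$ and $H$ cocompact'') is incomplete as stated: injectivity of $\ker\psi\to G/H$ does not by itself give compactness of $\ker\psi$. The paper avoids this by having already reduced to $\Radam(G)=\{1\}$, so any compact normal subgroup is trivial.
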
 

%
%
%. Old version: 
%\begin{theorem}\label{T:qi-step} Let $G$ be a tdlc group that has trivial amenable radical and contains 
%	$H=\tdf{G}_2$ as a closed subgroup of finite covolume.
%	Then $G/H$ is compact, and there is an open subgroup $G'<G$ of finite index that contains $H$ 
%	such that 
%	the natural embedding $H\to \isom(X)$ extends to an injective 
%	continuous homomorphism $G'\to \isom(X)$ with closed image. 
%\end{theorem} 

% \rocomm{1) Do reduction to tdlc $G$. 2) Argue that $G$ acts by isometries as lying in the semidirect product of $G'$ and the symmetry group. 3) Adjust the wording, not only but especially in lemma 5.18 }
The proof of Theorem~\ref{T:qi-step} proceeds in several steps: 
reduction to the case where $G$ is tdlc (already given in our main application),
proof that the image of $H$ is cocompact in $G$, and finally the use of 
techniques from geometric group theory, similar to the approach taken in~\cite{Furman-MM}*{Section~3}. 

\subsubsection*{Reduction to $G$ being tdlc with trivial amenable radical} 
(This reduction is not needed for our application, where $G$ is $\tdf{G}_1$). %\alexcomm{Only a sketch}
We will first show that both the connected component $G^0$ and $\Radam(G)$ are compact normal subgroups,
so that dividing by them we may assume $G$ to be tdlc with the trivial amenable radical. 
A group acting strongly transitively on a thick Euclidean building has trivial amenable radical 
(cf. \cite{Caprace+Monod:I}*{Theorem 1.10}, where this is deduced from \cite{AdamsBallmann}).
Therefore $\Radam(H)$ is contained in $\ker(H\to\Aut(X))$, which is compact by assumption.
It follows from Lemma~\ref{L:Radam-and-fincovol} that $\Radam(G)\cap \phi(H)$ is a compact group.
Let $G'=G/\Radam(G)$ and let $H'$ be the image of $H$ in $G'$.
By \cite[Theorem~5.1]{BG} the image of $H/\Radam(H)$ in $G'$ is closed (note that $H/\Radam(H)$ is qss by 
\cite[Theorem~3.7]{BG}).
It follows that $M=H\cdot\Radam(G)$ is a closed subgroup of $G$, and applying Lemma~\ref{L:intermediate} to the closed subgroups
$H<M<G$ we conclude that $M/H\cong \Radam(G)/\Radam(H)$ has finite volume.
Since $\Radam(H)$ is compact, it follows that $\Radam(G)$ has finite Haar measure, and therefore is a compact group.

Dividing by the compact amenable radical, hereafter we assume $G$ and $H$ to have trivial amenable radicals. 
The connected component $G^0$ of the identity in $G$ is a connected, center free, semi-simple real Lie group, and we consider
the action of $H$ on $G^0$ by conjugation, providing a homomorphism $\rho:H\to \Aut(G^0)$.
Note that $\Aut(G^0)$ is a Lie group, and the image of $H$ is closed (cf. \cite[Theorem~5.1]{BG}).
Since $H$ is tdlc, the image $\rho(H)$ is discrete. 
Considering the isometric continuous action of $H$ on the discrete space $\rho(H)$ we conclude
by \cite[Theorem~6.1]{BG} that $H_0=\ker \rho$ has finite index in $H$.
As before the image of $H$ in $G/G^0$ is closed, so $H \cdot G^0$ is closed in $G$, and the same applies to
the finite index subgroup $H_0\cdot G^0$. 
Lemma~\ref{L:intermediate} applied to $H_0<H_0\cdot G^0<G$ shows that $H_0\cdot G^0/H_0\cong G^0/(H_0\cap G^0)$ has finite Haar measure.
But $H_0\cap G^0=\{1\}$ because $G^0$ is center-free. 
We conclude that $G^0$ has finite Haar measure, and therefore is a compact group.
Thus, for the rest of the proof we assume $G$ to be tdlc with trivial amenable radical.

\subsubsection*{Compactness of $G/H$} 
By the Bruhat--Tits fixed-point theorem (see~\cite{bridson+haefliger}*{Corollary~2.8 on p.~179} for a more general result on CAT(0)-spaces) every compact subgroup of $H$ fixes a vertex of $X$. But since there are only finitely many $H$-orbits of vertices there are only finitely many vertex stabilizer groups up to conjugation. 
In particular, $H$ endowed with Haar measure $m$ 
has an upper bound on the Haar measure of its open compact subgroups: 
\[
		\sup \bigl\{ m(U) \mid U\ \textrm{is\ an\ open\ compact\ subgroup\ of}\ H\bigr\}<+\infty.
\] 
Thus Lemma~\ref{L:bounded-clopen} applies to $H<G$ and implies compactness of $G/H$.

\medskip

Therefore we are in a position to apply Lemma~\ref{lem: QI of lcsc group} in combination with 
Remark~\ref{rem: continuous local cross section} to $H$ acting on $X$, 
to obtain a homomorphism of $G$ to the quasi-isometry group of~$X$, $\phi\colon G \rightarrow\ \QI(X)$.
In fact, we have a commutative diagram 
\[
	\begin{tikzcd}
      	H \ar[r]\ar[d] & \Isom(X)\ar[d, hook] \\
      	G \ar[r, "\phi"] & \QI(X).   
  	\end{tikzcd}
\]
Further, there are constants $C,L>0$ such that for every $g\in G$ there 
is a $(L,C)$-quasi-isometry $f_g\colon X\to X$ that represents the class $\phi(g)\in \QI(X)$ and has the following property: 
For every bounded set $B\subset X$ there is a neighborhood of the identity $V\subset G$ so that 
\begin{equation}\label{eq: finite distance to id}
			\forall_{g\in V}\forall_{x\in B}~~d_X(f_g(x),x)<C.
\end{equation}

In what follows we denote by $\bar{G}$ and $\bar{H}$ the corresponding images of $G$ and $H$ in $\QI(X)$;
the commutativity of the diagram above implies that $\bar{H}<\bar{G}$.
For a general metric space the QI-group does not have a natural topological group structure. 
In the arguments below we will take advantage of the large scale geometry of $X$ to obtains information
on $\bar{G}$ to be able to place it in the image of $\Isom(X)$ in $\QI(X)$.

\subsubsection*{Definition of the subgroup $\bar{G}'<\bar{G}$} 
We now apply the following splitting theorem by Kleiner and Leeb. 
\begin{theorem}[\cite{Kleiner+Leeb}*{Theorem 1.1.2}]\label{thm: splitting kleiner leeb}
For every $C,L>0$ there are $L', C', D>0$ such that 
every $(L,C)$-quasi-isometry $X\to X$ is within distance $D$ from
a product of $(L',C')$-quasi-isometries $X_i\to X_{\pi(i)}$ between the factors 
for some permutation $\pi\in\operatorname{Sym}_n$. 
\end{theorem}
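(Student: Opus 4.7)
The plan is to combine the theory of asymptotic cones with the rigidity of bi-Lipschitz homeomorphisms of Euclidean buildings, following the original strategy of Kleiner--Leeb. Fix a non-principal ultrafilter $\omega$ on $\bbN$, a basepoint $x_0 \in X$, and a scaling sequence $\lambda_k \to 0$, and form the asymptotic cone $X_\omega := \lim_\omega(X,\lambda_k d_X,x_0)$. Writing $X = X_1\times\cdots\times X_n$ for the decomposition into irreducible factors, none of which is Euclidean by hypothesis, ultralimits commute with finite products, so $X_\omega = (X_1)_\omega\times\cdots\times(X_n)_\omega$, and each $(X_i)_\omega$ is itself an irreducible thick Euclidean building of the same rank as $X_i$. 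Crucially, the ``no Euclidean factor'' condition is preserved by this construction, so $X_\omega$ admits a canonical de Rham decomposition into irreducible factors that matches the product above.

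Second, any $(L,C)$-quasi-isometry $f\colon X\to X$ induces an $L$-bi-Lipschitz homeomorphism $f_\omega\colon X_\omega\to X_\omega$; the additive error $C$ is washed out by rescaling. I would then invoke the rigidity theorem that every bi-Lipschitz self-homeomorphism of a product of irreducible, non-Euclidean Euclidean buildings respects the de Rham decomposition, up to permuting isomorphic factors. This yields a permutation $\pi=\pi(f,\omega)\in\operatorname{Sym}_n$ and bi-Lipschitz maps $(X_i)_\omega\to(X_{\pi(i)})_\omega$ whose product is $f_\omega$. The rigidity input itself relies on detecting apartments in $X_\omega$ as maximal isometrically embedded Euclidean subspaces, and on reconstructing the spherical Tits building at infinity (and its irreducible join decomposition) from bi-Lipschitz invariants alone, which forces compatibility with the product structure.

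The hard step, and the main obstacle, is to upgrade this asymptotic splitting into a \emph{uniform} splitting of the original map $f$ with constants $(L',C',D)$ depending only on $(L,C)$. I would argue by compactness/contradiction: if no such constants existed for a given pair $(L,C)$, one would produce a sequence of $(L,C)$-quasi-isometries $f_k\colon X\to X$, basepoints $x_k\in X$, and radii $R_k\to\infty$ such that no product of $(k,k)$-quasi-isometries between the factors, over all permutations in $\operatorname{Sym}_n$, is within distance $k$ of $f_k$ on the ball $B(x_k,R_k)$. Rescaling by $\lambda_k := R_k^{-1}$ and passing to the ultralimit along $\omega$ would then produce a bi-Lipschitz self-homeomorphism of $X_\omega$ whose ``product-defect'' survives the limiting process, contradicting the previous paragraph. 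The delicate point is verifying that the failure-to-split at finite scale is robust enough to survive the ultralimit: one must choose the scales $\lambda_k$ precisely so that the obstruction radius is of order $1$ in the rescaled metric, and one must also know that the permutation $\pi$ can be chosen uniformly along the sequence, which follows from the discreteness of $\operatorname{Sym}_n$. The uniformity of $L'$ and $C'$ on each factor then results from a standard straightening argument: one projects the factor-maps onto apartments of $X_i$ and uses strong transitivity of the Weyl group action on chambers to produce a bona fide $(L',C')$-quasi-isometry at bounded distance.
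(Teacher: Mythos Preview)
The paper does not prove this statement at all: it is quoted verbatim as a black box from Kleiner--Leeb \cite{Kleiner+Leeb}*{Theorem~1.1.2} and is immediately applied in the proof of Theorem~\ref{T:qi-step}. There is therefore no ``paper's own proof'' to compare your proposal against.

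That said, your sketch is a faithful outline of the Kleiner--Leeb argument itself: pass to asymptotic cones to convert $(L,C)$-quasi-isometries into bi-Lipschitz homeomorphisms of a product of irreducible thick Euclidean buildings, invoke the structural rigidity of such products to force a permutation of factors, and then run a rescaling/compactness argument to pull the splitting back to the original space with uniform constants. The one place where your write-up is thin is the final ``straightening'' step producing genuine $(L',C')$-quasi-isometries on each factor; in Kleiner--Leeb this is not a separate apartment-by-apartment argument but falls out of the same ultralimit contradiction that produces $D$. If you want a self-contained proof rather than a citation, you should either follow their paper more closely at that point or simply cite it as the present paper does.
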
 

At the expense of increasing the constants $C$ and $L$, we may hence assume that for each $g\in G$, the quasi-isometry  
$f_g:X\to X$ as in (\ref{eq: finite distance to id}) is a product of $(L, C)$-quasi-isometries $f^{(i)}_g\colon X_i\to X_{\pi_g(i)}$ for 
some permutation $\pi_g\in {\rm Sym}_n$. 
Another consequence of Theorem~\ref{thm: splitting kleiner leeb} is 
that the product $\QI(X_1)\times\cdots\times\QI(X_n)$ embeds into $\QI(X)$ as a subgroup of index at most~$n!=|\operatorname{Sym}_n|$. 
We define the finite index subgroup $\bar{G}'<\bar{G}$ by
\[
	\bar{G}'=\bar{G}\cap \QI(X_1)\times\cdots\times\QI(X_n).
\] 
Let $\bar{H}'=\bar{H}\cap \bar{G}'$, and let $G'<G$ and $H'<H$ be the preimages of $\bar{G}'$ and $\bar{H}'$.
%Note that $G'$ contains $H$. 
	
% So there are homomorphisms 
% 	\[
% 		\pr_i\colon \bar{G}'\longrightarrow \QI(X_i) 
% 	\]
% 	for $i\in\{1,\dots, n\}$ such that $\phi\vert_{G'}=\phi_1\times\cdots\times\phi_n$. 

It follows that there are $(L,C)$-quasi-isometries $f_g^{(i)}\colon X_i\to X_i$ for every $i\in\{1,\dots, n\}$ 
and $g\in G'$ such that $f_g^{(i)}$ represents the $i$-th component of the quasi-isometry $f_g$.
% The restriction of $\phi_i$ to $H'$
% 	agrees with the homomorphism  
% 	\[
% 		H\xrightarrow{\pr_i} H_i\to\Isom(X_i)\to\QI(X_i),
% 	\]
% 	where $\pr_i$ is the projection to the $i$-th factor. 
In view of~\eqref{eq: finite distance to id} and the quantitative statement in Theorem~\ref{thm: splitting kleiner leeb} 
and at the expense of increasing $C, L>0$ once more, the following holds true: 
For every $i\in\{1,\dots,n\}$ and for every bounded set $B_i\subset X_i$ there is a neighborhood 
of the identity $V\subset G$ so that 
\begin{equation}\label{eq: finite distance to id - for the factors}
		\forall_{g\in V\cap G'}\forall_{x\in B_i}~~d_X(f_g^{(i)}(x),x)<C.
\end{equation}

\medskip

\subsubsection*{Openness of $G'<G$} 
The group $G'$ is defined as the kernel of a homomorphism of $G$ to a finite group. 
But we do not know at this point that the homomorphism is continuous, which would imply that $G'<G$ is open, thus an lcsc group itself. 
Next we provide a direct argument that shows openness of $G'<G$. 

Let $d_{X_i}$ denote the metric $X_i$, and the metric $d_X$ on $X$ being the $\ell^2$-sum of $d_{X_i}$-s.
Let $B_i\subset X_i$ be a bounded subset whose diameter exceeds $3C$, 
and let $B=B_1\times \dots\times B_n$. 
Let $V\subset G$ be a neighborhood of the identity that satisfies~\eqref{eq: finite distance to id}. 
Next we show that $V$ is contained in $G'$ and so $G'<G$ is an open subgroup. 
Suppose it is not. 
Then there is $g\in V$ such that $\pi_g$ is non-trivial, namely
there is $i\in\{1,\dots, n\}$ such that $j:=\pi_g(i)\ne i$. 
Let $x_j,x_j'\in B_j$ be points whose distance is at least $3C$. 
Pick points $x_l\in B_l$ for $l\in\{1,\dots, j-1, j+1,\dots,n\}$. 
Then either $d_{X_j}(f_g^{(i)}(x_i), x_j)>C$ or $d_{X_j}(f_g^{(i)}(x_i), x_j')>C$. 
Without loss of generality assume the first case. Let $x=(x_1,\dots, x_n)\in B$. 
Then 
\[ d_X(f_g(x), x)\ge d_{X_j}(f_g^{(i)}(x_i), x_j)>C,\]
contradicting~\eqref{eq: finite distance to id}. Therefore $V\subset G'$. 

\subsubsection*{Mapping $G'$ to $\Isom(X_i)$ for the higher rank factors $X_i$}
The rigidity theorem for 
higher rank irreducible buildings (such as $X_1,\dots,X_k$) by Kleiner--Leeb~\cite{Kleiner+Leeb} 
is the next important ingredient. 

\begin{theorem}[\cite{Kleiner+Leeb}*{Theorem~ 1.1.3}]\label{thm: rigidity kleiner leeb}
Let $i\in\{1,\dots, k\}$. 
For every $C>0$ and $L>0$ there is a constant $D>0$ such that every $(L,C)$-quasi-isometry $X_i\to X_i$ is within distance $D$ 
from a unique isometry $X_i\to X_i$. 
\end{theorem}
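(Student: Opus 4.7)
The plan is to follow the strategy of Kleiner and Leeb, exploiting the fact that higher-rank irreducible Euclidean buildings carry very rigid asymptotic structure. First I would pass to an asymptotic cone: for a non-principal ultrafilter $\omega$ on $\bbN$ and a sequence of scales $\lambda_n\to\infty$, the pointed ultralimit $X_i^\omega := \lim_\omega(X_i, d_{X_i}/\lambda_n, x_0)$ is again a Euclidean building, in the generalized (not necessarily locally finite, not necessarily discrete Weyl group) sense. An $(L,C)$-quasi-isometry $f\colon X_i\to X_i$ induces an $L$-bi-Lipschitz self-homeomorphism $f^\omega\colon X_i^\omega\to X_i^\omega$, and the additive constant $C$ disappears under rescaling.

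The technical heart is then to show that $f^\omega$ is an isomorphism of Euclidean buildings. This proceeds by characterizing apartments purely metrically: in a higher-rank irreducible Euclidean building without a flat factor, the apartments are precisely the maximal subsets isometric to $\bbR^r$, where $r$ is the rank. Being bi-Lipschitz, $f^\omega$ must permute these maximal flats and preserve the combinatorial pattern of their intersections (which encodes the spherical Tits building at infinity). One then argues that the restriction of $f^\omega$ to each apartment is affine with respect to the Weyl-invariant structure, so $f^\omega$ is an isometry globally.

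Second, I would descend from the cone back to $X_i$. For each basepoint $x\in X_i$, an ultralimit construction produces an isometry of $X_i$ approximating $f$ near $x$ within an additive error controlled by $L$ and $C$ alone. A compactness and ultrafilter argument, combined with the rigidity obtained above, shows that these local approximations are mutually compatible and assemble into a single isometry $\phi\colon X_i\to X_i$ with $\sup_{x\in X_i} d_{X_i}(f(x),\phi(x))\le D$, where $D=D(L,C)$. Uniqueness is automatic: two isometries of $X_i$ at bounded distance must agree on each apartment, since an isometry of $\bbR^r$ at bounded distance from the identity is the identity, and apartments cover $X_i$.

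The main obstacle is the rigidity statement inside the asymptotic cone — showing that every bi-Lipschitz self-homeomorphism of $X_i^\omega$ is an isometry. This is exactly where higher rank enters: one needs the metric characterization of apartments, and one must control the behaviour of bi-Lipschitz maps on singular tangent directions and the spherical building at infinity. This step fails in rank one (a bi-Lipschitz map of a tree is almost never at bounded distance from an isometry), which is why the statement of Theorem~\ref{thm: rigidity kleiner leeb} is restricted to the higher-rank factors $X_1,\dots,X_k$.
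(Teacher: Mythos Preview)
The paper does not prove this theorem; it is quoted as a black box from Kleiner--Leeb \cite{Kleiner+Leeb}*{Theorem~1.1.3} and used as an input in Step~4 of the proof of Theorem~\ref{T:main}. There is therefore no proof in the paper to compare your proposal against.

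That said, your sketch is a reasonable high-level outline of the actual Kleiner--Leeb argument: the passage to asymptotic cones, the fact that the cone is again a Euclidean building, the metric characterization of apartments in rank $\ge 2$, and the conclusion that a bi-Lipschitz self-map of the cone must be an isometry are indeed the core ingredients. Your description of the descent step is a bit loose compared to what Kleiner--Leeb do: rather than assembling local isometric approximations at varying basepoints, they show directly (via the cone argument) that a quasi-isometry sends each apartment to within uniformly bounded Hausdorff distance of an apartment, then read off an isomorphism of the spherical Tits building at infinity, and finally use that combinatorial isomorphism to produce the global isometry of $X_i$. But the essential mechanism you identify --- that higher rank forces the asymptotic picture to be rigid, whereas in rank one it is not --- is exactly the point.
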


	Moreover, no two distinct isometries $X_i\to X_i$ for $i\in\{1,\dots,k\}$ are within bounded distance from each other. 
	Hence the natural homomorphism $\Isom(X_i)\to \QI(X_i)$ is an isomorphism 
	for $i\in\{1,\dots,k\}$ and we obtain 
	homomorphisms
	\[
		\psi_i\colon G'\longrightarrow \Isom(X_i)	
	\] 
	for which the restriction to $H'<G'$ is the homomorphism $H'\to\Isom(X_i)$. 
	The statement~\eqref{eq: finite distance to id - for the factors} and Theorem~\ref{thm: rigidity kleiner leeb} applied 
	to $f_g^{(i)}$ show that there is a constant $E>0$ such that 
	for every bounded subset $B\subset X_i$ there is a neighborhood of the identity 
	$V\subset G$ such that 
	\begin{equation}\label{eq: psi to id}
		\forall_{g\in V}\forall_{x\in B}~~d_{X_i}(\psi_i(g)(x),x)<E.
	\end{equation}

	\begin{lemma}\label{lem: cont-rank2}
		The map $\psi_i$ is continuous for $i\in\{1,\dots, k\}$. 
	\end{lemma}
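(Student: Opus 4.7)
The plan is to prove continuity of $\psi_i$ at the identity $1\in G'$; since $\psi_i$ is an (abstract) homomorphism into a topological group, continuity at $1$ automatically gives continuity on all of $G'$. So fix a net $g_\alpha\to 1$ in $G'$, write $\phi_\alpha:=\psi_i(g_\alpha)\in\Isom(X_i)$, and aim to show $\phi_\alpha\to\id_{X_i}$ in the compact-open topology on $\Isom(X_i)$.

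First I would use~\eqref{eq: psi to id} with $B=\{x_0\}$ for a fixed base point $x_0\in X_i$ to conclude that $d_{X_i}(\phi_\alpha x_0,x_0)\le E$ eventually. Because $X_i$ is a proper metric space and the stabilizer of $x_0$ in $\Isom(X_i)$ is compact, the set
\[
\bigl\{\phi\in\Isom(X_i)\mid d_{X_i}(\phi x_0,x_0)\le E\bigr\}
\]
is compact in $\Isom(X_i)$. Therefore $(\phi_\alpha)$ lies eventually in a compact set, and every subnet has a further subnet converging in $\Isom(X_i)$ to some $\phi$.

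Next I would read off the defining property of any such cluster point~$\phi$. For each $x\in X_i$, applying~\eqref{eq: psi to id} to a bounded set $B\ni x$ yields $d_{X_i}(\phi_\alpha x,x)\le E$ eventually; passing to the limit gives $d_{X_i}(\phi x,x)\le E$. Hence $\phi$ is an isometry of $X_i$ at uniformly bounded distance from $\id_{X_i}$.

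Finally I would invoke the assertion recorded immediately after Theorem~\ref{thm: rigidity kleiner leeb} that, for $i\in\{1,\dots,k\}$, no two distinct isometries of $X_i$ lie within bounded distance of each other. This forces $\phi=\id_{X_i}$. As every subnet of $(\phi_\alpha)$ admits a subsubnet converging to $\id_{X_i}$, the full net converges to $\id_{X_i}$, and $\psi_i$ is continuous at $1$. The only nontrivial ingredient is this last step; it is exactly where Kleiner--Leeb higher-rank rigidity is essential, and it is precisely the failure of this rigidity in rank one that requires the factors $X_i$ with $k<i\le n$ to be handled separately.
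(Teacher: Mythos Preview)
Your argument is correct and takes a genuinely different route from the paper's. The paper argues directly: it invokes a geometric fact about Euclidean buildings (attributed to Ramos-Cuevas) that for any constant $D>0$ and any identity neighborhood $W\subset\Isom(X_i)$ there is a bounded set $B\subset X_i$ with
\[
\bigl\{\theta\in\Isom(X_i)\mid \sup_{x\in B} d_{X_i}(\theta x,x)<D\bigr\}\subset W,
\]
and then combines this with~\eqref{eq: psi to id} to produce, for each $U$, an explicit $V\subset G'$ with $\psi_i(V)\subset U$. Your approach instead exploits compactness: you trap the images $\phi_\alpha$ in a compact subset of $\Isom(X_i)$ using properness of $X_i$, and then identify every cluster point with $\id_{X_i}$ via the injectivity of $\Isom(X_i)\to\QI(X_i)$, which the paper has already recorded immediately after the Kleiner--Leeb rigidity theorem. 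Your route is arguably more elementary since it avoids the Ramos-Cuevas displacement result and uses only facts already asserted in the text; the paper's route is slightly more constructive in that it produces the neighborhood $V$ directly without passing through nets and subnets.
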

	\begin{proof}
	Let $U\subset \Isom(X_i)$ be an open neighborhood of the identity. We have to 
	show that there is an open neighborhood of the identity of $G$ that is contained 
	in $\psi_i^{-1}(U)$. 
	
	We rely on the following geometric fact about buildings that follows, for example, from~\cite{Ramos-Cuevas}: 
	For every constant $D>0$ and any open neighborhood $W$ of the identity in $\Isom(X_i)$,
	there is a bounded set $B\subset X_i$, depending on $D$ and $W$ so that 
	\begin{equation}\label{e:geo-cpx 1}
		\bigl\{ \theta\in \Isom(X_i) \mid \sup_{x\in B}\ d_{X_i}(\theta(x),x)<D\bigr\}\subset W.
	\end{equation}
	We apply this general fact to the constant $E$ from~\eqref{eq: psi to id} and 
	the identity neighborhood $U$. Let us fix a bounded subset $B=B(E,U)\subset X_i$ such that 
	\begin{equation}\label{eq: dist for isometries 1}
		 \bigl\{\theta\in \Isom(X_i) \mid \sup_{x\in B}\ d_{X_i}(\theta(x),x)<E\bigr\}\subset U.
	\end{equation}
    Applying the statement~\eqref{eq: psi to id} to this specific subset $B$ provides us with a neighborhood $V=V(E,B)\subset G$ 
	of the identity such that 
	\begin{equation}\label{eq: dist psi 1}
			\forall_{g\in V}\forall_{x\in B}~~d_{X_i}(\psi_i(g)(x),x)<E. 
	\end{equation}
	Since~\eqref{eq: dist for isometries 1} and~\eqref{eq: dist psi 1} 
	mean that $V\subset \psi_i^{-1}(U)$, continuity of $\psi_i$ follows. 
	\end{proof}
	
	\medskip
	
\subsubsection*{Mapping $G'$ to $\Homeo(\partial X_j)$ for the rank~$1$ factors $X_j$}
	Next we turn to the tree factors $X_j$ with $j\in\{k+1,\dots,n\}$. 
	For every tree of bounded degree $T$, such as $X_j$ with $k< j\le n$, the space of ends $\partial T$ is compact, 
	and one has an embedding of groups 
	\[
		\QI(T)\hookrightarrow\Homeo(\partial T).
	\]
	For $j\in \{k+1, \dots, n\}$ let $\psi_j$ be the composition  
	\[
		\psi_j\colon G'\ \xrightarrow{\phi_j}\ \QI(X_j)\ \hookrightarrow\ \Homeo(\partial X_j)
	\]
	and will denote by $\bar{G}_j'=\psi_j(G')$ the image in $\Homeo(\partial X_j)$.
	
	\begin{lemma}\label{lem: cont-rank1} 
		The map $\psi_j$ is continuous and has a closed image for every 
		$j\in \{k+1, \dots, n\}$. 
	\end{lemma}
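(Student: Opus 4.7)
The plan is to mirror the structure of Lemma~\ref{lem: cont-rank2}: prove continuity first using the coarse geometry of trees, and then deduce closedness of the image by exploiting the cocompactness of $H$ in $G$ to reduce to $\psi_j|_{H'}$, which comes from an honest isometric action on $X_j$.

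For continuity I would follow the template of~\eqref{e:geo-cpx 1}--\eqref{eq: dist psi 1}. Since $\partial X_j$ is compact and metrizable by a visual metric $d_\partial$ based at some vertex $v_0 \in X_j$, a basis of identity neighborhoods in $\Homeo(\partial X_j)$ is given by
\[
	W_\epsilon = \bigl\{ h \in \Homeo(\partial X_j) \mid \sup_{\xi \in \partial X_j} d_\partial(h(\xi),\xi) < \epsilon \bigr\}, \qquad \epsilon > 0.
\]
The geometric input I need is the following tree-stability statement: for every $\epsilon>0$ there is a radius $R=R(L,C,\epsilon)>0$ such that any $(L,C)$-quasi-isometry $f\colon X_j\to X_j$ satisfying $d_{X_j}(f(x),x)<C$ for all $x\in B(v_0,R)$ induces a boundary homeomorphism inside $W_\epsilon$. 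This is a consequence of the classical stability of quasi-geodesics in trees: the image of a geodesic ray $\gamma$ from $v_0$ under $f$ fellow-travels a unique geodesic ray $\gamma'$ at bounded Hausdorff distance $D=D(L,C)$, and if $f$ displaces the initial segment of $\gamma$ of length $R$ by at most $C$, then $\gamma'$ agrees with $\gamma$ on an initial segment of length at least $R-D-C$, which places the end of $\gamma'$ at visual distance exponentially small in $R$ from $\xi$. Feeding $B=B(v_0,R)$ into~\eqref{eq: finite distance to id - for the factors} supplies an identity neighborhood $V\subset G$ with $\psi_j(V\cap G')\subset W_\epsilon$, and continuity at the identity propagates to continuity everywhere by the homomorphism property.

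For closedness of $\psi_j(G')$ in $\Homeo(\partial X_j)$ I would exploit the compactness of $G/H$ already established in the proof of Theorem~\ref{T:qi-step}. Passing to $G'$ and setting $H'=H\cap G'$, the quotient $G'/H'$ is still compact, so $G'=H'\cdot K$ for some compact set $K\subset G'$. Then $\psi_j(G')=\psi_j(H')\cdot\psi_j(K)$; the second factor is compact by continuity of $\psi_j$, and the product of a closed set with a compact set is closed in any topological group. Hence it suffices to show that $\psi_j(H')$ is closed in $\Homeo(\partial X_j)$. But $\psi_j|_{H'}$ factors through the isometric action $\rho_j\colon H'\to\Isom(X_j)$ inherited from the isometric action of $H$ on $X$; because $H$ acts strongly transitively on the product $X$, the induced action on $X_j$ is transitive on chambers, so $\rho_j(H)$ is a closed cocompact subgroup of $\Isom(X_j)$ with compact kernel, and $\rho_j(H')$ is of finite index in $\rho_j(H)$, hence closed in $\Isom(X_j)$. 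Finally, on a thick locally finite tree the inclusion $\Isom(X_j)\hookrightarrow\Homeo(\partial X_j)$ is a closed topological embedding, since a uniformly convergent sequence of boundary homeomorphisms arising from isometries forces the underlying isometries to agree on arbitrarily large balls around $v_0$.

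The main obstacle I anticipate is the last step — verifying that $\Isom(X_j)$ sits closedly in $\Homeo(\partial X_j)$ — together with the fact that $\rho_j(H)$ is closed in $\Isom(X_j)$; both rely on properness considerations for the isometric action. A safer fallback is to prove directly that $\psi_j(H')$ is sequentially closed in $\Homeo(\partial X_j)$: given $\psi_j(h_n)\to h$ uniformly on $\partial X_j$, the injectivity of $h$ prevents the ends of $X_j$ from collapsing under $\rho_j(h_n)$ and therefore forces $\rho_j(h_n)(v_0)$ to remain bounded; local finiteness of $X_j$ then extracts a subsequence converging pointwise to an isometry $\tilde h$ with boundary action $h$, and compactness of the kernel of $\rho_j$ together with the closedness of the finite-index subgroup $H'<H$ yields a preimage of $h$ in $H'$.
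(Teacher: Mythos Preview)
Your approach coincides with the paper's: continuity is obtained exactly as you describe, via the Morse/stability lemma for quasi-geodesics in trees (the paper cites \cite[Theorem~3.5]{Furman-MM} and records the same tree fact you derive), and closedness is obtained by writing $\psi_j(G')=\psi_j(H')\cdot\psi_j(K)$ for a compact $K\subset G'$ and reducing to the closedness of $\psi_j(H')$, which the paper simply asserts while you go further and try to justify it. One correction to your fallback argument: the kernel of $\rho_j\colon H'\to\Isom(X_j)$ is \emph{not} compact in general (when $H=\prod_i H_i$ it contains all factors $H_i$ with $i\ne j$), so the closedness of $\rho_j(H')$ does not follow from a compact-kernel/properness argument for $\rho_j$ itself; in the application it comes instead from the product structure, since $\rho_j(H')$ is the image of (a finite-index subgroup of) the single simple factor $H_j$, and $H_j$ acts properly on its Bruhat--Tits tree $X_j$.
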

	\begin{proof}
	    The continuity statement is proved in~\cite[Theorem 3.5]{Furman-MM}. 
		The argument is analogous to the one for Lemma~\ref{lem: cont-rank2}. 
		However, instead of the geometric fact about higher rank buildings \eqref{e:geo-cpx 1} 
		one uses the following fact about trees; it is a consequence of the Mostow--Morse lemma for Gromov hyperbolic spaces: 
			
		For a tree $T$ of bounded degree and constants $L,C,D>0$ and for any identity neighbourhood $U\subset \Homeo(\partial T)$ 
		there is a compact subset $B\subset T$ so that 
		the image in $\Homeo(\partial T)$ of any $(L,C)$-quasi-isometry $f\colon T\to T$ with  
			\[\sup\{d_T(f(x),x) \mid x\in B\}<D\]
			belongs to $U$. 
			
		Since the image of $H'$ is cocompact in $G'$ and the image of $H'$ in $\Homeo(\partial X_j)$, 
		is closed, the image $\bar{G}_j'$ of $G'$ in $\Homeo(\partial X_j)$ is also closed. 
\end{proof}
	
\medskip
	
\subsubsection*{Identifying the image of $\psi_j$}
	
	% Since $\psi_i$ is continuous for $i\in\{1,\dots, n\}$, $\ker \psi_i$ is a closed subgroup of $G'$. 
	% We set 
	% \[
	% 	G'_i:=G'/\ker \psi_i.
	% \]
	% Regarding $H'_i$ as a subgroup of $H'$ and thus of $G'$, the inclusion $H_i\hookrightarrow G$ 
	% induces a monomorphism $H'_i\rightarrow G'_i$ since $G'$ restricts to an injective map 
	% $H_i\to\isom(X_i)$ or $H_i\to \Homeo(\partial X_i)$, respectively. 
	% 		
	% By Theorem~\ref{thm: finite index in automorphisms of building} $\isom(X_i)$ contains $H_i$ as a closed subgroup 
	% of finite index for $i\in\{1,\dots,k\}$. Thus 
	% the $\psi_i$-image of $G'$ for $i\in\{1,\dots, k\}$ contains $H_i$ as a subgroup of finite 
	% index. 
	% 	
	% Let us consider the $\psi_j$-image for the tree factors. 
	% Fix an index $j\in\{k+1,\dots, n\}$.
	% 	
	% The map $\phi_j$ describes a quasi-action of $G'$, thus a faithful quasi-action of $G_j$, on the tree $X_j$ since every $g\in G$  
	% acts by a $(L,C)$-quasi-isometry $f_g^{(j)}$ such that $f_{g_1}^{(j)}\circ f_{g_2}^{(j)}$ 
	% is within uniformly bounded distance from $f_{g_1g_2}^{(j)}$. 
	% 
	% Next we resort to the work of Mosher--Sageev--Whyte on quasi-actions on trees. Their main result says: 
	% 
	% \begin{theorem}[\cite{MSW-annals}*{Theorem~1}]\label{thm: quasi-actions on trees}
	% A cobounded quasi-action of a group on a bushy tree of bounded degree is quasi-conjugated to an 
	% isometric action of the same group on a -- possibly different -- tree. 
	% \end{theorem}

	Fix an index $j\in\{k+1,\dots, n\}$.
	Let $H'_j$ denote the image of $H'$ in $\isom(X_j)$, and $\bar{H}'_j$ denote the corresponding 
	subgroup of $\Homeo(\partial X_j)$. 
	Consider the action of $\bar{G}'_j$ on the Cantor set $\partial X_j$,
	that comes from the quasi-action of $\bar{G}'_j$ on the tree $X_j$.
	The latter is given by the family $\{f_g^{(j)}\}$ of $(L,C)$-quasi-isometries of the locally finite tree $X_j$,
	such that $f_{g_1}^{(j)}\circ f_{g_2}^{(j)}$ 
	is within uniformly bounded distance from $f_{g_1g_2}^{(j)}$ for all $g_1,g_2\in G'$.
	Note that the maps $f^{(j)}_g$ may be associated to the images $\psi_j(g)\in \bar{G}'_j$ rather than $g\in G'$.
	We proceed with two claims:
	\begin{enumerate}
		\item 
		There exists a locally finite tree $T_j$, an action $\rho\colon\bar{G}'_j\to \isom(T_j)$,
		and a quasi-isometry $q\colon T_j\to X_j$, so that $f^{(j)}_{g}$ are within bounded distance from the map $q\circ \rho(g)\circ q'$
		where $q'\colon X_j\to T_j$ is a coarse inverse of $q$.
		\item 
		One can take tree $T_j$ to be $X_j$.
	\end{enumerate}
	Claim (1) follows from the deep work of Mosher--Sageev--Whyte on quasi-actions on trees \cite{MSW-annals}*{Theorem~1}.
	We would like to thank the anonymous referee for pointing out an alternative argument based on the result of Carette--Dressen \cite{CD}*{Theorem~C}.
	Indeed, the fact that the action of $H'_j$ on the tree $X_j$ is strongly transitive,
	implies that the action of $\bar{H}'_j$ on the Cantor set $\partial X_j$ 
	is \emph{$3$-proper} and \emph{$3$-cocompact} (meaning the action on the space of distinct triples is proper and cocompact), 
	and these properties pass to the action of
	$\bar{G}'_j$ that contains $\bar{H}'_j$ as a cocompact subgroup.
	By \cite{CD}*{Theorem~C} such an action of a locally compact group on a Cantor set is sufficient to construct an isometric action
	on a locally finite tree $T_j$ with $\partial T_j$ being identified with the given Cantor set.

	A few general remarks on trees and groups acting on trees.
	An action of a group on a tree is called \emph{minimal} if there are no proper subtrees invariant under the group action. 
	If an action on a locally finite tree is minimal then the induced action on the boundary of the tree is minimal in the sense of dynamical systems:
	there are no proper closed invariant subsets. 
	This is a consequence of the limit set being the unique minimal closed invariant subset of the boundary 
	(see~\cite{coornaaert}*{Th\'eor\`eme~5.1} for a more general result, which is attributed to Gromov, in the context of Gromov-hyperbolic spaces). 
	
	Conversely, if a group $G$ is acting by automorphisms on a locally finite tree $T$, and $L\subset \partial T$ is a minimal $G$-invariant set
	consisting of more than two points, then the union of all geodesics in $T$ with end points in $L$, called the \emph{convex hull} $\operatorname{co}(L)$ of $L$, 
	is a minimal $G$-invariant sub-tree of $T$. 
	Any $G$-invariant sub-tree $T'$ has $\partial T'=L$, and therefore contains $\operatorname{co}(L)$. 
	Moreover there exists a retraction $r\colon T'\to \operatorname{co}(L)$ given by nearest point projection to $\operatorname{co}(L)$.
	This retraction is equivariant with respect to $\Aut(T)$.
	
	Returning to our situation, we may assume  without loss of generality that the tree $T_j$ has no vertices of degree one
	and so $T_j=\operatorname{co}(\partial T_j)$.
	We identify its boundary $\partial T_j$ with $\partial X_j$ in a $\bar{G}'_j$-equivariant way. 
	Since the action of $H_j$ on the tree $X_j$ is minimal, the topological action of $\bar{H}'_j$ on the Cantor set $\partial X_j=\partial T_j$ is minimal,
	and therefore the action of $\bar{H}'_j$ on the tree $T_j$ is minimal as well, i.e. $T_j$ does not contain any proper $\bar{H}'_j$-invariant subtree.

	\begin{lemma}\label{lem: action map continuous and injective}
		The action homomorphism $\bar{G}'_j\to \isom(T_j)$ is injective, continuous and has a closed image. 
	\end{lemma}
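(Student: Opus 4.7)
The plan is to prove the three assertions in order, leveraging the equivariant identification $\partial T_j = \partial X_j$ and the already-established properties of the topological group $\bar{G}'_j$ inside $\Homeo(\partial X_j)$.

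First I would address injectivity. Since $T_j$ has no vertex of degree one and equals $\operatorname{co}(\partial T_j)$, and since $|\partial T_j|$ is a Cantor set (hence has more than two elements), the canonical action homomorphism $\isom(T_j)\to\Homeo(\partial T_j)$ is injective: every vertex is determined by the partition of $\partial T_j$ into its cones, and every edge by the partition into its two half-space cones, so an isometry that fixes $\partial T_j$ pointwise fixes all vertices and edges. The composition $\bar{G}'_j\to\isom(T_j)\to\Homeo(\partial T_j)$ agrees with the given closed embedding $\bar{G}'_j\hookrightarrow\Homeo(\partial X_j)$, so the first factor must be injective.

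Next I would prove continuity by showing that vertex stabilizers in $\bar{G}'_j$ are open. Fix $v\in T_j$; the cones at $v$ form a finite clopen partition $\{U_1,\dots,U_d\}$ of $\partial X_j=\partial T_j$. The subgroup $P_v<\Homeo(\partial X_j)$ of homeomorphisms preserving each $U_i$ setwise is open in the compact-open topology, hence $P_v\cap\bar{G}'_j$ is open in $\bar{G}'_j$. Since $T_j$ is locally finite, $\Stab_{\bar{G}'_j}(v)$ contains $P_v\cap\bar{G}'_j$ as a finite-index subgroup, hence is itself open. As the family of pointwise stabilizers of finite subsets of vertices forms a neighborhood basis of the identity in $\isom(T_j)$, the action map $\bar{G}'_j\to\isom(T_j)$ is continuous.

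For closed image I would verify that the action is proper and cocompact, and conclude by a standard argument. Properness amounts to compactness of $\Stab_{\bar{G}'_j}(v)$; by $3$-properness of the $\bar{G}'_j$-action on $\partial X_j$ (which passes from $\bar{H}'_j$ to the ambient $\bar{G}'_j$), fixing three points chosen one in each of three distinct cones $U_i$ gives a compact group, and the full vertex stabilizer is an extension of this by a finite permutation of the cones, hence compact. Cocompactness on $T_j$ follows because the minimal action of $\bar{H}'_j$ on $T_j$ is already cocompact (being minimal on the convex hull of its limit set) and $\bar{H}'_j$ is cocompact in $\bar{G}'_j$. A continuous proper action of a locally compact group on a locally compact (metrizable) space yields a closed-image homomorphism into the isometry group, which completes the proof.

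The main technical point that needs care is local compactness of $\bar{G}'_j$ itself, since it sits inside the non-locally-compact group $\Homeo(\partial X_j)$. This, however, is automatic from the previous two steps: the open stabilizer $\Stab_{\bar{G}'_j}(v)$ is compact, so $\bar{G}'_j$ admits a compact identity neighborhood. Equivalently, one can simply cite Carette--Dressen's Theorem~C, whose output is precisely a proper continuous action of the locally compact group $\bar{G}'_j$ on the locally finite tree $T_j$, so that continuity, local compactness, and properness come as a package, and only injectivity and closedness of image need separate arguments as above.
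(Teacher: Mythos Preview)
Your injectivity argument is the same as the paper's. For continuity and closedness, however, the paper takes a much shorter route that you have overlooked: since $\isom(T_j)$ embeds as a \emph{closed topological subgroup} of $\Homeo(\partial T_j)$, and since the composition
\[
\bar{G}'_j \longrightarrow \isom(T_j) \longrightarrow \Homeo(\partial T_j)=\Homeo(\partial X_j)
\]
is nothing but the inclusion of $\bar{G}'_j$ into $\Homeo(\partial X_j)$ (which is continuous by definition and has closed image by Lemma~\ref{lem: cont-rank1}), the first arrow is automatically continuous with closed image. The entire lemma is thus a two-line tautology once one remembers this embedding.

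Your hands-on approach, by contrast, runs into real trouble. The continuity step has a small gap: if $v$ has degree~$2$, fixing each cone $U_1,U_2$ setwise need not fix~$v$, so the inclusion $P_v\cap\bar{G}'_j\subset\Stab_{\bar{G}'_j}(v)$ can fail. More seriously, your compactness argument for vertex stabilizers is incorrect. You correctly note that the pointwise stabilizer of a triple $(\xi_1,\xi_2,\xi_3)$ with $\xi_i\in U_i$ is compact (by $3$-properness) and is contained in $\Stab_{\bar{G}'_j}(v)$ (since $v$ is the median). But $\Stab_{\bar{G}'_j}(v)$ is \emph{not} an extension of this triple stabilizer by a finite group: elements of $\Stab_{\bar{G}'_j}(v)$ can act nontrivially within each cone $U_i$ while still fixing $v$, so the index can be infinite. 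Your fallback to Carette--Dreesen in the last paragraph would repair this, but then the preceding argument becomes redundant, and in any case the paper's route avoids the issue entirely.
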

	\begin{proof}	
		% The map $\eta$ induces a $G_j$-equivariant homeomorphism 
		% \[
		% 	\partial\eta\colon \partial X_j\to \partial T_j.
		% \] 
		Since both the isometry group and the quasi-isometry group of either $X_j$ or $T_j$ inject into the homeomorphism group of $\partial X_j=\partial T_j$, 
		the map $\bar{G}'_j\to \isom(T_j)$ is injective. 
		Since $\isom(T_j)$ embeds as a closed subgroup into $\Homeo(\partial T_j)$ continuity and closedness follow 
		from the continuity and closedness of the composition 
		\[
			\bar{G}'_j\to\isom(T_j)\to \Homeo(\partial T_j)=\Homeo(\partial X_j).\qedhere
		\] 
		% But the latter is conjugated via $\partial \eta$ to the map $\bar{G}'_j\to\Homeo(\partial X_j)$ 
		% whose continuity and closedness is implied by Lemma~\ref{lem: cont-rank1}. 	
	\end{proof}

	The argument for claim (2) relies on the fact that the structure of the tree $X_j$
	can be read of the intersection patterns of vertex stabilizers, which are maximal open compact subgroups of $\bar{H}'_j$
	(see e.g.~\cite{abramenko+brown}*{Corollary 11.35 on p.~563 and Theorem~11.38 on p.~564} for the  
	more general discussion of groups with BN-pairs acting on Euclidean buildings).
	In our case, $\bar{H}'_j$ acts on the tree $X_j$, transitively on edges and with two 
	orbits of vertices. Let $e=(v_1,v_2)$ be an edge in $X_j$, and denote 
	\[
		K_i=\Stab_{\bar{H}'_j}\left(\{v_i\}\right)\qquad (i=1,2).
	\]
	Then $K_1$ and $K_2$ are maximal compact (and open) non-conjugate subgroups of $\bar{H}'_j$. 
	The group $\bar{H}'_j$ is generated by $K_1\cup K_2$, and it acts transitively on the edges of $X_j$,
	with $K_1\cap K_2$ being the stabilizer of edge $e=(v_1,v_2)$.
	For each $i=1,2$, the subgroup $K_1\cap K_2$ is a maximal subgroup in $K_i$ and all maximal subgroups on $K_i$ are conjugate in $K_i$
	(because $K_i$ acts transitively on the edges emerging from $v_i$).

	\begin{lemma}\label{lem: homothety between trees} 
		There is $k\in\bbN$ and an $\bar{H}'_j$-equivariant  cellular homeomorphism $f\colon X_j\to T_j$ 
		such that $f$ is a homothety with stretch factor $k$. 	
	\end{lemma}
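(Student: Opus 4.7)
The plan is to locate for each of the two vertex types $v_1, v_2 \in X_j$ a canonically associated vertex $w_1, w_2 \in T_j$ with the same $\bar{H}'_j$-stabilizer, set $k = d_{T_j}(w_1, w_2)$, and define $f$ by sending the edge $(v_1,v_2)$ to the geodesic $[w_1,w_2]$ and extending equivariantly over the $\bar{H}'_j$-action.

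I would take $w_i$ to be the unique vertex of $T_j$ fixed by $K_i$. Since $K_i$ is a compact subgroup of $\bar{H}'_j$ and $T_j$ is a locally finite tree on which $\bar{H}'_j$ acts properly, the Bruhat--Tits fixed point theorem produces a non-empty fixed subtree $F_i = \Fix_{T_j}(K_i)$ (passing to the barycentric subdivision if needed to guarantee a fixed vertex). For any vertex $w \in F_i$ the compact open stabilizer $\Stab_{\bar{H}'_j}(w)$ contains $K_i$ and therefore equals $K_i$ by maximality of $K_i$ among compact subgroups of $\bar{H}'_j$. Uniqueness of $w_i$ is the main step: if $F_i$ contained an edge $(w,w')$, then $K_i$ would equal the stabilizers of both endpoints and of the edge in $T_j$. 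But the rank~$1$ BN-pair structure on $\bar{H}'_j$ induced by its strongly transitive action on $X_j$ forces $K_1 \cap K_2$ to be a maximal proper subgroup of each $K_i$ --- the image of $K_1 \cap K_2$ in the finite reductive quotient of $K_i$ is a maximal parabolic subgroup, because $K_i$ acts transitively on the edges of $X_j$ at $v_i$ with stabilizer $K_1 \cap K_2$. Combined with the cocompact minimal action on $T_j$, this maximality precludes the existence of a $K_i$-fixed edge in $T_j$, so $F_i = \{w_i\}$ with $\Stab_{T_j}(w_i) = K_i$.

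Setting $k = d_{T_j}(w_1,w_2)$, the assignment $f(gv_i) := g w_i$ is well-defined on vertices because the $X_j$- and $T_j$-stabilizers of $v_i$ and $w_i$ agree. Since $K_1 \cap K_2$ is contained in each $K_i$, it pointwise fixes the unique geodesic $[w_1,w_2] \subset T_j$, and this allows us to cellularly extend $f$ by mapping each edge $(gv_1, gv_2)$ of $X_j$ to the length-$k$ geodesic $[g w_1, g w_2]$ in $T_j$ consistently. By construction $f$ is $\bar{H}'_j$-equivariant, cellular, and a homothety of stretch factor $k$. Surjectivity follows from the minimality of the $\bar{H}'_j$-action on $T_j$: the image $f(X_j)$ is an $\bar{H}'_j$-invariant subtree of $T_j$, hence equals $T_j$. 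Injectivity is then a standard tree argument: distinct edges of $X_j$ have disjoint interiors, and their images under $f$ are geodesics whose endpoints are distinguishable in $T_j$ thanks to the stabilizer identification.

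The main obstacle is the uniqueness of $w_i$ as the $K_i$-fixed vertex in $T_j$. This is the step where the rank~$1$ BN-pair structure encoded in the triple $(K_1, K_2, K_1 \cap K_2)$ is indispensable: without the maximality of $K_1 \cap K_2$ inside each $K_i$, the group $K_i$ could fix a larger subtree in $T_j$, and no canonical choice of $w_i$ would exist, obstructing the definition of a cellular homothety.
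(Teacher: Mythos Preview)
Your overall strategy --- find $K_i$-fixed vertices $w_i\in T_j$, send $v_i\mapsto w_i$, and extend equivariantly --- is exactly the paper's. The difference, and the gap, is at the uniqueness step.

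You assert that the maximality of $K_1\cap K_2$ in $K_i$, together with minimality and cocompactness of the $\bar H'_j$-action on $T_j$, forces $\Fix_{T_j}(K_i)$ to be a single vertex. But you give no argument connecting these two facts. Maximality of $K_1\cap K_2$ constrains the proper subgroups of $K_i$; a $K_i$-fixed edge, by contrast, would have stabilizer equal to all of $K_i$, so the maximality hypothesis says nothing about it directly. A priori nothing rules out that the quotient graph of groups $\bar H'_j\backslash T_j$ contains an edge whose edge group and both vertex groups are conjugates of $K_1$ --- equivalently, that $T_j$ is a subdivision in which $K_1$ fixes a small segment. (That this does not happen is in fact a \emph{consequence} of the lemma, not an input to it.) The paper sidesteps the issue entirely: rather than proving uniqueness, it chooses a pair $(w_1,w_2)$ of fixed vertices at \emph{minimal distance} and carries that minimality through the argument.

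The same omission resurfaces in your injectivity step, which you dismiss as ``a standard tree argument''. It is not: one must show that for $h\in K_1\setminus K_2$ the geodesics $[w_1,w_2]$ and $[w_1,hw_2]$ meet only at $w_1$. The paper does this via the tripod-centre argument: the centre $c$ is fixed by both $K_1\cap K_2$ and $K_1\cap hK_2h^{-1}$, two \emph{distinct} maximal subgroups of $K_1$, hence by all of $K_1$; then $c=w_1$ follows from the minimal-distance choice of $(w_1,w_2)$. If you had actually proved uniqueness of the $K_1$-fixed vertex, the same tripod computation would conclude $c=w_1$ by uniqueness instead --- but the computation is still needed, and the maximality of $K_1\cap K_2$ enters \emph{here}, not in the uniqueness claim where you invoked it.
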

	\begin{proof}
		Upon subdividing we may assume that $\bar{H}'_j$ acts on $T_j$ without inversion. 
	    By the continuity part of Lemma~\ref{lem: action map continuous and injective} the orbits of the compact 
		subgroups $K_1$ and $K_2$ in $T_j$ are bounded. 
		By the Bruhat--Tits fixed point theorem~\cite{bridson+haefliger}*{Corollary~2.8 on p.~179}, there is a  vertex in $T_j$ that is  
		fixed under $K_1$ or $K_2$, respectively. Choose a pair $(w_1, w_2)$ of vertices in $T_j$ with $K_iw_i=w_i$ that has minimal distance. 

		Since $\bar{H}'_j$ is generated by $K_1$ and $K_2$, the $\bar{H}'_j$-orbit 
		$T'=\bigcup_{h\in \bar{H}'_j} [hw_1,hw_2]$ of the geodesic segment $[w_1,w_2]$ 
		between $w_1$ and $w_2$ is connected.
		Thus $T'$ is a $\bar{H}'_j$-invariant subtree of $T_j$.
		By the proceeding discussion this implies $T'=T_j$.
		
		The segment $[w_1,w_2]$ is fixed by $K_1\cap K_2$. 
		The map that sends the vertex $v_i$ to $w_i$ for $i=1,2$, 
		and the edge $[v_1,v_2]$ onto $[w_1,w_2]$ by an affine homeomorphism, can be extended to an $\bar{H}'_j$-equivariant 
		cellular surjective map $f\colon X_j\to T_j$. 

		Next we show that $f$ is locally injective. It is enough to show local injectivity at the $K_i$-fixed vertex $v_i$ of $X_j$. 
		By symmetry let us consider $v_1$ only. 
		Consider its neighbours $v_2$ and $v'_2=hv_2$ for some $h\in \bar{H}'_j$. 
		Since $K_1$ and $K_2$ are not conjugated, the $f$-images $w_1, w_2, w_2'$ of $v_1$, $v_2$ and $v'_2$ are pairwise distinct. 
		Let $w$ be the center of the tripod given by $w_1, w_2, w'_2$. 
		We want to show that $w=w_1$ which implies local injectivity of $f$ at $v_1$. 
		Both $K_1\cap K_2$ and $K_1\cap hK_2h^{-1}$ are contained in the stabilizer $S$ of $w$. 
		Since $K_1\cap K_2$ is maximal in $K_1$ and $K_1\cap hK_2h^{-1}\ne K_1\cap K_2$ 
		% by the group theoretic description of the 
		% tree $X_j$ (\cite{abramenko+brown}*{Theorem~11.38 on p.~564}) 
		it follows that $S=K_1$. 
		So $w=w_1$ since we chose $(w_1,w_2)$ to have minimal 
		distance among the vertices fixed by $K_1$ or $K_2$, respectively. 

		As a surjective, locally injective map between trees $f$ is a homeomorphism. 
		Let $k$ be the distance between $w_1$ and $w_2$. 
		Obviously, $f$ is locally a homothety with stretch factor $k$. 
		Thus it is globally so. 	
	\end{proof}

    Conjugating with a homothety as in Lemma~\ref{lem: homothety between trees} we obtain a topological isomorphism 
    \[
		\isom(T_j)\cong \isom(X_j)
	\]
    compatible with the two embeddings of $\bar{H}'_j$. 
	Hence, $\bar{G}'_j$ can be embedded as a closed intermediate subgroup $\bar{H}'_j<\bar{G}'_j<\Isom(X_j)$, 
	and the homomorphism $\psi_j$ can be regarded as a homomorphism $G'\to \bar{G}'_j\hookrightarrow \Isom(X_j)$ composed 
	with the natural embedding $\Isom(X_j)\to\Homeo(\partial X_j)$. 
	Thus we also denote the map $G'\to \bar{G}'_j\to \Isom(X_j)$ by $\psi_j$. 
	Let $\Delta\colon G'\to \prod_{i=1}^n G'_i$ be the diagonal embedding. 
	We obtain a continuous homomorphism with closed image 
	\[
		\psi'\colon G'\xrightarrow{\prod \psi_i\circ\Delta} \prod_{i=1}^n \Isom(X_i)
	\]
	whose restriction to $H'$ corresponds to the embedding $\bar{H}'\hookrightarrow \prod_{i=1}^n \Isom(X_i)$. 
	
	Finally we argue that this homomorphism $\psi':G'\to \prod \bar{G}_i'\hookrightarrow \prod \Isom(X_i)$ extends to 
	a homomorphism $\psi\colon G\to \Isom(\prod X_i)$.
	Recall that by Theorem~\ref{thm: splitting kleiner leeb}, the image $\bar{G}$ of $G$ in $\QI(X)$ where $X=X_1\times \cdots\times X_n$,
	is a finite extension of the product $\bar{G}'=\bar{G}'_1\times \cdots \times \bar{G}'_n$ with $\bar{G}'_i<\QI(X_i)$,
	by the finite group $\pi(G)<{\rm Sym}_n$.
	The conjugation action of $G$ on $G'=G'_1\times \cdots \times G'_n$ that descends to the conjugation action 
	of $\bar{G}$ on $\bar{G}'=\bar{G}'_1\times \cdots\times \bar{G}'_n$, and for each $g\in G$
	induces isomorphisms $p_{g,i}:\bar{G}'_i\to \bar{G}'_{\pi_g(i)}$.
	As a consequence of the proof so far we identify the groups $\bar{G}'_i$ as closed intermediate subgroups 
	\[
		\bar{H}'_i<\bar{G}'_i<\Isom(X_i)\qquad (i\in \{1,\dots, n\}) 
	\]
	and observe that the isomorphisms $p_{g,i}:\bar{G}'_i\to \bar{G}'_{\pi_g(i)}$ are continuous
	(as in Lemmas~\ref{lem: cont-rank2} and \ref{lem: action map continuous and injective}).
	Any continuous group isomorphism $\bar{G}'_i\cong \bar{G}'_j$
	is induced by an isometry $X_i\cong X_{j}$ -- for higher rank buildings (corresponding to $i,j\le k$) 
	this follows from Theorem~\ref{thm: rigidity kleiner leeb} of Kleiner--Leeb, 
	and for rank one cases (corresponding to $k\le i,j\le n$) it can be deduced from Lemma~\ref{lem: homothety between trees}.
	Therefore, for every $g\in G$, $\pi_g(i)=j$ only if $X_i$ is isometric to $X_j$.
	Thus each $g\in G$ defines an isometry $\psi(g)$ of $X=X_1\times \cdots\times X_n$,
	thereby defining the claimed continuous homomorphism $\psi:G\to \isom(\prod_{i=1}^n X_i)$ that extends
	$\psi':G'\to \prod \Isom(X_i)$.

	This completes the proof of Theorem~\ref{T:qi-step}, and therefore also the proof of the main classification
	result -- Theorem~\ref{T:main}.

\section{Proofs of Theorems~\ref{T:Mostow}--\ref{T: characterisation of groups with positive first L2 Betti}}\label{sec: applications}

\begin{proof}[Proof of Theorem~\ref{T:Mostow}]\hfill{}\\
Let $\Gamma<H$ be a lattice embedding that is 
\begin{itemize}
	\item[(i)] 
		either an irreducible lattice embedding into a connected, center-free, semi-simple real Lie group~$H$ without compact factors, 
		which is not locally isomorphic to $\SL_2(\bbR)$, 
	\item[(ii)] 
		or an $S$-arithmetic lattice embedding as in Definition~\ref{def: definition of S-arithmetic lattice up to tree extension}. 
\end{itemize}

Such $\Gamma$ satisfies all the assumptions of Theorem~\ref{T:main}.
Being a lattice, the group $\Gamma$ is Zariski dense in the semi-simple algebraic group $H$ 
by Borel's Density Theorem~\cite{Margulis:book}*{Theorem II.(4.4)}. 
Thus Theorem~\ref{T:good-groups}.(2) ensures that $\Gamma$ satisfies conditions $\CAF$ and $\NBC$.
Being an irreducible lattice, $\Gamma$ satisfies $\IRR$.
Moreover, lattices in semi-simple groups are known to be finitely generated~\cite{Margulis:book}*{\S IX.(3.2)}. 
So any lattice envelope of $\Gamma$ is compactly generated by Lemma~\ref{L:fg-cg}. 
We also remark that neither $\Gamma$ nor any finite index subgroup $\Gamma'<\Gamma$ have non-trivial finite normal subgroups.
Indeed, such a subgroup group $N\normal \Gamma'$ would be centralized by a finite index subgroup $\Gamma''=\ker(\Gamma'\to \Aut(N))$,
that forms an irreducible lattice in $H$.  
So $N\subset \calZ_H(\Gamma'')$, while by Borel's Density Theorem \cite{Margulis:book}*{\S II.(6.3)},  
$\calZ_H(\Gamma'')=\calZ(H)$ is the center of $H$, which is trivial.

Let $G$ be some lcsc group and $\Gamma < G$ be a lattice embedding. 
By Theorem~\ref{T:main} there is an open subgroup of finite index $G'<G$ and a compact
normal subgroup $K\normal G'$ so that, denoting 
$G'_0:=G'/K$ and $\Gamma':=\Gamma\cap G'$, the lattice embedding $\Gamma'\hookrightarrow G'_0$  
(by the above discussion $\Gamma'\cap K=\{1\}$) 
induced from $\Gamma<G$ is of one of the following three types: 
\begin{enumerate}
	\item 
	$G'_0$ is a connected, center-free, semi-simple, real Lie group without compact factors and 
	$\Gamma'\hookrightarrow G'_0$ is an irreducible lattice;
	\item 
	$\Gamma'\hookrightarrow G'_0$ is an $S$-arithmetic lattice, possibly up to tree extension, 
	in the sense of Definition~\ref{def: definition of S-arithmetic lattice up to tree extension};
	\item 
	$G'_0$ is a tdlc group with trivial amenable radical. 
\end{enumerate}
First, note that $\Gamma<H$ as in (i) is incompatible with $\Gamma'<G'_0$ being of type (2),
and $\Gamma<H$ as in (ii) is incompatible with $\Gamma'<G'_0$ being of type (1).
Indeed, Margulis' super-rigidity theorem \cite{Margulis:book}*{Theorem VII.(7.1)} 
precludes the same group $\Gamma'$ from being both an $S$-arithmetic lattice 
(with both real and non-archimedean factors unisotropic factors present) 
and being a lattice in a purely real Lie group.

We will show in Theorem~\ref{thm: discreteness in case 3} below that $\Gamma<H$ as in (i) or (ii) is incompatible with non-trivial 
lattice embeddings $\Gamma'<G'_0$ of type (3).
Before delving into this argument, let us now discuss in some detail the situations, where 
$\Gamma<H$ is as in (i) and $\Gamma'<G'_0$ is of type (1), 
and then the case of $\Gamma<H$ as in (ii) and $\Gamma'<G'_0$ of type (2).
We recall the famous Strong Ridigity Theorem. 
\begin{theorem}[Strong Rigidity Theorem, Mostow, Prasad, Margulis]\hfill{}\\
	Let $H$ and $H^\flat$ be connected, center-free, semi-simple real Lie groups without non-trivial compact factors, 
	with $H\not\cong \PSL_2(\bbR)$, and let 
	$\Lambda<H$ and $\Lambda^\flat<H^\flat$ be irreducible lattices. 
	Then any isomorphism $\Lambda\cong \Lambda^\flat$ extends to a continuous isomorphism $H\cong H^\flat$.
\end{theorem}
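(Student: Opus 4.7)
The plan is to split the proof according to the real rank of $H$ and to appeal in each case to the appropriate classical rigidity theorem. Two preliminary observations will be used throughout: first, any irreducible lattice $\Lambda<H$ is Zariski dense in $H$ by Borel's density theorem, so composing $\phi\colon\Lambda\to\Lambda^\flat$ with the inclusion $\Lambda^\flat\hookrightarrow H^\flat$ yields a homomorphism with Zariski-dense image in $H^\flat$; second, the centre-free, no-compact-factor assumptions guarantee that any continuous extension $\tilde\phi\colon H\to H^\flat$ of $\phi$ is automatically a topological isomorphism, since its kernel would be a proper normal subgroup of $H$ meeting $\Lambda$ trivially, hence centralising a Zariski-dense set, hence central, hence trivial; surjectivity follows symmetrically once an extension of $\phi^{-1}$ is produced.

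Case 1: ${\rm rk}_\bbR(H)\ge 2$. Apply Margulis' superrigidity theorem \cite{Margulis:book}*{Theorem~VII.(5.6)} to the injection $\phi\colon\Lambda\hookrightarrow H^\flat$. Since $\Lambda$ is irreducible in a higher-rank semisimple real Lie group and $\phi(\Lambda)=\Lambda^\flat$ is Zariski dense in $H^\flat$ with unbounded projection to each simple factor, $\phi$ extends to a continuous homomorphism $\tilde\phi\colon H\to H^\flat$. The same theorem applied to $\phi^{-1}$ yields $\tilde\psi\colon H^\flat\to H$. The compositions $\tilde\psi\circ\tilde\phi$ and $\tilde\phi\circ\tilde\psi$ coincide with the identity on the Zariski-dense subgroups $\Lambda$ and $\Lambda^\flat$, respectively, and hence are the identity maps by continuity and Zariski density. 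Thus $\tilde\phi$ is the desired continuous isomorphism extending $\phi$.

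Case 2: ${\rm rk}_\bbR(H)=1$ and $H\not\cong\PSL_2(\bbR)$. Then $H$ is locally isomorphic to one of $\operatorname{SO}(n,1)$ with $n\ge 3$, $\operatorname{SU}(n,1)$ with $n\ge 2$, $\operatorname{Sp}(n,1)$ with $n\ge 2$, or the exceptional $F_4^{-20}$. For uniform $\Lambda$ I invoke Mostow's original strong rigidity theorem \cite{Mostow:1973book}, whose proof realises $\phi$ as an equivariant quasi-isometry of the corresponding rank-one symmetric spaces, shows that its boundary extension is quasi-conformal with respect to the Carnot--Carath\'eodory structure on the sphere at infinity, and applies Liouville-type rigidity (Mostow for real hyperbolic, Pansu for the complex, quaternionic, and Cayley hyperbolic cases) to upgrade the boundary map to a conformal and hence isometric conjugation of the two lattice actions. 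For non-uniform $\Lambda$ one appeals to Prasad \cite{Prasad-Mostow} in the real hyperbolic case and to Margulis \cite{Margulis-Mostow} in the remaining cases; the additional ingredient here is a careful analysis of cusp subgroups, that is, the unipotent radicals of parabolic stabilisers of cusp points, allowing one to extend and rigidify the boundary map in the absence of compactness.

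The main obstacle is concentrated entirely in Case 2, where each subcase rests on substantial analytic and geometric machinery (quasi-conformal analysis on nilpotent homogeneous boundaries in particular). Since the overall statement is classical, however, one simply cites the combined Mostow--Prasad--Margulis theorem rather than reproducing its proof.
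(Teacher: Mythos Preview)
Your proposal is correct and matches the paper's treatment: the paper does not give a proof of this classical theorem either, but simply attributes it---Mostow \cite{Mostow:1973book} for all uniform lattices, Prasad \cite{Prasad-Mostow} for non-uniform lattices in rank one, and Margulis' superrigidity \cite{Margulis:book}*{Theorem~VII.(7.1)} for the higher-rank cases. Your case split (by rank first, then uniform/non-uniform) differs only cosmetically from the paper's organisation (uniform first, then rank), and your additional remarks on Zariski density and why the extension is automatically an isomorphism are correct standard observations that the paper omits; one minor point is that Prasad's result already covers \emph{all} rank-one non-uniform lattices, so your separate appeal to \cite{Margulis-Mostow} for the non-real-hyperbolic rank-one cases is not needed.
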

Strong Rigidity Theorem was discovered by Mostow \cite{Mostow:1973book} for all the cases where $H$, $H^\flat$  
are semi-simple real Lie groups and $\Lambda<H$, $\Lambda^\flat<H^\flat$ are \emph{uniform} irreducible lattices.
Prasad \cite{Prasad-Mostow} extended this result to some non-uniform lattices, including all non-uniform lattices in rank one real Lie groups.
Higher rank cases, for both uniform and non-uniform lattice embeddings, 
follow from Margulis' super-rigidity theorem (cf. \cite{Margulis:book}*{Theorem VII.(7.1)}).
The latter result is even more general, and implies Strong Rigidity for irreducible lattices in a larger class of groups,
including $\Lambda<H$ being an $S$-arithmetic lattice as in Setup~\ref{setup:arith} and $H^\flat$ being a product of 
real and non-archimedean simple Lie groups over local fields.

\medskip

Let $\Gamma<H\not\cong \PSL_2(\bbR)$ be an irreducible lattice in a real semi-simple group as in (i),
and assume that a finite index subgroup $\Gamma'<\Gamma$ and the group $G'_0$ are as in (1).
Then by Strong Rigidity Theorem, applied to $\Lambda=\Gamma'$ and $H^\flat=G'_0$, we get a continuous isomorphism $\rho':G'_0\cong H$
that intertwines the lattice embedding $j'\colon \Gamma'\hookrightarrow G'\rightarrow G'_0$ and the  
restriction $i'=i|_{\Gamma'}$ of the lattice embedding $i\colon\Gamma\hookrightarrow H$.

Peeking into the proof of Theorem~\ref{T:main} (see Theorem~\ref{T:lcsc-mod-ramen} and Proposition~\ref{P:step1}) 
we see that $K\normal G'$ is the amenable radical of $G$. 
So we also have the induced lattice embedding $\Gamma\hookrightarrow G_0:=G/K$. 
The action of $G$ on $G'$ by conjugation, gives a continuous homomorphism $\pi\colon G\to \Aut(H)$
with $K<\ker (\pi)$ so that the restriction $\pi'=\pi|_{G'}$
coincides with the composition $G'\to G'_0\overto{\rho'} H\cong \operatorname{inn}(H)\hookrightarrow \Aut(H)$.
This shows that the following diagram is commutative, i.e. that two homomorphisms $\pi'\circ j'$ and $\operatorname{inn}\circ i'$
describing embeddings $\Gamma'\hookrightarrow H\hookrightarrow \Aut(H)$, coincide:
\begin{equation}\label{e:Gamma-prime1}
	\begin{tikzcd}
  		\Gamma' \ar[d, hook, "j'"] \ar[rrr, hook, "i'"] &  & &H \ar[d, hook, "\operatorname{inn}" ]\\
  		G' \ar[r] & G'_0 \ar[r, "\rho'"] & H\ar[r,hook]&\Aut(H),   
	\end{tikzcd}
\end{equation}
We claim that the diagram below is also commutative, i.e. that the homomorphisms $\alpha=\pi\circ j:\Gamma\to \Aut(H)$ 
and $\beta=\operatorname{inn}\circ i:\Gamma\to \Aut(H)$ also coincide.
\begin{equation}\label{e:Gamma-2}
	\begin{tikzcd}
  		\Gamma \ar[d, hook, "j"] \ar[rr, hook, "i"] &  & H \ar[d, hook, "\operatorname{inn}" ]\\
  		G \ar[r]\ar[rr, bend right, "\pi"] & G_0 \ar[r] & \Aut(H).   
	\end{tikzcd}
\end{equation}
Indeed, $\Gamma'$ is normal in $\Gamma$ and $\alpha|_{\Gamma'}=\beta|_{\Gamma'}$, so for any $\gamma\in\Gamma$ and $\gamma'\in\Gamma'$
we have $\alpha(\gamma')=\beta(\gamma')$ and $\alpha(\gamma \gamma'\gamma^{-1})=\beta(\gamma \gamma'\gamma^{-1})$.
This implies that the element $\alpha(\gamma)^{-1}\beta(\gamma)\in \Aut(H)$ centralizes the lattice $\alpha(\Gamma')=\beta(\Gamma')$ in $H$.
Borel's Density Theorem and the fact that $\mathcal{Z}(H)=\{1\}$,
imply that $\alpha(\gamma)=\beta(\gamma)$ for every $\gamma\in\Gamma$.
This proves commutativity of the diagram (\ref{e:Gamma-2}),
and thereby proves Theorem~\ref{T:main} for $\Gamma<H$ as in (i) and $\Gamma'<G'_0$ satisfying (1).

\medskip

Next assume $\Gamma<H$ is an $S$-arithmetic lattice as in (ii) and that $\Gamma'<G'_0$ is of type (2),
i.e. is an $S$-arithmetic lattice, possibly up to a tree extension.
We view $G'_0$ as an intermediate closed group $H^\flat<G'_0<\Isom(X)$ where $H^\flat=\prod \mathbf{H}^\flat_i(K_{\nu_i})$
is a product of real and non-archimedean simple Lie groups, $X=\prod X_i$ is a product of irreducible symmetric spaces
and irreducible Bruhat--Tits buildings. 
In fact, we can write $X=\ssf{X}\times \tdf{X}$ where $\ssf{X}$ is a (possibly reducible) symmetric space 
and $\tdf{X}$ is a (possibly reducible) Euclidean building; the notation is motivated by the inclusions
$\ssf{(H^\flat)}<\Isom(\ssf{X})$ and $\tdf{(H^\flat)}<\Isom(\tdf{X})$.

Note that the image of $\Gamma'$ in $G'_0$ is contained in $H^\flat$, so there are two
$S$-arithmetic lattice embeddings $i'\colon\Gamma'\hookrightarrow H$ and $j'\colon\Gamma'\hookrightarrow H^\flat$.
The super-rigidity theorem (\cite{Margulis:book}*{Theorem VII.(7.1)}) implies a form of Strong Rigidity
that provides a continuous isomorphism $\pi'\colon H^\flat\to H$ intertwining these embeddings: $\pi'\circ j'=i'$,
similarly to the situation in diagram (\ref{e:Gamma-prime1}).
In particular, we may think of $H$ and $G'_0$ acting by isometries on the same space $X=\ssf{X}\times \tdf{X}$.

Note that the connected component $(G/K)^0$ of the identity is precisely $\Isom(\ssf{X})^0$,
and the action $G$ by automorphisms of $(G/K)^0$ gives a continuous homomorphism $G\to \Isom(\ssf{X})$.
Thus the continuous homomorphism $G'\to G'_0\hookrightarrow \Isom(\ssf{X})^0\times \Isom(\tdf{X})$ 
extends to the continuous homomorphism  
\[
	G\to  G_0 \hookrightarrow \Isom(\ssf{X})\times \Isom(\tdf{X}) <\Isom(X).
\]
Considering two isometric actions of $\Gamma$ on $X$ that agree on the normal subgroup of finite index $\Gamma'$,
and using the fact that the centralizer of $\Gamma'$ in $\Isom(X)$ is trivial, we deduce an analogue of 
diagram (\ref{e:Gamma-2}) is also commutative, i.e. that the two $\Gamma$-actions on $X$ coincide. 
This completes the proof of Theorem~\ref{T:main} in case (ii) and (2). 

\medskip

The following statement will complete the proof of Theorem~\ref{T:Mostow}. 
\begin{theorem}\label{thm: discreteness in case 3}
	Let $\Gamma<H$ be a lattice embedding as in (i) or (ii). Then any lattice embedding of $\Gamma$ into a tdlc group with trivial amenable radical is trivial. 
\end{theorem}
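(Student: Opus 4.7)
\smallskip

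Since $\Gamma$ is linear, Selberg's lemma gives it property \BT; hence by Lemma~\ref{L:bounded-clopen} any lattice embedding $\Gamma<D$ into a tdlc group $D$ is uniform, and by Lemma~\ref{L:fg-cg} the group $D$ is compactly generated (because $\Gamma$ is finitely generated). The plan is to use quasi-isometric rigidity to produce a continuous homomorphism $\rho\colon D\to \Isom(\ssf{X})$ into the isometry group of a real symmetric space $\ssf{X}$ associated with the Lie-group part of $H$, with $\rho|_\Gamma$ injective, and then to exploit the absence of small subgroups in Lie groups together with the hypothesis $\Radam(D)=\{1\}$ to force $D$ to be discrete.

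\smallskip

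Let $X$ denote the symmetric space of $H$ in case (i), and $X=\ssf{X}\times\tdf{X}$ in case (ii), where $\ssf{X}$ is the symmetric space of the real factor $\ssf{H}=\mathbf{H}_{K,S^\infty}$ and $\tdf{X}$ is the product of the Bruhat--Tits buildings of the non-Archimedean factors. In both cases the composition $\Gamma\hookrightarrow \Isom(X)\to\Isom(\ssf{X})$ is injective: by hypothesis in case (i) (where $H$ is center-free), and in case (ii) because an irreducible $S$-arithmetic lattice injects into each simple factor of the adjoint group $\ssf{H}$. Assuming first that $\Gamma$ is uniform in $H$, so that the $\Gamma$-action on $X$ is properly cocompact, Lemma~\ref{lem: QI of lcsc group} produces a continuous homomorphism $\bar\phi\colon D\to \QI(X)$ extending the canonical map $\Gamma\to\Isom(X)\to\QI(X)$. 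Quasi-isometric rigidity then promotes $\bar\phi$ to a genuine isometric action of $D$ on $X$ (or on a mild modification): the Kleiner--Leeb splitting theorem factors $\QI(X)$ along the irreducible factors of $X$; Kleiner--Leeb rigidity handles irreducible factors of rank at least two; Pansu's theorem handles the rank-one symmetric spaces of complex, quaternionic, and octonionic type; the Tukia/Gehring--Martin theorem handles real hyperbolic space of dimension $\geq 3$ (which is precisely why $\SL_2(\bbR)$ must be excluded from case (i)); and results of Mosher--Sageev--Whyte type handle tree factors. Composing the resulting continuous isometric action of $D$ with the projection onto the symmetric-space factor yields $\rho\colon D\to\Isom(\ssf{X})$ whose restriction to $\Gamma$ is injective. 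If $\Gamma$ is non-uniform, one applies the same recipe after replacing $X$ with a $\Gamma$-cocompact neutered model obtained by excising a $\Gamma$-equivariant system of horoballs around the cusps, invoking the corresponding QI-rigidity results of Schwartz, Drutu and others.

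\smallskip

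The group $\Isom(\ssf{X})$ is a real Lie group, hence has no small subgroups, so continuity of $\rho$ forces $K:=\ker\rho$ to be an open subgroup of the tdlc group $D$: any sufficiently small compact open subgroup of $D$ maps into a no-small-subgroups neighborhood of the identity in $\Isom(\ssf{X})$ and so to $\{1\}$. Injectivity of $\rho|_\Gamma$ yields $\Gamma\cap K=\{1\}$; since $\Gamma$ is uniform in $D$ and $\Gamma K$ is an open subgroup of $D$ containing $\Gamma$, the coset space $D/\Gamma K$ is simultaneously discrete and a continuous image of the compact space $D/\Gamma$, hence finite, so $\Gamma$ is uniform in the open finite-index subgroup $\Gamma K$. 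The normality of $K$ in $\Gamma K$ together with $K\cap\Gamma=\{1\}$ gives a topological isomorphism $\Gamma K\cong K\rtimes\Gamma$, under which $K$ is identified with $\Gamma K/\Gamma$ and is therefore compact. Being a compact normal subgroup of $D$ (as the kernel of a continuous homomorphism), $K$ is amenable, so $K\subset\Radam(D)=\{1\}$; consequently $K=\{1\}$. But $K$ is open in $D$, so $\{1\}$ is open in $D$, i.e., $D$ is discrete, and the lattice embedding $\Gamma<D$ is trivial. The principal technical obstacle is the quasi-isometric rigidity step of the second paragraph, particularly in the non-uniform and real-hyperbolic subcases, which require assembling a more intricate collection of geometric rigidity results than in the well-studied higher-rank uniform setting.
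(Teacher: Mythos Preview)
Your overall strategy---quasi-isometric rigidity to produce a continuous homomorphism into an NSS group injective on $\Gamma$, then the no-small-subgroups argument to force discreteness---is exactly the paper's approach, and your endgame (the paragraph deducing $K$ open, $K$ compact, $K\subset\Radam(D)=\{1\}$) is a correct unpacking of the paper's Lemma~\ref{L:NSS}.

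There is, however, a genuine gap in your treatment of $\HYP^2$ factors. You write that Tukia handles real hyperbolic space of dimension $\ge 3$, ``which is precisely why $\SL_2(\bbR)$ must be excluded from case (i)''. But the hypothesis in (i) only excludes $H\cong\PSL_2(\bbR)$ \emph{globally}; a semi-simple $H$ may well have $\PSL_2(\bbR)$ factors (Hilbert modular groups in $\PSL_2(\bbR)^n$, $n\ge 2$, are the basic example), and in case (ii) the archimedean part $\ssf{H}$ can certainly have such factors. For an $\HYP^2$ factor there is no Tukia-type rigidity promoting a uniform quasi-action to an isometric one, so your plan to land in $\Isom(\ssf{X})$ breaks down. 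The paper sidesteps this by projecting to a \emph{single} real factor $X_1$ rather than all of $\ssf{X}$, and when $X_1=\HYP^2$ it targets $\Homeo(S^1)$ instead of $\Isom(\HYP^2)$: the point is that $\Homeo(S^1)$ already has the NSS property, so one never needs to upgrade the boundary action to an isometric one. (A smaller misattribution: Pansu's theorem gives $\QI=\Isom$ only for $\HYP^m_{\bbH}$ and $\HYP^2_{\bbO}$; for $\HYP^m_{\bbC}$ one needs the Tukia-type result of Chow, as the paper does.)

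Your non-uniform route via neutered spaces and Schwartz/Dru\c{t}u rigidity is workable in principle but heavier than necessary. The paper's argument here is short and worth noting: for non-uniform $\Gamma$ the results of Schwartz, Eskin, Farb--Schwartz and Wortman identify $\QI(\Gamma)$ with the commensurator of $\Gamma$ in $H$, which is countable; hence the Borel subgroup $\ker(G\to\QI(\Gamma))$ has countable index, so positive Haar measure, so is open, and one feeds $G\to G/\ker$ directly into Lemma~\ref{L:NSS}. No geometric model of $\Gamma$ is needed.
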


Let us say that a Polish group $L$ has property NSS (No Small Subgroups), if 
there is an identity neighborhood $1\in V\subset L$ that contains no
non-trivial compact subgroups. 
Examples of such groups include discrete countable groups, $\Homeo(S^1)$ -- the group of homeomorphisms of the circle
with compact open topology, and all real Lie groups. 
In proving Theorem~\ref{thm: discreteness in case 3} we will use the following.
\begin{lemma}\label{L:NSS}
	Let $\Gamma<G$ be a lattice embedding in a tdlc group $G$ with $\Radam(G)=\{1\}$.
	Let $L$ be a Polish group property NSS and  $\rho\colon G\to L$ be a continuous homomorphism that is injective on $\Gamma$.
	Then $\Gamma<G$ is a trivial lattice embedding.
\end{lemma}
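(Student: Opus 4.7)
The plan has three steps, ending with the conclusion that $G$ must in fact be discrete.

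First, I would use the NSS hypothesis together with the total disconnectedness of $G$ to show that $\rho$ is trivial on a compact open subgroup. Pick an identity neighbourhood $V\subset L$ containing no non-trivial compact subgroup. Its preimage $\rho^{-1}(V)$ is open in $G$, so since $G$ is tdlc it contains a compact open subgroup $U<G$. The image $\rho(U)$ is then a compact subgroup of $L$ contained in $V$, hence trivial by NSS. Set $N:=\ker(\rho)$; then $U\subset N$, so $N$ is a normal, open, closed subgroup of $G$.

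Next, I would argue that $N$ itself is compact. By Lemma~\ref{L:open}, $\Gamma\cap N$ is a lattice in $N$; since $\rho$ is injective on $\Gamma$ we have $\Gamma\cap N=\{1\}$, so the trivial group is a lattice in $N$, which means $N$ has finite Haar measure. On the other hand, $N$ is partitioned into cosets of the compact open subgroup $U$, each of the same positive measure; finiteness of the total measure forces $[N:U]<\infty$, and hence $N$ is compact.

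Finally, the compact normal subgroup $N\normal G$ is in particular amenable, so $N\subset \Radam(G)=\{1\}$ and thus $N=\{1\}$. But then $U\subset N$ is trivial, which means the identity is open in $G$; that is, $G$ is discrete. A lattice in a countable discrete group is a subgroup of finite index, so taking $H=\Gamma$ and $K=\{1\}$ in Definition~\ref{def: virtual isomorphism} exhibits $\Gamma\hookrightarrow G$ as virtually isomorphic to the identity homomorphism of $\Gamma$, which is exactly the definition of a trivial lattice embedding.

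I do not anticipate a serious obstacle. The whole argument rests on the single observation that NSS prevents the continuous image of a compact open subgroup from being non-trivial; once $\ker(\rho)$ contains such a subgroup, the lattice hypothesis via Lemma~\ref{L:open} together with the triviality of the amenable radical forces $G$ itself to be discrete.
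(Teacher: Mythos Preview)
Your proof is correct and follows essentially the same route as the paper's: find a compact open $U$ inside $\rho^{-1}(V)$, conclude $U\subset\ker(\rho)$ so the kernel is open, use the lattice condition and injectivity on $\Gamma$ to see $\ker(\rho)$ has finite Haar measure hence is compact, and then invoke $\Radam(G)=\{1\}$ to force $\ker(\rho)=\{1\}$ and $G$ discrete. Your version is a bit more explicit in justifying the finite-measure and compactness steps (via Lemma~\ref{L:open} and a coset count), but the argument is the same.
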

\begin{proof}
    Let $V\subset L$ be a neighborhood that does not contain non-trivial compact subgroups. 
	In a tdlc group compact open subgroups form a basis of neighborhoods of the identity.
	Let $U$ be a compact open subgroup of $G$ contained in $\rho^{-1}(V)$. 
	Then $\rho(U)$ is a compact subgroup of $V$, therefore it is trivial. 
	Hence the subgroup $\ker(\rho)$ is open since, as it contains an open subgroup $U$.
	As $\ker(\rho)\cap \Gamma=\{1\}$ and $\Gamma<G$ is a lattice, $\ker(\rho)$ has finite Haar measure.
	Thus $\ker(\rho)$ is compact, and is therefore trivial because $\Radam(G)=\{1\}$ by assumption. 
	This implies that $G$ is discrete, and is a trivial lattice envelope for $\Gamma$.
\end{proof}

By Selberg's lemma $\Gamma$ as above is virtually torsion-free. 
Hence any lattice embedding of $\Gamma$ into a tdlc group is uniform according to Lemma~\ref{L:bounded-clopen}. 
To construct the homomorphism $\rho:G\to L$ as in Lemma~\ref{L:NSS} we shall use the homomorphism
\begin{equation}\label{e:rho0}
	\rho_0\colon G \to  \QI(G)\xrightarrow{\cong}\QI(\Gamma).
\end{equation}
We consider several cases.

\subsection*{Case $\Gamma<H$ is a non-uniform lattice}
By a theorem of Struble~\cite{struble} the group $G$ possesses a continuous, proper, left-invariant metric~$d$, 
and $d$ has to be quasi-isometric to the word-metric. 
So the kernel can be described as 
\[
	\ker(\rho_0)=\{g\in G\mid \sup_{h\in G} d(gh, h)<\infty\}
\]
which implies that $\ker(\rho_0)$ is a Borel subgroup of $G$. 	

For non-uniform, irreducible lattice $\Gamma<H$ the group $\QI(\Gamma)\cong \QI(G)$ coincides with 
the \emph{commensurator} of $\Gamma$ in $H$, which is a countable group. 
This was first proved by Schwartz~\cite{Schwartz} for rank-one real Lie groups, 
and then by Schwartz \cite{Schwartz2}, Farb--Schwarz~\cite{Farb+Schwartz}, and Eskin \cite{Eskin} 
for higher rank real Lie groups, and by Wortman~\cite{Wortman} for $S$-arithmetic cases.

Since $\ker(\rho_0)$ has countable index in $G$, the Borel subgroup $\ker(\rho_0)$ has positive Haar measure. 
Thus $\ker(\rho_0)$ is an open subgroup~\cite{bourbaki}*{IX \S 6 No 8, Lemma~9}.
Thus the quotient homomorphism 
\[
	G\overto{} L:=G/\ker(\rho_0)
\] 
is a continuous homomorphism into a countable discrete group. 
This homomorphism is injective on $\Gamma$ because $\Gamma$ embeds in $\QI(\Gamma)$.
Thus the proof of Theorem~\ref{thm: discreteness in case 3} is concluded by applying Lemma~\ref{L:NSS}. 

\medskip

For the rest of the proof we assume $\Gamma<H$ to be a uniform lattice,
write $H$ as a product of simple factors  $H=H_1\times\cdots \times H_n$ and let 
$X=X_1\times \cdots X_n$ be the product of the associated symmetric spaces or Bruhat--Tits buildings. 
At least one of the factors $H_i$ is a simple real Lie group, and we shall assume $H_1$ is such.
Since $\Gamma$ is quasi-isometric to $X$, we can view the homomorphism (\ref{e:rho0}) as
\[
	\rho_0:G\overto{} \QI(\Gamma)\cong \QI(X).
\] 
In view of the splitting Theorem~\ref{thm: splitting kleiner leeb} of Kleiner--Leeb ,
there is an open normal subgroup of finite index $G'<G$ whose image $\rho_0(G')$
is contained in the subgroup $\QI(X_1)\times \cdots \times \QI(X_n)$ of finite index in $\QI(X)$.
Denoting $\Gamma'=\Gamma\cap G'$ the homomorphism $\rho_0$ 
is compatible with the natural embedding $\Gamma'\to \Isom(X_1)\to \QI(X_1)$:
\[
	\begin{tikzcd}
  		\Gamma' \ar[d, hook] \ar[r, hook] & \Isom(X_1)\times \cdots\times \Isom(X_n) \ar[r, "\pr_1"] & \Isom(X_1) \ar[d, hook ]\\
  		G' \ar[r] & \QI(X_1) \times \cdots\times \QI(X_n) \ar[r, "\pr_1"] & \QI(X_1).   
	\end{tikzcd}
\]
Let us now consider the homomorphism $G'\to \QI(X_1)$, and denote by $\bar{G}'_1$ its image.

\subsection*{Case $\Gamma<H$ is uniform and $X_1=\HYP^2$}
(This case occurs if $H_1\cong \PSL_2(\bbR)$ is one of several factors of $H$). 
Note that the projection $\Gamma\to H_1$ is injective.
The boundary $\partial \HYP^2$ is homeomorphic to the circle $S^1$ and we have a homomorphism
\[
	\rho\colon G'\to  \QI(\HYP^2)\hookrightarrow \Homeo(S^1).
\]
This homomorphism is continuous by \cite{Furman-MM}*{Theorem~3.5}, and the restriction to $\Gamma$ is the composition of injective homomorphisms 
$\Gamma\overto{} H_1\cong\PSL_2(\bbR)\hookrightarrow \Homeo(S^1)$.
The Polish group $\Homeo(S^1)$ has NSS property, so Lemma~\ref{L:NSS} applies, and 
the proof of Theorem~\ref{thm: discreteness in case 3} is concluded in this case.

\subsection*{Case $\Gamma<H$ is uniform and $X_1=\HYP^m$ with $m\ge 3$.}
%
%This case corresponds to $H_1\cong \operatorname{PO}(m,1)=\Isom$.
The boundary at infinity $\partial \HYP^m$ is a sphere $S^{m-1}$ with a natural conformal structure that 
is preserved by $\Isom(\HYP^m)$. 
Moreover there is an isomorphism $\Isom(\HYP^m)\cong\operatorname{Conf}(S^{m-1})$ that
extends to an isomorphism 
\[
	\QI(\HYP^m)\cong \operatorname{QConf}(S^{m-1})
\]
between the group of quasi-isometries and the group of quasi-conformal homeomorphisms of the sphere. 
The topology of $\Isom(\HYP^m)$ coincides with that of $\operatorname{Conf}(S^{m-1})<\Homeo(S^{m-1})$. 

By a result of Tukia~\cite{tukia}, a subgroup $\bar{G}'_1$ of $\operatorname{QConf}(S^{m-1})$ that is
uniformly quasi-conformal and acts cocompactly on triples of points of $S^{m-1}$ 
can be conjugated into $\operatorname{Conf}(S^{m-1})$.

Since the homomorphism $G'\to \QI(X)\to \QI(\HYP^m)$ represents a quasi-action of $G$ on $\HYP^m$, 
the image $\bar{G}'_1\subset \QI(\HYP^m)\cong \operatorname{QConf}(S^{m-1})$ 
is a uniformly quasi-conformal group of homeomorphisms of $S^{m-1}$ due to the Mostow--Morse lemma. 
The homomorphism 
\[
	G'\overto{} \QI(\HYP^m) \hookrightarrow \Homeo(S^{m-1})
\]
is continuous by \cite{Furman-MM}*{Theorem~3.5}.
Since $\Gamma$ acts cocompactly on triples of points in $S^{m-1}$, the same is true for $\bar{G}'_1$. 
By Tukia's result, $\bar{G}'_1$ can be conjugated into the simple Lie group $\operatorname{Conf}(S^{m-1})\cong\Isom(\HYP^m)$.
Thus we obtain a continuous homomorphism
\[
	\rho\colon G'\overto{} \Isom(\HYP^m).
\]
Since $\Isom(\HYP^m)$ is a Lie group, Lemma~\ref{L:NSS} applies, and Theorem~\ref{thm: discreteness in case 3} 
is proved in this case.

\medskip

\subsection*{Case $\Gamma<H$ is uniform and $X_1=\HYP^m_{\bbC}$ with $m\ge 2$.}
This symmetric space is a complex hyperbolic space $\HYP^m_{\bbC}$; 
its boundary sphere $\partial \HYP_{\bbC}^m$ has a natural 
conformal class of a sub-Riemannian Carnot--Carath\'{e}odory structure.
Carnot--Carath\'{e}odory analogues of the theory of quasi-conformal mappings and a result analogous 
to Tukia's theorem imply (cf. \cite{chow}) that $\bar{G}'_1$ can be quasi-conformally conjugated into 
$\operatorname{Conf}(\partial \HYP_{\bbC}^m)=\Isom(\HYP_{\bbC}^m)$. 
This gives us a homomorphism
\[
	\rho\colon G'\overto{} \Isom(\HYP_{\bbC}^m)
\]
that is continuous by \cite{Furman-MM}*{Theorem~3.5}, and takes values
in a Lie group.
So applying Lemma~\ref{L:NSS} we prove Theorem~\ref{thm: discreteness in case 3} in this case.

\subsection*{Case $\Gamma<H$ is uniform and $X_1=\HYP^m_{\bbH}$ or $\HYP^2_{\bbO}$.}
By the result of Pansu~\cite{pansu} in the case of $X_1$ being the quaternionic hyperbolic space $\HYP^m_{\bbH}$
and the Cayley plane $\HYP^2_{\bbO}$ the natural homomorphism $\Isom(X_1)\to \QI(X_1)$ is actually 
an isomorphism. 
The embedding $\Isom(X_1)\hookrightarrow \Homeo(\partial X_1)$ 
is homeomorphic on its image, so the homomorphism 
\[
	\rho\colon G'\overto{} \QI(X_1)\cong \Isom(X_1) 
\]
is continuous (\cite{Furman-MM}*{Theorem~3.5}). 
Thus Lemma~\ref{L:NSS} yields the proof of Theorem~\ref{thm: discreteness in case 3} in this case too.

\subsection*{Case $\Gamma<H$ is uniform and $\operatorname{rk}(X_1)\ge 2$}
For irreducible symmetric space $X_1$ of higher rank,  
the work of Kleiner--Leeb~\cite{Kleiner+Leeb} 
shows that the natural homomorphism $\Isom(X_1)\to \QI(X_1)$ is an isomorphism. 
Thus we obtain a homomorphism 
\[
	\rho\colon G'\overto{} \QI(X_1)\cong \Isom(X_1) 
\]
into a Lie group, and would like to apply Lemma~\ref{L:NSS}.
The proof of Theorem~\ref{thm: discreteness in case 3} would be completed
once we verify continuity of the homomorphism $\rho$.

According to Lemma~\ref{lem: QI of lcsc group} from which we obtained $\rho$, there 
is a constant $C>0$ such that for every bounded subset $B\subset X_1$ there is a neighbourhood of the identity 
$V\subset G'$ such that 
\begin{equation}\label{eq: psi away from id}
	\forall_{g\in V}\forall_{x\in B}~~d_{X_1}(\rho(g)(x),x)<C.
\end{equation}
Let $U\subset \Isom(X_1)$ be an open neighborhood of the identity. 
We have to show that there is open neighborhood of the identity of $G$ that is contained 
in $\rho^{-1}(U)$. 
	
We rely on the following geometric fact about symmetric spaces: % that follows, for example, from~\cite{Ramos-Cuevas}: 
For every constant $D>0$ and any open neighborhood $W$ of the identity in $\Isom(X_1)$,
there is a bounded set $B\subset X_1$, depending on $D$ and $W$ so that 
\begin{equation}\label{e:geo-cpx}
	\bigl\{ \theta\in \Isom(X_1) \mid \sup_{x\in B}\ d_{X_1}(\theta(x),x)<D\bigr\}\subset W.
\end{equation}
We apply this general fact to the constant $C$ from~\eqref{eq: psi away from id} and 
the identity neighborhood $U$. Let us fix a bounded subset $B=B(C,U)\subset X_1$ such that 
\begin{equation}\label{eq: dist for isometries}
	\bigl\{\theta\in \Isom(X_1) \mid \sup_{x\in B}\ d_{X_1}(\theta(x),x)<C\bigr\}\subset U.
\end{equation}
Applying the statement~\eqref{eq: psi away from id} to this specific subset $B$ provides us with a neighborhood $V=V(C,B)\subset G$ 
of the identity such that 
\begin{equation}\label{eq: dist psi}
	\forall_{g\in V}\forall_{x\in B}~~d_{X_1}(\rho(g)(x),x)<C. 
\end{equation}
Since~\eqref{eq: dist for isometries} and~\eqref{eq: dist psi} 
mean that $V\subset \phi^{-1}(U)$, continuity of $\phi$ follows. 
As mentioned above the now proved Theorem~\ref{thm: discreteness in case 3} also completes the proof of Theorem~\ref{T:Mostow}. 
\end{proof} 

\medskip
\begin{proof}[Proof of Theorem~\ref{T:free-groups}]\hfill{}\\
	Let $\Gamma$ be a finite extension of a finitely generated non-abelian free group,
	and let $\Gamma<G$ be a lattice envelope. 
	Since $\Gamma$ satisfies all the assumptions of Theorem~\ref{T:main} by Theorem~\ref{T:good-groups}, we only need to discuss three possibilities.
	\begin{enumerate}
		\item 
			$\Gamma<G$ is virtually isomorphic to an irreducible lattice in a center-free semisimple
			real Lie group $H$. 
			The first $\ell^2$-Betti number of $\Gamma$ is positive. 
			By Olbrich's work~\cite{olbrich} (see also Chapter~5 about computations of $\ell^2$-invariants of locally symmetric spaces in L\"uck's book~\cite{lueck-book}) 
			the only such lattices with positive first $\ell^2$-Betti numbers are the ones in $\PSL_2(\bbR)$. 
			In this case, $\Gamma$ is a non-uniform lattice, since, for example, $\Gamma$ has virtual cohomological dimension one.
		\item 
			$\Gamma<G$ is virtually isomorphic to an $S$-arithmetic lattices as in Setup~\ref{setup:arith}. 
			In particular, it is virtually isomorphic to a lattice embedding into a product of two non-compact lcsc groups. 
			This can be ruled by the non-vanishing of the first $\ell^2$-Betti number of $\Gamma$ 
			(see the argument in the proof of Theorem~\ref{T: characterisation of groups with positive first L2 Betti}). 
			Alternatively, this situation can ruled out by the fact that S-arithmetic lattices as in~\S\ref{sec:arithmetic_case} are not Gromov hyperbolic. 
	    \item 
			$\Gamma<G$ is virtually isomorphic to a uniform lattice $\Lambda<H$ in a tdlc group~$H$ with trivial amenable radical. 
			Since $\Gamma$ and thus $\Lambda$ are virtually isomorphic to a non-abelian free group, 
	    	we can appeal to the work of Mosher--Sageev--Whyte~\cite{MSW-annals}. 
			Theorem~9 in \emph{loc.~cit.}~states that there is a tree $T$ and a continuous homomorphism 
	    	$H\to \Isom(T)$ with cocompact image and compact kernel. 
			Since $H$ has trivial amenable radical the kernel is trivial. 
			Hence $\Gamma<G$ is virtually isomorphic to a lattice embedding into a closed cocompact subgroup of the automorphism group of a tree.  \qedhere
	\end{enumerate}		
\end{proof}
\medskip
\begin{proof}[Proof of Theorem~\ref{T:surface-groups}]\hfill{}\\
	Let $\Gamma$ be a uniform lattice in $\PSL_2(\bbR)$, and let $\Gamma<G$ be another lattice embedding.
	As a Gromov hyperbolic group, $\Gamma$ satisfies all the assumptions of Theorem~\ref{T:main} according to Theorem~\ref{T:good-groups}. So we only need to discuss three possibilities.
	\begin{enumerate}
	\item $\Gamma<G$ is virtually isomorphic to an irreducible lattice is a semisimple
	real Lie group $H$. As in the proof above, by positivity of the first $\ell^2$-Betti number of $\Gamma$ one is reduced to the case of $H\simeq\PSL_2(\bbR)$.
	In this case, $\Gamma$ has to be cocompact, but the two embeddings of $\Gamma\to \PSL_2(\bbR)$ need not be conjugate. 
	\item $\Gamma<G$ is virtually isomorphic to an $S$-arithmetic lattices as in Setup~\ref{setup:arith}. This case is ruled out for the same reason as case (2) in the proof of Theorem~\ref{T:free-groups}. 
	\item $\Gamma<G$ is virtually isomorphic to a uniform lattice in a tdlc group. This 
	is covered by~\cite{Furman-MM}*{Theorem~C}: the only possibility is a trivial lattice embedding.\qedhere
	\end{enumerate}
\end{proof}

\medskip 
\begin{proof}[Proof of Theorem~\ref{T:GromovThurston}]\hfill{}\\
	Let $M$ be a closed Riemannian manifold of dimension at least~$5$ with sectional curvatures ranging in 
	\[ \bigl[-\bigl(1+\frac{1}{n-1}\bigr)^2, -1\bigr].\]
		
	Let $\Gamma=\pi_1(M)$ be the fundamental group, and let $\Gamma<G$ be a lattice envelope of $\Gamma$. 
	Since $\Gamma$ is finitely generated, $G$ is compactly generated by Lemma~\ref{L:fg-cg}. 	
	The group $\Gamma$ is Gromov hyperbolic, so it has the properties 
	\CAF, \NBC, and \IRR~according to Theorem~\ref{T:good-groups}. 	
	As a fundamental group of a closed aspherical manifold, $\Gamma$ is torsionfree. In particular, $\Gamma$ also has property~\BT. 
	By Theorem~\ref{T:main} there are the following three possibilities for the lattice embedding $\Gamma<G$ up to virtual isomorphism. Since $\Gamma$ is torsionfree, we may assume upon replacing $\Gamma$ by a finite index subgroup, $G$ by a finite index subgroup and a quotient by a compact normal subgroup, and $M$ by a finite cover that we still have $\Gamma=\pi_1(M)$ and $\Gamma<G$ is isomorphic (not just virtually) to one of the following cases. 
	\begin{enumerate}
	\item $\Gamma<G$ is an (irreducible) lattice embedding into a semisimple
	real Lie group $G$.
	\item $\Gamma<G$ is an S-arithmetic lattice in the sense of Setup~\ref{setup:arith}. 
	\item $\Gamma<G$ is a uniform lattice and $G$ is a tdlc group with trivial amenable radical. 
	\end{enumerate}

    We need to show that the only possibility is case (3) with $G$ being discrete unless 
    $M$ is homeomorphic to a hyperbolic manifold. 
    
    Since $\Gamma$ is Gromov hyperbolic we can rule out case (2). By the same reason, case (1) is only possible 
    if $G$ has real rank~$1$ and $\Gamma<G$ is uniform; note here that non-uniform lattices contain a free abelian subgroup of rank~$2$. Let us analyze the situation where $\Gamma<G$ is a 
    uniform lattice in a simple Lie group of real rank~$1$. Let $X=G/K$ be the associated symmetric space which is thus real, complex, quaternionic or Cayley hyperbolic. 
	As aspherical spaces with the same fundamental group, the locally symmetric space $\Gamma\bs X$ and $M$ are 
	homotopy equivalent. 
	
    If $X$ is real hyperbolic, then $M$ is homeomorphic to the closed hyperbolic 
    manifold $\Gamma\bs X$ by the following striking result of Farrell--Jones~\cite{farrell+jones-rigidity} in their work on the Borel conjecture. 
	So this possibility is ruled out by assumption.  
    \begin{theorem}[Topological rigidity~\cite{farrell+jones-rigidity}]
    Let $Y$ be a closed non-positively curved manifold of dimension $\ne 3,4$. If a closed manifold $Z$ is homotopy equivalent to $Y$, then $Y$ and $Z$ are homeomorphic. 
    \end{theorem}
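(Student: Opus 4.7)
The plan is to derive this theorem from the topological Borel conjecture for the fundamental group $\Gamma := \pi_1(Y)$, by combining surgery theory with the Farrell--Jones isomorphism conjecture in algebraic $K$- and $L$-theory. First I would observe that $Y$ is aspherical: its universal cover is $\mathrm{CAT}(0)$, hence contractible by Cartan--Hadamard, so $Y$ is a $K(\Gamma,1)$. Since $Z$ is homotopy equivalent to $Y$, it is also aspherical with $\pi_1(Z)\cong \Gamma$, and the theorem reduces to showing that any two closed aspherical topological manifolds of dimension $n\ge 5$ with fundamental group $\Gamma$ are homeomorphic.

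The next step is to invoke the surgery exact sequence for topological manifolds, which expresses the structure set $\mathcal{S}^{\mathrm{TOP}}(Y)$ in terms of the normal invariants $[Y,G/\mathrm{TOP}]$ and the quadratic $L$-groups $L_\ast(\bbZ\Gamma)$. Collapsing this sequence to a point requires two algebraic inputs: vanishing of the Whitehead group $\mathrm{Wh}(\Gamma)=0$, which promotes $h$-cobordisms to $s$-cobordisms and eliminates the fundamental class issues; and bijectivity of the topological $L$-theory assembly map $H_\ast(B\Gamma;\mathbb{L})\to L_\ast(\bbZ\Gamma)$. Both are special cases of the Farrell--Jones isomorphism conjecture for $\Gamma$. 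Granted these, any homotopy equivalence $f\colon Z\to Y$ represents an element of a one-point structure set, so $f$ is normally cobordant to the identity; the $s$-cobordism theorem (which is where the dimension hypothesis $\dim Y\ne 3,4$, i.e.\ $\dim Y\ge 5$, enters) then upgrades this to a homeomorphism.

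The hard step, and the geometric heart of Farrell--Jones' work, is the verification of the isomorphism conjecture for $\Gamma=\pi_1(Y)$ when $Y$ is closed non-positively curved. Here one exploits the dynamics of the geodesic flow on the unit tangent bundle of the universal cover to build foliated, continuously controlled transfer maps defined on a classifying space for the family of virtually cyclic subgroups of $\Gamma$, and then shows that these transfers split the relevant assembly maps. The main obstacle is the failure of the geodesic flow to be Anosov in the merely non-positively (as opposed to strictly negatively) curved setting: zero-curvature directions force delicate squeezing estimates along asymptotic geodesics, together with a fine analysis of the Tits boundary of the universal cover, in order to extract enough control to deduce that the assembly maps are isomorphisms.
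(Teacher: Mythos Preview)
The paper does not prove this theorem at all: it is quoted verbatim as a result of Farrell and Jones~\cite{farrell+jones-rigidity} and used as a black box inside the proof of Theorem~\ref{T:GromovThurston}. So there is no ``paper's own proof'' to compare against.

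Your sketch is a reasonable high-level outline of the original Farrell--Jones strategy (Borel conjecture via surgery theory plus the isomorphism conjecture in $K$- and $L$-theory, with the geodesic-flow transfer as the geometric input). Two minor remarks. First, the statement also covers dimensions $\le 2$, which you silently drop when you write ``dimension $n\ge 5$''; these low-dimensional cases are of course classical and need no surgery. Second, what you wrote is an outline rather than a proof: each of the three ingredients you name (vanishing of $\mathrm{Wh}(\Gamma)$, bijectivity of the $L$-theory assembly map, and the controlled transfer construction) is itself a substantial paper-length argument, and the ``main obstacle'' paragraph gestures at the difficulties without resolving them. That is fine if you intend to cite \cite{farrell+jones-rigidity} for the details, as the present paper does, but it should not be mistaken for a self-contained proof.
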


    The possibilities that $X$ is complex, quaternionic or Cayley hyperbolic are ruled out by applying 
    the following result by Mok, Siu, and Yeung~\cite{MSY} to $Y=\Gamma\bs X$ and $Z=M$ and the pinching assumption for $M$. 
	          
    \begin{theorem}[Geometric rigidity~\cite{MSY}*{Theorem~1}]
    Let $Y$ and $Z$ be homotopy equivalent closed Riemannian manifolds. Assume that $Z$ is negatively curved and $Y$ is complex, quaternionic or Cayley hyperbolic. Then $Y$ and $Z$ are isometric up to scaling. In particular, the sectional curvatures of $Z$ cannot range in $[-a, -1]$ 
    for $a<4$.     
    \end{theorem}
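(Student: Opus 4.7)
The plan is to use the harmonic map approach of Siu and of Mok--Siu--Yeung, starting from the homotopy equivalence as the \emph{input data} and exploiting the strong negativity of the curvature tensor of rank-one symmetric spaces of non-real type to upgrade it to an isometry. Let $h\colon Z\to Y$ realize the given homotopy equivalence. Since $Y$ is a closed locally symmetric space of non-positive curvature, the Eells--Sampson theorem produces a harmonic map $f\colon Z\to Y$ in the homotopy class of $h$, and uniqueness results of Hartman and Al'ber, combined with the strict negativity of $Y$, show that $f$ is the unique such representative up to flows along parallel fields (which are absent here because $Y$ has no Euclidean factor).

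The core of the argument is a Bochner--Siu--Sampson type vanishing identity. One pulls back a suitable parallel $(p,q)$-form (or its quaternionic/Cayley analogue) on $Y$ via $f$, integrates the Weitzenb\"ock formula over $Z$, and uses the fact that the curvature tensor of $Y$ is \emph{strongly negative} in the sense of Siu when $Y$ is complex hyperbolic, and in the sense of Mok--Siu--Yeung in the quaternionic and Cayley cases. The integrated identity then forces the Hessian of $f$ to lie in the kernel of an explicit nonnegative curvature operator; strong negativity forces the Hessian itself to vanish on a large subbundle, and one concludes that $f$ is pluriharmonic, quaternionic-pluriharmonic, or $\mathrm{Spin}(9)$-pluriharmonic as appropriate. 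Combined with the fact that $Z$ has strictly negative sectional curvature and $\dim Z=\dim Y$ (they are homotopy equivalent closed aspherical manifolds), this rank-rigidity upgrades $f$ to a totally geodesic immersion. I expect this Bochner step to be the principal technical obstacle, since verifying that the correct curvature-type tensor on $Z$ is pointwise nonpositive requires the full strength of the strong-negativity calculation of \cite{MSY} and cannot be bypassed by a purely topological argument.

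Having produced a totally geodesic $f\colon Z\to Y$ of equal dimension, a standard argument (the differential has constant rank equal to $\dim Y$ everywhere, so $f$ is a local diffeomorphism; compactness and simply-connectedness of the universal covers make it a covering; degree $\pm 1$ because $f$ is a homotopy equivalence) shows that $f$ is a Riemannian covering, hence an isometry after rescaling the metric on $Z$ by a uniform constant $\lambda>0$. This proves the first conclusion, namely that $Y$ and $Z$ are isometric up to scaling.

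For the pinching consequence, observe that in the standard normalization the sectional curvatures of the rank-one symmetric spaces $\bbH^m_\bbC$, $\bbH^m_\bbH$ and $\bbH^2_\bbO$ exactly fill the interval $[-4,-1]$: the maximal sectional curvature $-1$ is attained on totally real $2$-planes and the minimal value $-4$ is attained on the holomorphic, quaternionic, or Cayley lines. If the sectional curvatures of $Z$ lie in $[-a,-1]$ with $a<4$, and $Z$ is isometric to $Y$ after multiplying the metric of $Z$ by some $\lambda>0$, then the sectional curvatures of $Y$ lie in $[-a/\lambda,-1/\lambda]$. Comparing the two endpoints of the curvature interval of $Y$ forces $1/\lambda=1$ and $a/\lambda=4$ simultaneously, which is impossible for $a<4$. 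This contradiction completes the proof.
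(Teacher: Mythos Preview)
The paper does not give its own proof of this statement: it is quoted verbatim as \cite{MSY}*{Theorem~1} and used as a black box inside the proof of Theorem~\ref{T:GromovThurston}. So there is nothing in the paper to compare your argument against beyond the citation itself.

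Your sketch is in the right spirit (Eells--Sampson existence, a Bochner--Siu/Corlette identity, then a totally geodesic conclusion), but the harmonic map is pointed in the wrong direction. In the Mok--Siu--Yeung argument, as in Corlette's earlier quaternionic/Cayley case, the Bochner formula is run on the \emph{locally symmetric} manifold $Y$: one takes the harmonic map $f\colon Y\to Z$ in the homotopy class of the equivalence and exploits the special parallel form that lives on $Y$ (the K\"ahler form for $\HYP^m_\bbC$, the fundamental $4$-form for $\HYP^m_\bbH$, the ${\rm Spin}(9)$-invariant $8$-form for $\HYP^2_\bbO$) together with a curvature sign on the target $Z$. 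You take $f\colon Z\to Y$ and propose to pull back the parallel form from $Y$ to $Z$; but $Z$ is only a negatively curved Riemannian manifold with no complex or quaternionic structure, so there is no $\partial\bar\partial$-calculus and the pulled-back form is not parallel on $Z$. The Siu/Sampson identity you invoke genuinely needs the special structure on the domain, not the target, so as written the vanishing step does not go through. Reversing the map (and correspondingly using nonpositivity of an appropriate curvature expression on $Z$ as the target condition) fixes this and is exactly what \cite{MSY} do.

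Your final pinching paragraph is essentially correct: once $Z$ is isometric to $Y$ after rescaling, the curvature interval of the rescaled $Z$ must coincide with the exact interval $[-4,-1]$ of $Y$, and matching the top endpoint forces the scale factor to be $1$, whereupon the bottom endpoint forces $a=4$.
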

    Summing up, case (2) never occurs, and case (1) can only occur if $M$ is homeomorphic to hyperbolic manifold. 
    
    Finally, let us consider case (3) in which we have to show that the tdlc group $G$ is discrete upon dividing out a compact normal subgroup. 
	Consider the natural homomorphism 
	\[
		\phi\colon \Gamma\to\Isom(\tilde M)\to  \QI(\tilde M)\to \Homeo(\partial\tilde M)=\Homeo(S^{n-1}).
	\]
	Of course, $\phi$ coincides with the natural action $\Gamma\to \Homeo(\partial\Gamma)$ of 
	$\Gamma$ on its Gromov boundary. We can refer to Lemma~\ref{lem: QI of lcsc group} or, in this situation, to~\cite{Furman-MM}*{Theorem~3.5} to conclude that 
	the map $\phi$ extends to a 
	homomorphism \[\bar\phi\colon G\to\Homeo(S^{n-1}).\] 
	In the latter reference it is also shown that $\bar\phi$ is continuous, the kernel $K:=\ker(\bar\phi)$ is compact, and the image of $\bar\phi$ is locally compact in the subspace topology. By the open mapping theorem~\cite{bourbaki}*{IX. \S 5.3} $G$ is topologically isomorphic to $\im(\bar\phi)$. Since $G$ has trivial amenable radical, $K$ is trivial. 
	
	Since the tdlc group $G$ acts continuously and faithfully on the sphere $S^{n-1}$, a positive 
	solution of the Hilbert-Smith conjecture would imply that $G$ is discrete and thus finish the proof. The general Hilbert-Smith conjecture remains open but in our situation we can appeal to the work of Mj~\cite{Mj} that contains the following result. 
	\begin{theorem}[Hilbert-Smith conjecture for boundary actions~\cite{Mj}*{Corollary~2.2.5}]\label{thm: hilbert smith}
	Let $\Lambda$ be a Gromov hyperbolic Poincar\'e duality group. 
	
	Let $Q<\Homeo(\partial\Lambda)$ be a subgroup that is finite dimensional and locally compact in the subspace topology and lies in the image of the natural homomorphism $\QI(\Lambda)\to\Homeo(\partial\Lambda)$. Let 
	$h$ be the Hausdorff dimension of $\partial\Lambda$ with respect to the visual metric, and let $t$ be the topological dimension of $\partial\Lambda$. We assume that $h<t+2$. 
	
	Then $Q$ is a Lie group. 		
	\end{theorem}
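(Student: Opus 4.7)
The plan is to follow the classical Newman--Smith--Yang strategy for the Hilbert--Smith conjecture and combine it with the geometric information coming from Poincar\'e duality and the quasi-isometric origin of the action. By the Gleason--Yamabe structure theorem, a finite-dimensional locally compact group is a Lie group if and only if it has no small subgroups; and a locally compact group that fails the no-small-subgroups property is known to contain a copy of the $p$-adic integers $\bbZ_p$ for some prime~$p$. Hence it suffices to show that $Q$ cannot contain a subgroup topologically isomorphic to $\bbZ_p$.

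Assume for contradiction that $\bbZ_p < Q$; then $\bbZ_p$ acts effectively on the compact metric space $\partial\Lambda$ (with its visual metric). The central input is Yang's cohomological dimension inequality: for an effective action of $\bbZ_p$ on a finite-dimensional compact metric space $X$, one has
\[
	\dim_{\mathbb{F}_p}\bigl(X/\bbZ_p\bigr)\ge \dim_{\mathbb{F}_p}(X)+2.
\]
Since $\Lambda$ is a Gromov hyperbolic Poincar\'e duality group, work of Bestvina--Mess identifies $\partial\Lambda$ as a cohomology sphere whose topological and $\mathbb{F}_p$-cohomological dimensions both coincide with $t$. Applied to $X=\partial\Lambda$, Yang's theorem produces the lower bound $\dim_{\mathbb{F}_p}(\partial\Lambda/\bbZ_p)\ge t+2$.

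The second step is to obtain a matching upper bound using the assumption that $Q$ lies in the image of $\QI(\Lambda)\to\Homeo(\partial\Lambda)$. Elements of $\QI(\Lambda)$ extend to uniformly quasi-symmetric homeomorphisms of $\partial\Lambda$ with respect to the visual metric. Hence the $\bbZ_p$-action is by uniformly quasi-symmetric maps, so orbits do not distort Hausdorff measure too severely. A standard covering argument near a regular orbit then bounds the Hausdorff dimension of $\partial\Lambda/\bbZ_p$ from above by~$h$. Combined with the Szpilrajn inequality $\dim_{\mathrm{top}}\le \dim_H$ and the identification of topological with $\mathbb{F}_p$-cohomological dimension in our setting, this yields $t+2\le h$, contradicting the hypothesis $h<t+2$.

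The principal obstacle is threading together the three notions of dimension --- Hausdorff, topological, and $\mathbb{F}_p$-cohomological --- and controlling all of them under a $\bbZ_p$-action that is not isometric but only uniformly quasi-symmetric. Without the quasi-isometric provenance of $Q$, no upper bound on $\dim \partial\Lambda/\bbZ_p$ would be available, and Yang's lower bound would have nothing to contradict; this is precisely where the hypothesis that $Q$ factors through $\QI(\Lambda)$ is indispensable.
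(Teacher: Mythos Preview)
The paper does not prove this theorem; it is quoted verbatim as a result of Mj~\cite{Mj}*{Corollary~2.2.5} and used as a black box in the proof of Theorem~\ref{T:GromovThurston}. So there is no ``paper's own proof'' to compare against, only Mj's original argument.

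Your sketch does follow the correct overall strategy --- reduction to a $\bbZ_p$-action via Gleason--Yamabe, Yang's cohomological dimension jump for the orbit space, and a Hausdorff-dimension upper bound producing the contradiction --- and this is indeed the architecture of Mj's proof (which in turn adapts Martin's quasiconformal Hilbert--Smith argument). The use of Bestvina--Mess to identify the various dimensions of $\partial\Lambda$ is also the right move.

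However, the step you call ``a standard covering argument near a regular orbit'' is precisely the non-standard heart of the matter, and your description of it is too glib to count as a proof. The projection $\partial\Lambda\to\partial\Lambda/\bbZ_p$ is $1$-Lipschitz for the quotient pseudometric regardless of any quasi-symmetry hypothesis, so the bound $\dim_H(\partial\Lambda/\bbZ_p)\le h$ is not where the hypothesis enters. What actually requires the uniform quasi-symmetry (equivalently, the $\QI$-provenance of $Q$) is ensuring that the quotient is a genuine metric space on which the Szpilrajn-type comparison between cohomological and Hausdorff dimension is valid, and that Yang's theorem applies in the first place to $\partial\Lambda$ with its $\bbZ_p$-action. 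Mj's argument works by showing that uniform quasiconformality forces the $\bbZ_p$-orbits to be metrically tame (Hausdorff-null in an appropriate sense), which is what makes the dimension bookkeeping go through. Your final paragraph correctly identifies \emph{that} the hypothesis is indispensable, but misidentifies \emph{where} it enters.
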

    We apply this theorem to $Q=\im(\bar\phi)\cong G$ and $\Lambda=\Gamma$. The topological dimension of $\partial \Gamma=S^{n-1}$ is $t=n-1$. It remains to verify that we have $h<t+2=n+1$. The Hausdorff dimension $h$ equals the volume entropy $h_{vol}$ 
    of $\tilde M$ (see~\cite{yue}*{Theorem~C}), that is, 
    \[ h= h_{vol}:=\limsup_{R\to\infty} \frac{1}{R}\log\vol\bigl(B_{\tilde M}(x, R)\bigr).\]
    As a direct consequence of the Bishop-Gunther comparison theorem~\cite{gallot+hulin+lafontaine-book}*{Theorem~3.101 on p.~169} we obtain that if the $n$-dimensional manifold $\tilde M$  
    has sectional curvature in $[-b^2,-a^2]$, then 
    \[  (n-1)a \le h_{vol}\le (n-1)b.\]
    In the present situation, $b=1+1/(n-1)$, so we obtain that 
    \[ h=h_{vol}\le (n-1)(1+\frac{1}{n-1})=n<n+1=t+2.\]
    So the assumptions in Theorem~\ref{thm: hilbert smith} are satisfied and we conclude that $G$ is at the same time a Lie group and a tdlc group, thus discrete. 
\end{proof}

\medskip 
\begin{proof}[Proof of Theorem~\ref{T: characterisation of groups with positive first L2 Betti}]\hfill{}\\
	Let $\Gamma$ be group with positive first $\ell^2$-Betti number $\beta^{(2)}_1(\Gamma)>0$ and property~$\BT$.
	Assume that $\Gamma$ admits a non-uniform compactly generated lattice envelope $G$.
	Since, by Theorem~\ref{T:good-groups}, $\Gamma$ satisfies all the conditions of Theorem~\ref{T:main} and~\BT, $\Gamma<G$ is virtually isomorphic to some lattice embedding $\Gamma'<G'$ that is one of the first two types in Theorem~\ref{T:main}; the totally disconnected case is ruled out by~\BT. Non-vanishing of the first $\ell^2$-Betti number 
	is preserved under virtual isomorphism (this easily deduced from their basic properties), even under quasi-isometry~\cite{sauer+schroedl}*{Theorem~1}.  
	So $\beta^{(2)}_1(\Gamma')>0$. Hence $\Gamma'$ cannot be a lattice in a product of two non-compact lcsc groups. That is because $\beta^{(2)}_1(\Gamma')>0$ implies that the first $\ell^2$-Betti number of any lattice envelope of $\Gamma'$ is positive~\cite{kyed+petersen+vaes}*{Theorem~B}, and the first $\ell^2$-Betti number of a product of two non-compact unimodular lcsc groups is zero~\cite{petersen}*{Theorem~6.5 on p.~61}. 
	This excludes that $\Gamma'<G'$ is an S-arithmetic lattice as in Setup~\ref{setup:arith}, so type (2) is ruled out. 
	Hence $\Gamma'<G'$ is a lattice in a connected center-free semisimple Lie group without compact 
	factors. By Olbrich's work~\cite{olbrich} (see also Chapter~5 about computations of $\ell^2$-invariants of locally symmetric spaces in L\"uck's book~\cite{lueck-book}) 
	the only such lattices with positive first $\ell^2$-Betti numbers are the ones in 
	$\PSL_2(\bbR)$. Since $\Gamma'<G'$ is also non-uniform, the group $\Gamma'$, hence $\Gamma$, is virtually isomorphic to a free group. 
\end{proof}

\begin{bibdiv}
\begin{biblist}

\bib{abramenko+brown}{book}{
   author={Abramenko, Peter},
   author={Brown, Kenneth S.},
   title={Buildings},
   series={Graduate Texts in Mathematics},
   volume={248},
   note={Theory and applications},
   publisher={Springer, New York},
   date={2008},
   pages={xxii+747},
   %isbn={978-0-387-78834-0},
   %review={\MR{2439729}},
   %doi={10.1007/978-0-387-78835-7},
}
\bib{AdamsBallmann}{article}{
   author={Adams, Scot},
   author={Ballmann, Werner},
   title={Amenable isometry groups of Hadamard spaces},
   journal={Math. Ann.},
   volume={312},
   date={1998},
   number={1},
   pages={183--195},
}

\bib{BCGM}{article}{
	author= {Bader, Uri},
	author= {Caprace, Pierre-Emmanuel},
	author= {Gelander, Tsachik}
    author= {Mozes, Shahar}
 	title= {Lattices in amenable groups}, 
    note={ArXiv preprint: 1612.06220}
}

\bib{bader+caprace+lecureux}{article}{
	author= {Bader, Uri},
	author= {Caprace, Pierre-Emmanuel},
	author= {Lecureux, Jean}, 
	title= {On the linearity of lattices in affine buildings
and ergodicity of the singular Cartan flow}, 
    note={ArXiv preprint: 1608.06265}
}

\bib{BFGM}{article}{
   author={Bader, Uri},
   author={Furman, Alex},
   author={Gelander, Tsachik},
   author={Monod, Nicolas},
   title={Property (T) and rigidity for actions on Banach spaces},
   journal={Acta Math.},
   volume={198},
   date={2007},
   number={1},
   pages={57--105},
}

\bib{BFS-CRAS}{article}{
   author={Bader, Uri},
   author={Furman, Alex},
   author={Sauer, Roman},
   title={On the structure and arithmeticity of lattice envelopes},
   journal={C. R. Math. Acad. Sci. Paris},
   volume={353},
   date={2015},
   number={5},
   pages={409--413},
}

\bib{BFS-adelic}{article}{
   author={Bader, Uri},
   author={Furman, Alex},
   author={Sauer, Roman},
   title={An adelic arithmeticity theorem for lattices in products},
   note={ArXiv preprint: 1705.08158},
}

\bib{BFS}{article}{
   author={Bader, Uri},
   author={Furman, Alex},
   author={Sauer, Roman}
   title={Weak Notions of Normality and Vanishing up to Rank in $\ell^2 $-Cohomology},
   journal={Int. Math. Res. Not. IMRN},
   date={2014},
   number={12},
   pages={3177Ã¢ÂÂ3189},
}

\bib{BG}{article}{
   author={Bader, Uri},
   author={Gelander, Tsachik},
   title={Equicontinuous actions of semisimple groups},
   journal={Groups Geom. Dyn.},
   volume={11},
   date={2017},
   number={3},
   pages={1003--1039},
}

\bib{bestvina+fujiwara}{article}{
   author={Bestvina, Mladen},
   author={Fujiwara, Koji},
   title={Bounded cohomology of subgroups of mapping class groups},
   journal={Geom. Topol.},
   volume={6},
   date={2002},
   pages={69--89},
   %issn={1465-3060},
   %review={\MR{1914565}},
   %doi={10.2140/gt.2002.6.69},
}

 \bib{Borel-density}{article}{
     AUTHOR = {Borel, Armand},
      TITLE = {Density and maximality of arithmetic subgroups},
    JOURNAL = {J. Reine Angew. Math.},
     VOLUME = {224},
       YEAR = {1966},
      PAGES = {78--89},
 }

%\bib{borel-book}{book}{
%   author={Borel, Armand},
%   title={Linear algebraic groups},
%   series={Graduate Texts in Mathematics},
%   volume={126},
%   edition={2},
%   publisher={Springer-Verlag, New York},
%   date={1991},
%   pages={xii+288},
%}
\bib{borel+tits-abstract}{article}{
   author={Borel, Armand},
   author={Tits, Jacques},
   title={Homomorphismes ``abstraits'' de groupes alg\'ebriques simples},
   language={French},
   journal={Ann. of Math. (2)},
   volume={97},
   date={1973},
   pages={499--571},
}
		
\bib{bourbaki}{book}{
   author={Bourbaki, Nicolas},
   title={General topology. Chapters 5--10},
   series={Elements of Mathematics (Berlin)},
   note={Translated from the French;
   Reprint of the 1989 English translation},
   publisher={Springer-Verlag, Berlin},
   date={1998},
   pages={iv+363},
}

\bib{BG-Top-Tits}{article}{
   author={Breuillard, E.},
   author={Gelander, T.},
   title={A topological Tits alternative},
   journal={Ann. of Math. (2)},
   volume={166},
   date={2007},
   number={2},
   pages={427--474},
}

\bib{breuillardetal}{article}{
   author={Breuillard, Emmanuel},
   author={Kalantar, Mehrdad},
   author={Kennedy, Matthew},
   author={Ozawa, Narutaka},
   title={$C^*$-simplicity and the unique trace property for discrete
   groups},
   journal={Publ. Math. Inst. Hautes \'Etudes Sci.},
   volume={126},
   date={2017},
   pages={35--71},
   %issn={0073-8301},
   %review={\MR{3735864}},
   %doi={10.1007/s10240-017-0091-2},
}

\bib{bridson+haefliger}{book}{
   author={Bridson, Martin R.},
   author={Haefliger, Andr\'e},
   title={Metric spaces of non-positive curvature},
   series={Grundlehren der Mathematischen Wissenschaften [Fundamental
   Principles of Mathematical Sciences]},
   volume={319},
   publisher={Springer-Verlag, Berlin},
   date={1999},
   pages={xxii+643},
   %isbn={3-540-64324-9},
   %review={\MR{1744486}},
   %doi={10.1007/978-3-662-12494-9},
}
\bib{burger+monod}{article}{
   author={Burger, M.},
   author={Monod, N.},
   title={Continuous bounded cohomology and applications to rigidity theory},
   journal={Geom. Funct. Anal.},
   volume={12},
   date={2002},
   number={2},
   pages={219--280},
}

\bib{burger+mozes}{article}{
   author={Burger, Marc},
   author={Mozes, Shahar},
   title={Groups acting on trees: from local to global structure},
   journal={Inst. Hautes \'Etudes Sci. Publ. Math.},
   number={92},
   date={2000},
   pages={113--150 (2001)},
}

\bib{caprace-ecm}{article}{
author ={Caprace, Pierre-Emmanuel},
	title  ={Non-discrete, simple locally compact groups}, 
	note   ={to appear in the Proceedings of the 7th European Congress of Mathematics}, 
	date   ={2016},
}

\bib{capraceetal-closure}{article}{
	author ={Caprace, Pierre-Emmanuel},
	author ={Kropholler, Peter},
	author ={Reid, C.~D.},
	author ={Wesolek, P.}, 
	title  ={On the residual and profinite closures of commensurated subgroups}, 
	note   ={ArXiv preprint: 1706.06853}, 
	date   ={2017},
}

\bib{Caprace+Monod:kacmoody}{article}{
   author={Caprace, P.-E.},
   author={Monod, N.},
   title={A lattice in more than two Kac-Moody groups is arithmetic},
   journal={Israel J. Math.},
   volume={190},
   date={2012},
   pages={413--444},
}

\bib{Caprace+Monod:I}{article}{
   author={Caprace, P.-E.},
   author={Monod, N.},
   title={Isometry groups of non-positively curved spaces: structure theory},
   journal={J. Topol.},
   volume={2},
   date={2009},
   number={4},
   pages={661--700},
}

\bib{Caprace+Monod:II}{article}{
   author={Caprace, P.-E.},
   author={Monod, N.},
   title={Isometry groups of non-positively curved spaces: discrete
   subgroups},
   journal={J. Topol.},
   volume={2},
   date={2009},
   number={4},
   pages={701--746},
}

\bib{CD}{article}{
   author={Carette, Mathieu},
   author={Dreesen, Dennis},
   title={Locally compact convergence groups and $n$-transitive actions},
   journal={Math. Z.},
   volume={278},
   date={2014},
   number={3-4},
   pages={795--827},
}

\bib{cheeger+gromov}{article}{
   author={Cheeger, Jeff},
   author={Gromov, Mikhael},
   title={$L_2$-cohomology and group cohomology},
   journal={Topology},
   volume={25},
   date={1986},
   number={2},
   pages={189--215},
   %issn={0040-9383},
   %review={\MR{837621}},
   %doi={10.1016/0040-9383(86)90039-X},
}

\bib{chow}{article}{
   author={Chow, Richard},
   title={Groups quasi-isometric to complex hyperbolic space},
   journal={Trans. Amer. Math. Soc.},
   volume={348},
   date={1996},
   number={5},
   pages={1757--1769},
}
		
\bib{coornaaert}{article}{
   author={Coornaert, Michel},
   title={Mesures de Patterson-Sullivan sur le bord d'un espace hyperbolique
   au sens de Gromov},
   language={French, with French summary},
   journal={Pacific J. Math.},
   volume={159},
   date={1993},
   number={2},
   pages={241--270},
   %issn={0030-8730},
   %review={\MR{1214072}},
}
\bib{Eskin}{article}{
   author={Eskin, Alex},
   title={Quasi-isometric rigidity of nonuniform lattices in higher rank
   symmetric spaces},
   journal={J. Amer. Math. Soc.},
   volume={11},
   date={1998},
   number={2},
   pages={321--361},
}
	
\bib{Farb+Schwartz}{article}{
   author={Farb, Benson},
   author={Schwartz, Richard},
   title={The large-scale geometry of Hilbert modular groups},
   journal={J. Differential Geom.},
   volume={44},
   date={1996},
   number={3},
   pages={435--478},
}
\bib{farrell+jones-exotic}{article}{
   author={Farrell, F. T.},
   author={Jones, L. E.},
   title={Negatively curved manifolds with exotic smooth structures},
   journal={J. Amer. Math. Soc.},
   volume={2},
   date={1989},
   number={4},
   pages={899--908},
   %issn={0894-0347},
   %review={\MR{1002632}},
   %doi={10.2307/1990898},
}
\bib{farrell+jones+ontaneda}{article}{
   author={Farrell, F. T.},
   author={Jones, L. E.},
   author={Ontaneda, P.},
   title={Negative curvature and exotic topology},
   conference={
      title={Surveys in differential geometry. Vol. XI},
   },
   book={
      series={Surv. Differ. Geom.},
      volume={11},
      publisher={Int. Press, Somerville, MA},
   },
   date={2007},
   pages={329--347},
   %review={\MR{2409071}},
   %doi={10.4310/SDG.2006.v11.n1.a11},
}	

\bib{farrell+jones-rigidity}{article}{
   author={Farrell, F. T.},
   author={Jones, L. E.},
   title={Topological rigidity for compact non-positively curved manifolds},
   conference={
      title={Differential geometry: Riemannian geometry},
      address={Los Angeles, CA},
      date={1990},
   },
   book={
      series={Proc. Sympos. Pure Math.},
      volume={54},
      publisher={Amer. Math. Soc., Providence, RI},
   },
   date={1993},
   pages={229--274},
   %review={\MR{1216623}},
}
\bib{Furman-MM}{article}{
   author={Furman, A.},
   title={Mostow-Margulis rigidity with locally compact targets},
   journal={Geom. Funct. Anal.},
   volume={11},
   date={2001},
   number={1},
   pages={30--59},
}

\bib{Furman-strprox}{article}{
	author={Furman, A.},
	title={On minimal strongly proximal actions of locally compact groups},
    journal={Israel J. Math.},
    volume={136},
    date={2003},
    pages={173 -- 187},
}

%\bib{Furman-CG}{article}{
%   author={Furman, A.},
%   title={Coarse-geometric perspective on negatively curved manifolds and
%   groups},
%   conference={
%      title={Rigidity in dynamics and geometry},
%      address={Cambridge},
%      date={2000},
%   },
%   book={
%      publisher={Springer},
%      place={Berlin},
%   },
%   date={2002},
%   pages={149 -- 166},
%}
%
\bib{furstenberg-bourbaki}{article}{
   author={Furstenberg, H.},
   title={Rigidity and cocycles for ergodic actions of semisimple Lie groups
   (after G. A. Margulis and R. Zimmer)},
   conference={
      title={Bourbaki Seminar, Vol. 1979/80},
   },
   book={
      series={Lecture Notes in Math.},
      volume={842},
      publisher={Springer},
      place={Berlin},
   },
   date={1981},
   pages={273--292},
}
\bib{gallot+hulin+lafontaine-book}{book}{
   author={Gallot, Sylvestre},
   author={Hulin, Dominique},
   author={Lafontaine, Jacques},
   title={Riemannian geometry},
   series={Universitext},
   edition={3},
   publisher={Springer-Verlag, Berlin},
   date={2004},
   pages={xvi+322},
   %isbn={3-540-20493-8},
   %review={\MR{2088027}},
   %doi={10.1007/978-3-642-18855-8},
}
\bib{GKM}{article}{
   author={Gelander, Tsachik},
   author={Karlsson, Anders},
   author={Margulis, Gregory A.},
   title={Superrigidity, generalized harmonic maps and uniformly convex
   spaces},
   journal={Geom. Funct. Anal.},
   volume={17},
   date={2008},
   number={5},
   pages={1524--1550},
}
%
%\bib{gromov-invariants}{article}{
%   author={Gromov, M.},
%   title={Asymptotic invariants of infinite groups},
%   conference={
%      title={Geometric group theory, Vol.\ 2},
%      address={Sussex},
%      date={1991},
%   },
%   book={
%      series={London Math. Soc. Lecture Note Ser.},
%      volume={182},
%      publisher={Cambridge Univ. Press},
%      place={Cambridge},
%   },
%   date={1993},
%   pages={1--295},
%}

\bib{Gromov+Thurston}{article}{
   author={Gromov, M.},
   author={Thurston, W.},
   title={Pinching constants for hyperbolic manifolds},
   journal={Invent. Math.},
   volume={89},
   date={1987},
   number={1},
   pages={1--12},
}
\bib{hamenstaedt}{article}{
   author={Hamenst\"adt, Ursula},
   title={Bounded cohomology and isometry groups of hyperbolic spaces},
   journal={J. Eur. Math. Soc. (JEMS)},
   volume={10},
   date={2008},
   number={2},
   pages={315--349},
   %issn={1435-9855},
   %review={\MR{2390326}},
   %doi={10.4171/JEMS/112},
}

%\bib{kapovich+kleiner+leeb}{article}{
%   author={Kapovich, Michael},
%   author={Kleiner, Bruce},
%   author={Leeb, Bernhard},
%   title={Quasi-isometries and the de Rham decomposition},
%   journal={Topology},
%   volume={37},
%   date={1998},
%   number={6},
%   pages={1193--1211},
%   %%issn={0040-9383},
%   %review={\MR{1632904}},
%   %doi={10.1016/S0040-9383(97)00091-8},
%}		
\bib{karube}{article}{
   author={Karube, Takashi},
   title={On the local cross-sections in locally compact groups},
   journal={J. Math. Soc. Japan},
   volume={10},
   date={1958},
   pages={343--347},
   %issn={0025-5645},
   %review={\MR{0104761}},
   %doi={10.2969/jmsj/01040343},
}
% \bib{Kleinbock-Margulis}{article}{
%    author={Kleinbock, D. Y.},
%    author={Margulis, G. A.},
%    title={Bounded orbits of nonquasiunipotent flows on homogeneous spaces},
%    conference={
%       title={Sina\u\i 's Moscow Seminar on Dynamical Systems},
%    },
%    book={
%       series={Amer. Math. Soc. Transl. Ser. 2},
%       volume={171},
%       publisher={Amer. Math. Soc., Providence, RI},
%    },
%    date={1996},
%    pages={141--172},
% }
% 

\bib{Kleiner+Leeb}{article}{
   author={Kleiner, Bruce},
   author={Leeb, Bernhard},
   title={Rigidity of quasi-isometries for symmetric spaces and Euclidean
   buildings},
   journal={Inst. Hautes \'Etudes Sci. Publ. Math.},
   number={86},
   date={1997},
   pages={115--197 (1998)},
}
\bib{kyed+petersen+vaes}{article}{
   author={Kyed, David},
   author={Petersen, Henrik Densing},
   author={Vaes, Stefaan},
   title={$L^2$-Betti numbers of locally compact groups and their cross
   section equivalence relations},
   journal={Trans. Amer. Math. Soc.},
   volume={367},
   date={2015},
   number={7},
   pages={4917--4956},
   %issn={0002-9947},
   %review={\MR{3335405}},
   %doi={10.1090/S0002-9947-2015-06449-6},
}

\bib{lueck-book}{book}{
   author={L{\"u}ck, Wolfgang},
   title={$L^2$-invariants: theory and applications to geometry and
   $K$-theory},
   series={Ergebnisse der Mathematik und ihrer Grenzgebiete. 3. Folge. A
   Series of Modern Surveys in Mathematics [Results in Mathematics and
   Related Areas. 3rd Series. A Series of Modern Surveys in Mathematics]},
   volume={44},
   publisher={Springer-Verlag},
   place={Berlin},
   date={2002},
   pages={xvi+595},
}
\bib{Margulis-Mostow}{article}{
   author={Margulis, G. A.},
   title={Non-uniform lattices in semisimple algebraic groups},
   conference={
      title={Lie groups and their representations (Proc. Summer School on
      Group Representations of the Bolyai J\'anos Math. Soc., Budapest,
      1971)},
   },
   book={
      publisher={Halsted, New York},
   },
   date={1975},
   pages={371--553},
}

%\bib{Margulis:1974:ICM}{article}{
%   author={Margulis, G. A.},
%   title={Discrete groups of motions of manifolds of nonpositive curvature},
%   language={Russian},
%   conference={
%      title={Proceedings of the International Congress of Mathematicians
%      (Vancouver, B.C., 1974), Vol. 2},
%   },
%   book={
%      publisher={Canad. Math. Congress, Montreal, Que.},
%   },
%   date={1975},
%   pages={21--34},
%}
%
%\bib{Margulis:arith}{article}{
%   author={Margulis, G. A.},
%   title={Arithmeticity of the irreducible lattices in the semisimple groups
%   of rank greater than $1$},
%   journal={Invent. Math.},
%   volume={76},
%   date={1984},
%   number={1},
%   pages={93--120},
%}

\bib{Margulis-T}{article}{
   author={Margulis, G. A.},
   title={Finiteness of quotient groups of discrete subgroups},
   language={Russian},
   journal={Funktsional. Anal. i Prilozhen.},
   volume={13},
   date={1979},
   number={3},
   pages={28--39},
}

\bib{Margulis:book}{book}{
   author={Margulis, G. A.},
   title={Discrete subgroups of semisimple {L}ie groups},
   series={Ergebnisse der Mathematik und ihrer Grenzgebiete (3) 
              [Results in Mathematics and Related Areas (3)]},
   publisher={Springer-Verlag},
   address={Berlin},
   date={1991},
   volume={17},
}

\bib{Mj}{article}{
   author={Mj, Mahan},
   title={Pattern rigidity and the Hilbert-Smith conjecture},
   journal={Geom. Topol.},
   volume={16},
   date={2012},
   number={2},
   pages={1205--1246},
}

\bib{MSY}{article}{
   author={Mok, Ngaiming},
   author={Siu, Yum Tong},
   author={Yeung, Sai-Kee},
   title={Geometric superrigidity},
   journal={Invent. Math.},
   volume={113},
   date={1993},
   number={1},
   pages={57--83},
}

\bib{Monod}{article}{
   author={Monod, Nicolas},
   title={Superrigidity for irreducible lattices and geometric splitting},
   journal={J. Amer. Math. Soc.},
   volume={19},
   date={2006},
   number={4},
   pages={781--814},
}

\bib{MSW-annals}{article}{
   author={Mosher, L.},
   author={Sageev, M.},
   author={Whyte, K.},
   title={Quasi-actions on trees. I. Bounded valence},
   journal={Ann. of Math. (2)},
   volume={158},
   date={2003},
   number={1},
   pages={115--164},
}

\bib{Mostow:1973book}{book}{
   author={Mostow, G. D.},
   title={Strong rigidity of locally symmetric spaces},
   note={Annals of Mathematics Studies, No. 78},
   publisher={Princeton University Press},
   place={Princeton, N.J.},
   date={1973},
   pages={v+195},
}
\bib{olbrich}{article}{
   author={Olbrich, Martin},
   title={$L^2$-invariants of locally symmetric spaces},
   journal={Doc. Math.},
   volume={7},
   date={2002},
   pages={219--237},
   %issn={1431-0635},
   %review={\MR{1938121}},
}
\bib{osin}{article}{
   author={Osin, D.},
   title={Acylindrically hyperbolic groups},
   journal={Trans. Amer. Math. Soc.},
   volume={368},
   date={2016},
   number={2},
   pages={851--888},
   %issn={0002-9947},
   %review={\MR{3430352}},
   %doi={10.1090/tran/6343},
}
	\bib{petersen}{thesis}{
   author={Petersen, Henrik Densing},
   title={$L^2$-Betti numbers of locally compact groups},
   status={Ph.D. thesis, Department of Mathematical Sciences, Faculty of Science, University of Copenhagen, 2012. 139 p.},
}

\bib{pansu}{article}{
   author={Pansu, Pierre},
   title={M\'etriques de Carnot-Carath\'eodory et quasiisom\'etries des espaces
   sym\'etriques de rang un},
   language={French, with English summary},
   journal={Ann. of Math. (2)},
   volume={129},
   date={1989},
   number={1},
   pages={1--60},
}
	
%
%
%\bib{Platonov+Rapinchuk}{book}{
%   author={Platonov, Vladimir},
%   author={Rapinchuk, Andrei},
%   title={Algebraic groups and number theory},
%   series={Pure and Applied Mathematics},
%   volume={139},
%   publisher={Academic Press, Inc., Boston, MA},
%   date={1994},
%   pages={xii+614},
%}

\bib{Prasad-Mostow}{article}{
   author={Prasad, G.},
   title={Strong rigidity of ${\bf Q}$-rank $1$ lattices},
   journal={Invent. Math.},
   volume={21},
   date={1973},
   pages={255--286},
}

\bib{Prasad}{article}{
   author={Prasad, G.},
   title={Elementary proof of a theorem of Bruhat-Tits-Rousseau and of a
   theorem of Tits},
   language={English, with French summary},
   journal={Bull. Soc. Math. France},
   volume={110},
   date={1982},
   number={2},
   pages={197--202},
}
		
\bib{Raghunathan:book}{book}{
   author={Raghunathan, M. S.},
   title={Discrete subgroups of Lie groups},
   note={Ergebnisse der Mathematik und ihrer Grenzgebiete, Band 68},
   publisher={Springer-Verlag, New York-Heidelberg},
   date={1972},
   pages={ix+227},
   %review={\MR{0507234 (58 \#22394a)}},
}
\bib{Ramos-Cuevas}{article}{
   author={Ramos-Cuevas, Carlos},
   title={On the displacement function of isometries of Euclidean buildings},
   journal={Indag. Math. (N.S.)},
   volume={26},
   date={2015},
   number={2},
   pages={355--362},
}
% \bib{Robinson}{book}{
%    author={Robinson, Derek J. S.},
%    title={A course in the theory of groups},
%    series={Graduate Texts in Mathematics},
%    volume={80},
%    edition={2},
%    publisher={Springer-Verlag, New York},
%    date={1996},
%    pages={xviii+499},
%    isbn={0-387-94461-3},
% }
\bib{sauer+schroedl}{article}{
   author={Sauer, Roman},
   author={Schr\"odl, Michael},
   title={Vanishing of $\ell^2$-Betti numbers of locally compact groups as an invariant of coarse equivalence},
   note={ArXiv preprint:1702.01685; to appear in Fundamenta Mathematicae}
   date={2017},
}

\bib{Schwartz}{article}{
   author={Schwartz, Richard Evan},
   title={The quasi-isometry classification of rank one lattices},
   journal={Inst. Hautes \'Etudes Sci. Publ. Math.},
   number={82},
   date={1995},
   pages={133--168 (1996)},
}

\bib{Schwartz2}{article}{
   author={Schwartz, Richard Evan},
   title={Quasi-isometric rigidity and Diophantine approximation},
   journal={Acta Math.},
   volume={177},
   date={1996},
   number={1},
   pages={75--112},
}

\bib{Shalom}{article}{
   author={Shalom, Yehuda},
   title={Rigidity of commensurators and irreducible lattices},
   journal={Invent. Math.},
   volume={141},
   date={2000},
   number={1},
   pages={1--54},
}
\bib{shalom-square}{article}{
   author={Shalom, Yehuda},
   title={Rigidity, unitary representations of semisimple groups, and
   fundamental groups of manifolds with rank one transformation group},
   journal={Ann. of Math. (2)},
   volume={152},
   date={2000},
   number={1},
   pages={113--182},
   %issn={0003-486X},
   %review={\MR{1792293}},
   %doi={10.2307/2661380},
}
		
%\bib{schmidt}{article}{
%   author={Schmidt, Friedrich Karl},
%   title={Mehrfach perfekte K\"orper},
%   language={German},
%   journal={Math. Ann.},
%   volume={108},
%   date={1933},
%   number={1},
%   pages={1--25},
%   %issn={0025-5831},
%   %review={\MR{1512831}},
%   %doi={10.1007/BF01452819},
%}
\bib{struble}{article}{
   author={Struble, Raimond A.},
   title={Metrics in locally compact groups},
   journal={Compositio Math.},
   volume={28},
   date={1974},
   pages={217--222},
   %issn={0010-437X},
   %review={\MR{0348037}},
}
		
\bib{sun}{article}{
	author={Sun, Bin}, 
	title={Non-elementary convergence groups are acylindrically hyperbolic},
	note={ArXiv preprint: 1612.00134}, 
	date={2016}, 
}

\bib{thom}{article}{
   author={Thom, Andreas},
   title={Low degree bounded cohomology and $L^2$-invariants for
   negatively curved groups},
   journal={Groups Geom. Dyn.},
   volume={3},
   date={2009},
   number={2},
   pages={343--358},
}
%\bib{tits-spherical}{book}{
%   author={Tits, Jacques},
%   title={Buildings of spherical type and finite BN-pairs},
%   series={Lecture Notes in Mathematics, Vol. 386},
%   publisher={Springer-Verlag, Berlin-New York},
%   date={1974},
%   pages={x+299},
%   %review={\MR{0470099}},
%}

\bib{tukia}{article}{
   author={Tukia, Pekka},
   title={On quasiconformal groups},
   journal={J. Analyse Math.},
   volume={46},
   date={1986},
   pages={318--346},
}
	
% \bib{tits-survey}{article}{
%    author={Tits, J.},
%    title={Reductive groups over local fields},
%    conference={
%       title={Automorphic forms, representations and $L$-functions},
%       address={Proc. Sympos. Pure Math., Oregon State Univ., Corvallis,
%       Ore.},
%       date={1977},
%    },
%    book={
%       series={Proc. Sympos. Pure Math., XXXIII},
%       publisher={Amer. Math. Soc., Providence, R.I.},
%    },
%    date={1979},
%    pages={29--69},
% }
% 	
% \bib{tits-alternative}{article}{
%    author={Tits, J.},
%    title={Free subgroups in linear groups},
%    journal={J. Algebra},
%    volume={20},
%    date={1972},
%    pages={250--270},
% }
% \bib{tits-simple}{article}{
%    author={Tits, J.},
%    title={Algebraic and abstract simple groups},
%    journal={Ann. of Math. (2)},
%    volume={80},
%    date={1964},
%    pages={313--329},
% }
% 	
% \bib{tits-trees}{article}{
%    author={Tits, Jacques},
%    title={Sur le groupe des automorphismes d'un arbre},
%    %language={French},
%    conference={
%       title={Essays on topology and related topics (M\'emoires d\'edi\'es
%       \`a Georges de Rham)},
%    },
%    book={
%       publisher={Springer},
%       %place={New York},
%    },
%    date={1970},
% }

% \bib{Waterhouse}{book}{
%     AUTHOR = {Waterhouse, William C.},
%      TITLE = {Introduction to affine group schemes},
%     SERIES = {Graduate Texts in Mathematics},
%     VOLUME = {66},
%  PUBLISHER = {Springer-Verlag, New York-Berlin},
%       YEAR = {1979},
%      PAGES = {xi+164},
% }
\bib{wells}{article}{
   author={Wells, Charles},
   title={Automorphisms of group extensions},
   journal={Trans. Amer. Math. Soc.},
   volume={155},
   date={1971},
   pages={189--194},
   %issn={0002-9947},
   %review={\MR{0272898}},
   %doi={10.2307/1995472},
}
\bib{Wortman}{article}{
   author={Wortman, Kevin},
   title={Quasi-isometric rigidity of higher rank $S$-arithmetic lattices},
   journal={Geom. Topol.},
   volume={11},
   date={2007},
   pages={995--1048},
}

\bib{yue}{article}{
   author={Yue, Chengbo},
   title={The ergodic theory of discrete isometry groups on manifolds of
   variable negative curvature},
   journal={Trans. Amer. Math. Soc.},
   volume={348},
   date={1996},
   number={12},
   pages={4965--5005},
   %issn={0002-9947},
   %review={\MR{1348871}},
   %doi={10.1090/S0002-9947-96-01614-5},
}

% \bib{MR1091617}{article}{
%    author={Yau, S.-T.},
%    author={Zheng, F.},
%    title={Negatively $\frac14$-pinched Riemannian metric on a compact K\"ahler
%    manifold},
%    journal={Invent. Math.},
%    volume={103},
%    date={1991},
%    number={3},
%    pages={527--535},
%    issn={0020-9910},
%    review={\MR{1091617}},
%    doi={10.1007/BF01239525},
% }

% \bib{zimmer-book}{book}{
%    author={Zimmer, R. J.},
%    title={Ergodic theory and semisimple groups},
%    series={Monographs in Mathematics},
%    volume={81},
%    publisher={Birkh\"auser Verlag},
%    place={Basel},
%    date={1984},
%    pages={x+209},
% }

\end{biblist}
\end{bibdiv}

\end{document}